\documentclass[12pt]{amsart}
\usepackage{amsmath, amssymb}
\newfont {\cyr} {wncyr10}
\pagestyle{plain} \frenchspacing
\renewcommand{\labelenumi}{{(\roman{enumi})}}

\usepackage{amsfonts}
\usepackage{amsmath}
\usepackage{amssymb}
\usepackage{setspace}
\usepackage{multicol}



\newtheorem{theorem}{Theorem}[section]
\newtheorem{lemma}[theorem]{Lemma}\newtheorem{notation}[theorem]{Notation}

\newtheorem{corollary}[theorem]{Corollary}
\newtheorem{definition}[theorem]{Definition}

\newcounter{claim}[theorem]

\newcounter{cclaim}[theorem]

\usepackage{color}


\def \udot {{}^{\textstyle .}}

\newcommand{\E}{\mathrm{E}}\newcommand{\SU}{\mathrm{SU}}
\newcommand{\F}{\mathrm{F}}\newcommand{\B}{\mathrm{B}}\newcommand{\M}{\mathcal{M}}
\newcommand{\G}{\mathrm{G}}

\newcommand{\Q}{\mathrm{Q}}

\newcommand{\Aut}{\mathrm{Aut}}

\newcommand{\Out}{\mathrm{Out}}

\newcommand{\Syl}{\mathrm{Syl}}\newcommand{\syl}{\mathrm{Syl}}

\newcommand{\GF}{\mathrm{GF}}
\newcommand{\GL}{\mathrm{GL}}
\newcommand{\Sp}{\mathrm{Sp}}
\newcommand{\SL}{\mathrm{SL}}

\newcommand{\PSL}{\mathrm{PSL}}\newcommand{\PSp}{\mathrm{PSp}}
\newcommand{\Sym}{\mathrm{Sym}}
\newcommand{\Alt}{\mathrm{Alt}}
\newcommand{\Dih}{\mathrm{Dih}}

\newcommand{\U}{\mathrm{U}}

\def \OO {\hbox {\rm O}}

\def \syl {\hbox {\rm Syl}}\def \Syl {\hbox {\rm Syl}}

\def \wt {\widetilde}

\def \Aut{ \mathrm {Aut}}

\def \Out{\mbox {\rm Out}}

\def \J{\mbox {\rm J}}

\def \B{\mbox {\rm B}}

\def \M{\mbox {\rm M}}

\def \Co {\mbox {\rm Co}}

\def \PSU {\mbox {\rm PSU}}
\def \GSp {\mbox {\rm GSp}}\def \GO {\mbox {\rm GO}}



\begin{document}
\renewcommand{\labelenumi}{(\roman{enumi})}

\title  {${\mathbf \F_4(2)}$ and its automorphism group}
 \author{Chris Parker}
  \author{Gernot Stroth}

\address{Chris Parker\\
School of Mathematics\\
University of Birmingham\\
Edgbaston\\
Birmingham B15 2TT\\
United Kingdom} \email{c.w.parker@bham.ac.uk}

\address{Gernot Stroth\\
Institut f\"ur Mathematik\\ Universit\"at Halle - Wittenberg\\
Theordor Lieser Str. 5\\ 06099 Halle\\ Germany}
\email{gernot.stroth@mathematik.uni-halle.de}

\email {}

\date{\today}

\begin{abstract} We present an identification theorem for  the groups $\F_4(2)$ and  $\Aut(\F_4(2))$ based on the   structure of the centralizer of an element of order $3$.
\end{abstract}
\maketitle \pagestyle{myheadings}

\markright{{\sc }} \markleft{{\sc Chris Parker and Gernot Stroth}}

\section{Introduction}

In the classification of the finite simple groups a fundamental role was played by Timmesfeld's work on groups which contain a large extraspecial 2-subgroup \cite{Tim}. Timmesfeld determined the structure of the normalizer of such a subgroup and following this achievement several authors contributed to the classification of  all the  simple groups which contain a large extraspecial $2$-subgroup.

The notion of a large extraspecial $2$-subgroup of a group is generalized  in the work of Meierfrankenfeld,  Stellmacher and  the second author \cite{MSS} to the concept of a large $p$-subgroup where $p$ is an arbitrary prime. The definition of a large $p$- subgroup is as follows: given a finite group   $G$,  a $p$-subgroup $Q$ of $G$  is  \emph{large} if and only if
\medskip
\begin{enumerate}
\item[(L1)]\label{1} $Q = F^*(N_G(Q))$; and
\item[(L2)]\label{2} for all  non-trivial subgroups $U$  of $ Z(Q)$,  $N_G(U)\le N_G(Q)$.
\end{enumerate}
\medskip
Recall that  condition (L1) is equivalent to $Q=O_p(N_G(Q))$ and $C_G(Q)\le Q$. If $Q$ is extraspecial and $p = 2$ this definition coincides with  Timmesfeld's definition of a large extraspecial 2-group. The classification of groups with a large $p$-subgroup is sometimes called the MSS-project. The first step of this project is \cite{MSS}, where in contrast to the work of Timmesfeld,  it is not the normalizer of $Q$ which is determined but rather structural   information about the maximal $p$-local subgroups of $G$ which are not  contained in $N_G(Q)$ is provided.

Suppose now that $Q$ is a large subgroup of a group $G$ and let $S$ be a Sylow $p$-subgroup of $G$ containing $Q$. It is an elementary exercise to show that $F^\ast(N_G(U)) = O_p(N_G(U))$ for all non-trivial normal subgroups $ U$ of $ S$ (\cite[Lemma  2.1]{PS2}).  Groups which satisfy this property are said to be of \emph{parabolic characteristic $p$}.
If  $F^\ast(N_G(U)) = O_p(N_G(U))$ for all $1 \not= U \le S$, then $G$ is of \emph{local characteristic $p$} (also called characteristic $p$-type).
In \cite{MSS} it is assumed that $G$ has local characteristic $p$. However, there is work in progress which aims to remove  this assumption, and so all the successor articles to \cite{MSS} will be produced under the weaker hypothesis that the group under investigation has  a large $p$-subgroup.  One reason for this is that, as mentioned above, a group with a large $p$-subgroup is of  parabolic characteristic $p$, while demonstrating that a group has local characteristic $p$ may well  be hard to verify in applications.

 Nevertheless  \cite{MSS} provides us with some $p$-local structure of the group $G$ and this is all what we require for the next step of the programme in which  we aim to  recognize  $G$ up to isomorphism. For this recognition we typically build a geometry upon which a subgroup of $G$  acts. This means that we take some of the $p$-local subgroups of $G$ which contain $S$ and  consider the subgroup $H$ of $G$ generated by them. The $p$-local subgroups are  selected so that  $O_p(H) = 1$. As the generic simple groups with a large $p$-subgroup are Lie type groups in characteristic $p$, in many cases we will be able to show that the coset geometry determined by the $p$-local subgroups in $H$ is a building. The recognition of $H$ is then achieved with  help of the classification of buildings of spherical type \cite{Tits,  local}. At this stage, as a third step of the programme, we would like  to show that $G=H$. There is a general approach to achieve this goal. Since $H$  contains $S$ it also contains $Q$ and so we are able to identify $Q$ as a subgroup of $H$. Typically $Q=F^\ast(N_H(R))$ for some root group $R$ in $H$. We can then  determine the structure of $N_G(Q)$. The aim is to show that $N_G(Q) = N_H(Q)$ and from this further show that $N_G(U) = N_H(U)$ for all $1 \not= U \unlhd S$. The final step  is to show that, if $H$ is a proper subgroup of $G$, then $H$ is strongly $p$-embedded in $G$ and this contradicts the main results in   \cite{Be} and \cite{PSStrong}.

However there are situations  where it cannot be shown that $N_G(Q) = N_H(Q)$. This happens most frequently  when $p = 2$ or $3$ and $N_H(Q)$ is soluble. For the final stage of the MSS-project  one has to  analyze    exactly these more troublesome configurations; that is  determine all the groups $G$ where $F^*(H)$ is a group of Lie type in characteristic $p$ containing a Sylow $p$-subgroup $S$ of $G$, $N_H(Q)$ is soluble and $N_H(Q) \not= N_G(Q)$. There are several  configurations where this phenomenon arises. For
example  when $p=3$  we  have  $H \cong \Omega^-_6(3)$  contained in  $G \cong \U_6(2)$. Similarly, there are
containments $\Omega^+_6(3)$ in $\F_4(2)$, $\Omega_7(3)$ in  ${}^2\E_6(2)$ and $\M(22)$, and $\Omega^+_8(3)$ in
$\M(23)$ and $\F_2$. In all these cases $Q$ is an extraspecial 3-group and $N_H(Q)$ is soluble. In a series of papers \cite{PS1, PS3, PSS},  the larger groups in this list are determined from the
approximate structure of the centralizer of an element of order 3, or equivalently from the structure of $N_G(Q)$. In this paper we  identify $\F_4(2)$ from the approximate structure of the centralizer of a $3$-element. We are motivated by the embedding of $\Omega^+_6(3)$ in $\F_4(2)$, but we do not assume that $G$ contains this group. We just assume certain important structural information about the normalizer of $Q$ and, as a consequence, this present article is independent of the results in \cite{MSS}.
This article should also be viewed as a companion to the authors' earlier work \cite{PS1} in which the groups $G$ with $\PSU_6(2)\le G \le
\Aut(\PSU_6(2))$ are characterised by such information. Indeed in such groups, the centralizer of a $3$-element has a similar structure to that in $\F_4(2)$ or $\Aut(\F_4(2))$ but in these groups $Z(Q)$ is weakly closed in $Q$, while in $\F_4(2)$ and its automorphism group it is not. (Recall that, for subgroups $X \le  Y \le L$, we say that $X$ is \emph{weakly closed} in $Y$ with respect
to $L$ provided that if $ g \in  L$ and $X^g \le Y$, then $X^
g = X$.)  Unfortunately the arguments in these different situations are quite different.  The theorems proved in \cite{PS1} and in this article  are   employed in
\cite{PS2} to identify the corresponding groups.

We now  make precise what we  mean by the approximate structure of the centralizer of an element of order 3 in $\PSU_6(2)$ or $\F_4(2)$.

\begin{definition} We say that  $X$ is similar to a $3$-centralizer in a group of type  $\PSU_6(2)$ or $\F_4(2)$
provided the following conditions hold.
\begin{enumerate}
\item $Q=F^*(X)$ is extraspecial of order $3^5$ and $Z(F^*(X)) =Z(X)$; and
\item $X/Q$ contains a normal subgroup isomorphic to $ \Q_8\times \Q_8$.
\end{enumerate}
\end{definition}

Our main theorem is as follows.

\begin{theorem}\label{MT} Suppose that $G$ is a group, $Z \le G$ has order $3$.
If $C_G(Z)$ is similar to a $3$-centralizer in a  group of type $\PSU_6(2)$ or $\F_4(2)$ and $Z$ is not weakly
closed in $F^*(C_G(Z))$, then  $G \cong \F_4(2)$ or $\Aut(\F_4(2))$.
\end{theorem}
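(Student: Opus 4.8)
The plan is to follow the standard three-step recognition template sketched in the introduction, but adapted to the troublesome configuration in which $Z$ fails to be weakly closed in $Q := F^*(C_G(Z))$. First I would analyze the local structure of $X := C_G(Z)$ in detail: from the definition of similarity, $Q$ is extraspecial of order $3^5$ with $Z(Q) = Z$, and $X/Q$ has a normal subgroup $\cong \Q_8 \times \Q_8$. I would pin down $\Out(Q) \cap (X/Q)$ inside $\GSp_4(3)$ (the action on $Q/Z$), identify a Sylow $3$-subgroup $S$ of $G$ with $|S| = 3^6$ or $3^7$, and determine $N_G(S)$, the fusion of the non-central order-$3$ elements of $Q$, and — crucially — the $G$-conjugate $Z^g \le Q$ with $Z^g \ne Z$ whose existence is the weak-closure hypothesis. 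The element $Z^g$ lies in $Q \setminus Z$, so $C_G(Z^g)$ is again similar to a $3$-centralizer of the same type; playing $C_G(Z)$ and $C_G(Z^g)$ off against each other (both contain $C_Q(Z^g)$, an abelian or almost-abelian subgroup of order $3^4$) is what forces the geometry to be that of $\F_4(2)$ rather than $\PSU_6(2)$, where no such $Z^g$ exists.

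Next I would construct the rank-$2$ (or rank-$3$) amalgam. Using $X$ together with $N_G(E)$ for a suitable elementary abelian subgroup $E$ of $Q$ normalized by a large part of $X$ — the natural candidate is $E = C_Q(Z^g)$ if it is elementary abelian, or an index-$3$ subgroup thereof — I would produce two or three $3$-local subgroups $P_1, P_2, (P_3)$ containing $N_G(S)$ with $O_3$ of their generated subgroup trivial. The target is to recognize the coset geometry as the building of type $C_3$ or $F_4$ over $\GF(2)$; the $\Q_8 \times \Q_8$ in $X/Q$ is exactly the Weyl-group data one expects from two commuting $\SL_2(2)$-type Levi factors fattened by the extraspecial structure, and matching residues should identify the diagram. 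I would then invoke the classification of spherical buildings (Tits, and Tits--Weiss for the rank-$2$ case handled via Goldschmidt-type amalgam analysis) to conclude that $H := \langle P_1, P_2, P_3\rangle$ satisfies $F^*(H) \cong \F_4(2)$ and $S \in \Syl_3(H)$.

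Finally I would upgrade $H \le G$ to $H = G$ or $[G:H] = |{\Aut(\F_4(2))}:\F_4(2)| = 2$. Since $H \ge S \ge Q$ and $Q = F^*(N_H(R))$ for an appropriate long-root subgroup $R$ of $F^*(H)$, I would compute $N_G(Q)$ from the similarity hypothesis and show $N_G(Q) = N_H(Q)$ (here the failure of weak closure of $Z$ is used to see that $N_G(Q)$ permutes the $G$-conjugates of $Z$ in $Q$ exactly as $N_H(Q)$ does, which is what prevents $N_G(Q)$ from being strictly larger). Propagating this through $N_G(U) = N_H(U)$ for all $1 \ne U \unlhd S$ via a Sylow/pushing-up argument, I would conclude that either $G = H$, or $H$ is strongly $3$-embedded in $G$; the latter contradicts the classification of groups with a strongly $p$-embedded subgroup in the relevant form (Bender, and Parker--Stroth), giving $G \cong \F_4(2)$ or $\Aut(\F_4(2))$.

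The main obstacle, I expect, is the middle step — identifying the amalgam — precisely because $N_H(Q)$ is soluble and $Z$ is not weakly closed. In that regime the coset geometry is not obviously a building, several small non-generic amalgams with the same local data can intrude, and the interplay between $C_G(Z)$ and $C_G(Z^g)$ must be controlled very tightly (in particular ruling out the $\PSU_6(2)$ geometry and any exotic completions) before the building machinery applies. Most of the paper's length will go into this Goldschmidt-style amalgam analysis and the fusion bookkeeping for the non-central $3$-elements of $Q$.
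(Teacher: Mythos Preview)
Your proposal has a genuine structural gap in the middle step, and it reflects a misdiagnosis of why this configuration is ``troublesome''.

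You propose to build a chamber system from $3$-local subgroups $P_1, P_2, (P_3)$ containing $N_G(S)$ and to recognise it as the building of type $\mathrm C_3$ or $\F_4$ over $\GF(2)$. But $\F_4(2)$ is a Lie type group in characteristic~$2$: its building is assembled from $2$-local parabolic subgroups, and there is no $\F_4$-geometry on its $3$-locals. The only $3$-local overgroups of $S$ here are $M=N_G(Z)\approx 3^{1+4}.(\Q_8\times\Q_8).\Sym(3)$ and $L=N_G(J)\approx 3^4{:}\GO_4^+(3)$ (with $J=J(S)$ elementary abelian of order $3^4$); the amalgam $(M,L)$ is nowhere near an $\F_4$-diagram and no Goldschmidt-style analysis of $3$-locals will produce one. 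This is precisely the content of the introduction's warning: when $N_H(Q)$ is soluble, the $p$-local geometry does not detect $H$ directly.

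What the paper actually does is a \emph{prime switch}. From the $3$-local data (the quadratic form on $J$, the classes $\mathcal S(J)$, $\mathcal P(J)$, $\mathcal M(J)$) one first pins down the centralisers of certain \emph{non-central} $3$-elements $\rho_1,\rho_2\in J$ via Prince's Theorem, obtaining $C_G(\rho_i)\cong 3\times\Sp_6(2)$. Inside these $\Sp_6(2)$'s sit the involutions $r_1,r_2$ (the centres of the two $\Q_8$ factors), and the bulk of the work --- four sections --- is to build large $2$-signalizers $\Gamma_i=\Sigma_i\Upsilon_i$ for $I_i=C_J(r_i)$ and to prove $K_i=C_G(r_i)=N_G(\Gamma_i)\approx 2^{1+6+8}.\Sp_6(2)$, using Goldschmidt's strongly closed abelian theorem (applied to $\Upsilon_i$, a $2$-group) rather than any amalgam method. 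Only then is the $\F_4$ building constructed, from the \emph{$2$-local} data $K_1,K_2,T$ (Theorem~\ref{P=F4}), and the endgame uses Holt's Theorem on a $2$-central involution, not the strongly $p$-embedded classification. Strong $3$-embedding does appear, but only as a fusion-control device to feed into Holt.

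So your outer frame (local analysis $\to$ subgroup $H\cong\F_4(2)$ $\to$ show $H$ swallows $G$) is right, but the route from $3$-local hypotheses to $H$ goes through involution centralisers, not through a $3$-local amalgam.
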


Combining Theorem~\ref{MT}  and the main theorem from \cite{PS1} we obtain the following statement.

\begin{theorem}\label{combT} Suppose that $G$ is a group, $Z \le G$ has order $3$.
If $C_G(Z)$ is similar to a $3$-centralizer in  a  group of type $\PSU_6(2)$ or $\F_4(2)$ and $Z$ is not weakly
closed in a Sylow $3$-subgroup of $C_G(Z)$  with respect to $G$, then  either $F^*(G) \cong \F_4(2)$ or  $F^*(G) \cong
\PSU_6(2)$.
\end{theorem}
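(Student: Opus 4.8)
The plan is to deduce Theorem~\ref{combT} from Theorem~\ref{MT} and the main theorem of \cite{PS1} by a routine dichotomy on the weak closure hypothesis. So suppose $G$, $Z \le G$ of order $3$, and $C_G(Z)$ is similar to a $3$-centralizer in a group of type $\PSU_6(2)$ or $\F_4(2)$, and $Z$ is not weakly closed in a Sylow $3$-subgroup $S$ of $C_G(Z)$ with respect to $G$. Set $Q = F^*(C_G(Z))$, so $Q$ is extraspecial of order $3^5$ and $Z = Z(Q)$ by the definition of similarity. First I would record the elementary observations that $Q \le S$ (since $C_Q(Q) = Z(Q) = Z \le Q$ forces $Q$ to lie in a Sylow $3$-subgroup of $C_G(Z)$, and all such are conjugate) and that $S$ is in fact a Sylow $3$-subgroup of $G$: indeed condition (L1)-type information gives $C_G(Z) = N_G(Q)$-controlled behaviour, and more concretely $Q = F^*(C_G(Z))$ together with $|S:Q| = 3$ (read off from $\Q_8 \times \Q_8 \le X/Q$) shows $S \in \Syl_3(G)$ once one checks $C_G(Z)$ contains a full Sylow $3$-subgroup of $G$; this is standard since $Z \le Z(S')$ for any $3$-group $S' \ge S$ would force $S' \le C_G(Z)$.

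The core of the argument is then the case division. Either $Z$ is weakly closed in $Q$ with respect to $G$, or it is not. If $Z$ is \emph{not} weakly closed in $Q = F^*(C_G(Z))$, then the hypotheses of Theorem~\ref{MT} are met verbatim, and we conclude $G \cong \F_4(2)$ or $G \cong \Aut(\F_4(2))$; in either case $F^*(G) \cong \F_4(2)$, as required. If instead $Z$ \emph{is} weakly closed in $Q$ with respect to $G$, I must produce the conclusion $F^*(G) \cong \PSU_6(2)$ from \cite{PS1}. The main theorem of \cite{PS1} characterises the groups $G$ with $\PSU_6(2) \le G \le \Aut(\PSU_6(2))$ via exactly the similarity hypothesis on $C_G(Z)$ together with the weak closure of $Z$ in $Q$ (this is precisely the structural distinction the introduction flags: in $\PSU_6(2)$-type groups $Z(Q)$ is weakly closed in $Q$, whereas in $\F_4(2)$-type groups it is not). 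So in this branch \cite{PS1} directly gives $\PSU_6(2) \le G \le \Aut(\PSU_6(2))$, hence $F^*(G) \cong \PSU_6(2)$.

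It remains to reconcile the two hypotheses: Theorem~\ref{MT} and \cite{PS1} are phrased with ``weakly closed in $F^*(C_G(Z))$'' (equivalently in $Q$), whereas Theorem~\ref{combT} assumes failure of weak closure in the larger group $S$. The implication we need is: if $Z$ is weakly closed in $Q$ with respect to $G$ but not weakly closed in $S$ with respect to $G$, then the $\PSU_6(2)$ conclusion still holds. The cleanest route is to observe that the main theorem of \cite{PS1} is in fact proved under the hypothesis that $Z$ is weakly closed in $Q$ (not merely in $S$), so this branch is immediate; dually, Theorem~\ref{MT} covers the case $Z$ not weakly closed in $Q$. Since ``not weakly closed in $S$'' is implied by ``not weakly closed in $Q$'' (as $Q \le S$), the two branches together cover the hypothesis of Theorem~\ref{combT}: either $Z$ fails to be weakly closed in $Q$ (apply Theorem~\ref{MT}) or $Z$ is weakly closed in $Q$, and then \cite{PS1} applies and yields $F^*(G) \cong \PSU_6(2)$. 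Either way the conclusion holds.

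The only genuine subtlety — and the step I would be most careful about — is the bookkeeping around which ambient subgroup the weak closure is tested in, and making sure the cited theorems are being invoked with hypotheses they actually satisfy; in particular one must confirm that the failure of weak closure of $Z$ in $S$, in the branch where $Z$ \emph{is} weakly closed in $Q$, is harmless for \cite{PS1} (it is, because \cite{PS1} only needs weak closure in $Q$). Beyond that, the proof is a formal two-line combination of the two identification theorems, and I would present it as such: state the dichotomy on weak closure in $Q$, invoke Theorem~\ref{MT} in one case and \cite[Main Theorem]{PS1} in the other, and note that in both outcomes $F^*(G)$ is as claimed. \qedc
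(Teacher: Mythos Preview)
Your approach is correct and is exactly what the paper does: the paper offers no proof beyond the sentence ``Combining Theorem~\ref{MT} and the main theorem from \cite{PS1} we obtain the following statement,'' and your dichotomy on weak closure of $Z$ in $Q$ is the natural unpacking of that sentence.

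One small wording issue: you write that the failure of weak closure of $Z$ in $S$ is ``harmless for \cite{PS1} \dots\ because \cite{PS1} only needs weak closure in $Q$.'' This undersells the role of that hypothesis. The main theorem of \cite{PS1} does not merely tolerate the failure of weak closure in $S$; it \emph{requires} it (or an equivalent non-degeneracy condition) in order to force $G$ to be large rather than, say, $G=M$. So in the branch where $Z$ is weakly closed in $Q$, you should invoke \cite{PS1} with \emph{both} hypotheses in hand: $Z$ weakly closed in $Q$ (from the case assumption) and $Z$ not weakly closed in $S$ (inherited from the hypothesis of Theorem~\ref{combT}). Your argument already has both pieces available, so this is only a matter of phrasing, not a gap. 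Also, your aside that $|S:Q|=3$ is premature at this point in the logic (a priori $|S:Q|\in\{1,3,9\}$ from the similarity definition; the paper establishes $|S:Q|=3$ only later, in Lemma~\ref{structM}, and in \cite{PS1} the answer can differ), but you do not actually use it, so it can simply be dropped.
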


For groups $G$ with $C_G(Z)$ of type $\PSU_6(2)$ or $\F_4(2)$, the different $G$-fusion of $Z$ in $C_G(Z)$  manifests itself in the subgroup structure of $G$ very quickly.  Indeed, if we let $S$ be a Sylow $3$-subgroup of $C_G(Z)$ and $Q= F^*(C_G(Z))$, then we easily
determine that  $S \in \syl_3(G)$ and the Thompson subgroup $J$ of $S$ has order $3^4$ or $3^5$ when $Z$ is weakly closed in
$Q$,   whereas, it has order $3^4$ if $Z$ is not weakly closed in $Q$.  More strikingly, setting $L=
N_G(J)$, we have $F^*(L/Q) \cong \Omega_4^-(3)$ in the first case and in the second case $L/Q
\cong\Omega_4^+(3)$.

The paper is set out as follows. In Section~2 we gather pertinent information about that natural and spin modules
for $\Sp_6(2)$ and the natural and orthogonal $\SU_4(2)$-module as well as collect together further
identification theorems and results which we shall require for the proof of Theorem~\ref{MT}. In Section~3 we present  Theorem~\ref{P=F4} which will be used to identify a subgroup $P$  of our target group which is isomorphic to $\F_4(2)$. The proof of Theorem~\ref{P=F4} involves the construction of a building of type $\F_4(2)$ on which $P$ acts faithfully. The proof of the main theorem commences in Section~4. Thus we assume that $G$ satisfies the hypothesis of Theorem~\ref{MT} and set
$M= N_G(Z)$. We remark here that the information that is developed as the proof of Theorem~\ref{MT} unfolds
becomes information about the groups $\F_4(2)$ and $\Aut(\F_4(2))$ once the theorem is proved. The initial
objective of Section~4 is to determine more information about the structure of $M$. This is achieved by
exploiting the fact that $Z$ is not weakly closed in $Q=O_3(M)$. The first significant  result is presented in
Lemma~\ref{structM} where it is shown that $$M/Q \approx (\Q_8\times \Q_8).\Sym(3) \text{ or } (\Q_8\times
\Q_8).(2\times\Sym(3)).$$ In Section~4, we then move on, in Lemma~\ref{NJ}, to the determination of $L$ as
described in the previous paragraph.  At this stage we have  shown that $L \approx 3^4:\GO_4^+(3)$ or
$3^4:\mathrm{CO}_4^+(3)$. Thus $J$ supports a quadratic form and $G$-fusion of elements in $J$ is controlled by $L$.
This allows us to parameterize the  non-trivial cyclic subgroups of $J$ as singular, plus and minus (the
latter two types are fused when $L\approx 3^4:\mathrm{CO}_4^+(3)$) and also the five types of subgroups of order
$9$ which we label Type S, Type DP, Type DM, Type N+  and  Type N- (the notation is chosen to indicate that the groups are singular, degenerate with three plus groups, degenerate with three minus groups, non-degenerate of plus-type and non-degenerate of minus-type).

We let $\rho_1$ and $\rho_2$ be elements of $Q \cap J$ each centralized by a $\Q_8$ (the quaternion group of order $8$) subgroup of $M$ and one
generating a plus type and the other a minus type cyclic subgroup of $J$. In Section~6, we show that $C_G(\rho_1)
\cong C_G(\rho_2) \cong 3 \times \SU_4(2)$ or $3 \times \Sp_6(2)$ see Lemmas~\ref{eitheror} and \ref{thesame}. It
is the latter possibility that actually arises in our target groups. There is  related work  in \cite{FF} that we
might refer to at this stage but  they assume that $G$ is of characteristic $2$-type.

We let $r_1$ and $r_2$ be central involutions in the subgroup of $C_G(Z)$ isomorphic to $\Q_8\times \Q_8$ which
do not invert $Q/Z$ and, for $i=1,2$, we set $K_i= C_G(r_i)$. Again when $L\approx \mathrm{CO}_4^+(3)$ these
groups are conjugate. At this stage we know that $r_i$  centralizes  the (simple) component of $C_G(\rho_i)$. The
heart of the proof of Theorem~\ref{MT} is contained in Sections~7, 8, 9 and 10 where we determine the structure of
$K_i$. Thus the aim is to show that $K_1$ and $K_2$ have shape $2^{1+6+8}.\Sp_6(2)$ where $O_2(K_1)$ and $O_2(K_2)$ are commuting products of an extraspecial group of order $2^9$ and an elementary abelian group of order $2^7$.

We  begin our construction of $K_i$ by determining a large $2$-group $\Sigma_i$ which is normalized by $I_i=
C_J(r_i)$. It turns out that $\Sigma_i$ is the  extraspecial $2$-group of order $2^9$ and plus type we are seeking. In the case
that $C_G(\rho_i) \cong 3 \times \SU_4(2)$, we are able to show that in fact $K_i = N_G(\Sigma_i) $ and
$N_G(\Sigma_i)/\Sigma_i \cong \Aut(\SU_4(2))$ or $\Sp_6(2)$ and this leads to a contradiction as explained in
Lemma~\ref{ItsSp62}. Thus we enter Section~9 knowing that $C_G(\rho_1) \cong C_G(\rho_2) \cong 3 \times
\Sp_6(2)$. On the other hand $\Sigma_i$ is far from being a maximal signalizer for $I_i$. Thus is Section~9 we
construct an even larger signalizer which in the end is a product  $\Gamma_i=\Sigma_i\Upsilon_i$ where
$\Upsilon_i$ is an elementary abelian group of order $2^7$. Thus $\Gamma_i$ has order $2^{15}$ and in fact
$\Upsilon_i= Z(\Gamma_i)$ and this is proved in Lemma~\ref{Gammabasic}. We show that $N_{G}(\Gamma_i)/\Gamma_i
\cong \Sp_6(2)$ in Lemma~\ref{itssp}. The final hurdle requires that we show that $K_i=N_G(\Gamma_i)$. This is
proved in Lemma~\ref{H=K} and requires a sequence of lemmas which begins by showing that $\Upsilon_i$ is strongly
closed in $\Gamma_i$ with respect to $K_i$ and culminates in the statement that $\Upsilon_i$ is strongly closed
in a Sylow $2$-subgroup of $K_i$ with respect to $K_i$. At this stage we apply a Lemma~\ref{Gold} which is
essentially Goldschmidt's Strongly Closed Abelian $2$-subgroup Theorem \cite{Goldschmidt} to conclude that $K_i=
N_G(K_i) \approx 2^{1+6+8}.\Sp_6(2)$. Our final section exploits Theorem~\ref{P=F4}  to  produce a subgroup  $P$ of $G$ with $P \cong \F_4(2)$. We show that a
group closely related to $P$ is strongly $3$-embedded in $G$ and finally apply Holt's Theorem \cite{Ho} in the
form presented in Lemma~\ref{Holt} to conclude the proof of the Theorem~\ref{MT}.

 Throughout this article we follow the now standard Atlas \cite{Atlas} notation for
group extensions. Thus $X\udot Y$ denotes a non-split extension of $X$ by $Y$, $X{:}Y$ is a split extension of
$X$ by $Y$ and we reserve the notation $X.Y$ to denote an extension of undesignated type (so it is either
unknown, or we don't care). Our notation follows that in \cite{AschbacherFG}, \cite{Gorenstein} and  \cite{GLS2}.
We use the definition of signalizers as given in \cite[Definition 23.1]{GLS2}. For odd primes $p$, the extraspecial groups of
exponent $p$ and order $p^{2n+1}$ are denoted by $p^{1+2n}_+$. The extraspecial $2$-groups of order $2^{2n+1}$
are denoted by $2^{1+2n}_+$ if the maximal elementary abelian subgroups have order $2^{1+n}$ and otherwise we
write $2^{1+2n}_-$. We expect our notation for specific groups is self-explanatory. For a subset $X$ of a group
$G$, $X^G$ denotes the set of $G$-conjugates of $X$. If $x, y \in H \le  G$, we  write $x\sim _Hy$ to indicate
that $x$ and $y$ are conjugate in $H$. Often we shall give suggestive descriptions of groups which indicate the
isomorphism type of certain composition factors. We refer to such descriptions as the \emph{shape} of a group.
Groups of the same shape have normal series with isomorphic sections.  We use the symbol $\approx$ to indicate
the shape of a group.

\medskip

\noindent {\bf Acknowledgement.}  The first author is  grateful to the DFG for their support and thanks the mathematics department in Halle for their generous hospitality  from January to August 2011.

\section{Preliminaries}

In this section we lay out certain facts about the groups $\Sp_6(2)$ and $\Aut(\U_4(2))$ which play a pivotal role in the proof of our main theorem. We also present other background results that are of key importance to our investigations.

\begin{lemma}\label{modfacts} Suppose that $X \cong \Sp_6(2)$ or  $\Aut(\SU_4(2))$. Then there is a  unique irreducible $\GF(2)X$-module of dimension $6$ and a unique irreducible $\GF(2)X$-module of dimension 8 all the other non-trivial irreducible $\GF(2)X$-modules have dimension at least $9$.
\end{lemma}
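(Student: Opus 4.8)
The plan is to reduce everything to a computation about 2-modular representations of $\Sp_6(2)$ and use the fact that $\Aut(\SU_4(2)) \cong \Sp_6(2)$ actually forces $X \cong \Sp_6(2)$ in both cases — indeed $\SU_4(2) \cong \PSp_4(3)$ and $\Aut(\SU_4(2)) \cong \SU_4(2){:}2$, and this outer automorphism group is isomorphic to $\Sp_6(2)$? No: one must be careful. What is true and what I would use is that $\SU_4(2) \cong \Omega_6^-(2) \cong \PSp_4(3)$, so $\Aut(\SU_4(2))$ has order $2|\SU_4(2)|$, whereas $|\Sp_6(2)| = 2^9\cdot 3^4 \cdot 5 \cdot 7$ and $|\SU_4(2)| = 2^6 \cdot 3^4 \cdot 5$. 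These are different groups, so I cannot collapse the two cases; instead I would treat them in parallel, invoking the known 2-modular character tables.

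First I would recall that the 2-modular irreducible representations of both $X \cong \Sp_6(2)$ and $X \cong \Aut(\SU_4(2))$ are completely documented (e.g.\ in the modular \ATLAS or by direct citation of Lübeck/James). For $\Sp_6(2)$ the irreducible $\GF(2)$-modules have dimensions $1, 6, 8, 14, 48, 64, 112, \dots$; in particular there is a unique one of dimension $6$ (the natural symplectic module), a unique one of dimension $8$ (the spin module), and every other nontrivial irreducible has dimension $\ge 14 > 9$. For $\Aut(\SU_4(2))$ I would argue via restriction to the index-$2$ subgroup $\SU_4(2) \cong \PSp_4(3)$: over $\GF(2)$ the group $\SU_4(2)$ has a natural $4$-dimensional unitary module, its dual, the $6$-dimensional orthogonal module $\Omega_6^-(2)$, a $14$-dimensional module, a $20$-dimensional module, etc. A nontrivial irreducible $\GF(2)X$-module $V$ restricted to $\SU_4(2)$ is either irreducible or splits as a sum of two conjugate irreducibles; analyzing which $\SU_4(2)$-modules extend to $X$ and which fuse, one finds the $6$-dimensional orthogonal module extends (giving a unique $6$-dimensional $X$-module), the two $4$-dimensional modules fuse to give the unique $8$-dimensional $X$-module, and the next available dimension is $14$. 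Hence in this case too every other nontrivial irreducible has dimension $\ge 14 \ge 9$.

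The step I expect to be the main obstacle is the bookkeeping for $\Aut(\SU_4(2))$: one must verify (a) that the $6$-dimensional $\SU_4(2)$-module genuinely extends to the full automorphism group and does so uniquely, (b) that the only way to build an $8$-dimensional irreducible is by inducing/fusing the $4$-dimensional unitary module and its dual (so it is unique), and (c) that no exotic small irreducible of the outer extension appears below dimension $9$ — this last point rules out, in principle, a hypothetical $7$- or $8$-dimensional module not coming from $\SU_4(2)$ in the expected way. All three points follow from Clifford theory together with the explicit $2$-decomposition matrices, so the argument is a finite check rather than a conceptual difficulty; I would simply cite the relevant tables and record the conclusion.
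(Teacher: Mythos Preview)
Your proposal is correct in substance and arrives at the result by the same route the paper does: citation of the known $2$-modular character tables (the paper simply writes ``This is well known. See \cite{MOAT}'' and nothing more). So your argument is not wrong, merely far more elaborate than what the paper supplies.

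One technical imprecision is worth flagging. When you say $\SU_4(2)$ has ``a natural $4$-dimensional unitary module, its dual'' and that ``the two $4$-dimensional modules fuse to give the unique $8$-dimensional $X$-module'', you are implicitly working over $\GF(4)$, not $\GF(2)$. Over $\GF(2)$ there is no $4$-dimensional irreducible for $\SU_4(2)$: the natural unitary module and its Frobenius twist are Galois-conjugate absolutely irreducible $\GF(4)$-modules, and together they give a single irreducible $8$-dimensional $\GF(2)\,\SU_4(2)$-module. This module is invariant under the outer automorphism and \emph{extends} (uniquely, since $C_2$ has only the trivial $1$-dimensional $\GF(2)$-representation) to $\Aut(\SU_4(2))$; it is not induced from a $4$-dimensional module. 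The conclusion is the same, but the Clifford-theoretic mechanism is ``invariant module extends'' rather than ``conjugate pair fuses''. With that correction your bookkeeping for $\Aut(\SU_4(2))$ goes through cleanly, and the remaining irreducibles over $\GF(2)$ have dimensions $14$, $40$, $64$, all at least $9$.
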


\begin{proof} This is well known. See \cite{MOAT}. \end{proof}

In this section $U$ will denote  the  $\Aut(\SU_4(2))$ natural module  and the  $\Sp_6(2)$ spin module of dimension $8$ and $V$ will be the
$\Aut(\SU_4(2))$ orthogonal module and the $\Sp_6(2)$ natural module of dimension $6$.

\begin{table}
{\tiny
\begin{tabular}{|c|c|c|c|c|c|}
\hline
&& Centralizer in $\Aut(\SU_4(2))$ & Centralizer in $\Sp_6(2)$&$\dim C_U(u_j)$&$\dim C_V(u_j)$\\
\hline $a_2$&$u_1$&$2^{1+4}_+.(\SL_2(2) \times \SL_2(2))$&$2^{1+2+4}.(\SL_2(2) \times \SL_2(2))$&6&4\\
$b_3$&$u_2$&  $2 \times (\Sym(4) \times 2)$ &$2^7. 3$&4&3\\
$b_1$&$u_3$&$2 \times \Sp_4(2)$&$2^5.\Sp_4(2)$&4&5\\
$c_2$&$u_4$& $2^6. 3$ &$2^8. \SL_2(2)$&4&4\\\hline
\end{tabular}
\caption{Involutions in $\Sp_6(2)$ and $\Aut(\SU_4(2))$. The involutions in the first row are the \emph{unitary
transvections.} The involutions labeled with ``$b$"  those which are in $\Aut(\SU_4(2)) \setminus \SU_4(2)$.}
\label{Table1}}
\end{table}

For $X \cong \Sp_6(2)$,  let $X_1, X_2 $ and $X_3$  be the minimal parabolic subgroups of $X$ containing a fixed Sylow $2$-subgroup $S$. Set $X_{ij}= \langle X_i, X_j\rangle$ where $1 \le i<j\le 3$ and fix notation so that $$X_{12}/O_2(X_{12})\cong \SL_3(2),$$ $$X_{23}/O_2(X_{23}) \cong \Sp_4(2) \text{ and} $$ $$X_{13}/O_2(X_{13}) \cong \SL_2(2)\times \SL_2(2).$$
There are three conjugacy classes of elements of order $3$ in $X$. Let $\tau_1$, $\tau_2$ and $\tau_3$
be representatives of these classes and choose  so that on  the natural $\Sp_6(2)$-module $V$, for $1
\le i \le 3$,  $\dim [V,\tau_i]= 2i$.

\begin{lemma}\label{sp62facts} Suppose that $Y \cong \Aut(\SU_4(2))$ and that $X \cong \Sp_6(2)$ with $Y \le X$.
Assume that $V$ and $U$ are the faithful $\GF(2)X$-modules of dimension $6$ and $8$ respectively.
\begin{enumerate}
\item $X$ and $Y$ each have four conjugacy classes of involutions and for each involution $u\in X$ we have
$u^X\cap Y$ is a conjugacy class in $Y$. In column one of Table~\ref{Table1}  we { provide the  Suzuki names (see \cite[page 16]{AschSe}) for each class of involutions.}
 \item The shape of the centralizers of involutions in $X$ and $Y$ is
given in Table~\ref{Table1}.
\item For each involution in $u\in X$, $\dim C_V(u)$ and $\dim C_U(u)$ is given in Table~\ref{Table1}.
\item $X$ does not contain any subgroup of order $2^4$ in which all the involutions are conjugate.
\item  $X$ does not contain an extraspecial subgroup of order $2^7$.
    \item If $x$ is an involution of type $b_1$, then  a Sylow $3$-subgroup of $C_Y(u)$ contains two conjugates of $\langle \tau_1\rangle $ and two  conjugates of $\langle \tau_2\rangle$.
    \item $E=\langle \tau_1,\tau_2,\tau_3\rangle$ is the Thompson subgroup of a Sylow $3$-subgroup of $G$ and every element of order $3$ is $X$-conjugate ($Y$-conjugate) to an element of $E$.
\end{enumerate}
\end{lemma}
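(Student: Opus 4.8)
The plan is to verify the seven assertions of Lemma~\ref{sp62facts} by a combination of explicit references to the \textsc{Atlas} \cite{Atlas}, the module tables in \cite{MOAT}, and a small amount of direct computation inside $\Sp_6(2)$ and its subgroup $\Aut(\SU_4(2))$. Throughout I would fix the embedding $Y \cong \Aut(\SU_4(2)) \le X \cong \Sp_6(2)$ coming from the fact that $\SU_4(2)\cong \PSp_4(3)$ sits inside $\Sp_6(2)$ as the stabiliser of a non-singular vector in $V$ (so $X = Y \cdot V/[V,Y]$ in the obvious sense), and work with the two small faithful modules $V$ and $U$ supplied by Lemma~\ref{modfacts}.

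First I would settle parts (i)--(iii), which are really a single bookkeeping exercise. Both $\Sp_6(2)$ and $\U_4(2)$ have exactly four classes of involutions by the \textsc{Atlas}, and one checks from the permutation character on non-singular points of $V$ (equivalently from the fusion of the point stabiliser $Y$ in $X$) that each class of involutions of $X$ meets $Y$ in a single $Y$-class; the extra outer involutions of $Y$ are the graph automorphism class, which accounts for the ``$b$'' labelling. The centraliser shapes in Table~\ref{Table1} are read directly from the \textsc{Atlas} for $Y$ and from the known involution centralisers of $\Sp_6(2)$ (the transvection centraliser $2^{1+2+4}{:}(\SL_2(2)\times\SL_2(2))$ is standard, the others are the stabilisers of the various isotropic/non-isotropic subspaces of $V$). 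Finally $\dim C_V(u)$ is computed from the Jordan form of $u$ on the $6$-dimensional natural module (transvections have $\dim[V,u]=2$, etc.), and $\dim C_U(u)$ is obtained either from the character of $U$ (the Brauer character of the $8$-dimensional $2$-modular representation, since $U$ lifts to the $8$-dimensional faithful ordinary representation of $2\udot\Sp_6(2)$) or, for $Y$, from the fact that $U|_Y$ is the natural $4$-dimensional unitary module over $\GF(4)$ viewed over $\GF(2)$, where the fixed-space dimensions are immediate.

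Parts (iv) and (v) are the structural non-existence statements. For (iv) I would argue that a $2^4$-subgroup with all involutions conjugate would have to consist entirely of transvections (class $a_2$), since the other three classes are distinguished by invariants preserved under any fusion — e.g. $\dim[V,u]$ together with the isomorphism type of $C_X(u)/O_2(C_X(u))$ — so such a subgroup could in principle only be a ``pure transvection'' subgroup; but the transvections in $\Sp_6(2)$ correspond to non-zero vectors of $V$ under $t_v \leftrightarrow v$, and $\{t_v : v \in W\setminus 0\}$ is a subgroup only if $W$ is totally singular, forcing $\dim W \le 3$, hence at most $2^3$ transvections commute pairwise as a group — contradiction. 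For (v), an extraspecial subgroup $R$ of order $2^7$ has $|R/Z(R)| = 2^6$ and $Z(R)$ of order $2$; one checks $R$ cannot embed in any involution centraliser (the Sylow $2$-subgroup of $\Sp_6(2)$ has order $2^9$, and the largest extraspecial subgroups it contains have order $2^5$, visible from the structure $2^{1+2+4}$ of the transvection centraliser and the action there), so no such $R$ exists; alternatively this is immediate from the list of maximal $2$-local subgroups in the \textsc{Atlas}.

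For (vi) I would use the $b_1$-involution centraliser $C_Y(u) \cong 2\times \Sp_4(2) \cong 2\times\Sym(6)$; a Sylow $3$-subgroup has order $3^2$, and one identifies the two classes $\langle\tau_1\rangle,\langle\tau_2\rangle$ inside it by their fixed-space dimensions on $V$ (using $\dim[V,\tau_i]=2i$ and $\dim C_V(u)=5$ from Table~\ref{Table1}), noting that the $3^3$-elements of $\Sp_6(2)$ ($\tau_3$) cannot lie in $C_Y(u)$ since $\Sym(6)$ has no element of order $3$ acting fixed-point-freely enough; counting conjugates of each inside $\Sym(6)$ gives exactly two of each type. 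For (vii), $E=\langle\tau_1,\tau_2,\tau_3\rangle$ is elementary abelian of order $3^3$ (a maximal torus-normaliser ingredient, or directly: diagonal $\pm1$-entries in blocks give a $3$-group and one checks it is self-centralising among $3$-elements), a Sylow $3$-subgroup of $\Sp_6(2)$ has order $3^4$ with $E$ its unique elementary abelian subgroup of that rank, so $E$ is the Thompson subgroup; and since there are only three classes of order-$3$ elements, each with a representative diagonalisable over $\GF(4)$ and hence conjugate into $E$, every element of order $3$ meets $E$ — the same holds in $Y$ as $Y$ contains a full Sylow $3$-subgroup of $X$. The main obstacle here is (vii): pinning down that $E$ genuinely is the Thompson subgroup (rather than just \emph{a} large abelian subgroup) requires knowing the precise structure of a Sylow $3$-subgroup of $\Sp_6(2)$, so I would lean on the explicit description of $S_3 \in \Syl_3(\Sp_6(2))$ as $3^{1+2}_+ \times 3$ or $3\wr 3$-type quotient available from the parabolic structure, or simply cite it from \cite{GLS2}.
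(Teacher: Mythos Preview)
Your treatment of parts (i)--(iii), (vi), (vii) is broadly in line with the paper's, which simply cites \cite[Proposition 2.12 and Table 1]{PS1} for (i)--(iii), notes that $\Sp_4(2)$ contains no conjugate of $\tau_3$ for (vi), and calls (vii) elementary.

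Your argument for (iv), however, has a genuine gap. You assert that a pure $2^4$-subgroup ``would have to consist entirely of transvections (class $a_2$)'', but you give no reason why the other classes cannot occur, and the sentence about ``invariants preserved under any fusion'' does not address this at all: those invariants distinguish the classes from one another, not from the possibility of forming a pure subgroup. In fact the paper's proof shows the opposite conclusion. Using the inner product $(\chi|_A,1_A)\ge 0$ with the degree~$7$ character of $\Sp_6(2)$, one finds that the only class for which $\chi(1)+15\chi(u)\ge 0$ is the \textsc{Atlas} class $2C$, which is the Suzuki class $c_2$, \emph{not} $a_2$; the degree~$35$ character $\chi_7$ then gives the contradiction. (There is also a misidentification in your sketch: elements of class $a_2$ have $\dim C_V(u)=4$ by Table~\ref{Table1}, so they are not symplectic transvections on $V$; the symplectic transvections, with $\dim C_V(u)=5$, are the $b_1$-elements.) So your proposed route for (iv) does not work, and the character-table argument is what is actually needed.

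For (v) your plan would eventually succeed but is laborious. The paper's argument is a one-liner you should use instead: $\Sp_6(2)$ has a faithful $7$-dimensional complex representation, whereas any extraspecial group of order $2^7$ has minimal faithful degree $2^3=8$, so no such subgroup can embed.
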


\begin{proof} {Parts (i)-(iii) follow from \cite[Proposition 2.12, and Table 1]{PS1}.}

Suppose that $A \le X$ has order $2^4$ and that all the non-trivial elements are conjugate in $X$. We use the character table of $X$ given in \cite[page 47]{Atlas}.  Let $\chi$ be an irreducible character of $X$. Then, as $(\chi|_A,1_A) \ge 0$, we  have $$(\chi|_A,1_A) = \frac{1}{|A|} \sum_{a\in A} \chi(a) \ge 0.$$ Taking $\chi$ to be the degree $7$ character we see that all the non-trivial elements in $A$ are  in Suzuki  class $c_2$ (Atlas \cite{Atlas} $2C$). Now considering the character of degree $35$  denoted $\chi_7$ in \cite{Atlas} we obtain a contradiction.

Let $E$ be extraspecial of order $2^7$.
Since $X$ has a faithful
$7$-dimensional representation in characteristic $0$ and the smallest such representation of $E$ is $8$-dimensional, $E$ is not isomorphic to a subgroup of $X$.

Part (vi) follows from the action of $\Sp_4(2)$ on the natural module for $\Sp_6(2)$ as $\Sp_4(2)$ contains no conjugates of $\tau_3$.

Part (vii) is also  elementary  to verify.

\end{proof}

\begin{lemma}\label{sp62natural} Let $X \cong \Sp_6(2)$,  $S$ a Sylow $2$-subgroup of $X$ and $V$ be the $\Sp_6(2)$ natural module.
Then the following hold.
\begin{enumerate}
\item $X$ acts transitively on the non-zero vectors in $V$.
\item $V$ is uniserial as an $S$-module.
\item Suppose that, for $1 \le i \le 3$, $V_i$ is an $S$-invariant subspace of $V$ of dimension $i$. Then $X_{23}=N_X(V_1)$ and $X_{23}$ acts naturally as $\Sp_4(2)$ on $V_1^\perp/V_1$, $X_{13}=N_X(V_2)$, $O^2(X_3)$ centralizes $V_2$ and $V/V_2^\perp$, and $O^2(X_1)$ centralizes $V_2^\perp/V_2$ and $X_{12}=N_X(V_3)$ and acts naturally on both $V_3$ and $V/V_3$.
\end{enumerate}
\end{lemma}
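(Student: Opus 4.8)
The three assertions are standard facts about $\Sp_6(2)$ acting on its natural module, and the plan is to deduce each of them from the $\mathrm{BN}$-pair structure together with elementary linear algebra over $\GF(2)$.

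For (i) I would invoke Witt's extension theorem: since the bilinear form on $V$ is non-degenerate and alternating, every non-zero vector lies in a hyperbolic pair and hence extends to a symplectic basis, and $X$ is transitive on symplectic bases, so $X$ is transitive on $V\setminus\{0\}$. Equivalently, one counts $|V\setminus\{0\}|=63$ and identifies the stabiliser of a non-zero vector with the maximal parabolic of shape $2^5{:}\Sp_4(2)$.

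For (ii), note first that $S$ lies in a Borel subgroup $B$ of $X$, and $B$ stabilises a maximal totally isotropic flag $V_1<V_2<V_3$; taking perpendiculars, $B$, and hence $S$, also stabilises $V_3<V_2^\perp<V_1^\perp$, so $S$ normalises the chain $0<V_1<V_2<V_3<V_2^\perp<V_1^\perp<V$ of subspaces of dimensions $0,1,\dots,6$, acting trivially on each successive quotient. To see that there are no further $S$-submodules I would use a regular unipotent element $t\in S$: it acts on $V$ as a single Jordan block of size $6$ (the standard behaviour of a regular unipotent of $\Sp_{2n}$ on the natural module, and in any case easy to check directly for $\Sp_6$), so the $\langle t\rangle$-submodules of $V$ form a single chain of length $6$. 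Since every $S$-submodule is $\langle t\rangle$-invariant and we have already exhibited one $S$-submodule of each dimension between $0$ and $6$, these are all the $S$-submodules and $V|_S$ is uniserial.

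For (iii) I would run the standard dictionary between the parabolic subgroups of $\Sp_6(2)$ and the totally isotropic subspaces of $V$. Part (i) gives transitivity on isotropic lines, so $N_X(V_1)$ is a maximal parabolic whose Levi complement $\Sp_4(2)$ acts naturally on $V_1^\perp/V_1$; comparing Levi types, this parabolic is $X_{23}$. Likewise $N_X(V_2)$ and $N_X(V_3)$ are the maximal parabolics with Levi complements $\SL_2(2)\times\SL_2(2)$ and $\SL_3(2)$, which are $X_{13}$ and $X_{12}$ respectively (note $N_X(V_i)=N_X(V_i^\perp)$). The remaining assertions are read off from the Levi decomposition: the $\SL_2(2)$ corresponding to the long simple root lies in $X_3$, acts as $\Sp_2(2)$ on the non-degenerate plane $V_2^\perp/V_2$, and centralises $V_2$ and $V/V_2^\perp$; the $\SL_2(2)$ corresponding to the short simple root $e_1-e_2$ lies in $X_1$, acts on $V_2$ and on its dual quotient, and centralises $V_2^\perp/V_2$; and $X_{12}$ acts on the natural $\SL_3(2)$-module $V_3$ and on its dual $V/V_3$. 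The only point here that is not pure book-keeping is the uniqueness of the $S$-submodule series in (ii); the Jordan-block observation disposes of it cleanly, but if one prefers, one can instead check that $\dim C_V(S)=1$ (a single long root group already has one-dimensional commutator on $V$, while the positive short root groups push $[V,S]$ up to codimension one), dualise to get $\dim V/[V,S]=1$, and iterate down the flag.
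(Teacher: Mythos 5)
Your proof is correct, and it supplies a full argument where the paper is content to cite \cite[Lemma 14.37]{SymplecticAmalgams} for parts (i) and (ii) and leave (iii) as folklore; so there is no conflict of method, just a difference in how much is written out. The route you take --- Witt's theorem for transitivity, a Borel-stabilised maximal totally isotropic flag extended by perpendiculars to produce one $S$-submodule of each dimension, a single-Jordan-block regular unipotent to rule out any further $S$-submodules, and then the standard parabolic--flag dictionary matching Levi quotients $\Sp_4(2)$, $\SL_2(2)\times\SL_2(2)$, $\SL_3(2)$ to the stabilisers of $V_1$, $V_2$, $V_3$ --- is exactly the standard one, so this fills in rather than replaces the paper's cited argument. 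The one place worth a half-sentence of caution is the Jordan-block claim for the regular unipotent, since unipotent classes of $\Sp_{2n}$ in characteristic $2$ are classified differently from odd characteristic; the claim is nevertheless true (a single Jordan block of size $2n$ does preserve a non-degenerate alternating form over $\GF(2)$, and $\Sp_6(2)$ has elements of order $8$, which forces at least a block of size $\ge 5$), but you already anticipate the worry by offering the cleaner alternative: compute $\dim C_V(S)=1$ directly from a long root element, dualise, and iterate along the flag. That alternative is the argument I would keep if this proof were to be written into the paper, since it avoids invoking the regular unipotent's Jordan type at all.
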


\begin{proof} These are all well known facts about the action of $X$ on $V$. See for example \cite[Lemma 14.37]{SymplecticAmalgams} for (i) and (ii).
\end{proof}

\begin{lemma}\label{sp62spin}Let $X \cong \Sp_6(2)$, $S$ a Sylow $2$-subgroup of $X$ and $U$ be the  $\Sp_6(2)$ spin module.
\begin{enumerate}
\item $X$ has exactly two orbits on the non-zero vectors of $U$ one of length $135$ and one of length $120$.
\item $N_X(C_U(S)) = X_{12}$ and $C_U(S)= C_U(O_2(X_{12}))$.
\item If $U_2\le U$ is $S$-invariant of dimension $2$, then $N_X(U_2)=X_{13}$ and $O^2(X_1)$ centralizes $U_2$.
\end{enumerate}
\end{lemma}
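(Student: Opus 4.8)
The plan is to prove Lemma~\ref{sp62spin} by exploiting the standard fact that the $8$-dimensional spin module $U$ for $X\cong\Sp_6(2)$ carries a non-degenerate quadratic form of $+$-type, so that $X$ acts on $U$ as $\Omega_8^+(2)$ acts through the spin representation, with the three $X$-conjugacy classes of maximal parabolics of $X$ corresponding (via triality) to the three classes of maximal isotropic-type stabilizers. First I would establish (i): since $|U|=256$, the two $X$-orbit lengths $135$ and $120$ together with the zero vector must account for all $256$ vectors, and $135+120+1=256$, so it suffices to show there are exactly two orbits. This follows because the quadratic form partitions the $255$ non-zero vectors into $135$ singular and $120$ non-singular vectors (the usual counts for $O_8^+(2)$), and $X=\Omega_8^+(2)\cap(\text{spin stabilizer})$ is transitive on each set; transitivity on singular vectors is Witt's theorem applied inside the relevant orthogonal group, and transitivity on the non-singular vectors can be read off from the permutation character or from \cite{Atlas}. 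Alternatively, one checks directly that the two nontrivial permutation characters of $X$ of degrees $135$ and $120$ sum with $1_X$ to the Brauer character of $U$ lifted to characteristic $0$, which is how \cite{Atlas} presents it.

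For (ii) and (iii) I would work with a fixed Sylow $2$-subgroup $S\le X$ and the minimal parabolics $X_1,X_2,X_3$ with the labelling conventions fixed before Lemma~\ref{sp62facts} (so $X_{12}/O_2(X_{12})\cong\SL_3(2)$, etc.). The key structural input is that $U$, restricted to $X_{12}$, has the natural $\SL_3(2)$-module and its dual as the two non-central chief factors, with a $2$-dimensional fixed space for $O_2(X_{12})$; this is a well-known feature of the spin module (it is the ``$\SL_4(2)$ on $\Lambda^2$'' picture seen inside $\Sp_6(2)$). Granting this, $C_U(S)$ has dimension $2$: it is $C_U(O_2(X_{12}))$ because $O_2(X_{12})$ is generated by root subgroups of the two parabolics $X_1,X_2$ and acts on $U$ with exactly a $2$-dimensional fixed space, while $S=O_2(X_{12})X_S$ with the torus part acting fixed-point-freely on the $2$-space only trivially — more precisely $C_U(S)\le C_U(O_2(X_{12}))$ is forced by $O_2(X_{12})\le S$, and equality holds since a Sylow $2$-subgroup of $\SL_3(2)$ already fixes a nonzero vector in both the natural module and its dual, giving $\dim C_U(S)\ge 2$. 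Then $N_X(C_U(S))\ge X_{12}$ since $X_{12}$ stabilizes $C_U(O_2(X_{12}))$, and maximality of $X_{12}$ plus the fact that $C_U(S)$ is not $X$-invariant (as $X$ is irreducible on $U$) forces $N_X(C_U(S))=X_{12}$.

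Part (iii) is handled the same way, one rank lower: if $U_2\le U$ is $S$-invariant of dimension $2$, I would first argue $U_2=C_U(S)$ — this needs that $U$ is sufficiently close to uniserial as an $S$-module that the only $2$-dimensional $S$-submodule is the socle-type subspace $C_U(S)$; this can be seen from the submodule structure of $U|_{X_{12}}$ and $U|_{X_{23}}$ together, or simply by noting any $S$-invariant $U_2$ meets $C_U(S)$ nontrivially and then checking dimensions. Wait — I should be careful: $U$ need not be uniserial as an $S$-module (unlike the natural module $V$ in Lemma~\ref{sp62natural}), so instead I would identify $U_2$ as the unique $S$-invariant $2$-space on which $S$ acts trivially, which is exactly $C_U(S)$, and then quote (ii) to get $N_X(U_2)\ge X_{12}$; but the lemma asserts $N_X(U_2)=X_{13}$, so in fact the intended $U_2$ here is a \emph{different} $2$-dimensional $S$-submodule — the one sitting inside a natural $\SL_2\times\SL_2$ factor. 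Re-reading, the cleanest route is: among $S$-invariant subspaces of $U$ there is one $2$-space, call it $C_U(S)$, with normalizer $X_{12}$ (that is (ii)), and there is another $S$-invariant $2$-space $U_2$ (a chief factor for the $\SL_2\times\SL_2$-parabolic sitting atop the socle) with $N_X(U_2)=X_{13}$; I would pin down $U_2$ via the action of $X_{13}$ on $U$ (two natural $\SL_2(2)$-modules as chief factors, $O^2(X_1)$ centralizing one of them), conclude $X_{13}\le N_X(U_2)$, and use maximality of $X_{13}$ together with $U_2\ne C_U(S)$ to get equality; finally $O^2(X_1)$ centralizes $U_2$ because $U_2$ is precisely the $X_1$-chief factor on which $O^2(X_1)$ — being an $\SL_2(2)$ acting on the \emph{other} natural factor — acts trivially, which is visible from the $\SL_2(2)\times\SL_2(2)$ module structure. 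The main obstacle throughout is simply having the precise submodule/chief-factor structure of the spin module restricted to the various parabolics at hand; once that is in place (and it is standard, traceable to the $\Omega_8^+(2)$-triality picture or computable directly in \cite{Atlas} / from \cite{MOAT}), all three parts are short bookkeeping. I would therefore organize the writeup around a single preliminary paragraph recording $U|_{X_{ij}}$ for the three parabolics, and then deduce (i)–(iii) in turn.
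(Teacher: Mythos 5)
The paper itself gives no argument for this lemma; it simply cites \cite[Proposition 2.12]{PS1}, so there is no ``paper proof'' to compare against beyond that reference. Your from-scratch strategy --- restrict the spin module to the parabolics $X_{12}$, $X_{13}$ and read off the submodule data --- is the right sort of approach, and your treatment of (i) via the $+$-type quadratic form and the counts $135+120+1=256$ is essentially sound (modulo that Witt's theorem gives transitivity of the \emph{full} orthogonal group, not of $X$ itself, on singular vectors; for $X$ you do need the permutation-character or subgroup-index argument you mention as a fallback).

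The substantive gap is in (ii) and (iii): you assert that $O_2(X_{12})$ has a \emph{two}-dimensional fixed space on $U$ and hence that $\dim C_U(S)=2$, and this is false --- $\dim C_U(S)=1$. You can see this quickly from your own part (i): the $X$-orbit of length $135$ has point stabilizer of order $|X|/135 = 2^9\cdot 3\cdot 7$, and the unique maximal subgroup of $\Sp_6(2)$ of index $135$ is $X_{12}\approx 2^6{:}\SL_3(2)$, which is self-normalizing and contains $N_X(S)$; hence $X_{12}$ is the full stabilizer of a unique vector $v$ of $\mathcal O_1$, any $S$-fixed nonzero vector lies in $\mathcal O_1$ (the $120$-orbit stabilizer has $2$-part only $2^6$), and any $w\in C_U(S)^\#$ has $\Stab_X(w)\ge S$ and index $135$, forcing $\Stab_X(w)=X_{12}$ and $w=v$. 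Thus $C_U(S)=\langle v\rangle$ is one-dimensional, and the paper in fact uses exactly this in the proof of Lemma~\ref{buildingF4} (``$C_{O_2(U_1)/Z(O_2(U_1))}(T)$ has order $2$''). With the dimension corrected, the apparent tension you yourself noticed in (iii) disappears: an $S$-invariant $2$-space $U_2$ properly contains the $1$-space $C_U(S)$ and is not $C_U(S)$, so there is no conflict between $N_X(C_U(S))=X_{12}$ and $N_X(U_2)=X_{13}$. Your (ii) also needs re-deriving: the step ``a Sylow $2$ of $\SL_3(2)$ fixes a nonzero vector in both the natural module and its dual, giving $\dim C_U(S)\ge 2$'' does not lift fixed vectors from composition factors to $U$, and in any case the conclusion is wrong. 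Once you have $\dim C_U(O_2(X_{12}))=1$, the equality $C_U(S)=C_U(O_2(X_{12}))$ is automatic (any $1$-dimensional $\GF(2)$-space is fixed pointwise by any $2$-group normalizing it), and $N_X(C_U(S))=X_{12}$ follows from the orbit-stabilizer argument above.
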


\begin{proof}  See \cite[Proposition 2.12]{PS1}.
\end{proof}

\begin{lemma}\label{sp62line} Suppose that $X \cong \Sp_6(2)$ and
$V$ is the natural module for $X$. Let $P= X_{13}$,  $T \in \syl_3(P)$ and $Q= O_2(P)$.
\begin{enumerate}
\item $P/Q \cong \SL_2(2) \times \SL_2(2)$.
\item  The subgroups of order $3$ in $T$ are as follows:  there are  two subgroups $Z_1$ and $Z_2$ which are $X$-conjugate to $\langle \tau_3\rangle$, one subgroup which is $X$-conjugate to $\langle \tau_1\rangle$ (which we suppose is $\langle \tau_1\rangle$) and one subgroup which is $X$-conjugate to $\langle \tau_2\rangle $. The two  subgroups of $T$ which are conjugate to $\langle \tau_3\rangle$ are conjugate in $N_P(T)$.
\item $C_Q(Z_1) \cong C_Q(Z_2) \cong \Q_8$ and $[C_Q(Z_1),C_Q(Z_2)]=1$.
\item $C_T(Z(Q))=\langle \tau_1\rangle$ and $C_Q(\tau_1)= Z(Q)$.
\item If $U\le Q$ has order $2^3$ and if $U$ is $T$-invariant, then either $U = C_Q(Z_1)$,  $U= C_Q(Z_2)$ or $U= Z(Q)$.
    \item Let $Q^\prime = \langle t \rangle$. Then $t^X \cap Q\not\subseteq Z(Q)$.
\end{enumerate}
\end{lemma}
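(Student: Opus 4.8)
The plan is to work inside the parabolic $P = X_{13} = N_X(V_2)$, where $V_2$ is an $S$-invariant $2$-space of the natural module $V$, and to use the concrete module action from Lemma~\ref{sp62natural}(iii) throughout. Part (i) is just the definition of $X_{13}$, so there is nothing to do. For (ii), note that $T \in \syl_3(P)$ maps isomorphically to a Sylow $3$-subgroup of $P/Q \cong \SL_2(2)\times\SL_2(2) \cong \Sym(3)\times\Sym(3)$, which is elementary abelian of order $9$; hence $T$ has exactly four subgroups of order $3$. To name their $X$-classes I would compute $\dim[V,z]$ for a generator $z$ of each, using that $P$ stabilizes the flag $0 < V_2 < V_2^\perp < V$ and acts as $\SL_2(2)$ on $V_2$ and on $V/V_2^\perp$ (with $O^2(X_1)$ centralizing $V_2^\perp/V_2$, $O^2(X_3)$ centralizing $V_2$ and $V/V_2^\perp$). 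The element acting nontrivially only on the ``$X_3$'' factor has $\dim[V,z]=2$, so it is conjugate to $\langle\tau_1\rangle$; the element acting nontrivially only on the ``$X_1$'' factor has $\dim[V,z]=4$ and is a $\langle\tau_2\rangle$ (one checks it acts fixed-point-freely on the $4$-dimensional $V_2^\perp/V_2$); and the two ``diagonal'' subgroups both have $\dim[V,z]=6$, hence are conjugate to $\langle\tau_3\rangle$. These last two are swapped by the element of $N_P(T)$ inducing the graph automorphism exchanging the two direct factors of $P/Q$, giving the final sentence of (ii).

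For (iii) and (iv), I would use that $Z_1$ and $Z_2$ act on $Q = O_2(P)$ and that coprime action lets us read off fixed points factor by factor. Since $Q$ has order $2^6$ (as $|P| = 9\cdot 2^6\cdot$, reconciling $|X| = 2^9\cdot 3^4\cdot 5\cdot 7$ with $|P/Q|=36$ forces $|Q|=2^6$), and $Z_i$ inverts a complement, a short computation with the chief series of $Q$ under $P$ shows $C_Q(Z_i)$ has order $2^3$; identifying its structure as $\Q_8$ and checking $[C_Q(Z_1),C_Q(Z_2)]=1$ follows from the two subgroups lying in ``opposite'' chief factors. For (iv), $Z(Q)$ is the (unique) chief factor on which $\langle\tau_1\rangle$-type and the diagonal elements all act trivially while only $\langle\tau_2\rangle$ is nontrivial, forcing $C_T(Z(Q)) = \langle\tau_1\rangle$; and $C_Q(\tau_1)=Z(Q)$ because $\tau_1$, acting as a regular element on the remaining chief factors, has no further fixed points. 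Part (v) is then a counting/uniqueness argument: a $T$-invariant subgroup $U$ of order $2^3$ in $Q$ must be a sum of $T$-chief factors, and the only such subgroups of the right order are the three listed.

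The genuinely delicate part is (vi): we must show that the $X$-class of the generator $t$ of $Q' = Z(Q')$ meets $Q$ outside $Z(Q)$. Here the point is a fusion statement, not just an internal computation in $P$. My plan is to locate $Q'$ explicitly: since $Q$ is a $2$-group of order $2^6$ with the given action, $Q' = Z(Q)$ is the central $\Sp_4(2)$-chief factor's complement — actually I expect $Q'$ has order $2$ and $t$ is an involution of a specific Suzuki type (most plausibly $a_2$, the transvection class, or $c_2$), which we identify by computing $\dim C_V(t)$ from Table~\ref{Table1} together with the module structure. Then, using the transvection/centralizer data in Lemma~\ref{sp62facts} and Table~\ref{Table1}, I would exhibit inside $C_X(t)$ (whose shape is known) another conjugate of $t$ lying in $Q$ but not in $Z(Q)$ — for instance by finding two $P$-conjugates of $t$ whose product or whose position in the chief series of $Q$ places them in different $T$-orbits, only one of which lies in $Z(Q)$. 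Concretely, since $Q/Z(Q)$ is a nontrivial $T$-module and $t$ is a square or commutator realized by elements of $Q\setminus Z(Q)$, a direct check that some such element is $X$-conjugate (indeed $C_X(t)$-conjugate) to $t$ finishes it. The main obstacle is pinning down precisely which involution class $t$ belongs to and then producing the explicit conjugating element; everything else reduces to coprime-action bookkeeping inside $P$.
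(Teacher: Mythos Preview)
Your proposal has a fundamental arithmetic error that propagates through parts (iii)--(v): you claim $|Q| = 2^6$, but since $P$ is a parabolic it contains a full Sylow $2$-subgroup of $X$, so $|P|_2 = 2^9$ and hence $|Q| = 2^9/2^2 = 2^7$. (Relatedly, $V_2^\perp/V_2$ has dimension $2$, not $4$; the $4$-dimensional piece relevant to the $\tau_2$-identification is $V_2 \oplus V/V_2^\perp$.) With the correct $|Q|=2^7$, your ``chief-factor bookkeeping'' for (iii) no longer closes: you need a reason why $C_Q(Z_j)$ is exactly $\Q_8$ rather than some abelian group of order $8$, and why the two centralizers commute rather than merely sitting in ``opposite'' factors. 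The paper's device here is that $C_X(\tau_3) \cong \SU_3(2)$ carries a $\GF(4)$-structure, so a $\langle\tau_3\rangle$-conjugate can only centralize a quaternion subgroup of $Q$; the commutation $[C_Q(Z_1),C_Q(Z_2)]=1$ is then deduced from the coprime decomposition $Q = C_Q(Z_1)[Q,Z_1]$ together with the Three Subgroup Lemma, not from a direct-sum picture of chief factors.

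For (vi) your plan is in the right direction but missing the concrete lever. The paper does not hunt for a conjugating element. Instead it takes a symplectic transvection $r \in Z(Q)$ (class $b_1$, so $C_X(r) \approx 2^5.\Sp_4(2)$ from Table~\ref{Table1}), sets $E = O_2(C_X(r))$, and counts: $|E \cap Q| \ge 2^3$, but if $E \cap Q \le Z(Q)$ then $E \le C_{N_X(Q)}(E\cap Q)$ forces $|E \cap Q| \ge 2^4 > |Z(Q)|$, a contradiction. Hence $E \cap Q \not\le Z(Q)$, and the transitivity of $N_X(E)/E \cong \Sp_4(2)$ on $(E/\langle r\rangle)^\#$ puts a conjugate of $t$ into every nontrivial coset of $\langle r\rangle$ in $E$, in particular into $(E\cap Q)\setminus Z(Q)$. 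You should adopt this counting argument rather than trying to name an explicit conjugator.
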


\begin{proof} Let $Y$ be the $P$-invariant  isotropic $2$-space in $V$. Then $P$ preserves $0<Y < Y^\perp <V$.
Let $I$ be a hyperbolic line and $J= I^\perp$ be chosen so  $Y \le J$. Then the decomposition $I \perp J$ is
preserved by $\Sp_2(2) \times \Sp_4(2)$ and the subgroup $K$ of this group which leaves  $Y$ invariant has shape
$\Sp_2(2) \times (2\times 2^2).\SL_2(2) \cong \SL_2(2) \times 2 \times \Sym(4)$.  In particular, we now have
(i) holds. Furthermore,  we may suppose  the first factor of $K$ contains $\langle \tau_1\rangle$ while the
second factor contains $\langle \tau_2^*\rangle$, an $X$-conjugate of $\langle \tau_2\rangle$, acting fixed point
freely on $J$. Set $T =\langle \tau_1, \tau_2^*\rangle$. Since $\tau_1$ is inverted in the first factor of $K$,
we see  the two diagonal products $\tau_1\tau_2^*$ and $\tau_1^2\tau_2^*$ are conjugate in $N_P(T)$.
Furthermore these elements act fixed point freely on $V$ and so are $X$-conjugate to $\tau_3$. This is (ii).

Now consider $Q$. We know  this group has order $2^7$. We further have $Q \cap K = O_2(K)$ centralizes $Y+I=
Y^\perp$. Consequently $Q\cap K$ is normal in $P$ and as $[V,Q,Q\cap K]= [V,Q\cap K , Q]$ we additionally have $K\cap Q
\le Z(Q)$. Note that $\langle \tau_1\rangle$ centralizes $Q\cap K$. Now $C_P(\tau_2^*)$  is contained in
$K$ and so we see  $C_Q(\tau_2^*) = Z(K)$ has order $2$.  Now the centralizer in $X$ of $\tau_3$ supports a
$\GF(4)$ structure and is isomorphic to $\SU_3(2)$. It follows that  $\tau_1\tau_2^*$ and $\tau_1^2\tau_2^*$ can
centralize only quaternion subgroups of order $8$ in  $Q$. Since $C_Q(\tau_1\tau_2^*)$ and
$C_Q(\tau_1^2\tau_2^*)$ both centralize $Z(K)$ and $|Q|=2^7$ we have  $C_Q(\tau_1\tau_2^*) \cong
C_Q(\tau_1^2\tau_2^*) \cong \Q_8$ and  $C_Q(\tau_1\tau_2^*)'= Z(K)$.  Putting $Q_1 =
C_Q(\tau_1\tau_2^*)C_Q(\tau_1^2\tau_2^*)$ we have  $Q_1$ is $T$-invariant. Now $Q=
C_Q(\tau_1\tau_2^*)C_Q(\tau_1^2\tau_2^*)(Q\cap K)$,
$$[Q,\tau_1]= [C_Q(\tau_1\tau_2^*),\tau_1][C_Q(\tau_1^2\tau_2^*), \tau_1] = Q_1$$ is a normal subgroup of $Q$
and $Q_1\cap (Q\cap K)\le Z(K)$. Thus $Q_1$ is extraspecial and $Q'=Z(K)$ which has order $2$. In addition, $Q=
C_Q(\tau_1\tau_2^*) [Q,\tau_1\tau_2^*]$ with $C_Q(\tau_1\tau_2^*) \cap [Q,\tau_1\tau_2^*]= Z(K)$. Since
$$[C_Q(\tau_1\tau_2^*) , Q,\tau_1\tau_2^*] \le [Z(K),\tau_1\tau_2^*]=1$$ and $[C_Q(\tau_1\tau_2^*)
,\tau_1\tau_2^*,Q] =1,$ we also have $[C_Q(\tau_1\tau_2^*) , [Q,\tau_1\tau_2^*]]=1$ by the Three Subgroup Lemma.
In particular, as $[Q,\tau_1\tau_2^*]= C_Q(\tau_1^2\tau_2^*)(Q \cap K)$, we now have  (iii) and (iv) hold. If
$U$ is of order $2^3$ and is $T$-invariant, then $C_T(U)> 1$ and so (v) also follows from the above discussion.
To prove (vi), we start with a transvection $r \in Z(Q)$. By Table~\ref{Table1} we have  $E = O_2(C_X(r))$ is
elementary abelian of order $2^5$. Now $|E \cap Q| \geq 2^3$. If $E \cap Q \leq Z(Q)$, then, as $E \leq
C_{N_X(Q)}(E \cap Q)$, we get  $|E \cap Q| \geq 2^4$, a contradiction. Hence $E \cap Q \not\leq Z(Q)$. Now as
$N_X(E)$ acts transitively on $E/\langle r \rangle$, we have  any coset of $\langle r \rangle$ in $E$
contains a conjugate of $t$. In particular $t^X \cap E \cap Q \not\subseteq Z(Q)$.
\end{proof}

 \begin{lemma}\label{Noover} Let $Y = \Aut(\SU_4(2))$ and $V$ be the natural $\OO^-_6(2)$-module. Then there are no elementary abelian subgroup $E$ of order $8$ in $Y$ such that $|V : C_V(E)| \leq 4$.
\end{lemma}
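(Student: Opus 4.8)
The plan is to argue by contradiction using the module structure of $V$ as a $\GF(2)Y$-module, where $Y = \Aut(\SU_4(2))$ acts on the $6$-dimensional orthogonal space $V$ of minus type. Suppose $E \le Y$ is elementary abelian of order $8$ with $|V : C_V(E)| \le 4$, so $\dim C_V(E) \ge 4$. First I would note that $C_V(E) = \bigcap_{e \in E} C_V(e)$, and every non-trivial $e \in E$ is an involution of one of the four classes in Table~\ref{Table1}; by Lemma~\ref{sp62facts}(iii) (or directly from Table~\ref{Table1}, reading the $\dim C_V(u_j)$ column) the codimension of $C_V(e)$ in $V$ is at least $2$ for every involution $e$ (the minimum, $2$, occurring only for the $b$-type involutions, i.e.\ those in $Y \setminus \SU_4(2)$, and for the transvection class $a_2$). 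So each hyperplane-like subspace $C_V(e)$ has codimension $2$ or $3$.

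The main step is a counting/linear-algebra argument on the seven subspaces $C_V(e)$, $e \in E^\#$. Since $\dim C_V(E) \ge 4$ and $\dim V = 6$, for any two distinct $e, f \in E^\#$ we need $\dim(C_V(e) \cap C_V(f)) \ge 4$, which forces $\dim C_V(e) = \dim C_V(f) = 4$ and moreover $C_V(e) = C_V(f)$ unless their intersection is exactly $4$-dimensional with both being $4$-dimensional and meeting in codimension $0$ — in other words all the $C_V(e)$ must be pressed very close together. I would first rule out that all seven $C_V(e)$ coincide with a common $4$-space $W = C_V(E)$: then $E$ would act trivially on $W$ and hence embed in the pointwise stabilizer of a non-degenerate or degenerate $4$-subspace, and one checks from the structure of the parabolic/stabilizer subgroups of $Y$ acting on $V$ (the stabilizer of such a $W$ has a small $2$-group acting on $V/W \cong$ a $2$-space, of order dividing $|\GL_2(2)| \cdot |\text{something}|$ with $2$-part too small to contain $E \cong 2^3$ acting trivially on $W$) that no elementary abelian $2^3$ can centralize a $4$-space. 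Then I would handle the remaining case where the $C_V(e)$ are not all equal: pick $e$ with $C_V(e)$ of codimension $2$ (type $a_2$ or $b$), so $[V,e]$ is $2$-dimensional; the other involutions of $E$ normalize $[V,e]$ and $C_V(e)$, and restricting to $C_V(e)$ (a $4$-space carrying an induced, possibly degenerate, form) gives $E/\langle e\rangle \cong 2^2$ acting with a common fixed space of dimension $\ge 4$ inside a $4$-space — hence trivially — again forcing elements into a too-small $2$-subgroup of the relevant stabilizer, a contradiction.

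The hard part will be organizing the case analysis by the isomorphism type of $C_V(E)$ as a quadratic (or degenerate bilinear) space and correctly identifying the $2$-part of the stabilizer in $\OO_6^-(2)$ of each relevant configuration of subspaces, since $Y = \Aut(\SU_4(2)) = \OO_6^-(2)$ has order $2^7 \cdot 3^4 \cdot 5$ and one must show no $2^3$ fits where required; I expect this is most cleanly done by invoking Table~\ref{Table1} together with Lemma~\ref{sp62facts}(iv), which already asserts $X \cong \Sp_6(2)$ (and hence its subgroup $Y$) has no $2^4$ with all involutions conjugate, and a similar elementary count — indeed one could try to deduce the statement purely from the fact that an $E$ with $\dim C_V(E)\ge 4$ would, via the induced action on the $\le 2$-dimensional quotient $V/C_V(E)$, have to contain non-trivial elements acting trivially on both $C_V(E)$ and $V/C_V(E)$, i.e.\ quadratic elements with $[V,e,e]=1$ and $\dim[V,e]\le 2$; tracking which classes these are and bounding their number in an elementary abelian group gives the contradiction. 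I would close by remarking that the case $E \not\le \SU_4(2)$ is no different since the $b$-type involutions also have $\dim C_V \ge 3$, so the same bounds apply throughout.
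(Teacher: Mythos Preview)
Your proposal contains a genuine error that undermines the main argument. You claim that ``the codimension of $C_V(e)$ in $V$ is at least $2$ for every involution $e$,'' but this misreads Table~\ref{Table1}: the $b_1$-class involutions have $\dim C_V(u_3)=5$, i.e.\ codimension~$1$ --- they act as transvections on the orthogonal module $V$. (The phrase ``unitary transvections'' in the table's caption refers to the $a_2$ class acting on the \emph{unitary} module, not on $V$.) So your dimension-counting argument, which needs each $C_V(e)$ to be a $4$-space, breaks down as soon as $E$ contains $b_1$-type elements, and nothing you wrote rules those out.

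The paper's proof handles exactly this point. First, $b_3$-elements are excluded since $\dim C_V(E)\ge 4>3=\dim C_V(b_3)$. If $E\not\le Y'=\SU_4(2)$, then the nontrivial coset $E\setminus Y'$ contains four $b_1$-elements, each a transvection on $V$; but at most three hyperplanes of $V$ contain the $4$-space $C_V(E)$, so two of these transvections share a fixed hyperplane, and their product (lying in $Y'$) would also be a transvection on $V$ --- impossible since $\Omega_6^-(2)$ contains none. Thus $E\le Y'$, and now every $e\in E^\#$ is of type $a_2$ or $c_2$ with $\dim C_V(e)=4$, forcing $C_V(e)=C_V(E)$ for all $e$. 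Here the paper does \emph{not} try to rule out that all the $C_V(e)$ coincide (as you propose to do); rather it embraces this, uses the orthogonal form to get $[V,e]=C_V(e)^\perp=[V,E]$ for all $e$, concludes that all seven involutions of $E$ are $Y$-conjugate, and then derives a contradiction from the character table of $\SU_4(2)$ exactly as in the proof of Lemma~\ref{sp62facts}(iv). Your attempt to bound the $2$-part of a subspace stabilizer is both vaguer and aimed in the wrong direction.
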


\begin{proof}
Suppose false and let $E$ be such a subgroup of order $8$. From Table~\ref{Table1} we see  $E$ cannot contain elements of type $b_3$. If $E \not\leq
Y^\prime$, then $E$ contains exactly four elements of type $b_1$. As there are at most three hyperplanes in $V$
containing $C_V(E)$, two of these  elements have to centralize the same hyperplane of $V$. But then their
product, which is an involution in $E \cap Y$, also centralizes this hyperplane. As $\Omega_6^-(2)$ does
not contain transvections, we have $E \leq Y^\prime$. Therefore $|V : C_V(E)| = 4$ and $C_V(E) = C_V(e)$ for
all $e \in E^\#$. As $C_V(e) = [V,e]^\perp$ we also have $[V,e] = [V,E]$ for all $e \in E^\#$ which means all the
involutions in $E$ are conjugate. Now we use the character table of $\SU_4(2)$ as in the proof of
Lemma~\ref{sp62facts}(iv) to obtain a contradiction.
\end{proof}
%

Recall that a faithful $\GF(p)G$-module is an \emph{$F$-module} provided there exists a non-trivial elementary abelian $p$-subgroup $A \le G$ such that $|V:C_V(A)|\le |A|$. The subgroups $A \le G$ with $|V:C_V(A)|\le |A|$ are called \emph{offenders}. 

\begin{lemma} \label{NotF}Suppose that $X \cong \Sp_6(2)$ or $\Aut(\SU_4(2))$ and $W$ is a $\GF(2)X$-module of dimension $14$ which has exactly two composition factors one of dimension 6 and one of dimension $8$. Then $W$ is not an $F$-module.
\end{lemma}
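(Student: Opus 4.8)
The plan is to argue by contradiction: suppose $W$ is an $F$-module and let $A \le X$ be an offender, so $A$ is a non-trivial elementary abelian $2$-subgroup with $|W : C_W(A)| \le |A|$. Let $W_8$ and $W_6$ denote the composition factors of dimension $8$ and $6$; one of them is a submodule and the other a quotient, and by Lemma~\ref{modfacts} these are (up to the choice of which is $V$ and which is $U$) the natural/orthogonal module of dimension $6$ and the spin/natural module of dimension $8$. First I would record the elementary estimate that for any subgroup $A$ acting on $W$ with composition factors $W_8, W_6$, we have $|W : C_W(A)| \ge |W_8 : C_{W_8}(A)| \cdot |W_6 : C_{W_6}(A)|$ — this is the standard submodule/quotient inequality (if $W_6$ is the submodule, $C_W(A)$ maps into $C_{W/W_6}(A) = C_{W_8}(A)$ with kernel contained in $C_{W_6}(A)$, and symmetrically). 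Hence an offender $A$ on $W$ satisfies $|W_8 : C_{W_8}(A)| \cdot |W_6 : C_{W_6}(A)| \le |A|$.

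Next I would invoke the known $F$-module (or failure-of-factorization) structure of $\Sp_6(2)$ and $\Aut(\SU_4(2))$ on their small modules. The key external input is that the $6$-dimensional module $V$ and the $8$-dimensional module $U$ are each $F$-modules in a controlled way, with explicit bounds on $|V : C_V(A)|$ and $|U : C_U(A)|$ relative to $|A|$; in particular one should be able to extract from Table~\ref{Table1} and Lemmas~\ref{sp62natural}, \ref{sp62spin} (together with Lemma~\ref{Noover}) that there is no non-trivial elementary abelian $A$ with $|V : C_V(A)|\cdot |U : C_U(A)| \le |A|$. Concretely: if $|A| = 2$, the minimum of $|V:C_V(a)|$ over involutions is $2^2$ (type $a_2$, where $\dim C_V = 4$) and of $|U:C_U(a)|$ is $2^2$, giving a product $\ge 2^4 > 2 = |A|$. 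For larger $A$ one uses that $A$, being elementary abelian of rank $k$, must have $|V : C_V(A)| \le 2^k$ and $|U : C_U(A)| \le 2^k$ simultaneously with the product $\le 2^k$, which forces one of the two factors to be trivial, i.e. $A$ centralizes $V$ or centralizes $U$; but $X$ acts faithfully on each of $V$ and $U$ by Lemma~\ref{modfacts}, a contradiction. The residual work is to rule out the borderline cases where $|A|$ is small and one factor is as small as $2$ or $4$: for $|A| = 4$ one needs $|V:C_V(A)|\cdot|U:C_U(A)| \le 4$, forcing $|V:C_V(A)| \le 2$ hence (no transvections in $\Omega_6^\pm(2)$, and $V$ has no $1$-dimensional commutator for a four-group — exactly the content of Lemma~\ref{Noover} in the unitary case) a contradiction; for $|A| = 8$ one needs the product $\le 8$, and Lemma~\ref{Noover} together with the analogous $\Sp_6(2)$ fact directly excludes $|V : C_V(A)| \le 4$ with $A$ of order $8$, while if $|V:C_V(A)| = 8$ then $A$ must centralize $U$, impossible.

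The main obstacle I anticipate is the bookkeeping in the small-rank borderline cases — ensuring that the simultaneous constraints on $C_V(A)$ and $C_U(A)$ really are incompatible for every elementary abelian $A$ of order $2, 4, 8$ in both $\Sp_6(2)$ and $\Aut(\SU_4(2))$, since here the crude rank inequality $|V:C_V(A)| \le |A|$ alone is not enough and one must use the precise module structure (Lemmas~\ref{sp62natural}(iii), \ref{sp62spin}(iii) pin down exactly which subgroups are centralized by $O^2$ of a minimal parabolic, and Lemma~\ref{Noover} handles the sharpest unitary case). Once those finitely many cases are dispatched, the contradiction is immediate and the lemma follows. I would also note at the outset that the statement is independent of which composition factor is the submodule, since the inequality above is symmetric in submodule and quotient, so no separate argument for the two extension orders is needed.
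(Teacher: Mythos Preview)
Your basic framework---the product inequality $|W:C_W(A)| \ge |W_8:C_{W_8}(A)|\cdot|W_6:C_{W_6}(A)|$---is sound and does recover the paper's opening observation: combining it with Table~\ref{Table1}, every involution $a$ satisfies $|V:C_V(a)|\cdot|U:C_U(a)| \ge 2^4$ (with equality exactly for type $a_2$), and hence any offender $A$ has $|A| \ge 2^4$. So your treatment of $|A|\le 8$ is fine but also unnecessary---those cases are already excluded by this single-element bound.

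The genuine gap is your handling of $|A|\ge 2^4$. The sentence ``the product $\le 2^k$, which forces one of the two factors to be trivial'' is simply false: for $|A|=2^4$ one could a priori have $|V:C_V(A)|=|U:C_U(A)|=2^2$, and for $|A|=2^5$ or $2^6$ there are even more ways to split the product. You never return to these cases, so the proof is incomplete exactly where the content lies. The paper's argument here is not generic: for $|A|=2^4$ one observes that equality in the single-element bound forces every nontrivial element of $A$ to be of type $a_2$, which is impossible by Lemma~\ref{sp62facts}(iv). This already disposes of $\Aut(\SU_4(2))$ (which has $2$-rank $4$), and for $\Sp_6(2)$ with $|A|\in\{2^5,2^6\}$ the paper does a genuine structural analysis inside $C_X(u_3)$, tracking which involution types must appear and finally forcing $A$ to be the unique elementary abelian $2^6$ whose normalizer is a plane stabilizer, where one computes $|W:C_W(A)|\ge 2^{10}$. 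Your product inequality alone does not see this; you need the class-by-class information in Table~\ref{Table1} together with Lemma~\ref{sp62facts}(iv) to finish.
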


\begin{proof} Suppose that $A \le X$ is an  offender on $W$. Then $|A| \ge |W:C_W(A)|$. From Table~\ref{Table1},  for $a \in A$, we read $|A| \ge |W:C_W(a)| \ge 2^4$. Since the $2$-rank of $X$ is at most $6$, we  also have that $A$ does not contain any involutions of type $b_3$.

Suppose that $|A|=2^4$. Then all the involutions in  $A$ must be of type  $a_2$. This contradicts Lemma
\ref{sp62facts}(iv).  Hence $|A| \ge  2^5$  and $X \cong \Sp_6(2)$  as the $2$-rank of  $\Aut(\SU_4(2))$ is $4$
(see \cite[Proposition 2.12 (x)]{PS1}). We use the notation for involutions from Table~\ref{Table1}. We may as well suppose  $A \le C_X(u_3)$. Then as the $2$-rank of
$\Sp_4(2)$ is $3$, we have  $A\cap O_2(C_X(u_3))\not=1$. Since $|C_U(O_2(C_X(u_3)))|= 2^4$ and
$|C_V(O_2(C_X(u_3)))|=2$ certainly $A \not = O_2(C_X(u_3))$.  Now $O_2(C_X(u_3))$ contains 15 elements from
$u_1^X$, 15 elements from $u_4^X$ and one element from $u_3^X$ and multiplication by $u_3$ maps $u_1^X \cap
O_2(C_G(u_3))$ to $u_4^X \cap O_2(C_X(u_3))$. Thus, if $A$ contains a conjugate of $u_3$, then $A \cap
u_i^X \not=\emptyset $ for $i=1,3,4$. As $|A|= 2^5$, $A$ does not consist purely of elements of elements from
class $u_1^X$ by Lemma~\ref{sp62facts} (iv)  and consequently we must have elements from $u_4^X$ in $X$. It
follows now from Table~\ref{Table1} that $|A| = 2^6$. There is a unique such elementary abelian subgroup in a
Sylow $2$-subgroup of $X$ and its normalizer is a plane stabiliser in the action  of $X$ on $V$. But then
$|W:C_W(A)|\ge 2^{10}$ which is a contradiction.
\end{proof}

\begin{lemma}\label{nonsplitmods} Suppose that $X \cong \Sp_6(2)$, $W$ is a 7-dimensional
$\GF(2)X$-module with  $W/C_W(X)$  the natural  $\Sp_6(2)$-module. If $S \in \syl_2(X)$, then $C_W(S)> C_W(X)$.
\end{lemma}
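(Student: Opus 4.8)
The plan is to argue by contradiction: suppose $W$ is a $7$-dimensional $\GF(2)X$-module with $W/C_W(X)$ the natural module and $C_W(S)=C_W(X)$, so $C_W(X)$ has dimension $1$ and $W$ is a non-split extension of the natural module by the trivial module. First I would recall that such extensions are classified by $H^1(X,V)$ where $V$ is the natural $\Sp_6(2)$-module; it is standard (and follows from the ordinary character table in \cite{Atlas}, since $\Sp_6(2)$ has an ordinary irreducible of degree $7$ whose reduction mod $2$ is exactly this uniserial module) that $\dim H^1(X,V)=1$, so there is essentially one such $W$ to consider. The claim $C_W(S)>C_W(X)$ then says that in this unique non-split module the fixed points of a Sylow $2$-subgroup are strictly larger than the fixed points of the whole group.

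The key computation is to pin down $C_W(S)$ using the structure from Lemma~\ref{sp62natural}. Let $V_1<V_2<V_3$ be the $S$-invariant subspaces of the natural module of dimensions $1,2,3$, so $V$ is uniserial as an $S$-module and $C_V(S)=V_1$. Writing $W_0=C_W(X)$, the preimage $\widehat{V_1}$ of $V_1$ in $W$ is $2$-dimensional and $S$-invariant, and $C_W(S)$ has dimension $1$ or $2$; I must show it is $2$, i.e. that $S$ centralizes $\widehat{V_1}$. For this I would use the parabolic $X_{23}=N_X(V_1)$ from Lemma~\ref{sp62natural}(iii), which acts on the $2$-dimensional module $\widehat{V_1}$; since $O^2(X_{23})$ acts trivially on $V_1$ and on $W_0$, it acts on $\widehat{V_1}$ through a $2$-group, hence (being generated by elements of odd order times a $2$-group, or simply because $\mathrm{GL}_2(2)\cong\Sym(3)$ has a normal $2$-complement only trivially) the relevant point is that $O_2(X_{23})\le S$ and $O_2(X_{23})$ acts trivially on the $2$-dimensional $\widehat{V_1}$ because any action of a $2$-group on $\GF(2)^2$ fixing the submodule $W_0$ pointwise is by transvections, and I claim such transvections cannot all be nontrivial here. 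Concretely: if $S$ did not centralize $\widehat{V_1}$ then $[W,S]\supseteq W_0$ would force, upon restricting to a Levi complement of odd order in $X_{23}$, a nonsplit extension of the trivial $\SL_2(2)$ (or $\Sp_4(2)$) module which does not exist since $|X_{23}/O_2|$ acts on the $1$-dimensional $W_0$ and the $1$-dimensional $V_1$ trivially and $H^1$ of an odd-order-involving group on the trivial $\GF(2)$-module vanishes — so the extension $\widehat{V_1}$ splits over any $2'$-subgroup, and then an averaging/Maschke argument over the normal $2$-complement structure of $X_{23}$ forces it to split as $S$-module, giving $\dim C_W(S)=2$.

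The cleanest route, which I would actually write, avoids cohomology of $X_{23}$ and instead argues directly: pick $\tau_3\in X$ with $\dim[V,\tau_3]=6$, i.e. $\tau_3$ acts fixed-point-freely on $V$; then $C_W(\tau_3)=W_0$ has dimension $1$ and $[W,\tau_3]$ is a $6$-dimensional $\tau_3$-invariant complement to $W_0$ in $W$, isomorphic as $\langle\tau_3\rangle$-module to $V$. Choosing $S$ to contain $\tau_3$ is impossible since $\tau_3$ has order $3$; instead I take $S\le N_X(\langle\tau_3\rangle)$ (possible as $N_X(\langle\tau_3\rangle)$ has even order $\ge$ a Sylow $2$). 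Then $[W,\tau_3]$ is $N_X(\langle\tau_3\rangle)$-invariant, $W=[W,\tau_3]\oplus W_0$ as $S$-modules, and as $S$-module $[W,\tau_3]\cong V/V_1$ (a $5$-dimensional section — here I need to match dimensions carefully) — this is where the bookkeeping must be done right, and it is the step I expect to be the main obstacle: correctly identifying $[W,\tau_3]$ as an $S$-module and reading off that its $S$-fixed points are at least $1$-dimensional, so that $\dim C_W(S)\ge 1+1=2>1=\dim C_W(X)$. Once the $S$-module structure of $[W,\tau_3]$ is identified with an appropriate section of the natural module via Lemma~\ref{sp62natural}(ii), the uniseriality gives a nonzero $S$-fixed vector there, and adding $W_0$ completes the proof.

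I expect the main difficulty to be purely the module-theoretic bookkeeping: ensuring the decomposition $W=[W,\tau_3]\oplus C_W(\tau_3)$ is $S$-invariant (which requires $S$ to normalize $\langle\tau_3\rangle$ rather than merely centralize it — so one must check a Sylow $2$-subgroup of $N_X(\langle\tau_3\rangle)$ is Sylow in $X$, which follows because $|N_X(\langle\tau_3\rangle)|_2$ equals $|X|_2$ since $\tau_3$ has a $3'$-part of the centralizer index... actually $|X|_3=3^4$ and $\langle\tau_3\rangle$ is not central, so this needs the precise claim that $N_X(\langle\tau_3\rangle)$ contains a full Sylow $2$-subgroup, which one verifies from $C_X(\tau_3)\cong\SU_3(2)\le\GF(4)$-structure having order $9\cdot|\SL_2(3)|$, odd-indexed appropriately). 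If that Sylow normalizer claim fails, the fallback is the $H^1$ argument of the previous paragraph, localizing at a single minimal parabolic $X_i$ with $X_i/O_2(X_i)\cong\SL_2(2)$ acting on the relevant $2$- or $3$-dimensional subquotient of $W$ and invoking that $\SL_2(2)$ has no nonsplit extension of the trivial module by the trivial module, forcing $C_W(S)\supsetneq C_W(X)$.
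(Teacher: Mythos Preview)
Your proposal does not go through; each of the three routes you sketch has a genuine gap.

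The ``cleanest route'' via $\tau_3$ fails outright: $N_X(\langle\tau_3\rangle)$ does not contain a Sylow $2$-subgroup of $X$. Indeed $C_X(\tau_3)\cong\SU_3(2)$ has $2$-part $2^3$, so $|N_X(\langle\tau_3\rangle)|_2\le 2^4$, whereas $|S|=2^9$. You anticipated this might fail, but your stated fallback is also wrong: $\SL_2(2)\cong\Sym(3)$ \emph{does} admit a nonsplit extension of the trivial $\GF(2)$-module by the trivial module, since $H^1(\Sym(3),\GF(2))=\Hom(\Sym(3),\GF(2))\cong\GF(2)$. So the final sentence of your proposal is simply false and cannot rescue the argument.

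The $X_{23}$ argument in your second paragraph is the right locale but stops short. You correctly observe that $X_{23}/C_{X_{23}}(\widehat{V_1})$ is a $2$-group (indeed of order at most $2$), hence $O^2(X_{23})$ centralizes $\widehat{V_1}$. But $S\not\le O^2(X_{23})$, and your ``averaging/Maschke'' argument cannot apply: $X_{23}$ has no normal $2$-complement, and a $2'$-subgroup splitting says nothing about whether $S$ centralizes $\widehat{V_1}$.

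The paper's proof is different and more direct. It shows that a symplectic transvection $t$ on $V$ is also a transvection on $W$: writing $W/C_W(X)=A/C_W(X)\perp B/C_W(X)$ as a $2$-space plus a $4$-space with $t$ in the $\Sp_2$-factor, one has $[B,t]\cong B/C_B(t)$ as $\Sp_4(2)$-modules, forcing $[B,t]=0$, so $[W,t]=[A,t]$ is $1$-dimensional. Then one argues inside $P=C_X(t)$ (which contains $S$): the perfect subgroup $P'$ centralizes the $2$-space $L=[W,t]+C_W(X)$, and for a transvection $s\in P\setminus P'$ one has $[L,s]\le C_W(X)$, but $[L,s]=C_W(X)$ would force $[W,s]=C_W(X)$, impossible since $[W,s]$ is the axis of a transvection on the natural quotient. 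Hence $[L,s]=0$, so $P$ and in particular $S$ centralize $L$, giving $C_W(S)\ge L>C_W(X)$. The key idea you are missing is to identify the transvections on $W$ and exploit that their commutator subspaces cannot all equal $C_W(X)$.
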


\begin{proof}  Consider the subgroup $K= K_1\times K_2$ of $X$
which preserves the decomposition of $W/C_W(X)$ in to a perpendicular sum of a non-degenerate $2$-space $A/C_W(X)$  and
a non-degenerate $4$-space $B/C_W(X)$ with $K_1\cong \Sp_2(2)$ and $K_2 \cong \Sp_4(2)$.   Let $t$ be an
involution in $K_1$. Since $\dim A=3$, we have $\dim [A,t]=1$. Furthermore  $B/C_B(t)\cong [B,t]$ as
$K_2$-modules and so we must have $[B,t]=0$. Thus $[W,t] = [A,t]+[B,t]= [A,t]$ has dimension 1 and so $t$ is a
transvection on $W$.  Let $P$ be
the stabiliser in $X$ of the $2$-space  $L=[A,t]+C_W(X)$. Then $P= C_X(t)$ and contains $K_2$ and a Sylow $2$-subgroup $S$ of
$X$. Furthermore,  $P' = O_2(P)K_2'$ has index $2$ in $P$. Since $P'$ is perfect it centralizes $L$. Let
$s \in P\setminus P'$ be a transvection on $V$. Then $s$ is $X$-conjugate to $t$, and so $s$ is a transvection on $W$.   Then $[L,s]\le C_W(X)$ and if $[L,s]= C_W(X)$ then certainly $[W,s]=
C_W(X)$ which is nonsense. Thus $[L,s]=0$ and we conclude that $P$ and hence $S$ centralizes $L$. This proves the
claim.
\end{proof}

\begin{theorem}[Prince] \label{PrinceThm}
Suppose that $Y$ is isomorphic to the centralizer of a $3$-central element of order $3$ in $\PSp_4(3)$ and that $X$
is a finite group with a non-trivial element $d$ such that $C_X(d)\cong Y$. Let $P \in \Syl_3(C_X(d))$ and $E$ be
the elementary abelian subgroup of $P$ of order $27$. If $E$ does not normalize any non-trivial
$3^\prime$-subgroup of $X$ and $d$ is $X$-conjugate to its inverse, then either
\begin{enumerate}
\item
$|X:C_X(d)| =2$;
\item $X$ is isomorphic to $\Aut(\SU_4(2))$; or
\item $X$ is isomorphic to $\Sp_6(2)$.
\end{enumerate}
\end{theorem}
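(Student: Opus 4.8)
The plan is to follow Prince's original approach: identify the 3-local structure forced by the hypothesis, then use that structure together with a transfer/fusion analysis to pin down $X$. First I would record the structure of $Y$: if $d$ is a 3-central element of order 3 in $\PSp_4(3)$, then $C_{\PSp_4(3)}(d)$ has shape $3^{1+2}_+{:}\SL_2(3)$, so $F^*(Y)=P_0$ is extraspecial of order $27$ and exponent $3$, $Y/P_0\cong\SL_2(3)$, and the elementary abelian subgroup $E$ of order $27$ in a Sylow 3-subgroup $P$ of $C_X(d)$ is $P=P_0\langle e\rangle$ for a suitable $e$, with $Z(P_0)=\langle d\rangle$. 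I would fix $P\in\Syl_3(C_X(d))$ and verify $P\in\Syl_3(X)$ (since $C_X(d)$ has index prime to $3$ — if $3\mid |X:C_X(d)|$ one gets a larger 3-group normalizing $\langle d\rangle$, contradicting that $\langle d\rangle$ is weakly closed issues, or more simply one argues directly). The key leverage is the hypothesis that $E$ normalizes no non-trivial $3'$-subgroup of $X$: this is exactly a non-degeneracy (``no signalizer'') condition that will force $N_X(E)$, and more importantly the various centralizers $C_X(x)$ for $x\in E^\#$, to be of characteristic $3$ or close to it.

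The main steps I would carry out are: (1) Analyze $N_X(\langle d\rangle)$; since $C_X(d)=Y$ is known and $d\sim_X d^{-1}$, we get $N_X(\langle d\rangle)/C_X(d)$ of order $2$, so $|N_X(\langle d\rangle)|=2|Y|$. (2) Determine the fusion of the subgroups of order $3$ in $P$ and in $E$ using Alperin's fusion theorem, controlled by $N_X(P)$ and a bounded family of 3-local subgroups; the condition on $E$ forces these normalizers to be 3-constrained. (3) Use the structure of $C_X(x)$ for non-$3$-central $x\in E$: either $C_X(x)$ is again 3-constrained (feeding into a pushing-up / weak-BN-pair recognition) or one identifies a component. (4) Run a transfer argument: compute $X/O^{2}(X)$ or use the focal subgroup theorem with the involution $t$ realizing $d\sim d^{-1}$; case (i), $|X:C_X(d)|=2$, is precisely the outcome where $d$ is not real in $O^{2}(X)$ or $X$ has a normal subgroup of index $2$. (5) In the remaining case $X=O^2(X)$ with $E$ having no signalizers, recognize $X$: the amalgam $\{N_X(\langle d\rangle), \text{ a second 3-local}\}$ generates a group with a weak BN-pair of rank 2, or one quotes the known characterization of $\Sp_6(2)$ and $\Aut(\SU_4(2))$ by the centralizer of a 3-central 3-element (these two groups each contain $\PSp_4(3)\cong\Omega_5(3)$-related 3-local structure, and indeed $\Omega_6^-(2)\cong\SU_4(2)$, $\Sp_6(2)$ have the required $C(d)$). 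The two conclusions (ii) and (iii) are separated by whether the relevant involution centralizer is $2\times\Sp_4(2)$-like or $2^5.\Sp_4(2)$-like, i.e.\ by $|C_X(d)|$ versus behaviour of involutions — but since the theorem allows both, I need only show no other possibility survives.

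The hard part will be step (5): showing that the ``no $3'$-signalizer for $E$'' hypothesis, together with $C_X(d)\cong Y$ and reality of $d$, is rigid enough to exclude all groups other than the index-$2$ overgroup of $Y$, $\Aut(\SU_4(2))$, and $\Sp_6(2)$ — in particular ruling out larger simple groups (such as $G_2(3)$, $\PSp_6(3)$, or sporadic groups) that also contain elements with a similar 3-local structure. This is where one must invoke either a delicate transfer/fusion computation in $P$ (of order $3^4$) combined with Holt- or Smith-type strong-embedding arguments, or cite the classification-adjacent results on groups of characteristic $3$ with a large extraspecial (or nearly extraspecial) $3$-subgroup; Prince's original paper handled this by an intricate but essentially elementary local analysis, and I would reproduce that line, treating the identification of the rank-2 amalgam as the crux. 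Everything else — the structure of $Y$, the fusion of $3$-elements, the reality-forces-index-$2$-or-perfect dichotomy — is routine given the hypotheses and the preliminary lemmas already available.
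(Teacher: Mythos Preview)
The paper does not prove this theorem at all: its entire proof is the single line ``See \cite[Theorem 2]{prince1}.'' Prince's Theorem is quoted as a black-box preliminary result from the literature, and the authors make no attempt to reproduce Prince's argument.

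Your proposal, by contrast, is a sketch of how one might redo Prince's 1977 proof from scratch. As a high-level outline of the genre of argument (pin down the Sylow $3$-subgroup of order $3^4$, analyse fusion via Alperin, use the no-signalizer hypothesis on $E$ to force $3$-constrained locals, separate the index-$2$ case by transfer, then identify the remaining group) it is broadly in the right spirit, but it is not a proof: step~(5), which you yourself flag as the hard part, is where all the content lies, and phrases like ``weakly closed issues, or more simply one argues directly'' and ``reproduce that line'' are placeholders rather than arguments. In particular, ruling out the other candidates you list ($\mathrm G_2(3)$, $\PSp_6(3)$, sporadics) requires a genuine case analysis that you have not supplied, and the amalgam/weak-BN-pair route you gesture at is anachronistic relative to Prince's actual 1977 methods. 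For the purposes of this paper none of that matters: the correct response is simply to cite Prince.
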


\begin{proof} See \cite[Theorem 2]{prince1}. \end{proof}

\begin{lemma} \label{cen3psp43}Suppose that $X$ is a group of shape $3^{1+2}_+.\SL_2(3)$,  $O_2(X)=1$
 and
a Sylow $3$-subgroup of $X$ contains an elementary abelian subgroup of order $3^3$. Then $X$ is isomorphic to the
centralizer of a non-trivial $3$-central element in $\PSp_4(3)$.
\end{lemma}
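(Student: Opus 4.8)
The plan is to pin down $X$ by combining the known structure of $3$-local subgroups of $\PSp_4(3)$ with an extension-theoretic uniqueness argument. First I would recall the target: if $d$ is a non-trivial $3$-central element of $\PSp_4(3)$ and $C = C_{\PSp_4(3)}(d)$, then $C$ has shape $3^{1+2}_+.\SL_2(3)$, with $O_2(C) = 1$, and a Sylow $3$-subgroup of $C$ (equivalently of $\PSp_4(3)$, since $|\PSp_4(3)|_3 = 3^4$) does contain an elementary abelian subgroup of order $3^3$ — this last point because $\PSp_4(3) \cong \Omega_5(3) \cong \PSU_4(2)$ has $3$-rank $3$. So the hypotheses on $X$ are exactly the invariants of $C$, and the content of the lemma is that these invariants determine the isomorphism type.

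Next I would analyze the structure forced by the hypotheses. Write $Q = O_3(X)$, so $Q \cong 3^{1+2}_+$ and $X/Q \cong \SL_2(3)$. Since $\SL_2(3)$ acts on $Q/Z(Q) \cong 3^2$, and the only faithful $2$-dimensional $\GF(3)$-representation of $\SL_2(3)$ is the natural one, $X/Q$ acts naturally on $Q/Z(Q)$; in particular $O^3(X/Q) \cong \Q_8$ acts fixed-point-freely on $Q/Z(Q)$ and centralizes $Z(Q)$ (as $\SL_2(3)$ has no nontrivial action on a group of order $3$ through its quotient of order $3$, and $\Q_8$ is perfect-by-nothing — more precisely $\Q_8$ has no nontrivial $\GF(3)$-character of degree $1$). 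Now the hypothesis that a Sylow $3$-subgroup of $X$ contains an elementary abelian subgroup $A$ of order $3^3$ is the key extra constraint: since $Q$ is extraspecial of exponent $3$, $Q$ itself has $3$-rank $2$, so $A \not\le Q$, hence $A$ maps onto the Sylow $3$-subgroup of $X/Q \cong \SL_2(3)$, which has order $3$. Thus there is an element $x$ of order $3$ in $X \setminus Q$ with $AQ/Q = \langle xQ\rangle$ and $[A \cap Q, x] = 1$; since $x$ acts on $Q/Z(Q)$ as an order-$3$ element of $\SL_2(3)$ (a transvection), $C_{Q/Z(Q)}(x)$ has order $3$, so $A \cap Q$ has order exactly $3$ and $(A\cap Q)Z(Q)/Z(Q) = C_{Q/Z(Q)}(x)$. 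This says precisely that the extension $1 \to C_{Q}(x) \to \langle C_Q(x), x\rangle \to \langle xQ\rangle \to 1$ splits with elementary abelian middle, i.e. $x$ may be chosen to act trivially on a complement-type subgroup — this is the information that rules out the "wrong" extension of $\SL_2(3)$ by $3^{1+2}_+$.

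Finally I would assemble the identification. Having fixed the action of $X/Q$ on $Q$ and the splitting datum above, the isomorphism type of $X$ is determined by a cohomology class in $H^2(\SL_2(3), Q)$ (or, more elementarily, by the structure of the preimage in $X$ of a Sylow $3$-subgroup of $X/Q$ together with that of the preimage of $\Q_8$). The $\Q_8$ acts fixed-point-freely on $Q/Z(Q)$, so $C_X(\Q_8)$-type arguments and coprime action give that the preimage of $\Q_8$ in $X$ is $Q \rtimes \Q_8$ or a nonsplit analogue; in either case the $3$-rank condition forces the Sylow $3$-subgroup of $X$ to have the same isomorphism type as that of $\PSp_4(3)$, and then, since $O_2(X) = 1$ forces $X$ to act faithfully, one checks there is a unique group with these local data — which must therefore be $C_{\PSp_4(3)}(d)$ itself. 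Alternatively, and perhaps more cleanly, I would invoke that a group $X$ of shape $3^{1+2}_+.\SL_2(3)$ with $O_2(X)=1$ is, up to isomorphism, determined by whether the preimage of a Sylow $3$-subgroup of $\SL_2(3)$ is elementary abelian-by-$Z(Q)$ or of exponent $9$; the hypothesis picks out the former, which is exactly the $\PSp_4(3)$-centralizer. The main obstacle I anticipate is the last step: showing that the two pieces of local data (action of $\SL_2(3)$ on $Q$, plus the existence of $A \cong 3^3$) really do determine $X$ up to isomorphism, i.e. that the relevant second cohomology group is small enough that no spurious extensions survive the $O_2(X)=1$ and rank hypotheses. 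This is a finite computation in group cohomology (or a direct Sylow-theoretic argument), but it is where the real work lies; everything before it is bookkeeping about $3^{1+2}_+.\SL_2(3)$.
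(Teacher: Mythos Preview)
The paper does not prove this lemma at all: its proof reads ``See \cite[Lemma~6]{Parker1}.'' So there is no internal argument to compare against; you are supplying a proof where the authors defer to the literature.

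Your outline has the right architecture --- use $O_2(X)=1$ to force $C_X(Q)=Z(Q)$ and hence that $X/Q\cong\SL_2(3)$ acts naturally on $Q/Z(Q)$, then use the elementary abelian $3^3$ to pin down the extension --- but there is a computational slip and a genuine gap. The slip: you assert $|A\cap Q|=3$, but in fact $|A|=3^3$ and $|AQ/Q|\le 3$ force $|A\cap Q|\ge 9$, while $A\cap Q$ abelian in the extraspecial group $Q$ gives $|A\cap Q|\le 9$; so $A\cap Q$ is a maximal abelian subgroup of $Q$ of order $9$, and the real content of the hypothesis is that there exists $x\in S\setminus Q$ of order $3$ with $C_Q(x)=A\cap Q$ of order $9$ (not $3$). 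Relatedly, the remark that ``$\Q_8$ is perfect-by-nothing'' is garbled: $\Q_8$ has abelianisation $2\times 2$. The correct reason $\SL_2(3)$ centralises $Z(Q)$ is simply that $\SL_2(3)$ has no subgroup of index $2$.

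The gap is the final step. You correctly identify that the heart of the matter is showing that the data (faithful natural action of $\SL_2(3)$ on $Q/Z(Q)$, together with the existence of an order-$3$ element outside $Q$ with large centraliser in $Q$) determine $X$ up to isomorphism, and you say this ``is where the real work lies'' --- but then you do not do it. One must actually show that (a) the Sylow $3$-subgroup $S$ of $X$ is determined up to isomorphism by the rank-$3$ condition, and (b) the extension of $S$ by the $\Q_8$ is then forced (given $O_2(X)=1$). Neither is automatic, and without them what you have is a plan rather than a proof. This is precisely the computation carried out in the cited reference.
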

\begin{proof} See \cite[Lemma~6]{Parker1}.\end{proof}

\begin{lemma}\label{quadratic form}
Suppose that $F$ is a field, $V$ is an $n$-dimensional vector space over $F$ and  $G= \GL(V)$. Assume that $q$ is quadratic form  of Witt index at least $1$ and with non-degenerate associated bilinear form $f$, where, for $v,w \in V$, $f(v,w) = q(v+w)-q(v)- q(w)$.
Let $\mathcal S$ be the set of singular 1-dimensional subspaces of $V$ with respect to $q$. Then the stabiliser in $G$ of $\mathcal S$ preserves $q$ up to similarity.
\end{lemma}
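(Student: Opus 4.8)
We may assume $\dim V\ge 2$, since otherwise non-degeneracy of $f$ together with Witt index at least $1$ is impossible. Fix $g$ in the stabiliser of $\mathcal S$ and set $q'=q\circ g$; its polar form is $f'(u,v)=f(gu,gv)$, which is again non-degenerate, $q'$ again has Witt index at least $1$, and $q,q'$ have the same set $\mathcal S$ of singular $1$-spaces (so in particular $q(v)=0\iff q'(v)=0$ for all $v$). The plan is to prove that two such forms must be proportional, i.e. $q'=c\,q$ for some $c\in F^\times$; ranging over all $g$ in the stabiliser this is exactly the statement that the stabiliser preserves $q$ up to similarity.

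First I would record that the singular vectors span $V$, and indeed build a convenient basis of them. Since $f$ is non-degenerate and $q$ has Witt index at least $1$ there is a hyperbolic pair $x,y$ (both singular, $f(x,y)=1$); put $W=\langle x,y\rangle^{\perp_f}$, so $V=\langle x,y\rangle\oplus W$. A short calculation shows that for every $w\in W$ the vector $x+w-q(w)y$ is singular and is non-orthogonal to $y$. Choosing a basis $w_3,\dots,w_n$ of $W$ and setting $v_1=x$, $v_2=y$, $v_i=x+w_i-q(w_i)y$ for $i\ge 3$ yields a basis $v_1,\dots,v_n$ of $V$ made of singular vectors whose ``non-orthogonality graph'' (with $i\sim j$ iff $f(v_i,v_j)\ne 0$) is connected, since every $v_i$ is joined to $v_2=y$.

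The core step recovers $f$, up to a scalar, from $\mathcal S$ alone. For a singular point $\langle x\rangle$ and a line $\ell=\langle x,v\rangle$ through it one has $q(\alpha x+\beta v)=\beta\big(\beta q(v)+\alpha f(x,v)\big)$, so $\ell$ meets $\mathcal S$ in exactly two points when $f(x,v)\ne 0$, and in one point or in all of $\ell$ when $f(x,v)=0$. Hence $x^{\perp_f}$ is the union of the lines through $\langle x\rangle$ meeting $\mathcal S$ in a number of points different from two, so $x^{\perp_f}$ depends only on $\mathcal S$. Applying the same to $q'$ gives $x^{\perp_{f'}}=x^{\perp_f}$ for every singular $x$, hence $f'(x,-)=c_x f(x,-)$ for some $c_x\in F^\times$; feeding in the connected-graph basis $v_1,\dots,v_n$ and using that $f$ and $f'$ are symmetric forces all the $c_{v_i}$ to equal a common scalar $c$, so $f'=c\,f$.

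To finish, set $h=q'-c\,q$. From $f'=c\,f$ one gets $h(u+v)=h(u)+h(v)$, and plainly $h(\lambda v)=\lambda^2 h(v)$; moreover $h(v_i)=q'(v_i)-c\,q(v_i)=0$ as each $v_i$ is singular for both forms. Consequently $h\big(\sum a_i v_i\big)=\sum a_i^2 h(v_i)=0$, so $h\equiv 0$ and $q'=c\,q$, as wanted. The one genuinely delicate point is characteristic $2$: there $q$ is not determined by $f$ and $h$ need not be $F$-linear (only additive and $\lambda\mapsto\lambda^2$-semilinear), so one really must exploit both that $h$ vanishes on the singular vectors \emph{and} that these span $V$; the argument was arranged precisely so that this, together with the tangent-hyperplane description of $x^{\perp_f}$, is all that is needed.
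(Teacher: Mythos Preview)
Your argument is correct and entirely self-contained. The paper itself does not prove this lemma but simply cites \cite[Lemma~2.10]{ParkerRowley}, so there is no in-paper argument to compare against; your proof supplies what the citation stands in for. The two nontrivial ideas you use---recovering $x^{\perp_f}$ for singular $x$ purely from the incidence of $\mathcal S$ with lines through $\langle x\rangle$, and then propagating the proportionality constants $c_x$ along a connected basis of singular vectors---are exactly the standard route, and your closing remark about characteristic~$2$ (that one must kill $h=q'-cq$ on a singular spanning set rather than deduce $q$ from $f$) is well taken.
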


\begin{proof} See \cite[Lemma 2.10]{ParkerRowley}.
\end{proof}

\begin{lemma}\label{GO4} Suppose that $p$ is an odd prime, $X = \GL_4(p)$ and $V$ is the natural $\GF(p)G$-module. Let $A =\langle a, b\rangle\le X$ be elementary abelian of order $p^2$ and assume that $[V,a] = C_V(b)$ and $[V,b]= C_V(a)$ are distinct and of dimension $2$.
Let $v \in V\setminus [V,A]$. Then  $A$ leaves invariant a non-degenerate quadratic form with respect to which  $v$ is a singular vector. In particular, $X$ contains exactly two conjugacy classes of subgroups such as $A$. One is conjugate to a Sylow $p$-subgroup of $\GO_4^+(p)$ and the other to a Sylow $p$-subgroup of $\GO_4^-(p)$.
\end{lemma}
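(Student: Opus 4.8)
The plan is to build the quadratic form explicitly from the $A$-module decomposition of $V$ and then use Lemma~\ref{quadratic form} together with the Sylow theory of $\GO_4^\pm(p)$ to get the counting statement. First I would record the module structure: since $A = \langle a,b\rangle$ is elementary abelian of order $p^2$ with $[V,a] = C_V(b)$ and $[V,b] = C_V(a)$ two distinct $2$-dimensional subspaces, and since $a,b$ each have a single nontrivial Jordan block type on their respective commutators, one checks that $V = [V,a] \oplus [V,b]$, that $a$ acts trivially on $[V,b]$ and as a single transvection-like unipotent piece on $[V,a]$ — more precisely $[V,a]$ is a faithful uniserial $\langle a\rangle$-module of dimension $2$, hence $[[V,a],a] = C_{[V,a]}(a) =: L_a$ is a line, and symmetrically $[[V,b],b] = C_{[V,b]}(b) =: L_b$. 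Thus $[V,A] = L_a \oplus L_b$ is $2$-dimensional and $V/[V,A]$ is $2$-dimensional, with $A$ acting trivially on both; pick $v \in V \setminus [V,A]$ so that $v, bv$ (say) together with $L_a, L_b$ give a convenient basis.

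Next I would write down the form. Choosing a generator $w_a$ of $L_a$ and $w_b$ of $L_b$, and using $v$ and a second coset representative, one sets up coordinates $(x_1,x_2,x_3,x_4)$ in which $a$ and $b$ act by explicit unipotent matrices (each fixing three coordinate directions and shearing one), and then solves the linear conditions ``$q$ is $a$-invariant and $b$-invariant'' for the coefficients of a general quadratic form $q$ in four variables. The pairing $[V,a] = C_V(b)$, $[V,b] = C_V(a)$ forces the radical of the bilinear form to be trivial and forces $L_a, L_b$ to be totally singular and paired nondegenerately with the ``$v$''-coordinates; a short computation produces a nondegenerate $q$ with $q(v) \ne 0$ normalizable to $q(v)=1$, i.e. $v$ singular is false — rather, rescaling $v$ inside its coset by elements of $[V,A]$ (which changes $q(v)$ by values of $q$ on $[V,A]$-translates, and these run over a coset that one shows contains the needed value) makes $v$ singular. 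This is the one genuinely computational point and the main obstacle: checking that the space of $A$-invariant quadratic forms is exactly $2$-dimensional, that the generic member is nondegenerate, and that one can adjust the representative $v$ within $v + [V,A]$ so that $q(v) = 0$.

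With the form in hand, $A$ is a $p$-subgroup of $\GO(V,q) \cong \GO_4^{\varepsilon}(p)$ for $\varepsilon \in \{+,-\}$; since $|\GO_4^{\pm}(p)|_p = p^2$, $A$ is a full Sylow $p$-subgroup of $\GO_4^\varepsilon(p)$. Conversely, any subgroup of $\GL_4(p)$ of the stated form (elementary abelian of order $p^2$ with $[V,a]=C_V(b)$, $[V,b]=C_V(a)$ distinct and $2$-dimensional) sits inside $\GO_4^\varepsilon(p)$ for one of the two $\varepsilon$ by the construction just given, so there are at most two conjugacy classes of such $A$; and there are at least two because a Sylow $p$-subgroup of $\GO_4^+(p)$ and one of $\GO_4^-(p)$ are both of this form (a direct check, the $+$ case being the stabilizer of a pair of complementary totally singular $2$-spaces, the $-$ case the analogous object over $\GF(p^2)$) yet cannot be $\GL_4(p)$-conjugate since $\GO_4^+(p)$ and $\GO_4^-(p)$ are not conjugate in $\GL_4(p)$ (their orders, hence their normalizers' indices, differ). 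The ``singular $v$'' assertion is exactly what Lemma~\ref{quadratic form} needs: the stabilizer in $\GL_4(p)$ of the singular points of $q$ preserves $q$ up to similarity, which pins down which orthogonal group contains $A$ and completes the dichotomy.
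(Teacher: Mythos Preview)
The paper does not prove this lemma; it simply cites \cite[Lemma 2.11]{ParkerRowley}.  So there is no approach to compare against, but your proposal rests on a mistaken picture of the $A$-module structure of $V$, and the error contaminates everything that follows.

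You assert that $V=[V,a]\oplus [V,b]$ and that $[V,A]=L_a\oplus L_b$ is $2$-dimensional.  Neither holds.  Since $[V,a]=C_V(b)$ and $[V,b]=C_V(a)$, the intersection $[V,a]\cap [V,b]$ equals $C_V(a)\cap C_V(b)=C_V(A)$; and because $V\setminus [V,A]\ne\emptyset$ forces $[V,A]=[V,a]+[V,b]$ to have dimension $3$, this intersection is $1$-dimensional, not zero.  Your lines $L_a=[V,a,a]$ and $L_b=[V,b,b]$ both coincide with $C_V(A)$: they are the \emph{same} line.  (Indeed, in the application in Lemma~\ref{NJ} the r\^oles of $[V,a]$ and $[V,b]$ are played by $A_1$ and $A_2$, which meet in $Z$.)  Consequently $a$ cannot have the Jordan type $J_2\oplus J_1\oplus J_1$ implicit in your ``trivial on $[V,b]$, one $J_2$ on a complementary $[V,a]$'' picture---that would make $\dim C_V(a)=3$, contradicting $C_V(a)=[V,b]$.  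In fact $a$ (and $b$) have Jordan type $J_3\oplus J_1$: the alternative $J_2\oplus J_2$ would give $[V,a]=C_V(a)$, forcing $[V,a]=[V,b]$.  Your coordinate set-up and the matrix computation you sketch are therefore based on the wrong model.

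There is a second, independent problem in your treatment of the singularity of $v$.  The lemma fixes $v\in V\setminus [V,A]$ and asks for an $A$-invariant nondegenerate form $q$ with $q(v)=0$.  You instead fix $q$ and propose to replace $v$ by a translate in $v+[V,A]$; that changes $v$, not $q$, and translations are not linear, so this does not transport $q$ to another $A$-invariant quadratic form.  The correct manoeuvre is to vary $q$ over the space of $A$-invariant quadratic forms: evaluation at $v$ is linear in $q$, so its kernel is a hyperplane of such forms, and the work is to show that this hyperplane contains a nondegenerate member.  Your dimension count for that space would in any case have to be redone once the module structure is corrected.
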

\begin{proof} See \cite[Lemma 2.11]{ParkerRowley}.
\end{proof}

The $4$-dimensional orthogonal module of $+$-type will play a prominent role in the proof of our main theorem. We next introduce some notation which will be used in the proof.

\begin{notation}\label{o4} Let $V$ be a  $4$-dimensional non-degenerate orthogonal space of $+$-type over $\GF(3)$. Assume that $X$ is a non-zero subspace of $V$.
Then $\mathcal S(X)$ is the set of singular 1-dimensional subspaces in $X$, $\mathcal P(X)$ the set of 1-dimensional subspaces of $+$-type in $X$ and $\mathcal M(X)$ the set of 1-dimensional subspaces of $-$-type in $X$.
\end{notation}

\begin{lemma}\label{hyper}  Let $X$ be a $3$-dimensional subspace  in a non-degenerate 4-dimensional orthogonal space of $+$-type over $\GF(3)$.  Then $\mathcal S(X) \not=\emptyset$.
\end{lemma}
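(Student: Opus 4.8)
The plan is to argue by a dimension count over $\GF(3)$. Let $V$ be the 4-dimensional non-degenerate orthogonal space of $+$-type and let $X \le V$ be a 3-dimensional subspace. Write $Q$ for the restriction of the quadratic form to $X$ and $f$ for the associated bilinear form. First I would observe that $X$ contains a non-zero singular vector precisely when $Q$ represents zero non-trivially on $X$, so the claim is that the form $Q|_X$ is isotropic.

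First, consider the radical $X^\perp \cap X$ of $f|_X$. Since $V$ is non-degenerate, $\dim X^\perp = 1$, so $\operatorname{rad}(f|_X)$ is either $0$ or $1$-dimensional. If $\operatorname{rad}(f|_X) \ne 0$, pick $0 \ne v \in X \cap X^\perp$. If $Q(v) = 0$ we are done immediately; if $Q(v) \ne 0$, then $\langle v\rangle$ is non-degenerate, $V = \langle v\rangle \perp \langle v\rangle^\perp$, and $\langle v\rangle^\perp$ is a non-degenerate 3-space of some type, but in any case a non-degenerate 3-space over $\GF(3)$ is always isotropic (a non-degenerate ternary quadratic form over a finite field is universal, hence isotropic), so $\langle v\rangle^\perp \cap X$, which is a 2-space of $X$ meeting $\langle v\rangle^\perp$, contains a singular vector — more carefully, $\langle v\rangle^\perp \cap X$ has dimension $\ge 2$, and inside the 3-space $\langle v\rangle^\perp$ every 2-space still contains a singular vector since the form there is isotropic of rank $3$. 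So I would just quote that non-degenerate orthogonal spaces of dimension $\ge 3$ over a finite field are isotropic, which reduces that sub-case.

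Now suppose $f|_X$ is non-degenerate, so $X$ is a non-degenerate orthogonal 3-space. Again a non-degenerate ternary quadratic form over $\GF(3)$ is isotropic (equivalently, $\GO_3(3)$-spaces have Witt index $1$), so $\mathcal S(X) \ne \emptyset$. Thus in every case $X$ contains a singular $1$-space. The one remaining point is the possibility that $f|_X$ is degenerate with $Q$ non-degenerate on the radical line and the type of $\langle v\rangle^\perp$ needs no discussion — I would streamline by noting that over $\GF(3)$ every orthogonal space of dimension $\ge 3$, degenerate or not, with a non-degenerate part of dimension $\ge 3$, is isotropic, and a $3$-space $X$ with a $1$-dimensional radical has a non-degenerate part of dimension $2$; but if that $2$-space is of $-$-type (anisotropic) and the radical vector $v$ has $Q(v)\neq 0$ then $X$ could a priori be anisotropic. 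This is the one case to rule out, and here is where the hypothesis that the ambient $4$-space $V$ is of $+$-type enters: an anisotropic orthogonal space over $\GF(3)$ has dimension at most $2$, so a $3$-space $X \le V$ cannot be anisotropic regardless of $V$'s type — hence $\mathcal S(X) \ne \emptyset$.

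The main obstacle is simply keeping the finite-field quadratic form facts straight: the key input is the standard fact (Chevalley–Warning, or direct enumeration) that over $\GF(3)$ an anisotropic quadratic form has dimension at most $2$, equivalently every orthogonal space of dimension $\ge 3$ is isotropic. Given that, the lemma is immediate since $X$ is $3$-dimensional; the case analysis on the radical above is not even necessary and I would collapse the whole proof to: $X$ is a $3$-dimensional quadratic space over $\GF(3)$, hence isotropic, so $\mathcal S(X) \ne \emptyset$.
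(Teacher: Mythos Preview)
Your final collapsed argument is correct and is exactly the content of the reference the paper cites (Aschbacher, \emph{Finite Group Theory}, 21.3): over a finite field an anisotropic quadratic space has dimension at most $2$, so any $3$-dimensional quadratic space is isotropic and $\mathcal S(X)\ne\emptyset$. The paper gives no independent argument, only the citation, so your one-line proof is precisely what is being invoked.

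One remark on the case analysis you wisely discard: in odd characteristic the sub-case ``$v$ in the radical of $f|_X$ with $Q(v)\ne 0$'' is vacuous. Indeed $f(v,v)=Q(2v)-2Q(v)=2Q(v)$, and $v$ in the radical forces $f(v,v)=0$, hence $Q(v)=0$ over $\GF(3)$. So had you kept the case split, the degenerate-radical case would finish immediately; the subsequent discussion of $\langle v\rangle^\perp\cap X$ is both unnecessary and not quite right (in fact $X\subseteq\langle v\rangle^\perp$ there, not a $2$-space). None of this matters for your final proof. You are also right that the $+$-type hypothesis on the ambient $4$-space plays no role in this particular lemma.
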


\begin{proof} See \cite[21.3]{AschbacherFG}.
\end{proof}

We now introduce some additional  notation:

\begin{notation}\label{type} Let $V$ be a $4$-dimensional non-degenerate orthogonal space of $+$-type over $\GF(3)$  and $E$ be a $2$-dimensional subspace of $V$. The type of $E$ is determined by the number of one spaces of a given type in $E$. Thus we have

\begin{tabular}{ll}
{\rm Type S}:&$|\mathcal S(E)|=4$.\\
{\rm Type DP}:& $|\mathcal S(E)|=1$ and $|\mathcal P(E)|=3$.\\
{\rm Type DM}:&$|\mathcal S(E)|=1$ and $|\mathcal M(E)|=3$.\\
{\rm Type N+}:&$|\mathcal S(E)|=2$ and  $|\mathcal M(E)|=|\mathcal P(E)|=1$.\\
{\rm Type N-}:& $|\mathcal P(E)|=|\mathcal M(E)|=2$.
\end{tabular}
\end{notation}

\begin{lemma}\label{types} Let $V$ be a $4$-dimensional non-degenerate orthogonal space over $\GF(3)$ of $+$-type and $E$ be a $2$-dimensional subspace of $V$. Then $E$ is of one of the types in Notation~{\rm \ref{type}}.
\end{lemma}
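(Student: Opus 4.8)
\textbf{Proof plan for Lemma~\ref{types}.}
The statement asserts that every $2$-dimensional subspace $E$ of a non-degenerate $4$-dimensional orthogonal $\GF(3)$-space of $+$-type falls into exactly one of the five classes S, DP, DM, N+, N$-$ listed in Notation~\ref{type}. The plan is simply to run through the possibilities for the restriction of $q$ to $E$ and count singular points. The key arithmetic fact I would use is that $E$ has $(3^2-1)/(3-1)=4$ projective points.

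First I would split on the rank of the radical of $f|_E$. If $f|_E$ is non-degenerate, then $(E, q|_E)$ is a non-degenerate $2$-dimensional orthogonal $\GF(3)$-space, hence either of $+$-type (a hyperbolic plane, with exactly $2$ singular points and $4-2=2$ non-singular points) or of $-$-type (anisotropic, with $0$ singular points). In the $+$-type case one checks that the two non-singular points have the same type as each other precisely when $q$ takes a single non-zero value on them up to squares; over $\GF(3)$ the non-zero squares are $\{1\}$ and the non-squares are $\{2\}$, and a short direct computation in the hyperbolic plane with Gram matrix $\begin{smallmatrix}0&1\\1&0\end{smallmatrix}$ shows the two non-singular $1$-spaces are $\langle(1,1)\rangle$ with $q=1$ (a $+$-point, using the normalization that $V$ being of $+$-type fixes which discriminant is "$+$") and $\langle(1,-1)\rangle$ with $q=-1=2$ (a $-$-point). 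So the non-degenerate $+$-type plane gives $|\mathcal S(E)|=2$, $|\mathcal P(E)|=|\mathcal M(E)|=1$: Type N+. The anisotropic plane has $|\mathcal S(E)|=0$, and its $4$ points are non-singular; evaluating $q$ on them (e.g. with Gram matrix corresponding to $x^2+y^2$ or $x^2-\delta y^2$ with $\delta$ a non-square) shows two are of each type, giving Type N$-$.

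If $f|_E$ is degenerate, its radical $R$ is $1$- or $2$-dimensional. If $R=E$, i.e. $f|_E=0$ (so $E$ is totally isotropic for the bilinear form), then since $q(v+w)=q(v)+q(w)$ for all $v,w\in E$ the map $q|_E$ is additive, hence $\GF(3)$-linear, so its zero set is a subspace; it is either all of $E$ (then all $4$ points are singular: Type S) or a $1$-space (then $1$ point is singular and $q$ is injective on the $3$ non-zero cosets modulo that $1$-space, so on the remaining $3$ points $q$ takes each non-zero value, forcing... here I would note that $E$ being $2$-dimensional the non-singular points come in a single $q$-value class up to squares and one checks all three are of the same type, giving Type DP or Type DM according to that class). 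The remaining case $\dim R=1$ cannot occur: then $q|_R$ would have to vanish (a $1$-dimensional space with zero bilinear form on which $q$ is linear and non-trivial would have to live inside a larger non-degenerate space of even dimension only if its $q$-value is compatible, but more simply) — I would rule it out by observing that a rank-$1$ radical of $f|_E$ forces $E$ to be a degenerate plane inside a non-degenerate $4$-space, and such planes have $f|_E$ of rank $\le 1$; rank exactly $1$ would make $q|_E$ a rank-$1$ form plus a linear part, whose isometry type over $\GF(3)$ still reduces to one of the above counts. The cleanest route is: the possible isometry types of a $2$-dimensional quadratic $\GF(3)$-space are classified (non-degenerate $\pm$, and the two degenerate types "linear form" and "zero form on a totally isotropic plane"), and I would just enumerate them, citing the standard classification of quadratic forms over finite fields.

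The main obstacle, such as it is, is bookkeeping: getting the $+$/$-$ labelling of the non-singular $1$-spaces consistent with the global $+$-type normalization of $V$ (Notation~\ref{o4}), since "type of a $1$-space" depends on a choice of discriminant convention. I would fix this once by recording that a $1$-space $\langle v\rangle$ is of $+$-type iff $q(v)$ lies in the square class that makes $\langle v\rangle^\perp$ (a $3$-space) extend $\langle v\rangle$ to the ambient $+$-type $4$-space, equivalently $q(v)\in\{1\}$ in a suitably scaled coordinate system; with that fixed, the five counts above are forced and mutually exclusive, which is exactly the assertion. Since Lemma~\ref{hyper} already guarantees $\mathcal S(E)\ne\emptyset$ is available for $3$-spaces (not needed here), no case is vacuous among the five.
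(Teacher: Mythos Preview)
Your case analysis on the radical of $f|_E$ is the right organising idea, and your treatment of the non-degenerate cases (N$+$ and N$-$) is fine. But the degenerate cases are misallocated, and this is a real error rather than a bookkeeping slip.

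Over $\GF(3)$ (indeed over any field of odd characteristic) the quadratic form is recoverable from the bilinear form via $q(v)=2^{-1}f(v,v)$. Consequently, if $f|_E=0$ then $q|_E=0$ identically: the case $R=E$ forces $E$ to be totally singular, i.e.\ Type~S only. Your claim that $q|_E$ could be a nonzero linear map on a totally $f$-isotropic plane is a characteristic-$2$ phenomenon and does not happen here (and even additively, $q(\lambda v)=\lambda^2 q(v)$, so $q$ is not $\GF(3)$-linear). So Types DP and DM do not arise from $R=E$.

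Conversely, the case $\dim R=1$ does occur and is exactly where DP and DM live. If $R=\langle r\rangle$ is the $1$-dimensional radical of $f|_E$, then $f(r,r)=0$ forces $q(r)=0$, so $\langle r\rangle$ is the unique singular point of $E$; for any $w\in E\setminus R$ one has $q(w\pm r)=q(w)$, so the three non-singular points share a common $q$-value and hence a common type, giving DP or DM according to that square class. Your attempt to rule this case out is therefore incorrect.

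The paper's proof simply invokes the standard trichotomy---totally singular, degenerate (with a $1$-dimensional radical), non-degenerate of $\pm$-type---and reads off the point counts, which is what your corrected case split amounts to.
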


\begin{proof} The subspaces of $V$ of dimension $2$ are either totally singular (S), degenerate with three elements of $\mathcal P(V)$ (DP), degenerate with three elements from $\mathcal M(V)$ (DM) , non-degenerate of plus type (N+),
 or non-degenerate of minus type (N-).
\end{proof}

\begin{theorem}\label{closed} Suppose that $G$ is a   finite group, $Q$ is a
subgroup of $G$ and $H= N_G(Q)$. Assume that the following  hold
\begin{enumerate}
\item $H/Q \cong \Aut(\SU_4(2))$ or $\Sp_6(2)$;
\item $Q=C_G(Q)$ is a minimal
normal subgroup of $H$ and is elementary abelian of order $2^8$;
\item $H$ controls $G$-fusion of elements of $H$ of order $3$; and
\item if $g \in G\setminus H$ and $d \in H\cap H^g$ has order $3$,
then $C_Q(d)=1$.
\end{enumerate}
Then $G= HO_{2'}(G)$.
\end{theorem}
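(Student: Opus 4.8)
\textbf{Proof proposal for Theorem~\ref{closed}.}

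The plan is to show that $H$ is strongly embedded in $G$ modulo the core $O_{2'}(G)$, or more directly to apply a transfer/fusion argument showing $G$ has a normal subgroup of index a power of $2$ that reduces to the situation where $H=G$. First I would pass to $\bar G = G/O_{2'}(G)$; hypotheses (i)--(iv) are inherited by the image $\bar H$ of $H$, since $Q$ is a $2$-group meeting $O_{2'}(G)$ trivially, so I may assume $O_{2'}(G)=1$ and aim to prove $G=H$. Let $S \in \syl_2(H)$; since $Q = O_2(H)$ is self-centralizing and $H/Q$ has a Sylow $2$-subgroup of order dividing $|\Sp_6(2)|_2 = 2^9$, we know $|S|$ and the structure of $S$ precisely. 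The key object is the set $\mathcal{D}$ of subgroups of $H$ of order $3$; by Lemma~\ref{sp62facts}(vii) every element of order $3$ in $H$ is $H$-conjugate into the Thompson subgroup $E$ of a Sylow $3$-subgroup, and by hypothesis (iii) $H$ controls $G$-fusion of these elements.

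The heart of the argument is to prove that $H = N_G(S)N_G(Q)$-type control is enough: I would show that $S \in \syl_2(G)$ and that $N_G(T) \le H$ for every non-trivial $2$-subgroup $T$ of $S$ that is not centralized by an element of order $3$ acting with $C_Q(d)=1$, using (iv). More concretely, the strategy is: (1) show $S\in\syl_2(G)$ by a counting/order argument combined with the fact that $Q\trianglelefteq H$ is self-centralizing, so $C_G(Q)=Z(Q)$... actually $C_G(Q)=Q$ by (ii), hence $N_G(Q)=H$ and there is no fusion of $Q$ itself; (2) show that whenever $g\in G\setminus H$, $H\cap H^g$ has odd order coprime-ish structure — precisely, using (iv), an element of order $3$ in $H\cap H^g$ must have $C_Q(d)=1$, but by Lemma~\ref{modfacts} and the module structure in (ii), $C_Q(d)\ne 1$ for every order-$3$ element $d$ of $H$ (since on an $8$-dimensional module over $\GF(2)$ an order $3$ element has nontrivial fixed points: $\dim[Q,d]$ is even and at most $\dim Q=8$, and in fact the relevant classes $\tau_1,\tau_2,\tau_3$ all have $\dim C_Q(\tau_i)\ge ?$ — here I would read off from the module that $C_Q(d)\ne0$ always unless $d$ acts freely, which cannot happen on an even-dimensional module with all eigenvalues primitive cube roots only if $4\mid 8$, so I need the finer information that none of the three classes acts fixed-point-freely on the specific $8$-dimensional module, which follows from Lemma~\ref{sp62spin} and the orbit lengths $135, 120$). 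Hence $H\cap H^g$ has no elements of order $3$ for $g\notin H$, i.e. $3 \nmid |H\cap H^g|$.

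With that established, I would invoke a strong-embedding-style argument: $H$ contains a full Sylow $3$-subgroup of $G$ (as $N_G(Q)=H\supseteq$ Sylow $3$ of $G$ because $Q=O_2(H)$ and Sylow $3$-subgroups of $G$ normalize... no — rather, a Sylow $3$-subgroup $P_3$ of $H$ is Sylow in $G$ since $C_G(Z)\le N_G(Q)\cap\dots$); and the fact that distinct conjugates of $H$ intersect in a $3'$-group means $H$ controls all $3$-local structure. One then applies Holt's theorem or the strong $p$-embedding machinery referenced in the introduction (Lemma~\ref{Holt}-type results, or directly \cite{Be}, \cite{PSStrong}) with $p=3$: either $H=G$, or $H$ is strongly $3$-embedded, and the latter is impossible because $H/Q\cong\Sp_6(2)$ or $\Aut(\SU_4(2))$ does not have a strongly $3$-embedded subgroup structure compatible with $Q$ being a normal $2$-subgroup (a strongly $3$-embedded subgroup has a very restricted list, and $Q\ne1$ forces a contradiction). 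Therefore $G=H$, and unwinding the reduction gives $G=HO_{2'}(G)$.

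The main obstacle I expect is step (2): proving cleanly that $C_Q(d)\neq1$ for \emph{every} order-$3$ element $d\in H$, so that hypothesis (iv) forces $H\cap H^g$ to be a $3'$-group. This requires knowing the Brauer character of the $8$-dimensional $\GF(2)$-module on the three classes of order-$3$ elements — equivalently checking none of $\tau_1,\tau_2,\tau_3$ acts fixed-point-freely on $U$ (the spin module for $\Sp_6(2)$, the natural module for $\Aut(\SU_4(2))$). For $\SU_4(2)$ on its natural $4$-dimensional $\GF(4)$-module this is classical (an order $3$ element always fixes a nonzero vector since $4$ is not a multiple of $3$ forcing an eigenvalue $1$ over $\GF(4)$ when the trace condition holds), and for the $\Sp_6(2)$ spin module one reads it from \cite{PS1}; assembling this into a uniform statement is the delicate point, after which the strong-embedding conclusion is routine given the tools already cited.
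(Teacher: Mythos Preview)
The paper does not prove this result itself; it simply cites \cite[Theorem~3.1]{ParkerRowley}. So there is no internal argument to compare against, but your proposal has a genuine gap at precisely the point you flag as the main obstacle.

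Your claim that $C_Q(d)\neq 1$ for every element $d$ of order $3$ in $H$ is false. On the $8$-dimensional module $Q$ (the spin module for $\Sp_6(2)$, or the natural $\SU_4(2)$-module regarded over $\GF(2)$), one of the three classes of elements of order $3$ acts fixed-point-freely. This is visible elsewhere in the present paper: Lemma~\ref{celts} records that for $\langle x\rangle\in\mathcal P(I_1)$ one has $C_{\Sigma_1}(x)=\langle r_1\rangle$, i.e.\ $x$ has no nonzero fixed points on $\Sigma_1/\langle r_1\rangle$, which is exactly the spin module. In the $\SU_4(2)$ case the diagonal matrix $\mathrm{diag}(\omega,\omega,\omega^{-1},\omega^{-1})$ lies in $\SU_4(2)$ and has no eigenvalue $1$ on $\GF(4)^4$, hence acts fixed-point-freely on $\GF(2)^8$; your heuristic that ``$4$ is not a multiple of $3$ forces an eigenvalue $1$'' is simply wrong. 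Consequently hypothesis (iv) cannot be leveraged to conclude that $H\cap H^g$ is a $3'$-group for $g\notin H$: it is entirely consistent with (iv) that $H\cap H^g$ contains order-$3$ elements from the fixed-point-free class, and indeed when the theorem is applied in Lemma~\ref{ItsSp62} only the two classes with $C_Q(d)\neq 1$ are checked in verifying (iv). Your strong $3$-embedding route therefore collapses, and the proof in \cite{ParkerRowley} necessarily handles the fixed-point-free class by some other mechanism.
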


\begin{proof} This is \cite[Theorem 3.1]{ParkerRowley}.
\end{proof}

\begin{lemma}\label{involutionsonexspec}
Suppose that $G$ is a group, $E$ is an extraspecial $2$-group  which is normal in $G$ and $x \in G\setminus C_G(E)$ is
an involution. If $x $ is not $E$-conjugate to $xe$ where $e \in Z(E)^\#$, then $C_E(x)\ge [E,x]$ and $[E,x]$ is elementary
abelian.
\end{lemma}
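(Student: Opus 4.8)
We are given an extraspecial $2$-group $E \unlhd G$ with center $Z(E) = \langle z \rangle$ of order $2$, and an involution $x \in G \setminus C_G(E)$ with the property that $x$ is not $E$-conjugate to $xz$ (equivalently, not $E$-conjugate to $xe$ for the unique non-trivial $e \in Z(E)$). The plan is to work inside the group $\langle E, x\rangle$ and exploit the bilinear/quadratic form structure on $\overline E = E/Z(E)$. Write $\overline{\phantom{x}}$ for images in $\overline E$. Since $x$ acts on $E$ centralizing $Z(E)$, it acts linearly on the $\GF(2)$-space $\overline E$ preserving the symplectic form $\langle \overline a, \overline b\rangle = [a,b] \in Z(E)$ (identified with $\GF(2)$) and preserving the quadratic form $\mathfrak q(\overline a) = a^2 \in Z(E)$.

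**Key step: the hypothesis forces a quadratic condition.** First I would show $[E,x] \le C_E(x)$. Consider the map $\varphi\colon E \to E$, $a \mapsto [a,x] = a^{-1}a^x$. Since $x^2 = 1$, for $a \in E$ we have $a^{x^2} = a$, and a direct computation gives $[a,x]^x = [a,x]^{-1} \cdot (\text{correction in } Z(E))$; more precisely $\overline{[a,x]}$ is sent by $x$ to $\overline{[a,x]}^{-1} = \overline{[a,x]}$ in the elementary abelian quotient, so $x$ centralizes $\overline{[E,x]}$, i.e. $[[E,x],x] \le Z(E)$. Now I claim $[[E,x],x] = 1$: if some $a$ had $[[a,x],x] = z \ne 1$, then setting $b = [a,x]$ we get $b^x = bz$, and one checks (using $x^2=1$ and $b \in [E,x]$) that this produces an element of $E$ on which $x$ acts as $xe \sim_E x$ would be forced, contradicting the hypothesis. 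The cleanest route is: the hypothesis "$x \not\sim_E xz$" is equivalent to the statement that the map $\overline E \to Z(E)$, $\overline a \mapsto [a,x]$ together with the quadratic datum is constrained so that $x$ centralizes $[E,x]$. Concretely, $x \sim_E xe$ iff $xe = x^a = x[x,a]$ for some $a \in E$ iff $e \in [x,E]\cap Z(E)$ — wait, more carefully $e \in \{[x,a] : a \in E\}$, and since $[x,a] \in Z(E)$ would mean $a$ centralizes $x$ modulo $Z(E)$; so the hypothesis says: whenever $[x,a] \in Z(E)$, in fact $[x,a] = 1$, i.e. $C_E(x)$ is exactly the preimage of $C_{\overline E}(x)$, equivalently $[[E,x],x]=1$. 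This gives $[E,x] \le C_E(x)$ directly.

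**Key step: $[E,x]$ is elementary abelian.** Since $[E,x] \le C_E(x)$, the group $[E,x]$ is a subgroup of $E$ centralized by $x$. To see it is elementary abelian it suffices to show every $a \in [E,x]$ has $a^2 = 1$, i.e. $\mathfrak q$ vanishes on $\overline{[E,x]}$ (this also forces $\overline{[E,x]}$ to be totally isotropic, hence $[E,x]$ abelian, and then order-$2$ generators give elementary abelian — one must also rule out $a^2 = z$). Write $a = [b,x] = b^{-1}b^x$ for some $b \in E$. Then $a^2 = (b^{-1}b^x)(b^{-1}b^x)$; conjugating by $x$ and using $x^2=1$, $(a^2)^x = (b^{-x}b)(b^{-x}b) = a^{-2}$ computed in $E$, but $a \in C_E(x)$ so $(a^2)^x = a^2$, giving $a^2 = a^{-2}$, hence $a^4 = 1$, which is automatic in an extraspecial $2$-group. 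The real input is again the non-fusion hypothesis: if $a^2 = z$ for some $a \in [E,x]$, then $\langle a, x\rangle$ (with $[a,x]=1$) lets us write $x$ and $xz = xa^2$... the standard trick is that $z = a^2 = (b^{-1}b^x)^2$ and expanding using $[b, b^x] \in Z(E)$ shows $[b,b^x] = z$, i.e. $b$ and $b^x$ do not commute; then $\langle b, b^x \rangle$ is a quaternion or dihedral subgroup swapped-stabilized by $x$, and a Frattini/counting argument inside $\langle b, b^x, x\rangle$ exhibits an $E$-conjugate of $x$ of the form $xz$ — contradiction. So $a^2 = 1$ for all $a \in [E,x]$, whence $[E,x]$ is elementary abelian.

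**Main obstacle.** The genuinely delicate point is the second one: translating the fusion hypothesis "$x \not\sim_E xz$" into the two structural facts. I expect the efficient formulation is the linear-algebra one: $x$ induces on $\overline E$ an isometry $t$ of the symplectic space preserving $\mathfrak q$, and $x \sim_E xz$ iff $z \in \{[a,x] : a \in E\}$ iff the linear functional $\overline a \mapsto \langle \overline a, \overline{[E,x]}\rangle$-type pairing detects $z$; the correct statement is that $x\sim_E xz$ fails iff the "transvection-type" obstruction $\overline a \mapsto \mathfrak q(\overline a) + \mathfrak q(t\overline a)$ — equivalently the restriction of $\mathfrak q$ to $[\overline E, t]$ — is identically zero, which is exactly "$[E,x]$ elementary abelian", while $[[E,x],x]=1$ comes for free from $t$ acting trivially on $[\overline E, t]$ modulo the radical together with $t^2 = 1$. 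I would set this up via the standard identity: for $a, b \in E$, $[ab, x] = [a,x]^b[b,x]$ and $[a,x]^x = [a,x]^{-1}[a,x,x]^{-1}$, reduce mod $Z(E)$ to get $t$-module statements, lift back, and keep careful track of the $Z(E)$-valued cocycle terms — that bookkeeping is the one place the argument must be done honestly rather than quoted.
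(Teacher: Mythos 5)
Your derivation of $[E,x]\le C_E(x)$ is correct and is exactly the paper's argument: from any $a$ with $[[a,x],x]\ne 1$ one exhibits $b=[a,x]$ with $x^b=xz$, contradicting the hypothesis. (Your parenthetical claim that the hypothesis is ``equivalently $[[E,x],x]=1$'' is not quite a biconditional --- $[[E,x],x]=1$ does not exclude $[a_0,x]=z$ for some $a_0$ --- but only the direction you actually use is needed, so this is harmless.)

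Your second step, however, has a genuine gap, and it is one you yourself flag. After writing $a=[b,x]$ you compute $(a^2)^x=a^{-2}$, combine this with $a\in C_E(x)$ to get $a^4=1$, observe that this is vacuous, and then appeal to an unspecified ``Frattini/counting argument'' in $\langle b,b^x,x\rangle$ to rule out $a^2=z$, explicitly acknowledging that this bookkeeping remains to be done. But the gap is of your own making: conjugate $a$ itself rather than $a^2$. The very computation you performed gives $a^x=(b^{-1}b^x)^x=b^{-x}b=a^{-1}$ (valid for any commutator $a=[b,x]$ with $x$ an involution), while step~1 gives $a^x=a$ since $a\in[E,x]\le C_E(x)$; hence $a=a^{-1}$, i.e.\ $a^2=1$, at once. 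The non-fusion hypothesis is used exactly once, in step~1, and step~2 is then a one-liner needing no further input. This is in substance what the paper does (it expands $[h,x][h,x]$ directly and cancels), and your identity $a^x=a^{-1}$, applied to $a$ rather than $a^2$, is if anything a cleaner way to see it; instead you stopped one step short and detoured into an unfinished and unnecessary side argument.
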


\begin{proof}  Certainly  $C_{E/Z(E)}(x)\ge [E/Z(E),x]$. Therefore, if $C_E(x) \not \ge [E,x]$, then  $[f,x,x] = e$ for some $f \in E$. Setting $w = [f,x]$ we then have $x^w=xe$ which contradicts our hypothesis on $x$. Hence $C_E(x) \ge [E,x]$.

We now show that every element of $[E,x]$ has order $2$. Let $f \in [E,x]$. Then $fe$  has the same order as $f$. Thus we may suppose that $f=[h,x]$ for some $h \in E$.  As $[E,x]\le C_E(x)$,  $x[h,x]=[h,x]x$  and so 
\begin{eqnarray*}f^2&=&[h,x][h,x] = h^{-1}xhx[h,x]=h^{-1}xh[h,x]x\\&=& h^{-1}xhh^{-1}xhxx=1\\
 \end{eqnarray*}as required. This proves the lemma.
\end{proof}

For a group $X$ with subgroups $A \le Y \le X$,  we say that $A$ is \emph{strongly closed  in $Y$ with respect to
$X$} provided $A^x \cap Y \le A$ for all $x \in X$.

\begin{lemma}\label{Gold} Suppose that $K$ is a group, $O_{2'}(K)=1$,  $A$ is an abelian $2$-subgroup of $K$ and $A$ is strongly closed in $N_K(A)$ with respect to $K$. Assume that $F^*(N_K(A)/C_K(A))$ is a non-abelian simple group.
 Then  $K= N_K(A)$.
\end{lemma}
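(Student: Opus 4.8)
The statement is essentially Goldschmidt's Strongly Closed Abelian 2-subgroup Theorem \cite{Goldschmidt}, and the plan is to reduce to exactly that theorem. Set $N = N_K(A)$ and $\bar N = N/C_K(A)$, so by hypothesis $F^*(\bar N)$ is a non-abelian simple group, call it $\bar L$. Since $A$ is a strongly closed abelian 2-subgroup of $N$ with respect to $K$, Goldschmidt's theorem applies to $K$ (using $O_{2'}(K)=1$) and tells us that $\langle A^K\rangle$ is a product of an abelian 2-group and groups of a short explicit list: copies of $\SL_2(2^n)$, $\Sz(2^n)$, $\PSU_3(2^n)$, a group with an abelian Sylow 2-subgroup, or $\mathrm{J}_2$-type configurations — in each case the normalizer of $A$ induces a very restricted action. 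First I would record that $M := \langle A^K\rangle$ is normal in $K$ and that $A \in \Syl_2(M)$ (this is part of the Goldschmidt conclusion, since $A$ strongly closed forces $A$ to contain a Sylow 2-subgroup of $\langle A^K \rangle$), so in particular $N_K(A)$ controls things at the level of $M$ by a Frattini argument: $K = M N_K(A) = MN$.

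Next I would analyze $N = N_K(A)$ acting on $M$. Because $A \in \Syl_2(M)$ and $A$ is abelian and strongly closed, each simple component $X$ of $M/O_{2'}(M)$ (with $O_{2'}(M)=1$ here since $O_{2'}(K)=1$ and $M \unlhd K$) has abelian Sylow 2-subgroups, and $N$ permutes these components. The key point is to compare $\bar L = F^*(\bar N)$ with the action of $N$ on $M$: since $F^*(\bar N)$ is simple and $C_K(A) \le N$, the group $\bar N$ acts on $M$ with $F^*(\bar N)$ acting nontrivially, and a simple group acting on a product of Goldschmidt-type groups (each with abelian Sylow 2) that normalizes a common Sylow 2-subgroup $A$ must, by order and structure considerations, force $M$ itself to be 2-closed — i.e. $M = A$. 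Indeed if $M \neq A$ then $N/C_N(M)$ embeds in the (rather small, 2-local-controlled) outer structure of the Goldschmidt target and cannot contain a non-abelian simple section acting the way $\bar L$ does; the simplicity of $F^*(\bar N)$ is incompatible with the ``$p$-solvable-like'' constraint Goldschmidt's list imposes on $N_K(A)/C_K(A)$ whenever $\langle A^K \rangle$ is a nontrivial such product. So $M = \langle A^K \rangle = A$, which says $A \unlhd K$, hence $K = N_K(A) = N$, as desired.

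The main obstacle is the middle step: turning ``$F^*(N_K(A)/C_K(A))$ is non-abelian simple'' into ``$M = A$''. One has to go through Goldschmidt's list case by case and check that in every genuinely nontrivial case ($\SL_2(2^n)$, $\Sz(2^n)$, $\PSU_3(2^n)$, direct products thereof, and the abelian-Sylow case) the group $N_K(A)/C_K(A)$ induces on $A$ — and hence has as a section — only a group with a normal 2-complement-like structure or a solvable-by-(something small) structure, never a non-abelian simple $F^*$. This is where the hypothesis is really used, and it is the part that requires care rather than a one-line citation; everything else is a Frattini argument plus the invocation of \cite{Goldschmidt}.
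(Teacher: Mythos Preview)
Your overall strategy matches the paper's: invoke Goldschmidt's theorem on $M=\langle A^K\rangle$ and then argue that the resulting structure is incompatible with $F^*(N_K(A)/C_K(A))$ being non-abelian simple. However, there are errors in your execution that you should be aware of.

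It is not true that $A\in\Syl_2(M)$. Goldschmidt's conclusion is that $A=O_2(M)\Omega_1(T)$ for some $T\in\Syl_2(M)$ containing $A$, and for components of type $\PSU_3(2^a)$ or ${}^2\B_2(2^a)$ the Sylow $2$-subgroup is non-abelian, so $A<T$. Your Frattini argument therefore fails as written (and is in any case unnecessary). Also, the sporadic group on Goldschmidt's list is $\J_1$, not $\mathrm{J}_2$, and the full list includes $\PSL_2(q)$ with $q\equiv 3,5\pmod 8$ and the Ree groups ${}^2\G_2(3^a)$, which you omit.

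The paper's route through the ``middle step'' is cleaner than an explicit case analysis. With $L=\langle A^K\rangle$ and $E(L)\ne 1$, a single glance at Goldschmidt's list shows that $N_L(A)$ is \emph{soluble} but not a $2$-group. Now $N_L(A)=L\cap N_K(A)$ is normal in $N_K(A)$; its image in $N_K(A)/C_K(A)$ is a soluble normal subgroup, hence trivial since $F^*(N_K(A)/C_K(A))$ is non-abelian simple. Thus $N_L(A)\le C_K(A)$, which is impossible because in every group on the list the normalizer of $A$ acts non-trivially on $A$. No Frattini argument and no extended case-by-case discussion is required.
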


\begin{proof} Set  $ L= \langle A^{ K}\rangle$.   Since $O_{2'}(K)=1$, we have $O_{2'}(L)=1$.
By Goldschmidt \cite[Theorem A]{Goldschmidt},  $ L=  O_2( L)E( L)$ and $ A=  O_2( L)\Omega_1( T)$ where $ T \in \syl_2( L)$ contains $ A$. If $E( L)=1$, then $ A$ is normal in $ K$ and we are done.  Thus $E( L) \neq 1$.
 Goldschmidt additionally states that  $E( L)$ is a direct product of simple groups of type $\PSL_2(q)$, $q \equiv 3,5 \pmod 8$, ${}^2\G_2(3^a)$,  $\SL_2(2^a)$, $\PSU_3(2^a)$, ${}^2\B_2(2^a)$ for some natural number $a$, or the sporadic simple group $\J_1$.
It  follows from the structure of these groups that $N_{ L}( A)$ is a soluble group which is not a $2$-group.
On the other hand, $N_{ L}( A)=  L \cap {N_K(A)}$ is a normal subgroup of $ {N_K(A)}$.  Since $F^*(N_K(A)/C_K(A))$ is a non-abelian simple group and $N_{ L}( A)$ is soluble we now have $N_L(A) \le C_K(A)$ and this contradicts the structure of $E(L)$. Thus $A$ is normal in $K$ as claimed.
\end{proof}

 We will also need the following statement of Holt's Theorem \cite{Ho}.

 \begin{lemma}\label{Holt}  Suppose that $K$ is a simple group,  $P$ is a  proper subgroup of $K$ and  $r$ is a $2$-central element of $K$. If $r^K\cap P= r^P$ and $C_K(r)\le P$, then  $K\cong \PSL_2(2^a)$ ($a \ge 2$), $\PSU_3(2^a)$ ($a \ge 2$), ${}^2\B_2(2^a)$ ($a\ge 3$ and odd) or $\Alt(n)$ ($n \ge 5$) where  in the first three cases $P$ is a Borel subgroup of $K$ and in the last case $P \cong \Alt(n-1)$.
 \end{lemma}

\begin{proof}
Set $\Omega= K/P$ and assume that $P<K$. The conditions
$C_K(r) \le  P$ and $r^K \cap P= r^P$ together imply that $r$ fixes a unique point of $\Omega$. Let $J$ be the set of involutions of $K$ which fix exactly one point of $\Omega$.  Since $r$ is  a $2$-central element of  $K$, any $2$-group which fixes at least $3$ points when it acts on
$\Omega$ commutes with an element of $J$. Hence   Holt's criteria (*) from
\cite{Ho} is satisfied.
In addition, the simplicity of  $K$ yields $K= \langle r^K\rangle= \langle J\rangle$.   Thus \cite[Theorem 1]{Ho} implies that   $K$ is isomorphic to one of the following groups
$\PSL_2(2^n)$, $\PSU_3(2^n)$, ${}^2\B_2(2^n)$ ($n\ge 3$ and odd) or $\Alt(\Omega)$ where in the first three classes of
groups the stabiliser  $P$  is a Borel subgroup and in the latter case it is $\Alt(\Omega\setminus\{P\})$.
 \end{proof}

For the final steps in the identification of $\F_4(2)$  we need information about its involutions and their centralizers.

\begin{lemma}\label{F42Classes} The group $X=\F_4(2)$ has four conjugacy classes of involutions $x_1, x_2, x_3$ and $x_4$  three of which are $2$-central.
Furthermore we may assume that notation is chosen so that
\begin{enumerate}\item $C_X(x_1) \cong C_X(x_2) \approx 2^{1+6+8}.\Sp_6(2)$; \item
$C_X(x_3) \approx 2^{1+1+4+1+4+4+1+4}.\Sp_4(2)$; and \item $C_X(x_3) \approx 2^{[9]}.(\SL_2(2)\times \SL_2(2))$.\end{enumerate}
\end{lemma}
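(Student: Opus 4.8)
\textbf{Proof proposal for Lemma~\ref{F42Classes}.}

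The plan is to extract this entirely from the known structure of $\F_4(2)$ as recorded in the literature, rather than derive it from scratch, since this is a ``lookup'' lemma supplying data about the target group. First I would invoke the classification of semisimple and unipotent classes in $\F_4(q)$; for $q=2$ the involution classes coincide with the unipotent classes of order $2$, and there are exactly four of them. A convenient reference is the tables of unipotent classes in exceptional groups (Liebeck--Seitz, or Mizuno's original determination of the conjugacy classes of $\F_4(2)$), or equivalently one can read the class list and the involution centralizer orders directly from the \ATLAS-style data for $\F_4(2)$. The three $2$-central classes are those whose representatives lie in the centre of a Sylow $2$-subgroup; equivalently, their centralizer orders are divisible by the full $2$-part $2^{24}$ of $|\F_4(2)|$. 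The fourth class, $x_4$, is the class of a long-root (or short-root) element that is not $2$-central, with centralizer of shape $2^{[9]}.(\SL_2(2)\times\SL_2(2))$ coming from the rank-$2$ parabolic structure.

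Next I would nail down the centralizer shapes. For $x_1$ and $x_2$: the graph automorphism of $\F_4(2)$ interchanges long and short roots, and correspondingly interchanges two of the involution classes; the centralizer of a long-root involution is a maximal parabolic-type $2$-local of shape $2^{1+6+8}.\Sp_6(2)$, obtained from the end-node maximal parabolic $P$ with Levi factor $\Sp_6(2)$ (type $C_3$) acting on $O_2(P)$, which has order $2^{15}$ and the displayed $2^{1+6+8}$ structure (an extraspecial group of order $2^7$ extended by a module of dimension $8$ — here I would double-check the precise meaning of the symbol $2^{1+6+8}$ against the paper's conventions, cf. Lemma~\ref{modfacts} where the $6$- and $8$-dimensional $\Sp_6(2)$-modules appear). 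The class $x_2$ is the image of $x_1$ under the graph automorphism and has an isomorphic centralizer. For $x_3$: its centralizer is the $2$-local of shape $2^{1+1+4+1+4+4+1+4}.\Sp_4(2)$ arising from the middle-node structure, where $\Sp_4(2)\cong\Sym(6)$ acts on a $2$-group of order $2^{19}$ with the indicated chief series. Finally for $x_4$: this is the non-$2$-central involution, centralized by a $2$-group of order $2^9$ extended by $\SL_2(2)\times\SL_2(2)$, read off from the $A_1\times A_1$ Levi.

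The main obstacle — really the only non-clerical point — is verifying the precise \emph{shapes} (the chief series exhibited by the strings of integers) rather than merely the isomorphism types of the quotients and the orders. These are sensitive to conventions and are most safely obtained by citing a source that computes $O_2$ of the relevant parabolics of $\F_4(2)$ together with its module structure, e.g. the explicit Chevalley-group computations in the references already used in this paper (the treatment of $2$-locals of $\F_4(2)$ that underlies Theorem~\ref{P=F4}), or Mizuno's paper. I would therefore phrase the proof as: ``The conjugacy classes of involutions in $\F_4(2)$ and their centralizers are determined in [Mizuno]/[GLS3]; see also [the $\F_4(2)$ reference]. The stated shapes follow from the structure of the maximal $2$-local subgroups of $\F_4(2)$.'' with a one-line remark that exactly three classes meet $Z(S)$ for $S\in\syl_2(\F_4(2))$, which is immediate from the centralizer orders being divisible by $|S|=2^{24}$ precisely for $x_1,x_2,x_3$. (I also note that item (iii) in the statement as printed says $C_X(x_3)$ but surely should read $C_X(x_4)$; I would correct that typo in the writeup.)
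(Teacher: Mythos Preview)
Your proposal is correct and takes essentially the same approach as the paper: this is a lookup lemma, and the paper simply cites Guterman \cite[Section~3]{Guterman} and Aschbacher--Seitz \cite[page~45]{AschSe} for the involution classes and centralizer shapes in $\F_4(2)$. Your observation about the typo in item~(iii) (which should read $C_X(x_4)$) is also accurate.
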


\begin{proof}
These facts can be found in Guterman \cite[Section 3]{Guterman}  (see also \cite[Page 45]{AschSe}) .
\end{proof}

\section{Identifying $\F_4(2)$}

The final step in the proof of Theorem~\ref{MT} demands  that we can identify $\F_4(2)$ or $\Aut(\F_4(2))$ from the structure of the centralizer of a certain $2$-central involution. In this section  we give such an identification. The centralizers of interest are the centralizers of the  involutions $x_1$, $x_2$ in $\F_4(2)$ as given in Lemma~\ref{F42Classes} (i). Of course, we do not want to specify the isomorphism type of such a centralizer, but only the approximate shape of the group.

\begin{definition}\label{F4cent}  We say  the group $U$ is similar to a $2$-centralizer in a group of type  $\F_4(2)$ if $U$ has the following properties. 
\begin{enumerate}
\item  $U/O_2(U) \cong \Sp_6(2)$,
\item  $O_2(U) $ is an product of $Z(O_2(U))$  by an extraspecial group of order $2^9$,  $Z(O_2(U))$ is elementary abelian of order $2^7$.
\item   $U/O_2(U)$ induces the natural module on $Z(O_2(U))/O_2(U)'$ and the spin module on $O_2(U)/Z(O_2(U))$.
\end{enumerate}
\end{definition}

\begin{definition}\label{F4setup} Suppose that $G$ is a group and assume  that the  following hold:
\begin{enumerate}
\item For $i=1,2$, there are  involutions $x_i$ in $G$ such that $U_i = C_G(x_i)$ is similar to a $2$-centralizer in a group of type  $\F_4(2)$.
\item There is a Sylow $2$-subgroup $T$   of $U_1$ such that  $Z(T) = \langle x_1,x_2 \rangle$. 
\end{enumerate}
Then we say that $U_1$, $U_2$, $T$ is an \emph{$\F_4$ set-up in $G$.}
\end{definition}

Our identification theorem in this section is as follows:
\begin{theorem}\label{P=F4} If $U_1$, $U_2$, $T$ is an $\F_4$ set-up in $G$, then  $\langle U_1, U_2 \rangle \cong \F_4(2)$.
\end{theorem}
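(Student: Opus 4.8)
The plan is to recognize $P = \langle U_1, U_2\rangle$ as $\F_4(2)$ by building its spherical building of type $F_4$ from a carefully chosen amalgam of parabolic subgroups containing a common Sylow $2$-subgroup, and then to invoke the classification of spherical buildings of rank $\ge 3$ (\cite{Tits, local}) together with the fact that $\F_4(2)$ admits a strongly transitive action. First I would set $T$ to be the given Sylow $2$-subgroup of $U_1$ with $Z(T) = \langle x_1, x_2\rangle$, and note that since $x_2 \in Z(T)$ we have $T \le C_G(x_2) = U_2$ as well, so $T$ is a common Sylow $2$-subgroup of $U_1$ and $U_2$ and hence of $P$. The involutions $x_1, x_2$ should turn out to be $P$-conjugate (they are fused inside $\langle U_1, U_2\rangle$ because each $U_i$, having $\Sp_6(2)$ on top acting with the natural and spin modules below, moves the other central involution appropriately), so $U_1$ and $U_2$ are conjugate $2$-central involution centralizers.

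Next I would locate inside $U_1 \cong 2^{1+6+8}.\Sp_6(2)$ the three minimal parabolic subgroups of the $\Sp_6(2)$-quotient, pulled back to subgroups $P_1, P_2, P_3$ of $U_1$ containing $T$; together with a fourth parabolic coming from $U_2$ (the "extra node") one obtains four subgroups $P_1, \dots, P_4$ containing $T$, each of the form (2-group)$.\SL_2(2)$ or similar, whose rank-$2$ amalgams $\langle P_i, P_j\rangle$ I would analyze one pair at a time. Using Lemmas~\ref{modfacts}, \ref{sp62natural}, \ref{sp62spin}, \ref{sp62line}, \ref{nonsplitmods} to control the action of $\Sp_6(2)$ on the natural and spin modules inside $O_2(U_i)$, and using Table~\ref{Table1} and Lemmas~\ref{sp62facts}, \ref{NotF} to pin down involution centralizers and rule out $F$-module behaviour, I would identify each rank-$2$ residue: the $F_4$ Dynkin diagram forces residues of types $A_2$, $B_2$, $A_1 \times A_1$ with the correct parameters, i.e. the $\langle P_i, P_j\rangle / O_2$ should be recognized as $\SL_3(2)$, $\Sp_4(2)$, or $\SL_2(2) \times \SL_2(2)$ acting with the prescribed modules. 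The crucial inputs here are that the rank-$2$ parabolics generated have the shape dictated by the $\F_4(2)$ parabolic geometry, which I would verify by computing orders, Levi complements, and the modules $O_2(\langle P_i,P_j\rangle)$ supports, comparing against the known $\F_4(2)$ data.

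With all rank-$2$ residues identified, I would form the coset geometry $\Gamma$ on $\{P_1, \dots, P_4\}$ and check the connectedness and residual-connectedness hypotheses, so that $\Gamma$ is a chamber system of type $F_4$ whose rank-$2$ residues are the classical generalized polygons over $\GF(2)$; by Tits' classification of buildings of spherical type and rank $\ge 3$ this geometry is the building of $\F_4(2)$ (one must also check there is no exotic "twin" or non-$\F_4(2)$ completion, which is automatic since $F_4$ over $\GF(2)$ is uniquely determined and $\GF(2)$ admits no nontrivial automorphisms so the possible ${}^2F_4$ twisting does not intervene at the parameter level arising here). Then $P = \langle U_1, U_2\rangle$ acts on this building; a standard argument (the parabolics $P_i$ are maximal, $P$ acts faithfully because $O_2(P) = 1$, which follows since $O_2(P)$ would lie in $T$ and be normalized by each $P_i$ hence centralized by the simple sections, forcing it into $Z(U_i) = \langle x_i \rangle$ and then triviality as $x_1 \ne x_2$) shows $P$ acts strongly transitively, so $P$ contains and is contained in the automorphism group generated by root subgroups, giving $P \cong \F_4(2)$.

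The main obstacle I expect is the rank-$2$ amalgam analysis: proving that each $\langle P_i, P_j\rangle$ is \emph{exactly} the expected parabolic of $\F_4(2)$ — neither too small (so the geometry is thick and connected) nor too large (so no collapse or exotic completion occurs) — and in particular handling the "mixed" pair involving one parabolic from $U_1$ and one from $U_2$, where one must reconcile the spin-module structure seen inside $O_2(U_1)$ with the natural-module structure seen inside $O_2(U_2)$ on the common overlap. This is where the module lemmas of Section~2 (especially the non-$F$-module statement Lemma~\ref{NotF} and the uniserial structure in Lemmas~\ref{sp62natural}--\ref{sp62spin}) do the real work, pinning down $O_2$ of each rank-$2$ parabolic and hence the isomorphism type of the parabolic itself; once the local structure is nailed down, invoking the building classification is routine.
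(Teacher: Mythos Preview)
Your outline is essentially the paper's approach: pull back three minimal parabolics $P_1,P_2,P_3$ from $U_1/O_2(U_1)\cong\Sp_6(2)$ and a fourth $P_4$ from $U_2$, verify the rank-$2$ residues, assemble a chamber system of type $\F_4$, and invoke Tits' local approach \cite{local} to identify the universal $2$-cover as the $\F_4(2)$ building. You have also correctly located the crux: the ``mixed'' residues $P_{14}$ and $P_{24}$, where one parabolic comes from each $U_i$.

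A few corrections on the details. First, Lemma~\ref{NotF} plays no role here; the module lemmas actually used are \ref{sp62natural}, \ref{sp62spin}, and \ref{nonsplitmods}. Second, you do not need the $P$-conjugacy of $x_1$ and $x_2$ at this stage. Third, the genuine technical content of the mixed-pair analysis (the paper's Lemma~\ref{buildingF4}) is more specific than ``pin down $O_2$'': one shows $V=Z_2(T)$ has order $2^4$ and is normalized by $P_{14}$, computes that $P_{14}/C_G(V)$ embeds in $\GL_4(2)$ with $|D|\le 180$, and then must rule out the alternative $P_{14}/O_2(P_{14})\cong\mathrm{O}_4^-(2)\cong\Sym(5)$ by an orbit-counting argument on a $P_{124}$-invariant subgroup $W$ of order $2^5$; this simultaneously forces $P_{24}=P_2P_4$. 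Finally, the endgame in the paper is not a direct strong-transitivity argument but a covering argument: the universal $2$-cover $\mathcal C'$ is the $\F_4(2)$ building, and one shows the covering group $D$ is trivial by observing that $U_1$ already coincides with a maximal parabolic of $\F_4(2)$.
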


For the remainder of this section we assume that $U_1$, $U_2$ and $T$ is an $\F_4$ set-up in $G$. Notice that because of Definition~\ref{F4cent} (ii),  for $i=1,2$,   $O_2(U_i)'= \langle x_i\rangle$ has order $2$. The first lemma details the relationship of $U_1$ with $U_2$.

\begin{lemma} \label{p23} The following hold:
 \begin{enumerate}
 \item $U_1\cap U_2 $ contains $T$;\item  $(U_1\cap U_2)/O_2(U_1\cap U_2)\cong \Sp_4(2)$; \item  $O_2(U_1\cap U_2)=O_2(U_1)O_2(U_2)$; and \item $Z(T) = Z(O_2(U_1)) \cap Z(O_2(U_2))$.\end{enumerate}
\end{lemma}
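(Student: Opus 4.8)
The plan is to work entirely inside $U_1$, using the detailed module structure imposed by Definition~\ref{F4cent}, and to exploit the symmetry between $x_1$ and $x_2$ afforded by Definition~\ref{F4setup}(ii). First I would establish (iv), since it feeds the rest. Since $Z(T)=\langle x_1,x_2\rangle$, and $x_1\in Z(O_2(U_1))$ (as $O_2(U_1)'=\langle x_1\rangle$ is contained in $Z(O_2(U_1))$, which is abelian), I want to show $x_2\in Z(O_2(U_1))$ as well. The point is that $x_2$ centralizes $T\supseteq O_2(U_1)$, being in $Z(T)$; hence $\langle x_1,x_2\rangle\le Z(O_2(U_1))\cap Z(O_2(U_2))$, and by symmetry the reverse containment is immediate because $Z(O_2(U_i))$ is abelian and normal in $U_i$, so $Z(O_2(U_1))\cap Z(O_2(U_2))$ is a normal subgroup of $\langle U_1,U_2\rangle$-stabilised-by-$T$ — more carefully, it is centralized by $T$ (each $Z(O_2(U_i))$ is), hence lies in $C_{O_2(U_1)}(T)\le Z(T)$ once one checks $Z(O_2(U_1))\cap Z(O_2(U_2))\le O_2(U_1)$; and it is visibly contained in $O_2(U_1)$. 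That gives (iv).

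Next (i): I would argue $x_2\in Z(T)\le U_1$, so $T$ centralizes $x_2$, i.e. $T\le C_G(x_2)=U_2$; and of course $T\le U_1$. Hence $T\le U_1\cap U_2$. For (ii) and (iii) the engine is the following: $U_1\cap U_2=C_{U_1}(x_2)$, so I need the centralizer in $U_1$ of the involution $x_2\in Z(O_2(U_1))$. Write $\bar U_1=U_1/O_2(U_1)\cong\Sp_6(2)$. Since $x_2\in O_2(U_1)$, we have $O_2(U_1)\le C_{U_1}(x_2)$ is false in general — rather $x_2$ being central in $O_2(U_1)$ gives $O_2(U_1)\le C_{U_1}(x_2)$ exactly, so $O_2(U_1)\le U_1\cap U_2$, and then $(U_1\cap U_2)/O_2(U_1)$ is the stabilizer in $\bar U_1\cong\Sp_6(2)$ of the image of $x_2$ in the relevant section. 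Now $x_2\in Z(O_2(U_1))$ and $x_2\notin O_2(U_1)'=\langle x_1\rangle$, so $x_2$ has a nontrivial image $\bar x_2$ in the natural $\Sp_6(2)$-module $Z(O_2(U_1))/O_2(U_1)'$ (Definition~\ref{F4cent}(iii)). By Lemma~\ref{sp62natural}(i), $\Sp_6(2)$ is transitive on nonzero vectors of the natural module, and the stabilizer of such a vector is, by Lemma~\ref{sp62natural}(iii), the parabolic $X_{23}$ with $X_{23}/O_2(X_{23})\cong\Sp_4(2)$. Hence $(U_1\cap U_2)/O_2(U_1)$ has the shape $O_2(X_{23})\,\Sp_4(2)$, and taking $O_2$ on both sides yields $(U_1\cap U_2)/O_2(U_1\cap U_2)\cong\Sp_4(2)$, which is (ii).

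For (iii), one inclusion is clear: $O_2(U_1)O_2(U_2)$ is a normal $2$-subgroup of $U_1\cap U_2$ (each factor is normal in the respective $U_i$, hence in $U_1\cap U_2$, and their product is a $2$-group since $O_2(U_1)$ normalizes $O_2(U_2)$ and vice versa), so $O_2(U_1)O_2(U_2)\le O_2(U_1\cap U_2)$. For the reverse I would compute orders. We have $|O_2(U_1\cap U_2)|=|O_2(U_1)|\cdot|O_2(X_{23})|$ where the last factor is read from $\Sp_6(2)$; symmetrically using $U_2$, and using that $O_2(U_1)\cap O_2(U_2)$ is determined by how much of $O_2(U_2)$ lies inside $O_2(U_1)$ — here I would use that $O_2(U_2)\cap O_2(U_1)$ maps onto $C_{Z(O_2(U_1))/\langle x_1\rangle}(\text{image of }O_2(U_2))$ and into the spin module part, and pin down the two relevant dimensions from Lemma~\ref{sp62natural} and Lemma~\ref{sp62spin}. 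The main obstacle will be exactly this order bookkeeping: keeping straight which sections of $O_2(U_1)$ (the extraspecial $2^9$ piece versus the elementary abelian $2^7=Z(O_2(U_1))$ piece) are hit by $O_2(U_2)$, and confirming that $|O_2(U_1)O_2(U_2)|$ exactly matches $|O_2(U_1\cap U_2)|$ rather than being a proper subgroup; this is where the precise parabolic structure of $\Sp_6(2)$ acting on both the natural and spin modules, via Lemmas~\ref{sp62natural} and~\ref{sp62spin}, must be used carefully. Everything else is formal.
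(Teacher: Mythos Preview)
Your argument for (iv) contains a genuine error: you assert that $T$ centralizes each $Z(O_2(U_i))$, but this is false. Since $T\in\Syl_2(U_1)$, the image $T/O_2(U_1)$ is a Sylow $2$-subgroup of $\Sp_6(2)$ and acts non-trivially on the natural module $Z(O_2(U_1))/\langle x_1\rangle$; hence $T$ does not centralize $Z(O_2(U_1))$. The inclusion $Z(T)\le Z(O_2(U_1))\cap Z(O_2(U_2))$ is indeed easy (your argument there is fine, and in fact cleaner than the paper's appeal to Lemma~\ref{nonsplitmods}), but the reverse inclusion cannot be obtained first. The paper proves (iv) \emph{after} (iii): once $O_2(U_1\cap U_2)=O_2(U_1)O_2(U_2)$ is known, the intersection $Z(O_2(U_1))\cap Z(O_2(U_2))$ is centralized by $O_2(U_1)O_2(U_2)=O_2(U_1\cap U_2)$, and the action of $O_2(X_{23})$ on the natural module then forces it down to $Z(T)$. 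So the order of the proof matters, and doing (iv) first as you propose does not work.

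For (iii), your order-count plan is workable but laborious: pinning down $|O_2(U_1)\cap O_2(U_2)|$ exactly would require tracking both the natural and spin module contributions, as you yourself flag. The paper bypasses this with a short normal-subgroup argument. Since $(O_2(U_1)\cap O_2(U_2))'\le O_2(U_1)'\cap O_2(U_2)'=\langle x_1\rangle\cap\langle x_2\rangle=1$, the intersection is abelian; as $O_2(U_1)$ contains an extraspecial subgroup of order $2^9$, this gives $|O_2(U_1):O_2(U_1)\cap O_2(U_2)|\ge 2^4$. Then $O_2(U_1)O_2(U_2)/O_2(U_1)$ is a normal $2$-subgroup of $(U_1\cap U_2)/O_2(U_1)\cong X_{23}$ of order at least $2^4$, and Lemma~\ref{sp62natural}(iii) forces it to be all of $O_2(X_{23})$, giving equality. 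Your arguments for (i) and (ii) are correct and essentially match the paper's.
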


\begin{proof} From the definition of an $\F_4$ set-up in $G$, we have $T \le U_1 \cap U_2$. This proves (i).

Since $Z(U_i)/\langle x_i\rangle$ is a natural $U_i/O_2(U_i)$-module and $|Z(T)|=4$, Lemma~\ref{nonsplitmods} implies $Z(T) \le Z(U_1) \cap Z(U_2)$. Therefore, by Lemma~\ref{sp62natural} (iii),
\begin{eqnarray*}(U_1\cap U_2)/O_2(U_1\cap U_2)&=&C_{U_1}(Z(T))/O_2(C_{U_1}Z(T))\\ &=& C_{U_2}(Z(T))/O_2(C_{U_1}Z(T)) \cong \Sp_4(2).\end{eqnarray*}  Hence (ii) holds.

Since $$(O_2(U_1)
\cap O_2(U_2))' \le O_2(U_1)' \cap O_2(U_2)'= \langle x_1\rangle \cap \langle x_2\rangle=1,$$ $O_2(U_1)
\cap O_2(U_2)$ is abelian. Therefore,  as $O_2(U_1)$ contains an extraspecial subgroup of order $2^9$, we have $$|O_2(U_1): O_2(U_1)
\cap O_2(U_2)|\ge 2^4.$$  Furthermore, as $O_2(U_1)  O_2(U_2)/O_2(U_1) $ is normal in $(U_1\cap U_2)/O_2(U_1)$,   $O_2(U_1\cap U_2)=O_2(U_1)O_2(U_2)$ follows from Lemma~\ref{sp62natural} (iii). This is (iii).

Finally, since  $O_2(U_1\cap U_2)$ centralizes
$Z(O_2(U_1)) \cap Z(O_2(U_2))$, we deduce  $Z(T)= Z(O_2(U_1)) \cap Z(O_2(U_2))$ and this proves (iv).\end{proof}

Our method to prove Theorem~\ref{P=F4} is to use  the $\F_4$ set-up $U_1$, $U_2$, $T$ in $G$  to construct a chamber system of type $\F_4(2)$ using the subgroup  $P= \langle U_1, U_2\rangle$ of $G$. To accomplish this we first define  $P_1, P_2, P_3$ to be subgroups of $U_1$ containing $T$ such that $P_j/O_2(U_1)$, $j= 1,2,3$, are
the minimal parabolic subgroups  of $U_1/O_2(U_1)$ containing $T/O_2(U_1)$. We additionally let $P_4$ be such that
$U_2\ge P_4 \ge T$, $P_4 \not \le U_1$ and $P_4/O_2(U_2)$ is a minimal parabolic subgroup of $U_2/O_2(U_2)$.
For $\emptyset \neq \sigma \subseteq \{1,2,3,4\}$ we set $P_\sigma= \langle P_ j \mid j \in \sigma\rangle$.

  We may assume that notation has been chosen so that \begin{eqnarray*}P_{12}/O_2(P_{12})&\cong &\SL_3(2);\\
 P_{13}/O_2(P_{13})&\cong &\SL_2(2)\times \SL_2(2); \text{ and} \\
 P_{23}/O_2(P_{23})&\cong &\Sp_4(2).\\
 \end{eqnarray*}
 Note also that $P_j /O_2(P_j) \cong \SL_2(2)$ for $1 \le j \le 4$.
 By Lemma~\ref{p23} (ii), $P_{23}= U_1\cap U_2$ and $P= \langle P_1,P_2,P_3,P_4\rangle$.

 Set  $\mathcal I= \{1,2,3,4\}$,  and let $${\mathcal{C}} =
(P/T, (P/P_k), k \in \mathcal I)$$
\noindent  be the corresponding chamber system. Thus $\mathcal C$ is an edge coloured
graph  with colours from $\mathcal I=\{1,2,3,4\}$ and vertex set the right cosets $P/T$. Furthermore, two cosets
$Tg_1$ and $Tg_2$ form a $k$-coloured edge if and only if $Tg_2g_1^{-1} \subseteq P_k$. Obviously $P$ acts on
$\mathcal C$ by multiplication of cosets on the right and this action preserves the coloured edges.
For $\mathcal J
\subseteq \mathcal I$, set $P_{\mathcal J}
= \langle P_k \mid k \in \mathcal J\rangle$ and
$\mathcal C_\mathcal J = (P_{\mathcal J}/T, (P_{\mathcal J}/P_k), k \in \mathcal J)$. Then $\mathcal C_\mathcal
J$ is the $\mathcal J$-connected component of $\mathcal C$ containing the vertex $T$.

We will show  $\mathcal C$  locally resembles the corresponding chamber system in  $\F_4(2)$. This means that for $\sigma \subset \mathcal I$ with $|\sigma|=2$ we will show $P_{\sigma}/O_2(P_{\sigma})$ is isomorphic to the corresponding group in $\F_4(2)$. Since $U_1/O_2(U_1) \cong \Sp_6(2)$ this is true if $\sigma \subseteq \{1,2,3\}$. Hence we may assume that $4 \in \sigma$. There are two possibilities for the relationship between $P_2$ and $P_4$ (they are both contained in $U_2$), but we may  have $P_{24}/O_2(P_{24}) \cong \SL_3(2)$ or $P_{24} = P_2P_4$. We shall show that the latter is in fact the case. We will also prove  $P_{14} = P_1P_4$. This is the purpose of the next lemma.

\begin{lemma} \label{buildingF4} The subgroup  $Z_2(T)$ is normalized by $P_{14}$,  $P_{14} = P_1P_4$  and $P_{24}= P_2P_4$.
\end{lemma}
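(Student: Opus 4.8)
The plan is to locate $Z_2(T)$ inside both $U_1$ and $U_2$ and show that it is an $S$-invariant $2$-space of the natural $\Sp_6(2)$-module in each of them that is moreover normalized by the relevant minimal parabolics. First I would compute $Z_2(T)$ using the known module structure inside $U_1$. Since $T$ is Sylow in $U_1$ and $O_2(U_1)$ is the product of $Z(O_2(U_1))$ (elementary abelian of order $2^7$, carrying the $14$-dimensional module with a natural $6$-dimensional section $Z(O_2(U_1))/\langle x_1\rangle$ on top and the spin $8$-dimensional section $O_2(U_1)/Z(O_2(U_1))$ below) with an extraspecial group of order $2^9$, the centre $Z(T)$ is $\langle x_1,x_2\rangle$ by Lemma~\ref{p23}(iv), and the preimage in $T$ of $C_{T/O_2(U_1)}(\cdot)$ acting on the various layers can be read off from Lemmas~\ref{sp62natural} and \ref{sp62spin}. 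Concretely, $Z_2(T)$ will be the full preimage of the $1$-dimensional $S$-fixed subspace of the natural module sitting inside $Z(O_2(U_1))/\langle x_1\rangle$, intersected with the spin-module constraints; I expect $Z_2(T)$ to be elementary abelian of order $2^4$ or so, containing $Z(T)$ with $Z_2(T)/Z(T)$ a $2$-dimensional $S$-submodule of the natural module for $U_1/O_2(U_1)$. The key structural input is that $Z_2(T)$, viewed in $U_1$, is the preimage of an $S$-invariant $2$-space $V_2$ in the natural $\Sp_6(2)$-module, and by Lemma~\ref{sp62natural}(iii), $N_{U_1}(V_2) = (X_{13}$-analogue$) = P_{13}$, with $O^2(P_1)$ centralizing $V_2$; in particular $P_1$ normalizes $Z_2(T)$.

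Next I would show $P_4$ normalizes $Z_2(T)$. Here the point is that $Z_2(T) \le O_2(U_1\cap U_2) = O_2(U_1)O_2(U_2) = O_2(P_{23})$ by Lemma~\ref{p23}(iii), and symmetrically $Z_2(T)$ must be characterized inside $U_2$ as a $T$-invariant subgroup. Using the spin-module description (Lemma~\ref{sp62spin}) and natural-module description (Lemma~\ref{sp62natural}) for $U_2/O_2(U_2)$, $Z_2(T)$ corresponds to an $S$-invariant $2$-space $U_2'$ either in the natural or the spin module for $U_2/O_2(U_2)$; Lemma~\ref{sp62spin}(iii) gives $N_{U_2}(U_2') = X_{13}$-analogue for a spin $2$-space with $O^2(X_1)$ centralizing it, and Lemma~\ref{sp62natural}(iii) the natural-module statement. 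In either case $P_4$ (being one of the minimal parabolics of $U_2$) normalizes $Z_2(T)$: the content is that $Z_2(T)$, being characteristic in $T$, is normalized by $N_{U_2}(T) \supseteq$ a Sylow-normalizer, and then the parabolic $P_4$ that corresponds to the node fixing this particular $2$-space does too, while $P_{23}$ does automatically since $Z_2(T)\unlhd T$ and $Z_2(T)$ is, by the module description, normal in $P_{23}=U_1\cap U_2$. Since $P_1$ and $P_4$ both normalize $Z_2(T)$, so does $P_{14}=\langle P_1,P_4\rangle$; similarly $P_2$ and $P_4$ both normalize it (the $P_2$ part again from the module structure in $U_1$), giving $P_{24}$ normalizes $Z_2(T)$.

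Finally, with $P_{14} \le N_G(Z_2(T))$, I would identify $N_G(Z_2(T))$ well enough to conclude $P_{14} = P_1P_4$ and $P_{24}=P_2P_4$. The mechanism: inside $N_G(Z_2(T))$ the subgroup $C = C_G(Z_2(T))$ has index bounded by $|\mathrm{Aut}(Z_2(T))|$, and $P_1, P_4$ act on the small $2$-group $Z_2(T)$; comparing the actions of $P_1$ and $P_4$ on $Z_2(T)$ and on $Z_2(T)/Z(T)$ versus $Z(T)$ shows that $\langle P_1,P_4\rangle/O_2(\langle P_1,P_4\rangle)$ embeds into a group generated by two commuting $\SL_2(2)$'s — i.e. it cannot be $\SL_3(2)$ because the two $\SL_2(2)$ factors act on non-isomorphic/transverse pieces ($P_1$ moves a piece centralized by $O^2(P_4)$ and vice versa, by the ``$O^2(X_1)$ centralizes'' clauses of Lemmas~\ref{sp62natural}(iii) and \ref{sp62spin}(iii)), whereas $\SL_3(2)$ would force a single irreducible $3$-dimensional action. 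Hence $P_{14}/O_2(P_{14}) \cong \SL_2(2)\times\SL_2(2)$, equivalently $[P_1,P_4]\le O_2(P_{14})$ and a Frattini/order count gives $P_{14}=P_1P_4$; the argument for $P_{24}=P_2P_4$ is identical with $P_2$ in place of $P_1$.

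The main obstacle I anticipate is the very first step: pinning down $Z_2(T)$ precisely — its order and its position in the $14$-dimensional module for $U_1$ and simultaneously in the module structure for $U_2$ — and verifying that the \emph{same} subgroup $Z_2(T)$ is ``natural-module-like'' on one side and correctly placed on the other, so that the ``$O^2$ centralizes'' clauses genuinely apply to $P_1$ (resp.\ $P_2$) and to $P_4$ on complementary sections. Everything after that is a short action-on-a-small-$2$-group argument, but getting the module bookkeeping exactly right, using Lemmas~\ref{nonsplitmods}, \ref{sp62natural} and \ref{sp62spin} in concert, is where the real work lies.
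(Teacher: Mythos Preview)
Your overall plan --- pin down $V := Z_2(T)$, show $P_1$ and $P_4$ each normalize it, then analyse the action of $\langle P_1, P_4\rangle$ on $V$ --- matches the paper's. The first two steps go through roughly as you anticipate, though your description of $V$ needs correcting: $V$ has order $2^4$ but is \emph{not} contained in $Z(O_2(U_1))$. One has $V = (V\cap Z(O_2(U_1)))(V\cap Z(O_2(U_2)))$ with each factor of order $2^3$, so $V$ picks up the $1$-dimensional $T$-fixed subspace of the spin section $O_2(U_i)/Z(O_2(U_i))$ for each $i$, not just a natural-module $2$-space. The paper then shows $P_1$ normalizes $V$ by computing $\langle V^{P_1}\rangle = V$ via size bounds together with the ``$O^2$ centralizes'' clauses, rather than by quoting a stabilizer result directly.

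The genuine gap is in your last step. Once $P_{14}$ acts on the order-$2^4$ group $V$ with $O_2(P_1)$ and $O_2(P_4)$ mapping to transvections having distinct centres $\langle x_1\rangle$, $\langle x_2\rangle$, the resulting subgroup of $\GL_4(2)\cong\Alt(8)$ is either $\SL_2(2)\times\SL_2(2)$ or $\mathrm O_4^-(2)\cong\Sym(5)$ --- \emph{not} $\SL_3(2)$, which is the alternative you name. Your ``transverse pieces'' heuristic does not eliminate $\Sym(5)$. First, neither $P_1$ nor $P_4$ normalizes $Z(T)$: for instance $P_1\le U_1$ fixes $\langle x_1\rangle$ but not $\langle x_1,x_2\rangle$, since the stabilizer of a natural-module $1$-space is $P_{23}$ by Lemma~\ref{sp62natural}(iii); so there is no $P_{14}$-invariant flag $Z(T)<V$ on which to separate the two actions. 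Second, in the $4$-dimensional deleted permutation module for $\Sym(5)$, two $\Sym(3)$'s on overlapping $3$-sets each move a $2$-space centralized by the other's $O^2$, yet they still generate all of $\Sym(5)$; so ``$P_1$ moves a piece centralized by $O^2(P_4)$ and vice versa'' does not force commutation.

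The paper eliminates $\mathrm O_4^-(2)$ via an extra construction: it builds an elementary abelian $W$ of order $2^5$ containing $V$, shows $W$ is normalized by all of $P_{124}$ (this computation inside $U_2$, using Lemma~\ref{sp62spin}(iii), is also what yields $P_{24}=P_2P_4$), and then argues that if $P_{14}/O_2(P_{14})\cong\mathrm O_4^-(2)$ then $P_{124}$ would act irreducibly on $W$, while an orbit count on $W^\#$ forces $\langle x_1^{P_{124}}\rangle \le V$, a contradiction. This auxiliary-module argument is the missing ingredient in your proposal.
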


\begin{proof}  Let $V= Z_2(T)$. Then, by Lemma~\ref{p23} (iv),  $V\cap Z(O_2(U_2)) \not \le Z(O_2(U_1))$.

 As $C_{O_2(U_1)/Z(O_2(U_1))}(T)$ has order $2$ by Lemma~\ref{sp62spin} and $|V \cap Z(O_2(U_2))|=2^3$ by Lemma~\ref{sp62natural}, we deduce   $V= (V \cap Z(O_2(U_1)))(V\cap Z(O_2(U_2)))$ has order $2^4$ as $Z(T) =Z(O_2(U_1))\cap Z(O_2(U_2))$.

Using  Lemmas~\ref{sp62natural} and \ref{sp62spin},   $V\cap Z(O_2(U_1))$ and $VZ(O_2(U_1))$ are both normalized by $P_1$.  Set  $$W= \langle V{^{P_{1}}}\rangle.$$ Then, as the set $V^{P_1}$ has size at most $3$, $W /(V\cap Z(O_2(U_1)))$ has order at most $2^3$ and  $W=V(W\cap Z(O_2(U_1)))$.  Since $(W\cap Z(O_2(U_1)))/(V\cap Z(O_2(U_1)))$ has order at most $2^2$, Lemma~\ref{sp62natural} implies  $O^2(P_1)$ centralizes  $(W\cap Z(O_2(U_1)))/(V\cap Z(O_2(U_1)))$. But then $W/(V\cap Z(O_2(U_1)))$ is centralized by $O^2(P_1)$. Thus $W=V$.  We may apply the same argument to $U_2$ to see that  $P_4$ also normalizes $V$ and so deduce that $P_{14}$ acts on $V$ which has order $2^4$.

We have $[V,O_2(P_1)] \le Z(O_2(U_1)) \cap Z(O_2(U_2))= Z(T)$. Hence, as $[V,O_2(P_1)]$ is normalized by $P_1$, $[V,O_2(P_1)]= \langle x_1\rangle$. Similarly $  \langle x_2\rangle = [V,O_2(P_4)]$. Therefore $O_2(P_1) \cap O_2(P_4)$ centralizes $V$ and has index $4$ in $T$. Thus  $C_T(V)= O_2(P_1) \cap O_2(P_4)$. In particular, $O_2(P_1)$ acts as a transvection on $V$. Hence $C_V(O_2(P_1))$ has order $2^3$ and so $C_V(O_2(P_1))= V \cap Z(U_1)$ and $C_V(O_2(P_4))= V \cap Z(O_2(U_2))$. Because $C_G(V) \le U_1$, we have also shown $C_G(V)= O_2(P_1) \cap O_2(P_4)$.

Set $$D=\langle O_2(P_1)^{N_G(V)}, O_2(P_4)^{N_G(V)}\rangle C_G(V) /C_G(V).$$
Then $D \cap U_1 = P_1$ and, as  $x_1$ has at most 15 conjugates under the action of $D$, $|D| \leq 12\cdot 15$. The structure of $\Alt(8) \cong\GL_4(2)$  therefore shows $D \cong  \SL_2(2)\times \SL_2( 2),$ or  $\mathrm O_4^-(2)\cong \Sym(5)$.

Let $Q_{12}=O_2(P_{12})$, $W_1$ be the preimage of $C_{Z(O_2(U_1))/\langle x_1\rangle}(Q_{12})$ and define $W= W_1V$. Then $W$ is elementary abelian of order $2^5$. Since $V=(V \cap Z(O_2(U_1)))(V\cap Z(O_2(U_2)))$,
\begin{eqnarray*}
[W,Q_{12}]&=&[W_1(V \cap Z(O_2(U_1)))(V\cap Z(O_2(U_2))),Q_{12}]\\& \le &\langle x_1\rangle [(V \cap Z(O_2(U_1)))(V\cap Z(O_2(U_2))),Q_{12}]\\&=&\langle x_1\rangle [(V\cap Z(O_2(U_2))),Q_{12}]\\&\le &\langle x_1\rangle [(V\cap Z(O_2(U_2))),T] \\&=&\langle x_1\rangle [(V\cap Z(O_2(U_2))),O_2(U_1)O_2(P_4)]\\
&=&\langle x_1\rangle [(V\cap Z(O_2(U_2))),O_2(U_1)]=\langle x_1\rangle.\\
\end{eqnarray*}
As $O_2(U_1)/Z(O_2(U_1))$ is a spin module for $\Sp_6(2)$, $$C_{O_2(U_1))/Z(O_2(U_1))}(Q_{12})= WZ(O_2(U_1))/Z(O_2(U_1))$$ by Lemma~\ref{sp62spin}.  We deduce that $W$ is the preimage of $C_{O_2(U_1)/\langle x_1\rangle}(Q_{12})$ and thus $W$ is normalized by $P_{12}$. Since $Z(O_2(U_1)) \cap Z(O_2(U_2)) = Z(T)$, we have $WZ(O_2(U_2))/Z(O_2(U_2))$ has order $2^2$. It follows from Lemma~\ref{sp62spin} that $O^2(P_{4})$ centralizes $WZ(O_2(U_2))/Z(O_2(U_2))$. Let $W_2= \langle W^{P_4}\rangle$. Then $W_2= W(W_2 \cap Z(O_2(U_2)))$. Since $W/V$ has order $2$, we infer that $W_2/V$ has order at most $2^3$. Thus $(W_2 \cap Z(O_2(U_2)))/(V\cap Z(O_2(U_2)))$ has order at most $2^2$. It follows from Lemma~\ref{sp62natural} that  $(W_2 \cap Z(O_2(U_2)))/(V\cap Z(O_2(U_2)))$ is centralized by $O^2(P_4)$. Therefore $W/V$ is normalized by $TO^2(P_4)= P_4$. This shows that $W$ is normalized by $P_{124}$. Notice that along the way  we have shown that $P_{24}=P_2P_4$.

Suppose that $P_{14}/O_2(P_{14})\cong \mathrm O_4^-(2)$. Then $P_{14}$ acts irreducibly on $V$ and so, as $P_{12}$ does not normalize $V$,  $W$ is an irreducible $P_{124}$-module.  As $P_{14}$ has orbits of length $10$ and $5$ on $V$ and $Z(T) \leq V$, we have that $P_{14}$ does not centralize any element in $W \setminus V$ and so $P_{14}$ acts transitively on the 16 elements of $W \setminus V$. This means the orbits of $P_{14}$ on the involutions of $W$  have lengths 5, 10 and 16.  Since $5$ divides the order of $D$, we get that the number of conjugates of $x_1$ under $P_{124}$ is divisible by $5$ and, as $|x_1^{P_{12}}|=10$,  we conclude $|x_1^{P_{124}}|=10$ or 15.
But then $V = \langle x_1^{P_{124}} \rangle$, contradicting the fact that $P_{124}$ acts  irreducibly on $W$. Hence $P_{14}/O_2(P_{14})\cong \SL_2(2)\times \SL_2(2)$ with $P_{14}=P_1P_4$ and this concludes the proof of the lemma.
\end{proof}

\begin{proof}[Proof of Theorem~\ref{P=F4}]  Using Lemma~\ref{buildingF4} and the observations before the lemma  yields that the chamber systems $\mathcal C_{1,2}$, $\mathcal C_{3,4}$ are projective planes, $\mathcal C_{2,3}$ is a generalized quadrangle and in both cases the  parameters are $3,3$ and the remaining  $\mathcal  C_J$ with $|J| = 2$ are all complete bipartite graphs again with parameters $3,3$. Thus $\mathcal C$ is a chamber system of type $\F_4$ (see \cite{local}) in which all panels have $3$ chambers.
Since $U_1/O_2(U_1)\cong \Sp_6(2) \cong U_2/O_2(U_2)$,  we have $\mathcal C_{1,2,3}$  and $\mathcal C_{2,3,4}$ are the  $\Sp_6(2)$-building.
Hence, as each connected rank $3$ residue  of $\mathcal C$ is a building of type $\mathrm C_3$
 and all the rank $2$  residues of $\mathcal C$ are Moufang polygons, applying \cite[Corollary 3]{local}  yields
 that the universal covering   $\pi : {\mathcal{C}}^\prime \longrightarrow {\mathcal{C}}$  has
$\mathcal{C}^\prime$ a building of type $\F_4$ which also has three chambers on each panel.
  By \cite[Proof of Theorem 10.2 on page 214]{Tits} this building is uniquely determined by the two
  residues of rank three with connected diagram. Thus $\mathcal C^\prime$ is isomorphic to the $\F_4(2)$ building and the type preserving automorphism group $F$ of $\mathcal C'$ is isomorphic to $\F_4(2)$.
Since $\mathcal C'$ is a $2$-cover of $\mathcal C$,  there is a subgroup $U$ of
$F$ such that $U$ contains $U_1$ and $U/D \cong P$ for a suitable normal subgroup
 $D$ of $U$. As $U_1$ is isomorphic to a maximal parabolic subgroup of $F$,  we deduce that $U = F$ and $D=1$.  Thus $P \cong F$.
 \end{proof}

\section{The structure of $M$}

 From now on we suppose that $G$ is a group which satisfies the assumptions of Theorem ~\ref{MT}. We  set $M = N_G(Z)$.  So $C_G(Z)$ has index at most $2$ in $M$. Let $S \in \syl_3(M)$
and $Q= F^*(M)= O_3(M)$.

\begin{lemma}\label{basic} We have $Z=Z(S)=Z(Q)$, $N_G(S)\le M$ and $S \in \syl_3(G)$.
\end{lemma}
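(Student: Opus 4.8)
The plan is to exploit the hypothesis that $C_G(Z)$ is similar to a $3$-centralizer in a group of type $\PSU_6(2)$ or $\F_4(2)$, so that $Q=F^*(M)$ is extraspecial of order $3^5$ with $Z(Q)=Z(M)=Z$. Since $Q$ is extraspecial of order $3^5$ and exponent $3$ (as $p=3$ is odd), we have $Z(Q)=Q'=\Phi(Q)$ has order $3$, and $Z=Z(X)=Z(F^*(X))$ for $X=C_G(Z)$ forces $Z(Q)=Z$. First I would establish $Z=Z(S)$. Since $S\in\syl_3(M)$ and $Q=F^*(M)=O_3(M)\le S$ with $C_M(Q)\le Q$, the subgroup $Z=Z(Q)$ is centralized by $S$ (because $S$ normalizes $Q$ and $Q/Z$ is elementary abelian acted on by $S$, but more directly $Z$ is the unique minimal characteristic subgroup visible as $Z(Q)$ and $S$ normalizes $Q$, hence $S$ normalizes $Z$, and since $|Z|=3$, $S$ centralizes $Z$). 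So $Z\le Z(S)$. Conversely, if $Z<Z(S)$ then $C_S(Z)=S$ contains an element $z$ of order $3$ with $\langle z\rangle\ne Z$ centralizing $S$, hence centralizing $Q\cap Z(S)\ni$ ... — more cleanly, $Z(S)$ centralizes $Q$ only if $Z(S)\le C_M(Q)\le Q$, and then $Z(S)\le Z(Q)=Z$; but in general $Z(S)$ need not centralize $Q$. Instead I would argue: $Z(S)$ is normalized by $S$, and $C_Q(Z(S))$ is a nontrivial $S$-invariant subgroup of $Q$, so contains $Z=Z(Q)$. If $Z(S)\not\le Q$, pick the image of $Z(S)$ in $S/Q$; this centralizes $Q/Z$ nontrivially, but the action of $S/Q$ on $Q/Z$ (which by condition (ii) of the definition, via $\Q_8\times\Q_8\unlhd M/Q$ acting faithfully) has no nontrivial fixed points for elements of $S/Q$ — this last point needs $Q/Z$ to be a faithful $S/Q$-module with $C_{Q/Z}(S/Q)=0$, which follows from the structure of the $\Q_8\times\Q_8$-action on the $4$-dimensional symplectic space $Q/Z$. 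So $Z(S)\le Q$, whence $Z(S)\le Z(Q)=Z$, giving $Z=Z(S)$.

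Next, for $N_G(S)\le M$: condition (L2)-type reasoning is exactly what $Z$ not being weakly closed buys us indirectly, but the cleanest route is that $Z=Z(S)$ is characteristic in $S$, so $N_G(S)\le N_G(Z(S))=N_G(Z)=M$. This is immediate once $Z=Z(S)$ is in hand, since any $g\in N_G(S)$ normalizes $Z(S)=Z$, hence lies in $N_G(Z)=M$.

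For $S\in\syl_3(G)$: suppose not, so $S<N_T(S)$ for some Sylow $3$-subgroup $T$ of $G$ containing $S$ (using that a $p$-subgroup is always properly contained in its normalizer in any larger $p$-group). Then there is $g\in N_T(S)\setminus S$ with $g\in N_G(S)\le M$, so $g\in M$; but $g$ is a $3$-element of $M$ normalizing $S\in\syl_3(M)$, hence $g\in S$ by Sylow's theorem applied inside $M$ — contradiction. Therefore $S\in\syl_3(G)$.

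The main obstacle will be the very first claim $Z=Z(S)$, specifically ruling out $Z(S)\not\le Q$: one must use the precise module structure of $Q/Z$ as an $\F_3(M/Q)$-module, namely that the normal $\Q_8\times\Q_8$ acts on the $4$-dimensional space $Q/Z$ as the tensor/sum of the two $2$-dimensional faithful $\Q_8$-modules over $\F_3$, and that this action (extended by $\Sym(3)$ or $2\times\Sym(3)$ on top) has the property that every element of $S/Q$ acts without nonzero fixed points on $Q/Z$. Equivalently $C_Q(s)=Z$ for every $s\in S\setminus Q$, which is a concrete computation with $\Q_8$ acting on $\F_3^2$. Once that fixed-point-free property is recorded (it will be needed repeatedly later and presumably is part of the structure of $M$ developed in Lemma~\ref{structM}), the rest of Lemma~\ref{basic} is formal Sylow theory.
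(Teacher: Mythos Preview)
Your arguments for $N_G(S)\le M$ and $S\in\Syl_3(G)$ are correct and match the paper. The trouble is entirely in your treatment of $Z=Z(S)$, where you in fact state the right one-line argument and then talk yourself out of it.

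You write: ``$Z(S)$ centralizes $Q$ only if $Z(S)\le C_M(Q)\le Q$, and then $Z(S)\le Z(Q)=Z$; but in general $Z(S)$ need not centralize $Q$.'' The last clause is simply false. Since $Q=O_3(M)\le S$ and $Z(S)$ centralizes all of $S$, certainly $Z(S)$ centralizes $Q$. Hence $Z(S)\le C_M(Q)\le F^*(M)=Q$, so $Z(S)\le Z(Q)=Z$; together with $Z\le Z(S)$ (which you justify correctly) this gives $Z=Z(S)$. That is exactly the paper's proof.

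Your proposed replacement is not only unnecessary but wrong on two counts. First, you assert that every nontrivial element of $S/Q$ acts fixed-point-freely on $Q/Z$, equivalently $C_Q(s)=Z$ for all $s\in S\setminus Q$. This cannot hold: $S/Q$ is a $3$-group acting on the $\GF(3)$-space $Q/Z$, so every element has a nontrivial fixed subspace (and indeed Lemma~\ref{Z weak Q}(v) later shows $C_{Q/Z}(S)=A/Z$ has order $9$). You may be thinking of the fixed-point-free action of the $2$-group $\Q_8\times\Q_8$ on $Q/Z$, but $S/Q$ is a $3$-group. Second, the structure of $S/Q$ and its action on $Q/Z$ are only determined later (Lemmas~\ref{Z weak Q} and~\ref{structM}), so invoking them here would be circular.
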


\begin{proof}  Since $C_M(Q) \le F^*(Q)= Q$, we have that $Z= Z(Q)=Z(S)$.  Therefore  $N_G(S) \le N_G(Z)= M$ and, in particular,
$S \in\Syl_3(N_G(S))\subseteq \Syl_3(G)$.
\end{proof}

Let $R^*$ be a normal subgroup of $C_H(Z)$ such that $R^*/Q\cong \Q_8 \times \Q_8$ and let  $R \in
\Syl_2(R^*)$. We have that $M/Q$ embeds into $\Out(Q)$ and $\Out(Q)$ is isomorphic to  $\GSp_4(3)$
by \cite[III(13.7)]{Hu}. We now locate $M/Q$ in $\Out(Q)$. We will show that $M/QR$ is isomorphic to $\Sym(3)$ or $2 \times \Sym(3)$, more precise  information will be presented in Lemma~\ref{structM}. The next lemma provides our initial restriction on the structure of $M$.

\begin{lemma}\label{U6F4}
We have that $M/Q$ normalizes $R^*/Q$ and is isomorphic to a subgroup of  the subgroup $\mathbf M$  of
$\GSp_4(3)$ which preserves a decomposition of the natural $4$-dimensional symplectic space over $\GF(3)$ into a
perpendicular sum of two non-degenerate $2$-spaces. Furthermore, $R/Q$ maps to $O_2(\mathbf M)$.
\end{lemma}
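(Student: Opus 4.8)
The aim is to identify $M/Q$ inside $\Out(Q) \cong \GSp_4(3)$ as a subgroup of the stabiliser $\mathbf{M}$ of an orthogonal decomposition of the natural symplectic $\GF(3)$-space into two non-degenerate $2$-spaces, and to show $R/Q$ maps into $O_2(\mathbf{M})$. The key fact to exploit is the definition of ``similar to a $3$-centralizer'': $Q = F^*(C_G(Z))$ is extraspecial of order $3^5$ with $Z(Q) = Z$, and $C_G(Z)/Q$ — hence $M/Q$ modulo an index $\le 2$ subgroup — has a normal subgroup $R^*/Q \cong \Q_8 \times \Q_8$. Since $C_M(Q) \le Q$ by (L1)-type considerations (Lemma~\ref{basic}), $M/Q$ embeds into $\Aut(Q)/\Inn(Q) \le \Out(Q) \cong \GSp_4(3)$, the latter isomorphism coming from \cite[III(13.7)]{Hu}; the symplectic space here is $Q/Z$ of dimension $4$ over $\GF(3)$, with form induced by commutation.

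First I would pin down the action of $R^*/Q \cong \Q_8 \times \Q_8$ on $V := Q/Z$. Write $R^*/Q = A \times B$ with $A \cong B \cong \Q_8$. Each $\Q_8$ has a faithful irreducible $\GF(3)$-module only in dimension $2$ (its unique faithful irreducible over $\GF(3)$ is $2$-dimensional, realising $\Q_8 \le \SL_2(3)$), and $V$ is a faithful $\GF(3)[A\times B]$-module of dimension $4$. Since $Z(A) = Z(B) = Z(R^*/Q)$ must act as $\pm 1$ and faithfully, the central involution acts as $-1$ on $V$; so $V$ decomposes under $A \times B$ as a tensor-type or sum-type module. A short argument — e.g. decomposing $V$ into isotypic components for $A$ and noting $B$ permutes them while $Z(B)$ acts as $-1$ — forces $V = V_A \oplus V_B$ where $V_A = C_V(B)$, $V_B = C_V(A)$, each $2$-dimensional, with $A$ acting faithfully (as $\Q_8 \le \SL_2(3)$) on $V_A$ and trivially on $V_B$, and symmetrically for $B$. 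Because $A \le \SL_2(3)$ preserves a symplectic (hence, in dimension $2$, the unique up to scalar) form on $V_A$, and the global symplectic form on $V$ restricts non-degenerately to each $V_A, V_B$ (they are the radicals' complements — one checks $V_A^\perp \cap V_A = 0$ since otherwise $A$ would centralise a nonzero vector), the decomposition $V = V_A \perp V_B$ is an orthogonal sum of non-degenerate $2$-spaces. Hence $R^*/Q$, and so $R/Q = O_2(R^*/Q)$, lies in the stabiliser $\mathbf{M}$ of this decomposition, and in fact $R/Q$ (which is a $2$-group) lies in $O_2(\mathbf{M}) = (\Q_8 \times \Q_8)$-part, i.e. the quaternion normal subgroup of $\mathbf{M}$; this gives the final sentence.

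Next, to see that all of $M/Q$ normalises $R^*/Q$ and preserves the decomposition $\{V_A, V_B\}$: $R^*/Q \trianglelefteq C_G(Z)/Q$ by hypothesis, and $M/C_G(Z)$ has order $\le 2$, so $R^*/Q$ is normalised by $M$ provided we check the outer part of $M$ does not destroy it. But the pair $\{V_A, V_B\}$ is canonically attached to $R^*/Q$ — it is the set of homogeneous components — so any automorphism of $Q$ normalising $R^*/Q$ permutes $\{V_A, V_B\}$; thus $M/Q$ maps into $N_{\GSp_4(3)}(\{V_A,V_B\}) = \mathbf{M}$ once we know $M$ normalises $R^*/Q$. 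If the outer involution of $M$ did not normalise $R^*/Q$ one would get $(R^*/Q)^{M}$ generating a larger $2$-local in $\GSp_4(3)$; since $R^*/Q$ already contains a Sylow $2$-subgroup of $\mathbf{M}$ up to its Frattini, and Sylow $2$-subgroups of $\GSp_4(3)$ are not much bigger, this is quickly excluded — alternatively, and more cleanly, $R^*$ may be taken as a \emph{characteristic} subgroup of $C_G(Z)$ (it is determined, e.g., as $O^{3'}$ of the preimage of $O_2$ of the relevant quotient, or as generated by the elements acting as $\SL_2(3)$-elements on $Q/Z$), so $M$ normalises $R^*$ outright. I would phrase the lemma's proof to select $R^*$ so that this normality is automatic.

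\textbf{Main obstacle.} The only genuinely delicate point is the module-theoretic step: showing that the $4$-dimensional faithful $\GF(3)[\Q_8 \times \Q_8]$-module $V$ must split as $V_A \perp V_B$ with each factor non-degenerate and each $\Q_8$ acting as $\SL_2(3)$ on its factor — in particular ruling out that $V$ is (the inflation of) a genuinely tensor-decomposable module or that the form pairs $V_A$ with $V_B$. This is where one uses that $Z(A) = Z(B)$ in $R^*/Q$ (a single central involution acting as $-1$), that $\dim V = 4$ is exactly $2 + 2$, and that the symplectic form is invariant: if $V_A$ were totally isotropic then $A$ would act on the $2$-dimensional $V/V_A \cong V_A^*$ dually, but then $A$ centralises $V_A \cap V_A^\perp = V_A \ne 0$ in the form pairing, contradicting faithfulness after identifying $V_A^\perp = V_A$. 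Once non-degeneracy of $V_A$ is in hand the rest is bookkeeping with $\SL_2(3) = \mathbf{M}'$-structure and the known order of $\mathbf{M} = (\Q_8 \ast \Q_8).(\text{something})$ inside $\GSp_4(3)$.
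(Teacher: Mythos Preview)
The paper itself does not prove this lemma; it cites \cite[Lemma 3.1]{PS1}. So your attempt is being compared against what such a proof must contain, not against text in this paper.

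Your overall strategy is correct: embed $M/Q$ in $\GSp_4(3)$, analyse the action of $R^*/Q\cong\Q_8\times\Q_8$ on $V=Q/Z$, extract a canonical orthogonal decomposition $V=V_1\perp V_2$ into non-degenerate $2$-spaces, and then argue that $M/Q$ permutes $\{V_1,V_2\}$. That is exactly the right shape.

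There is, however, a genuine error in your module analysis. You repeatedly write ``$Z(A)=Z(B)=Z(R^*/Q)$'' and speak of ``a single central involution acting as $-1$''. That is the central product $\Q_8*\Q_8=2^{1+4}_+$, not the direct product $\Q_8\times\Q_8$. In the direct product, $Z(R^*/Q)=Z(A)\times Z(B)\cong C_2\times C_2$ has \emph{three} involutions $z_A$, $z_B$, $z_Az_B$. The correct statement is that exactly one of them, necessarily $z_Az_B$, acts as $-I$ on $V$ (if $z_A=-I$ then $B$ must embed in $\mathrm{End}_A(V)^\times\cong\GL_2(3)$ with $z_B\mapsto -I$, forcing $z_A=z_B$, a contradiction; and a short eigenspace count rules out the case where none is $-I$). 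Then $V_1=C_V(z_A)=[V,z_B]$ and $V_2=C_V(z_B)=[V,z_A]$ are each $2$-dimensional, perpendicular, and non-degenerate.

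A related slip: you set $V_A=C_V(B)$ and $V_B=C_V(A)$ and claim these are $2$-dimensional. That depends on which direct-product decomposition $R^*/Q=A\times B$ you choose, and there are several (for instance $A=\langle (i,z),(j,1)\rangle$ inside the standard $\Q_8\times\Q_8$ has $C_V(A)=0$). The decomposition that is canonical is the eigenspace decomposition for $Z(R^*/Q)$ just described, and indeed one then checks $R^*/Q=O_2(\Sp(V_1))\times O_2(\Sp(V_2))=O_2(\mathbf M)$ regardless of how one labels the factors.

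Finally, for ``$M$ normalizes $R^*$'': your instinct to make $R^*/Q$ characteristic in $C_G(Z)/Q$ is right, and once the module analysis is fixed it becomes easy. Any normal $\Q_8\times\Q_8$ in $C_G(Z)/Q$ determines a decomposition $\{V_1,V_2\}$, and since $R^*/Q$ already acts irreducibly on each $V_i$, any such decomposition must equal $\{V_1,V_2\}$; hence $R^*/Q$ is the unique normal $\Q_8\times\Q_8$ in $C_G(Z)/Q$, so is characteristic there, so is normal in $M$.
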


\begin{proof} See \cite[Lemma 3.1]{PS1}.
\end{proof}

We next  introduce  a substantial amount of notation. We will use this for the remainder of the paper. We note now that the subgroups $Q_1$ and $Q_2$ defined below will be shown to have order $3^3$ in Lemma~\ref{Qaction}.

\begin{notation}\label{nota}
 \begin{enumerate}  \item
Define $R_1$ and $R_2$ to be the two  subgroups of $R$ isomorphic to $\Q_8$ which map to normal subgroups of
$C_{\mathbf M}(Z(R)Q/Q)$.
 \item For $i=1, 2$, let  $r_i \in Z(R_i)^\#$  and $K_i = C_G(r_i)$.
\item  For $i=1,2$, define $$Q_i= [Q,R_i].$$
\item  For $i=1,2$, let $A_i \le Q_i$ be a fixed $S$-invariant subgroup of $Q_i$ of order $3^2$ and set $A=A_1A_2$.
\item  For $i=1, 2$, we let $$\langle \rho_i \rangle \le A_i$$ be such that $\langle \rho_i\rangle$ is inverted by
$r_i$.
\item  Set $J = C_S(A)$ and $L = N_G(J)$.
\end{enumerate}
\end{notation}

Most of this paper is devoted to the determination of $K_1$ and $K_2$. We will show that $K_i$ is similar to a $2$-centralizer in a group of type  $\F_4(2)$ as defined in Definition~\ref{F4cent} and, for $T \in \syl_2(K_1)$, show that $K_1,K_2$ and $T$ is an $\F_4$ set-up. We then use Theorem~\ref{P=F4} to obtain a subgroup $P \cong \F_4(2)$ of $G$.  Our interim goal to achieve this objective is to show that $C_G(\rho_i)$ is isomorphic to  the corresponding centralizer in $\F_4(2)$ or $\Aut(\F_4(2))$. We eventually do this in Lemma~\ref{ItsSp62}.  However we begin more modestly by determining the precise structure of $M$.

\begin{lemma}\label{Qaction} The following hold.
\begin{enumerate}
\item $|S/Q|\le 3^2$.
\item $Q_1=C_Q(r_2)$ and $Q_2= C_Q(r_1)$ and both are normal in $S$; and
\item $Q_1 \cong Q_2 \cong 3^{1+2}_+$, $[Q_1,Q_2]=1$ and $Q=Q_1Q_2$;
\item $A$ is elementary abelian of order $3^3$.
\end{enumerate}
In particular, $Q$ has exponent $3$.
\end{lemma}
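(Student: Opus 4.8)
The plan is to use the embedding $M/Q \hookrightarrow \mathbf{M} \le \GSp_4(3)$ from Lemma~\ref{U6F4}, where $\mathbf{M}$ is the stabiliser of a decomposition $V = V_1 \perp V_2$ of the natural symplectic space into non-degenerate $2$-spaces, so that $\mathbf{M}$ has shape $(\GSp_2(3) \wr 2)$-like with $O_2(\mathbf{M}) \cong \Q_8 \times \Q_8$ acting with $R_i$ inducing $\SL_2(3)$-scalars on $V_i$ and trivially on $V_{3-i}$. First I would read off (i): a Sylow $3$-subgroup of $\GSp_4(3)$ has order $3^3$, and $Q$ already accounts for one factor of $3$ in $S$ while a Sylow $3$-subgroup of $\mathbf{M}$ has order $3^2$ (one from each $\SL_2(3)$-factor, the wreathing $2$ contributing no $3$), whence $|S/Q| \le 3^2$.

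Next, for (ii), observe that $r_i$ maps to the central involution of the $i$-th $\Q_8$-factor, so on $V$ it acts as $-1$ on $V_i$ and as $+1$ on $V_{3-i}$. Since $R^*/Q \cong \Q_8 \times \Q_8$ and $Q = F^*(M)$ is extraspecial of order $3^5$ with $Z = Z(Q)$, the module $Q/Z$ is the natural symplectic $\GF(3)$-module for $R^*/Q$ acting through $\mathbf{M}$ (this is exactly the content of Lemma~\ref{U6F4}'s identification); hence $C_{Q/Z}(r_i)$ is the preimage of $V_{3-i}$, a $2$-dimensional space, and $C_Q(r_i)$ has order $3^3$. Coprime action of $\langle r_i\rangle$ on $Q$ gives $Q = C_Q(r_i)[Q,r_i]$ with $[Q,r_i] = [Q,R_i] = Q_i$, since $R_i/Q$ centralizes $V_{3-i}$ and acts fixed-point-freely on $V_i$; thus $Q_{3-i} = C_Q(r_i)$ and $|Q_i| = 3^3$. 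Normality in $S$ follows because $S$ normalizes each $Q_i = [Q,R_i]$ (as $S$ normalizes $R_i Q/Q$, these being characteristic in the image of $S$ in $\mathbf M$ — or, if $S$ swaps $R_1,R_2$, then it still permutes $\{Q_1,Q_2\}$, but one checks from the structure of $\mathbf M$ that the swapping elements lie outside $S$; care is needed here).

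For (iii): $Q_i$ has order $3^3$, is non-abelian (it contains $Z = Q'$ since $[Q_i,Q_i]$ maps onto a non-degenerate form on $V_i \ne 0$, forcing $Q_i' = Z$) and has exponent $3$ because $Q$ does — but establishing $\exp(Q)=3$ is precisely the "in particular" clause, so instead I would argue $Q_i \cong 3^{1+2}_+$ directly: a non-abelian group of order $3^3$ is either $3^{1+2}_+$ or $3^{1+2}_-\cong$ the cyclic-by-cyclic one of exponent $9$; the latter has cyclic derived-quotient-complement structure incompatible with $R_i \cong \Q_8$ acting with $C_{Q_i/Z}(R_i) = 1$ and inducing the full $\SL_2(3)$ on $Q_i/Z$ (which $\Out(3^{1+2}_-)$, a $2$-group of order $6$... actually $\cong \GL_1(3)\times$something, does not admit). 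So $Q_i \cong 3^{1+2}_+$, and then $Q = Q_1 Q_2$ with $[Q_1,Q_2]=1$ because $V_1 \perp V_2$ means the corresponding preimages commute modulo $Z$ and $[Q_1,Q_2] \le Z$ forces, by a commutator-pairing count, $[Q_1,Q_2]=1$; and $Q_1 \cap Q_2 = Z$ from orders, giving $Q = Q_1 \circ Q_2$, which has exponent $3$. Finally (iv): $A_i \le Q_i$ is a chosen $S$-invariant subgroup of order $3^2$; since $Q_i \cong 3^{1+2}_+$ has all its order-$9$ subgroups elementary abelian and $[A_1,A_2] \le [Q_1,Q_2] = 1$ with $A_1 \cap A_2 \le Q_1 \cap Q_2 = Z$, while $A_i$ is maximal in $Q_i$ hence $Z \le A_i$, we get $A = A_1A_2$ elementary abelian with $|A| = 3^2\cdot 3^2/3 = 3^3$.

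I expect the main obstacle to be the bookkeeping around which elements of $S$ (or $M$) normalize versus swap $R_1, R_2$ and correspondingly $Q_1, Q_2$: getting $Q_1, Q_2 \unlhd S$ requires knowing that the image of $S$ in $\mathbf{M}$ lies in the "non-swapping" subgroup, which should follow from $|S/Q| \le 3^2$ being a $3$-group together with the wreathing involution having order $2$, so that $S$ maps into $O^{2}(\mathbf M)$-ish part — but this needs to be said carefully using Lemma~\ref{U6F4} and the explicit structure of $\mathbf{M}$. The module-theoretic identification of $Q/Z$ as the natural $\GSp_4(3)$-module (so that fixed-point spaces of $r_i$ and commutators $[Q,R_i]$ translate into the geometry of $V_1 \perp V_2$) is the engine that makes all four parts fall out, and I would state it cleanly at the outset.
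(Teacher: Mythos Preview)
Your proposal is correct and follows essentially the same route as the paper, namely reading everything off the embedding $M/Q\hookrightarrow\mathbf{M}$ of Lemma~\ref{U6F4}. Two small points where you hesitate can be dispatched more cleanly. First, your worry about $S$ swapping $R_1$ and $R_2$ is unnecessary: $S/Q$ is a $3$-group while the two $\Q_8$-factors of $R^*/Q\cong\Q_8\times\Q_8$ can only be interchanged by an element of even order, so $S$ normalises each $R_iQ$ and hence each $Q_i=[Q,R_i]$. Second, the paper obtains $[Q_1,Q_2]=1$ via the Three Subgroup Lemma (from $[C_Q(r_i),r_i,Q]=1$ and $[Q,r_i,r_i]=Q_i$ by coprime action one gets $[C_Q(r_i),Q_i]=1$, and then $C_Q(r_i)=Q_{3-i}$ since $r_1r_2$ inverts $Q/Z$); this also forces $Q_i$ non-abelian, for otherwise $Q_i\le C_Q(Q_i)\cap C_Q(C_Q(r_i))=Z(Q)$. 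Your symplectic-pairing argument for $[Q_1,Q_2]=1$ is equally valid. Finally, rather than wrestling with $\Out(3^{1+2}_-)$, observe that $r_i$ inverts $Q_i/Z$ and centralises $Z$: if $x\in Q_i$ had order $9$ then $x^3\in Z^\#$ would be simultaneously inverted and centralised by $r_i$, a contradiction; hence $Q_i$ has exponent $3$ and $Q_i\cong 3^{1+2}_+$.
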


\begin{proof}
Part (i)  follows  from Lemma~\ref{U6F4}.

 That $Q_1$ and $Q_2$ are normalized by $S$ follows  from the action of $M$ on $Q$ as $R_1 Q/Q$ and $R_2 Q/Q$ are normalized by $S/Q$.

 For $i=1,2$, we have that $C_Q(r_i)$ and $Q_i=[Q,r_i]$ commute by the Three Subgroup Lemma. Since $Q_i$
has order $3^3$ it follows  that $Q_i\cong 3^{1+2}_+$. As $r_1r_2$ inverts $Q/Z$, $r_2 $ inverts
$C_{Q/Z}(r_1)$ and so $C_Q(r_1)= Q_2$ and $C_Q(r_2)=Q_1$. In particular, $Q_1$ and $Q_2$ commute and $Q=Q_1Q_2$. This proves (ii) and (iii). Finally (iv) follows from (ii) and (iii).
\end{proof}

\begin{lemma}\label{ActionQ} Every element of $Q$ is $M$-conjugate to an element of $A$.
\end{lemma}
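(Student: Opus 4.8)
The plan is to count, using the structure already established. We know $Q = Q_1Q_2$ with $Q_1 \cong Q_2 \cong 3^{1+2}_+$, $[Q_1,Q_2]=1$, $Z(Q_1)=Z(Q_2)=Z$, and $Q$ has exponent $3$ by Lemma~\ref{Qaction}. Every element of $Q\setminus Z$ generates with $Z$ a subgroup of order $9$, and $|Q\setminus Z| = 3^5-3 = 240$, so there are $120$ cyclic subgroups of order $3$ in $Q$ other than $Z$. We aim to show $M$, or even just $\langle R^*, S\rangle$, is transitive on these, and also fixes $Z$; granting that transitivity, every element of $Q\setminus Z$ is conjugate into any chosen order-$9$ subgroup meeting $Q$ appropriately, and in particular into $A$ once we check $A$ contains a representative of the single orbit (which it does, since $A \le Q$ has order $3^3$ and is not contained in $Z$). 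So the real content is the orbit count.

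First I would analyse the action of $R^* = QR$ on the set $\Omega$ of $120$ non-central cyclic subgroups. Since $R/Q \cong \Q_8\times \Q_8$ acts on $Q/Z \cong 3^2 \times 3^2 = (Q_1/Z)\oplus(Q_2/Z)$, with $R_1$ acting as $\Q_8$ on $Q_1/Z$ (a faithful $2$-dimensional $\GF(3)$-representation, i.e. $R_1/\langle r_1\rangle \cong \SL_2(3)$ acting naturally, but only $\Q_8 \le \SL_2(3)$) and centralizing $Q_2/Z$, and symmetrically for $R_2$. The key point is that $\Q_8 \le \SL_2(3)$ is transitive on the $8$ non-zero vectors of $\GF(3)^2$ (indeed $\SL_2(3)$ is, and $\Q_8$ has index $3$, orbits of size dividing $8$; since $\Q_8$ is transitive on the $4$ one-spaces and has centre acting as $-1$, it is transitive on all $8$ vectors). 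Hence $R_1$ is transitive on the non-zero vectors of $Q_1/Z$ and $R_2$ on those of $Q_2/Z$, so $R$ is transitive on the $8\times 8 = 64$ vectors of $(Q/Z)$ that project non-trivially to both summands, transitive on the $8$ non-zero vectors lying in $Q_1/Z$, and on the $8$ in $Q_2/Z$. Now lift to $Q$: a coset $vZ$ with $v \in Q\setminus Z$ contains three elements $v, v\zeta, v\zeta^2$ (where $\langle\zeta\rangle = Z$), generating three cyclic subgroups unless $vZ$ is actually a subgroup — but $vZ$ is a subgroup only when $v \in Z$. So each non-zero vector $\bar v \in Q/Z$ corresponds to exactly three members of $\Omega$, namely $\langle v\rangle, \langle v\zeta\rangle, \langle v\zeta^2\rangle$, and $Q$ itself permutes these three (since $Q$ acts trivially on $Q/Z$ but $[Q,v] \le Z$ can be non-trivial for $v$ with non-central image... actually $[Q_1,Q_1] = Z$, so conjugating $v \in Q_1\setminus Z$ by $Q_1$ does move $\langle v\rangle$ through all three). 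Thus $Q$ fuses the three subgroups over each vector, and $R$ is transitive on the vectors within each of the three $R$-orbits on $Q/Z\setminus\{0\}$.

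This gives at most three $R^*$-orbits on $\Omega$: those subgroups lying in $Q_1$ (there are $3\times 4 = 12$ of them, corresponding to the $4$ one-spaces of $Q_1/Z$, three subgroups each — wait, $12$ subgroups but $8$ non-zero vectors, let me recount: $Q_1$ has $3^3 - 3 = 24$ non-central elements, hence $12$ non-central cyclic subgroups, matching $24/2$), those lying in $Q_2$ ($12$ of them), and the remaining $120 - 12 - 12 = 96$ "diagonal" ones. So I would then show these three orbits are fused in $M$: the element of $M$ swapping $R_1Q/Q$ with $R_2Q/Q$ (which exists since $M/Q \le \mathbf M$ contains the swap, by Lemma~\ref{U6F4} — $\mathbf M$ preserves the decomposition into two $2$-spaces and hence contains the interchange of factors) conjugates $Q_1$ to $Q_2$ and fuses those two orbits. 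For the diagonal orbit: here I would use that $M$ contains an element inducing a "twist" — concretely, $\mathbf M \le \GSp_4(3)$ contains, besides $O_2(\mathbf M)$ and the swap, an element acting as an outer diagonal automorphism scaling one factor; alternatively, and more robustly, invoke Lemma~\ref{U6F4} together with the fact that $C_{\mathbf M}(Z(R)Q/Q)$ modulo $O_2$ is $\Sym(3)$ or bigger so $M/QR$ has order divisible by $3$, and this order-$3$ element permutes $Q_1/Z$, $Q_2/Z$ and a "diagonal" $2$-space cyclically, forcing fusion of a diagonal subgroup with one inside $Q_1$. This shows $M$ is transitive on all $120$ subgroups in $\Omega$, hence on all $240$ non-central elements (as $M$ already permutes the three preimages of each vector via $Q$). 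Since $A \le Q$ is not contained in $Z$, $A$ contains a non-central element, and therefore every element of $Q$ is $M$-conjugate into $A$ (the identity and central elements being handled trivially, as $M$ fixes $Z$ and hence each of $1, \zeta, \zeta^2$... though of course $M$ need only fix $Z$ setwise, and $A \supseteq A_1 \ni \rho_1$ with $\langle \rho_1\rangle$ of order $3$, while $Z \le Q_1$; one checks $Z \le A$ is not automatic — but $A$ elementary abelian of order $3^3$ inside $Q$ of exponent $3$ must contain $Z = Z(Q)$ since $C_Q(A) \ge A$ forces $A$ to meet the centre, and any abelian subgroup of order $\ge 3^2$ in $3^{1+2}_+$ contains the centre, so $Z \le A$).

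The main obstacle I expect is pinning down that the three potential $R^*$-orbits really are fused in $M$ rather than staying distinct — equivalently, that $M/QR$ (the $\Sym(3)$ or $2\times\Sym(3)$ quotient) acts on $Q/Z$ in a way that mixes the "inside $Q_1$", "inside $Q_2$" and "diagonal" families. This is exactly the place where the embedding $M/Q \hookrightarrow \mathbf M$ of Lemma~\ref{U6F4} must be exploited carefully: the $\Sym(3)$ on top is generated by the factor-swap (order $2$) and an element of order $3$; the order-$3$ element cannot normalize both $Q_1/Z$ and $Q_2/Z$ (an order-$3$ automorphism of $\GF(3)^2$ fixing a frame of two complementary lines would have to be trivial on the Frattini quotient structure unless it permutes lines, but it has odd order and $\mathrm{GL}_2(3)$'s order-$3$ elements are not diagonalizable over $\GF(3)$), so it moves $Q_1/Z$ to a diagonal subspace, giving the required fusion. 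I would write this last point out with a short explicit matrix computation in $\GSp_4(3)$. Everything else is the bookkeeping sketched above.
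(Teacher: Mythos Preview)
Your approach has a genuine gap: the claimed transitivity of $M$ on the $120$ non-central cyclic subgroups of $Q$ is \emph{false} in general. Concretely, when $M=M_0S\langle f\rangle$ (case~(a) of Lemma~\ref{structM}, the $\F_4(2)$ case), the subgroups $\langle\rho_1\rangle\le Q_1$ and $\langle\rho_2\rangle\le Q_2$ are not $M$-conjugate --- indeed, Lemma~\ref{fusion3elts} later shows they are not even $G$-conjugate. Two specific errors lead you there. First, you assert that $M$ contains an element swapping $R_1Q/Q$ with $R_2Q/Q$ ``since $M/Q\le\mathbf M$ contains the swap''; but Lemma~\ref{U6F4} only gives an inclusion $M/Q\le\mathbf M$, not equality, and in case~(a) no such swap exists in $M$. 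Second, you claim the order-$3$ element of $M/QR$ cannot normalize both $Q_1/Z$ and $Q_2/Z$; this contradicts Lemma~\ref{Qaction}(ii), which says $Q_1$ and $Q_2$ are normal in $S$ (your parenthetical reasoning treats $Q/Z$ as $\GF(3)^2$ with $Q_i/Z$ as lines, but $Q/Z$ is $4$-dimensional and each $Q_i/Z$ is a plane, on which an order-$3$ element of $\GL_2(3)$ can act nontrivially while preserving the decomposition).

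The paper's proof is much shorter and sidesteps transitivity entirely. Working modulo $Z$, write $w=x_1x_2$ with $x_i\in Q_i/Z$. Since $R_i$ acts transitively on $(Q_i/Z)^\#$ and centralizes $Q_{3-i}/Z$, one can choose $s_i\in R_i$ with $x_i^{s_i}\in A_i/Z$, and then $w^{s_1s_2}=x_1^{s_1}x_2^{s_2}\in (A_1A_2)/Z=A/Z$. As $Z\le A$, this lifts to the desired conclusion in $Q$. No information about $S$, $f$, or the $\Sym(3)$ quotient is needed --- only $R$ and the decomposition $Q=Q_1Q_2$ already established in Lemma~\ref{Qaction}.
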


\begin{proof} It suffices to prove that every element of $Q/Z$ is conjugate to an element
of $A/Z$. Let $w\in Q/Z$. Then $w=x_1x_2$ where $x_i \in Q_i/Z$ by Lemma~\ref{Qaction} (iii). Since, from the definition of  $A$, for  $i=1, 2$, $(A\cap Q_i)/Z= A_i/Z$ has order $3$ and $R_i$ acts transitively on $Q_i/Z$, there
exists $s_i\in R_i$ such that $w^{s_1s_2} = x_1^{s_1}x_2^{s_2} \in A/Z$. This proves the claim.
\end{proof}

Recall that by hypothesis  $Z$ is not weakly closed  in $Q$. Hence there is a $g\in G$  such that $Y= Z^g \le Q$ and $Y \neq Z$.  We set
\begin{eqnarray*}
V&=&ZY;\\
H&=&\langle Q, Q^g\rangle; \text{ and } \\
W &=& C_{Q^g}(Z)C_Q(Y).\\
\end{eqnarray*}
Notice that $C_Q(Y)$ normalizes $C_{Q^g}(Z)$ and so $W$ is indeed a subgroup of $G$. Because of
Lemma~\ref{ActionQ} we may and do suppose that $V \le A$. In particular, $V$ is normalized by $S$.
Before we continue  our study of $M$, we  investigate $H$.

\begin{lemma}\label{Z weak Q}  The
following statements hold.
\begin{enumerate}
\item $S> Q$;
\item $Q \cap Q^g$ is elementary abelian of order $3^3$ and is a normal subgroup of $S$;  \item $W=C_Q(Y)C_{Q^g}(Y)$ is a normal subgroup of $H$, $H/W\cong \SL_2(3)$, $WQ \in \syl_3(H)$  and $W/(Q\cap Q^g)$ is a natural $H/W$ module;
\item for $i=1,2$, $V \cap Q_i=Z$ and $A \neq Q\cap Q^g$;
\item $A=[Q,W]\le W$, $A/Z= C_{Q/Z}(S)= C_{Q/Z}(W)$ and $A$ is normal in $N_G(S)$; and
\item for $i=1,2$,  $[WQ/Q,R_iQ/Q]\neq 1$.
\end{enumerate}
\end{lemma}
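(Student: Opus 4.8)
The whole lemma is a study of the amalgam $H=\langle Q,Q^{g}\rangle$, and my plan is to recognise $H$ as an $\SL_{2}(3)$-amalgam acting on $V=ZY$. First I would note that $Q\ne Q^{g}$ (otherwise $Z=Z(Q)=Z(Q^{g})=Y$), that $V\le A$ is elementary abelian of order $3^{2}$ by Lemma~\ref{Qaction} and, lying in the $S$-invariant subgroup $A$, is itself $S$-invariant. Since $Z\le Z(Q)$, conjugation by $Q$ fixes $Z$ pointwise, and for $y\in Y^{\#}$ the map $q\mapsto[y,q]$ is a homomorphism $Q\to[Q,Q]=Z$ with kernel $C_{Q}(Y)$; hence $Q$ induces on $V$ the full root subgroup of order $3$ in $\SL(V)$ fixing the flag $Z<V$. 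The same argument applied to $Y=Z(Q^{g})\le Q^{g}$ shows $Q^{g}$ induces the opposite root subgroup, so the image of $H$ in $\GL(V)$ contains, and (being generated by $3$-elements) is contained in, $\SL(V)\cong\SL_{2}(3)$; thus $H/C_{H}(V)\cong\SL_{2}(3)$. Lifting a ``Weyl element'' of $\SL(V)$ to $h\in H$ gives $\langle Z\rangle^{h^{-1}}=\langle Y\rangle\le Q$; as $Z^{h^{-1}}$ and $Y=Z^{g}$ differ by an element of $N_{G}(Z)=M$ and $Q\unlhd M$, it follows that $Z\le Q^{g}$, whence $C_{Q^{g}}(Z)$ has order $3^{4}$.

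\emph{Part (i).} Put $D=Q\cap Q^{g}$. Then $D'\le[Q,Q]\cap[Q^{g},Q^{g}]=Z\cap Y=1$ and $\exp D=3$, so $D$ is elementary abelian; in particular $|D|\le 3^{3}$, since the maximal elementary abelian subgroups of the extraspecial group $Q$ have rank $3$. Suppose $S=Q$. Then $Q\in\syl_{3}(G)$, and since $M^{g}/Q^{g}\cong M/Q$ has trivial $3$-part, $Q^{g}$ is the unique Sylow $3$-subgroup of $N_{G}(Y)=M^{g}$; as $C_{Q}(Y)$ is a $3$-subgroup of $N_{G}(Y)$ of order $3^{4}$, we get $C_{Q}(Y)\le Q^{g}$, hence $D=C_{Q}(Y)$. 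But $Y$ is non-central in $Q$, so $C_{Q}(Y)$ is the preimage of the hyperplane $\overline{Y}^{\perp}$ of $Q/Z$, whose induced form properly dominates its radical $\overline{Y}$; thus $C_{Q}(Y)$ is non-abelian -- a contradiction. Therefore $S>Q$.

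\emph{Parts (ii) and (iii).} Using $C_{Q}(Y)\le N_{G}(Q^{g})$ and $C_{Q^{g}}(Z)\le N_{G}(Q)$ one obtains $[C_{Q}(Y),C_{Q^{g}}(Z)]\le[N_{G}(Q),Q]\cap[N_{G}(Q^{g}),Q^{g}]\le D$, so $W=C_{Q}(Y)C_{Q^{g}}(Z)$ is a subgroup; the amalgam structure then gives $W=C_{H}(V)=O_{3}(H)$, $W\unlhd H$ and $H/W\cong\SL_{2}(3)$, the images of $Q$ and $Q^{g}$ being the two root subgroups. Now $D=W\cap Q\cap Q^{g}$, and $W/D$ carries a $\GF(3)(H/W)$-module structure in which $C_{Q}(Y)/D$ and $C_{Q^{g}}(Z)/D$ are opposite root spaces of order $3$; hence $W/D$ is the natural $\SL_{2}(3)$-module, $|D|=|C_{Q}(Y)|/3=3^{3}$, $|W|=3^{5}$, and $WQ$ of order $3^{6}$ is a Sylow $3$-subgroup of $H$. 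Elementary abelianness of $D$ was proved in~(i). Finally $D\unlhd S$: $S$ normalises $Q$, and once~(v) is in hand $D$ is the preimage in $C_{Q}(Y)$ of the unique $S$-invariant complement to $\overline{Y}$ in $C_{Q}(Y)/Z$, hence is $S$-invariant.

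\emph{Parts (iv), (v), (vi).} Since $W\unlhd H$ and $Q\le H$, $[Q,W]\le W\cap Q=C_{Q}(Y)$. A generator of $S/Q$ normalises each of $Q_{1}/Z,Q_{2}/Z$ and, because $R_{i}Q/Q$ acts irreducibly on $Q_{i}/Z$, acts as a nontrivial $3$-element of $\GL_{2}(3)$ on it; thus $C_{Q/Z}(S)=A_{1}/Z\oplus A_{2}/Z=A/Z$. A rank count then yields $C_{Q/Z}(W)=A/Z$ and $[Q,W]=A\le W$, and $A\unlhd N_{G}(S)$ because $N_{G}(S)\le M$ (Lemma~\ref{basic}) normalises both $S$ and $Q$, hence $C_{Q/Z}(S)$; this is~(v). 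Since a $3$-element cannot interchange $Q_{1}$ and $Q_{2}$, $W$ normalises each $Q_{i}$, so $[Q_{i},W]\le A\cap Q_{i}=A_{i}$, while $[Q_{1},W][Q_{2},W]\ge[Q,W]=A$ forces $[Q_{i},W]=A_{i}>Z$; hence $W$, and so $WQ/Q$, acts nontrivially on $Q_{i}/Z$, and as $C_{\GL_{2}(3)}(\Q_{8})$ is a $2$-group this gives $[WQ/Q,R_{i}Q/Q]\ne1$, which is~(vi). For~(iv) I would argue that $V\le A$ gives $V\cap Q_{i}\le A_{i}$, that $V\cap Q_{i}=V$ would put $Y$ into $A_{i}$ and can be excluded (so $V\cap Q_{i}=Z$), and that $A\ne D=Q\cap Q^{g}$, by comparing the positions of $A$ and $D$ inside $Q$ relative to $Q_{1},Q_{2}$ together with the $G$-conjugacy of $Y$ to $Z$. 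The main obstacle is the amalgam recognition of the third paragraph -- establishing that $W$ is normal in $H$ with $W/D$ the natural module, equivalently that $|Q\cap Q^{g}|=3^{3}$; once that is in place everything else is bookkeeping inside the extraspecial group $Q$ and its small $\GF(3)$-modules.
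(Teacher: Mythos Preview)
Your overall strategy---recognise $H$ as an $\SL_2(3)$-amalgam acting on $V$, then read off the structure of $W$ and $Q\cap Q^g$---is exactly the paper's, and your arguments for (i), (v), (vi) are close to theirs. The critical gap is the one you yourself flag at the end: you assert that $C_Q(Y)/D$ and $C_{Q^g}(Z)/D$ are ``root spaces of order $3$'', equivalently that $|Q\cap Q^g|=3^3$, but give no argument. A priori one only knows $V\le Q\cap Q^g$ and $|Q\cap Q^g|\le 3^3$, and the case $Q\cap Q^g=V$ has to be eliminated by hand. The paper's argument here is genuinely delicate and is not a consequence of any generic ``amalgam structure'': assuming $Q\cap Q^g=V$, one has $|W|=3^6$ and $\Phi(W)=V$; an involution $f\in H$ (which inverts $V$) can neither centralise nor invert $W/V$, so $W_0=C_W(f)V$ has $|W_0/V|=3^2$ and is centralised by $O^3(H)$; then $(W\cap Q)W_0=W$ forces $[W,Q]\le W_0$, whence $O^3(H)$ centralises $W/V$, contradicting the behaviour of $f$. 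Nothing of this kind appears in your proposal.

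Two smaller points. First, you assert $W=C_H(V)$ and $H/W\cong\SL_2(3)$ directly, but what the normality of $W$ gives is only that $H/W$ is a central extension of $\SL_2(3)$; the paper disposes of the extension by extracting a Sylow $2$-subgroup $T$ of $O^3(H)$ and invoking the triviality of the Schur multiplier of $\Q_8$ to get $T\cong\Q_8$ and $H=WTQ$. Second, your argument that $D\unlhd S$ via ``the unique $S$-invariant complement to $\overline Y$ in $C_Q(Y)/Z$'' is not justified (why unique, and why is $D/Z$ that complement?); the paper's route is cleaner: $H$ is $2$-transitive on the four subgroups of order $3$ in $V$, so $N_G(V)=(N_M(V)\cap N_{M^g}(V))H$ normalises $Q\cap Q^g$, and $S\le N_G(V)$ because $V$ was chosen inside the $S$-invariant subgroup $A$. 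Your treatments of (iv) (``can be excluded'', ``by comparing the positions'') are also only sketches; the paper's proofs of $V\cap Q_i=Z$ and $A\ne Q\cap Q^g$ both make essential use of the already-established fact $|Q\cap Q^g|=3^3$.
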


\begin{proof}  As $Q$ is extraspecial, $C_Q(Y)$ is non-abelian of order $3^4$. By Lemma~\ref{basic},
$M^g/Q^g$ has Sylow $3$-subgroups of order at most  $9$ and   $C_Q(Y)\le M^g$ so we have $Z=C_Q(Y)' \le Q^g$. In
particular  we now have $S> Q$ for else $C_Q(Y) \le Q^g$ and then $Z= C_Q(Y)' \le (Q^g)'= Y$ which is a
contradiction. In particular, (i) holds.

Since $\Phi(Q\cap Q^g) \le Z \cap Y= 1$,  $Q \cap Q^g$ is elementary abelian.

Because  $V \le Q\cap Q^g$,  we have $[V,Q]= Z$ and $[V,Q^g]=Y$ and so
$H$ normalizes and acts non-trivially on
$V$  with $H/C_H(V) \cong \SL_2(3)$.

Turning our attention to $W$, we have  $$[W,Q] = [C_Q(Y)C_{Q^g}(Z),Q] = Z[C_{Q^g}(Z),Q].$$ Since
$[[C_{Q^g}(Z),Y],Q]=1=[Q,Y,C_{Q^g}(Z)]$, the Three Subgroup Lemma implies that $[C_{Q^g}(Z),Q]\le C_{Q}(Y) \le
W$. Therefore $$[Q,W]\le C_Q(Y)\le W$$ and, similarly, $[W,Q^g]\le C_{Q^g}(Z)\le W$. Hence $H$ normalizes $W$ and
of course $W \le C_G(V)$.

As $[C_H(V),Q]\le C_Q(V)=C_Q(Y)\le W$, $H/W$ is a central extension of $\SL_2(3)$. Since $H$ acts transitively on
the four subgroups of order $3$ in $V$, and each such subgroup determines uniquely a subgroup of $H$ we have that
$Q^H$ has exactly $4$ members. Now $O^{3}(H)W/W$ is a central extension of a nilpotent group and is thus
nilpotent. Let $T$ be a Sylow $2$-subgroup of $O^3(H)$. Then as $O^3(H)/W$ is nilpotent, $Q$ normalizes and does
not centralize $T$. It follows that $H= WTQ$ and then the action of $Q$ on $T$ and the fact that
$T/C_T(V)\cong\Q_8$ implies that $T \cong \Q_8$ and that $H/W \cong \SL_2(3)$, as by \cite[Satz V.25.3]{Hu} the
Schur multiplier of a quaternion group is trivial.

Using that $O^3(H)$ acts
transitively on $V^\#$, we see that $O^3(H)$ does not normalize any non-trivial subgroup of $(W\cap Q)/(Q\cap
Q^g)$.

Assume $Q \cap Q^g = V$. Then $|W|=3^6$. As $W'\le  V$, $W$ is generated by groups of exponent $3$ and $W$ is
non-abelian, we have $\Phi(W) = V$. Let $f \in H$ be an involution. Then $fW \in Z(H/W)$ and, by Burnside's
Lemma, $f$ does not centralize $W/\Phi(W)$ and neither does it invert $W/\Phi(W)$, for then, as $f$ inverts $V$,
$W$ would be abelian. Therefore, setting $W_0= C_W(f)V$, we have $W_0>V$. Then, as the faithful representations
of $\SL_2(3)$ in characteristic $3$ have even dimension and the minimal faithful representation for $\PSL_2(3)$
is $3$, $|W_0/V|=3^2$ and $W_0$ is centralized by $O^3(H)$ and normalized by $Q$; in particular, $Q\cap W_0 \le
V$ by the comments at the end of the last paragraph. But then $(W\cap Q)W_0= W_0(W\cap Q^g)= W$ which means that
$$[W, Q] =[W_0,Q][W\cap Q,Q]\le W_0.$$ Consequently $O^3(H)$ centralizes $W/V$    which is a contradiction as we
have already remarked that $f$ does not centralize $W/V$. Therefore $Q\cap Q^g
> V$.

 Since $Q\cap Q^g$ is abelian and $Q$ is extraspecial of order $3^5$, we now have
that $|Q\cap Q^g|= 3^3$ and $W/(Q\cap Q^g)$ is a natural $\SL_2(3)$-module. This completes the proof of the first
two statements in (ii) and all of (iii).

Since $H$ acts two transitively on the non-trivial cyclic subgroups of $V$, $N_G(V)= (N_M(V)\cap N_{M^g}(V)) H$ and therefore  $N_G(V)$ normalizes $Q\cap Q^g$.  From the choice of $V \le A$, we have $S \le N_G(V)$. This is the last statement in (ii).

Suppose that $V \le Q_i$ for some $i\in \{1,2\}$. Then $C_M(V) \ge R_{3-i}$ and so $R_{3-i}$ acts on $Q\cap Q^g$.
Since $|Q\cap Q^g:V|= 3$, we obtain $Q\cap Q^g \le C_Q(r_{3-i})= Q_i$  contrary to $Q\cap Q^g$ being elementary
abelian of order $3^3$. Hence $V$ is not contained in $Q_i$ for $i=1,2$. If $A=Q\cap Q^g $, then
$$Y=[A,C_{Q^g}(Z)]\le[A,S]= Z,$$ which is impossible. Hence we also know that $A \neq Q\cap Q^g$.  Thus (iv) holds.

If $[Q_1,W]\le Z$, then $[Q,W]=[Q_1,W][Q_2,W]\le A_2$. Therefore using (iv), $$[C_Q(V),W]=  [C_Q(V),C_{Q^g}(V)]Z
\le Q\cap Q^g \cap A_2= Z.$$ Since $|Q\cap Q^g|= 3^3$ by (ii),  $Y=[Q\cap Q^g, C_{Q^g}(V)]\le [Q,W]=Z$ which is
impossible. Thus $[Q_1,W] = A_1$ and similarly $[Q_2,W] = A_2$. Now  $[Q,W]= A$ and consequently $[Q,S]=
A$. This proves (v).

Finally, suppose that $[WQ,R_1Q] \le Q$. Then $[Q_1,W] \le A_1$ and is $R_1$-invariant. Hence $[Q_1,W]\le Z$ and
this contradicts (v). Thus $[WQ,R_1Q] \not \le Q$ and (vi) holds.
\end{proof}

Now we are in a position to determine $M$. For this set
  $$M_0 = RQ$$ and let $f$ be an involution in $H$. Then $f$ inverts $V$ and
thus $f \in M$. We refine our choice of $R$ so that $R\langle f \rangle$ is a Sylow $2$-subgroup of
$M_0S\langle f \rangle$.

\begin{lemma}\label{structM} The following hold.
\begin{enumerate}
\item $S= WQ$ and $|S/Q|=3$; and
\item One of the following holds:
\begin{enumerate}
\item $M= M_0S\langle f \rangle$, $C_M(Z)= M_0S$ and $M/M_0  \cong \Sym(3)$; or

\item  $|M:  M_0S\langle f\rangle|=2$, $C_M(Z) =  M_0S\langle  t \rangle$ where $t$ is an involution which
exchanges $R_1$ and $R_2$, centralizes $V$ and inverts $SM_0/M_0$ and  $M/M_0 = \langle t,f\rangle  SM_0/M_0 \cong
2 \times \Sym(3)$ with centre $\langle tf\rangle M_0/M_0$.
\end{enumerate}
\end{enumerate}
\end{lemma}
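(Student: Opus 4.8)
\textbf{Proof proposal for Lemma~\ref{structM}.}

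The plan is to first pin down the Sylow $3$-structure and then lift the action of $M/Q$ from $\Out(Q) \cong \GSp_4(3)$ down to the precise subgroup described. For part (i), recall from Lemma~\ref{Z weak Q}(iii) that $WQ \in \syl_3(H)$ and $W/(Q\cap Q^g)$ is a natural $\SL_2(3)$-module, so $|WQ : Q| = |W : W\cap Q| = 3$. By Lemma~\ref{Z weak Q}(i) we have $S > Q$, and by Lemma~\ref{Qaction}(i) we have $|S/Q| \le 3^2$. The key point is that $W$ is $S$-invariant: indeed $A = [Q,W]$ is normal in $N_G(S)$ by Lemma~\ref{Z weak Q}(v), and one checks (using that $S$ normalizes $Q\cap Q^g$ by Lemma~\ref{Z weak Q}(ii) and normalizes $V$ by the choice $V\le A$) that $S$ normalizes $W = C_Q(Y)C_{Q^g}(Y)$; alternatively, since $WQ \in \syl_3(H)$ and $S$ normalizes the relevant data, a Frattini/conjugacy argument places $S$ inside $N_G(WQ)$ and forces $S \le WQ$, hence $S = WQ$ and $|S/Q| = 3$. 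This also shows $WQ = S \in \syl_3(G)$ by Lemma~\ref{basic}.

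For part (ii), work inside $\Out(Q) \cong \GSp_4(3)$ via Lemma~\ref{U6F4}: $M/Q$ embeds in the stabiliser $\mathbf M$ of the orthogonal decomposition $\langle R_1Q/Q\rangle \perp \langle R_2Q/Q\rangle$, with $R/Q \mapsto O_2(\mathbf M)$. The group $\mathbf M$ has shape $(\Q_8 \times \Q_8).(2 \times \Sym(3))$ (the $2 \times \Sym(3)$ acting on the two $\Sp_2(3) \cong \SL_2(3)$ factors: the $\Sym(3)$ is the outer automorphism coming from $\GSp_4$ acting on each $2$-space together with the swap of the two factors, and the central $2$ is the diagonal $-1$). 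Since $|S/Q| = 3$ from part (i) and $R/Q \cong \Q_8 \times \Q_8$, the image $M_0S/Q$ already accounts for $(\Q_8\times\Q_8){:}3$; what remains is to identify $M/(M_0S)$ as a subgroup of $2 \times 2$ (the quotient of $\mathbf M$ by $(\Q_8\times\Q_8){:}3$). The involution $f \in H$ inverts $V$ hence lies in $M$, and since $f$ centralizes $W/V$ nontrivially but inverts $V$ (from the analysis in the proof of Lemma~\ref{Z weak Q}), $f$ normalizes each $Q_i$ and maps to one of the two reflection classes; crucially $f$ inverts $SM_0/M_0$ (it inverts $WQ/Q$), so $f \notin M_0 S$, giving $|M : M_0 S| \ge 2$ and $M_0S\langle f\rangle / M_0 \cong \Sym(3)$.

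It then remains to decide whether $M/(M_0S\langle f\rangle)$ is trivial (case (a)) or of order $2$ (case (b)). This is the essential dichotomy and corresponds exactly to whether $C_G(Z) = M$ or $|M : C_G(Z)| = 2$, i.e. whether the full $2 \times \Sym(3)$ is realised. In case (b) the extra involution $t$ must centralise $V$ (it lies in $C_G(Z)$ and the only further element available in $\mathbf M$ modulo $M_0S\langle f\rangle$ that centralises $Z$ is, up to $M_0S$, the one that swaps $R_1 \leftrightarrow R_2$ and acts as $-1$ on $SM_0/M_0$), it swaps $R_1$ and $R_2$ by its action on $\mathbf M$, and it inverts $SM_0/M_0$ because $[t,f]$ must centralise $V$ and lie in the centre of the quotient; the centre of $M/M_0 \cong 2\times\Sym(3)$ is then $\langle tf\rangle M_0/M_0$ since $tf$ centralises $V$ and has order $2$ modulo $M_0$. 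The main obstacle is showing there is nothing \emph{beyond} case (b): one must verify that $M/Q$ cannot be larger than the index-$2$ subgroup $\mathbf M_0$ of $\mathbf M$ consisting of elements whose determinant-type invariant on the two $2$-spaces is matched — equivalently that $M/Q \le \mathbf M$ with $M_0S/Q$ of index dividing $4$ and no element of $M$ induces a ``scalar times swap'' that is not already in $\langle t, f\rangle$; this is forced because $|S/Q| = 3$ caps the $3$-part and the $2$-part of $\mathbf M/(R/Q) \cong 2\times\Sym(3)$ has Sylow $2$-subgroup of order $4$, all of whose elements are accounted for by $\langle t, f\rangle$. Hence exactly one of (a), (b) holds.
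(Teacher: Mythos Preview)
Your argument for part (i) has a genuine gap. You write that ``a Frattini/conjugacy argument places $S$ inside $N_G(WQ)$ and forces $S \le WQ$'', but normalizing $WQ$ does not imply containment in $WQ$. What you would need is $S \le H$, since $WQ \in \syl_3(H)$; but there is no reason for this, and indeed if $|S/Q|=9$ held it would be false. Likewise, knowing that $S$ normalizes $W$ (even granting your sketch of this) gives no bound on $|S/Q|$. The paper's proof is essentially different: it assumes $|S/Q|=3^2$ and shows that then \emph{every} abelian subgroup $B \le Q$ of order $3^3$ normal in $S$ must equal $A$. Taking $B = Q\cap Q^g$ (which is normal in $S$ by Lemma~\ref{Z weak Q}(ii)) then forces $A = Q\cap Q^g$, contradicting Lemma~\ref{Z weak Q}(iv). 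This uniqueness argument is the actual content of (i), and your proposal does not supply anything equivalent.

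Part (ii) is also too loose at two points. First, the reason $t$ must swap $R_1$ and $R_2$ is not visible in $\mathbf M \le \GSp_4(3)$ alone (there are non-swapping involutions in $\mathbf M$); it comes from the sharper fact that $C_M(Z)/Q$ embeds in $\Sp_2(3)\wr 2$, where the only $2$-elements outside $O_2$ are swaps. Second, and more seriously, your justification that $t$ inverts $SM_0/M_0$ is not an argument: inside $C_3\wr C_2$ the swap \emph{centralizes} the diagonal $C_3$, so a priori $C_M(Z)/M_0$ could be $3\times 2$ rather than $\Sym(3)$. The paper rules this out by a concrete calculation: adjusting $t$ by $r_1r_2$ so that $t$ centralizes $V$, one observes $S/(Q\cap Q^g)$ is extraspecial and that if $t$ centralized $S/Q$ then $t$ would centralize $S/C_Q(V)$, whence by Burnside's basis theorem $t$ would centralize $S/(Q\cap Q^g)$ and then $Q$ itself, a contradiction. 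Your sentence ``it inverts $SM_0/M_0$ because $[t,f]$ must centralise $V$'' does not address this obstruction.
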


\begin{proof} We have seen in Lemma~\ref{Z weak Q} (i) and (v)  that $|S/Q| \ge 3$ and  $A/Z=C_{Q/Z}(S)= C_{Q/Z}(W)$.

Suppose that  $|S/Q|= 3^2$ and assume that $B$ is an abelian subgroup of $Q$ which is normal in $S$ of order
$3^3$ with $B\neq A$. For $i=1,2$, let $s_i \in S$ be such that $[s_i,R_{3-i}]\le Q$. Then $[B,s_i]\le B
\cap A\cap Q_i \le A_i$. Thus if $s_i$ does not centralizes $B/Z$, then $A_i \le B$. Since $S= \langle
s_1,s_2\rangle$ and $B\neq A$, without loss of generality we may suppose that $A_1 \le B$ and $[B,s_2]\le
Z$. In particular, $B \le Q_1 A$ as $C_{Q/Z}(s_2)= Q_1A/Z$. But then $A_1$ is centralized by $AB= Q_1A$ and we have a contradiction
as $Z(Q_1A)= A_2$. Thus, if $B\le Q$ is a normal abelian subgroup of $S$ of order $3^3$, then $B=A$. Taking
$B = Q\cap Q^g$, we now have that $Q\cap Q^g= A$ a possibility which is eliminated  by Lemma~\ref{Z
weak Q} (iv). Thus $|S/Q|=3$. This proves (i).

We know that $f$ inverts $W/(Q\cap Q^g)$ and so $WQ/Q$ is inverted by $f$. In particular, $M_0S\langle
f\rangle/M_0 \cong \Sym(3)$. If $M=M_0S\langle f\rangle$, then (ii)(a) holds. So assume that $M>M_0S\langle
f\rangle$. As $M$ inverts $Z$, we have $M= C_M(Z)\langle f\rangle$. Since, by Lemma~\ref{U6F4}, $C_M(Z)/Q$ is
isomorphic to a subgroup of $\Sp_2(3)\wr 2$ and since $S/Q$ has order $3$, Lemma~\ref{Z weak Q} (vi) implies that
$C_M(Z)/M_0 \cong 3 \times 2$ or $\Sym(3)$. Especially, there  is a $2$-element   $t\in C_M(Z)\setminus M_0$ which
normalizes $R\langle f\rangle$ and swaps $R_1$ and $R_2$. Because $R\langle t \rangle$ is isomorphic to a Sylow
$2$-subgroup of $\Sp_2(3)\wr 2$, we may as well assume that $t$ is an involution  and that $t$ normalizes $S$.

Since $t$ normalizes $S$ and swaps $R_1$ and $R_2$, $t$ also interchanges $Q_1$ and $Q_2$ and normalizes $A$. It
follows that $t$ normalizes $V$. Without loss of generality we may now additionally assume that $t$ normalizes
$Y$. Thus $t$ normalizes $Q\cap Q^g$ as well as $A$. Since $t$ centralizes $Z$, $[Q,t]$ is extraspecial of order
$3^{1+2}$. Hence either $t$ centralizes $V$ and $Q/C_Q(V)$ or $t$ inverts $V/Z$ and $Q/C_Q(V)$. Multiplying $t$
by $r_1r_2$, we may assume that $t$ centralizes $V$. If $S/Q$ is centralized by $t$, we now have $S/C_Q(V)$ is
centralized by $t$. However, as $[Q,S](Q\cap Q^g)= C_Q(V)/(Q\cap Q^g)$, we see that $S/(Q\cap Q^g)$ is
extraspecial and since $t$ centralizes $S/C_Q(V)$, Burnside's Lemma implies that $t$ centralizes $S/(Q\cap Q^g)$.
Then $t$ also centralizes $Q$ which is a contradiction.  Hence $t$ inverts $S/Q$ and therefore $C_M(Z)/M_0$
has the structure described in (ii)(b).
\end{proof}

\section{The structure of $L = N_G(J)$}

In this section  we continue to use the   notation introduced in \ref{nota}. We also recall $H = \langle Q , Q^g\rangle$ and $f$ is an involution in $H\cap M$ which inverts $Z$.

We will show that $J$ is the Thompson subgroup of $S$ and determine $L = N_G(J)$.

Set $$H_1 = H^{r_1}, W_1= W^{r_1} \mbox{ and }V_1= V^{r_1}.$$

\begin{lemma}\label{HnotH1} We have $W \not= W_1$ and $H \not= H_1$.
\end{lemma}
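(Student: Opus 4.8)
The plan is to deduce both inequalities from the single fact that $r_1$ does not normalise $V$; the subgroups $W$ and $H$ will then differ from $W_1=W^{r_1}$ and $H_1=H^{r_1}$ because $V$ can be recovered from each of $W$ and $H$.

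The first step is to show $V\ne V_1$. By Lemma~\ref{Qaction}(ii) we have $A_2\le Q_2=C_Q(r_1)$, so $r_1$ centralises $A_2$, while by Notation~\ref{nota}(v) $r_1$ inverts $\langle\rho_1\rangle\le A_1$; since $r_1$ centralises $Z$ this forces $\langle\rho_1\rangle\ne Z$, whence $A_1=Z\times\langle\rho_1\rangle$ and $r_1$ inverts $A_1/Z$. Now $V\le A=A_1A_2$ with $A_1\cap A_2=Z$ and, by Lemma~\ref{Z weak Q}(iv), $V\cap Q_1=V\cap Q_2=Z$; hence every $v\in V\setminus Z$ has the form $v=v_1v_2$ with $v_i\in A_i\setminus Z$. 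If $V^{r_1}=V$ held, then $v^{r_1}v^{-1}=v_1^{r_1}v_1^{-1}$ would lie in $V\cap A_1\le V\cap Q_1=Z$, so $r_1$ would fix the coset $v_1Z$ in $Q_1/Z$; but $r_1$ inverts $A_1/Z$, forcing $v_1\in Z$, a contradiction. Therefore $V\ne V_1$.

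The second step is to recover $V$ from $W$. Since $|C_{Q^g}(Z)|=3^4$ while $|Q\cap Q^g|=3^3$ by Lemma~\ref{Z weak Q}(ii), we have $C_{Q^g}(Z)\not\le Q$, and so $W\cap Q=C_Q(Y)$, a subgroup of order $3^4$. As $Q$ is extraspecial of order $3^5$ and exponent $3$ it contains no abelian subgroup of order $3^4$, so $C_Q(Y)$ is non-abelian with $C_Q(Y)'=Z$; since $YZ\le Z(C_Q(Y))<C_Q(Y)$, an order count forces $Z(C_Q(Y))=YZ=V$. Because $r_1$ normalises $Q$ we get $W_1\cap Q=(W\cap Q)^{r_1}=C_Q(Y)^{r_1}=C_Q(Y^{r_1})$, and likewise $Z(C_Q(Y^{r_1}))=V^{r_1}=V_1$. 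Hence $W=W_1$ would imply $V=V_1$, contrary to the first step, so $W\ne W_1$.

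The final step is immediate: by Lemma~\ref{Z weak Q}(iii), $W$ is a normal $3$-subgroup of $H$ with $H/W\cong\SL_2(3)$, and since $O_3(\SL_2(3))=1$ this gives $W=O_3(H)$, a characteristic subgroup of $H$. Therefore $H=H_1$ would force $W=O_3(H)=O_3(H^{r_1})=W^{r_1}=W_1$, which is false. I do not anticipate a serious obstacle here; the only points requiring care are the identification $V=Z(C_Q(Y))$ inside the extraspecial group $Q$ and keeping straight the action of $r_1$ on $A_1$, $A_2$ and $Q_1/Z$, after which everything follows from Lemmas~\ref{Z weak Q} and \ref{Qaction}.
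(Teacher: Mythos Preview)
Your proof is correct, but you recover $V$ from $W$ by a different invariant than the paper does.  The paper observes directly that $W'=V$: writing $W=C_Q(V)C_{Q^g}(V)$ one has
\[
W'=[C_Q(V),C_{Q^g}(V)]\,V\;\le\;(Q\cap Q^g)\cap[Q,W]=(Q\cap Q^g)\cap A=V,
\]
using $[Q,W]=A$ and $A\neq Q\cap Q^g$ from Lemma~\ref{Z weak Q}.  Since $W'$ is characteristic in $W$ this gives $W\neq W_1$ immediately, and then $H\neq H_1$.  You instead identify $V$ as $Z(W\cap Q)$, which is not intrinsically characteristic in $W$ but is carried to $Z(W_1\cap Q)$ because $r_1$ normalises $Q$; this works perfectly well and your treatment of the step $W\cap Q=C_Q(Y)$ and of $Z(C_Q(Y))=V$ is sound.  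Your final deduction $W=O_3(H)$ is more explicit than the paper's one-line ``and so also $H\neq H_1$'', but both arguments amount to the same thing.  The paper's route is marginally slicker in that it uses a characteristic subgroup of $W$ itself, while yours trades that for the elementary computation of the centre of a maximal subgroup of the extraspecial group $Q$; either way the content is the same.
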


\begin{proof} Notice that $r_1$ inverts $A_1/Z$ and centralizes $A_2/Z$. Therefore, $V^{r_1} \neq V$.
Since $$W' = [C_Q(V),C_{Q^g}(V)]V  \le Q\cap Q^g \cap [Q,W]=  Q\cap Q^g\cap A= V,$$
we see  $W'= V$ and $W_1'= V_1$. Thus $W$ and $W_1$ are not equal and so also $H \neq H_1$.
\end{proof}

\begin{lemma}\label{3classes} For $i=1,2$, we have $\rho_i$ is not $G$-conjugate to an element of $Z$. In
particular, $A$ contains exactly seven $G$-conjugates of $Z$.
\end{lemma}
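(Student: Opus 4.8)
The plan is to separate two kinds of $G$-fusion question and handle the $\rho_i$ first. For the first statement, suppose for a contradiction that $\rho_1$ (the argument for $\rho_2$ is identical, or follows from the action of the element $t$ in Lemma~\ref{structM}(ii)(b) which swaps the two cases, although we should not rely on that as $t$ need not exist) is $G$-conjugate to a generator of $Z$. Then $C_G(\rho_1)$ is $G$-conjugate to $C_G(Z) = C_G(Z)$, hence is similar to a $3$-centralizer in a group of type $\PSU_6(2)$ or $\F_4(2)$; in particular $F^*(C_G(\rho_1))$ is extraspecial of order $3^5$. On the other hand, $\rho_1 \in A_1 \le Q_1$ and $r_1$ inverts $\langle\rho_1\rangle$, so $C_M(\rho_1)$ contains $C_Q(\rho_1)$ which, since $Q_1 \cong 3^{1+2}_+$ and $\langle\rho_1\rangle \le Q_1$ is non-central (as $V \cap Q_1 = Z$ forces $\rho_1 \notin Z$, so $\langle\rho_1\rangle \ne Z(Q_1) = Z$), has the shape $Z \langle\rho_1\rangle \times Q_2 \cong 3^2 \times 3^{1+2}_+$ inside $Q$; moreover $R_2 \le C_G(r_1)$ acts on this, and $r_2 \in R_2$ inverts $\langle\rho_2\rangle$. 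The key point is to extract enough of $C_G(\rho_1)$ to contradict the structure forced by the hypothesis: I would compute $C_Q(\rho_1)$ precisely and observe that $O_3(C_G(\rho_1))$ cannot be extraspecial of order $3^5$ with centre $\langle\rho_1\rangle$ once we also bring in the quaternion action of $R_2$ and, if available, $R_1/\langle r_1\rangle$-type elements, since these produce a normal subgroup of $C_G(\rho_1)/O_3$ of the wrong shape (it already contains a $\Q_8$ acting on $Q_2$ but $\langle\rho_1\rangle$ is \emph{centralized}, not the full centre of an extraspecial group of order $3^5$).

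A cleaner route, which I expect is the intended one, is to use the weak-closure hypothesis together with the subgroup $H$: since $Z$ is $G$-conjugate to $Y \le Q$, the group $Z$ has conjugates inside $Q$, and the fusion of these is governed by $M = N_G(Z)$ via Lemma~\ref{ActionQ}. If $\rho_1$ were $G$-conjugate to (a generator of) $Z$, then by Lemma~\ref{ActionQ} applied in a conjugate it would be forced, after conjugating into $M$, to be $M$-conjugate to an element of $A$ lying in the $Z$-class; but the $M$-classes on $A^\# / \text{(scalars)}$ are controlled by $M/Q$ acting on $A$, and one checks directly from Lemma~\ref{Qaction} and Lemma~\ref{Z weak Q} that $S$ (hence $M$) has orbits on the one-spaces of $A$ separating $Z$ from $\langle\rho_1\rangle$: indeed $Z = \Phi(Q) \cap A$ is $M$-invariant of dimension $1$, while $\langle\rho_1\rangle$ lies in the $2$-dimensional $S$-invariant $A_1$ on which $r_1$ (so also $M$, since $R_1$ normalizes things) acts with $[A_1,r_1] = \langle\rho_1\rangle$ of plus type and $A_1 \cap Z(Q) = Z$ of the other type — the quadratic form coming from Lemma~\ref{GO4}/\ref{types} distinguishes them. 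Thus $\rho_1^M \ne Z$, and since $N_G(Q) = M$ (by Lemma~\ref{basic}, $N_G(S) \le M$, and $Q$ is weakly closed enough: any $G$-fusion of elements of $Q$ not realized in $M$ would contradict $Q = O_3(M)$ being normal, together with the Alperin–Goldschmidt fusion theorem applied with $N_G(Q) = M$ since $Q \trianglelefteq S \in \syl_3(G)$), no element of $Z^G \cap Q$ lies in $\langle\rho_1\rangle$, i.e.\ $\rho_1 \notin Z^G$.

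For the ``In particular'' clause, I would count the $G$-conjugates of $Z$ inside $A$. The group $A$ is elementary abelian of order $3^3$, so it has $13$ subgroups of order $3$. By Lemma~\ref{Z weak Q}(v), $A/Z = C_{Q/Z}(S)$, so $S$ normalizes $A$ and $Z$, and $N_G(A) \ge N_G(S) \cap \dots$; the $G$-fusion of the $1$-spaces of $A$ is controlled by $L = N_G(J)$ once we know $A \le J$ and $L$ acts on $A$ preserving the quadratic form (this is exactly the $\GO_4^+(3)$-picture promised in the introduction, via Lemmas~\ref{GO4}, \ref{quadratic form}, \ref{types}), but at this stage of the paper we argue more directly: the thirteen $1$-spaces of $A$ split into the singular ones, the plus ones and the minus ones with respect to the form on $J$ restricted to $A$, and $A$ being a non-degenerate $3$-space of $+$-type (a hyperplane in $J$) has, by the standard count in $\GO_3(3)\cong 2\times \Sym(4)$-geometry, exactly $4$ singular $1$-spaces and the remaining $9$ split as plus/minus. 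Now $Z$ is singular (it is the radical-type line $\Phi(Q)\cap A$, fixed by all of $S$, which forces it to be singular), so $Z^G \cap A$ is contained in the $4$ singular lines; by the first statement $\langle\rho_1\rangle, \langle\rho_2\rangle$ (one plus, one minus, not singular) are \emph{not} in $Z^G$; and the three non-singular classes among the $9$ are likewise not $Z$ (same form-type argument). Hence $Z^G \cap A$ is among the $4$ singular lines. Finally one shows \emph{all four} singular lines are in $Z^G$: $Y = Z^g$ is one of them and $Y \ne Z$, and $H = \langle Q, Q^g\rangle$ acts on $V = ZY$ as $\SL_2(3)$ permuting transitively the four $1$-spaces of $V$ (this is in the proof of Lemma~\ref{Z weak Q}(iii)); each of those is $G$-conjugate to $Z$, giving at least $2$ (namely $Z,Y$ and their $H$-images inside $V$) — then by conjugating by a suitable element of $N_G(A) \cap L$ (or by repeating the $H$-argument with other pairs) the four singular lines form a single $G$-class. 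This gives exactly $4$ from $A$ lying in $Z^G$... \emph{wait}: the statement claims seven, so the correct count must include that the relevant subgroup is $J$ or that ``$G$-conjugates of $Z$'' are counted as the $Z$-class plus the $Y$-type classes differently; I would therefore recount using that, with respect to the $+$-type form on the $4$-space $J$, the singular $1$-spaces number $(3+1)(3-1) = 8$, of which $4$ lie in the hyperplane $\langle r\text{-perp}\rangle$ but the hyperplane $A$ might be of $-$-type instead, having a different singular-line count; pinning down the type of $A$ as a subspace of $J$ (via Lemma~\ref{hyper}, which guarantees $\mathcal{S}(A)\ne\emptyset$, so $A$ is either $+$-type degenerate or non-degenerate) and getting the singular count to be exactly $7$ is the delicate bookkeeping step.

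\textbf{Main obstacle.} The genuine content is the first sentence — ruling out $\rho_i \in Z^G$ — and I expect the honest proof of it to go through $C_G(\rho_i)$: assuming $\rho_1 \sim_G$ a generator of $Z$ makes $C_G(\rho_1)$ similar to a $3$-centralizer of type $\PSU_6(2)$ or $\F_4(2)$, and one must derive a contradiction from the fact that $C_G(\rho_1)$ visibly contains $C_Q(\rho_1) = \langle\rho_1\rangle \times Z(Q_1)\cdot Q_2$-type data with $R_2 \cong \Q_8$ acting and $r_2$ inverting $\langle\rho_2\rangle \le C_Q(\rho_1)$, whereas in a genuine such centralizer the preimage of the normal $\Q_8\times\Q_8$ acts in a pinned-down way on the extraspecial $O_3$ with \emph{its} centre, not $\langle\rho_1\rangle$, being inverted by the relevant involutions. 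Making that contradiction airtight — i.e.\ showing the ``wrong element'' is forced to be central in $O_3(C_G(\rho_1))$ — is where the real work lies; the subsequent counting to get the number $7$ is routine $\GF(3)$-orthogonal-geometry once the type of $A$ inside the form space $J$ is identified, which in turn uses Lemmas~\ref{hyper} and~\ref{types} together with the $\SL_2(3)$-transitivity on $V^\#$ established inside the proof of Lemma~\ref{Z weak Q}.
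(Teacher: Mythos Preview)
Your proposal misses the paper's argument entirely, and neither of your two suggested routes is brought to a conclusion.

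For the first assertion, the paper's proof is two lines: if $\rho_i$ were $G$-conjugate into $Z$, then $\langle\rho_i\rangle = Z^h$ for some $h$, and since $\rho_i \in Q\setminus Z$ one may take $Y = \langle\rho_i\rangle$ and $U = Z\langle\rho_i\rangle$ in the role of $V$ in the set-up preceding Lemma~\ref{Z weak Q}. Part~(iv) of that lemma then gives $U \cap Q_i = Z$; but $\langle\rho_i\rangle \le A_i \le Q_i$, so $\langle\rho_i\rangle \le U \cap Q_i = Z$, a contradiction. You circle close to this in your ``cleaner route'' when you invoke Lemma~\ref{Z weak Q}, but you never isolate part~(iv) as the relevant clause and instead drift into fusion-control arguments (Alperin--Goldschmidt, $M$-orbits on $A$) that are unnecessary and, as written, don't close the gap between $M$-fusion and $G$-fusion. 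Your first route via the structure of $C_G(\rho_1)$ could in principle be made to work, but it is far heavier than needed and you do not actually produce the contradiction.

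For the count of seven, your argument is both circular and confused. You invoke the quadratic form on $J$ and the $\GO_4^+(3)$-picture, but that is precisely what is established in the \emph{next} lemma (Lemma~\ref{NJ}); you cannot use it here. Your own hesitation (``wait: the statement claims seven'') signals that the orthogonal-geometry bookkeeping you attempt is not the intended mechanism. The paper's argument is elementary: $V$ and $V_1 = V^{r_1}$ are distinct subgroups of $A$ of order $9$ meeting in $Z$, and each contains four $G$-conjugates of $Z$ (by the transitive $\SL_2(3)$-action of $H$, resp.\ $H_1$, on its non-trivial cyclic subgroups). This gives $4+4-1 = 7$. The remaining six one-spaces of $A$ are exactly the three non-$Z$ one-spaces of $A_1$ together with the three non-$Z$ one-spaces of $A_2$ (since $V \cap Q_i = Z$ forces the non-$Z$ one-spaces of $V$ and $V_1$ to be diagonal), and these are $Q$-conjugate to $\langle\rho_1\rangle$ and $\langle\rho_2\rangle$ respectively, hence not in $Z^G$ by the first part.
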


\begin{proof} Assume that $i\in \{1,2\}$ and set  $U= \langle \rho_i  \rangle Z$. If $\rho_i$ was $G$-conjugate to an element of $Z$, then
Lemma~\ref{Z weak Q}(iv) applied with $U$ in place of $V$   yields $\langle \rho_i\rangle = U\cap Q_i \le Z$
which is absurd. Thus $\rho_i$ is not $G$-conjugate to an element  $Z$. Since $V\cup V_1 \subset A$, we now see  $A$ contains exactly seven $G$-conjugates of $Z$ three  $Q$-conjugates of $\langle\rho_1\rangle$ and three $Q$-conjugates of $\langle \rho_2\rangle$.
\end{proof}

 We can now describe  the structure of $L$.

\begin{lemma}\label{NJ} The following hold.
\begin{enumerate}
\item $J= J(S)$ is elementary abelian of order $3^4$.
\item $L$ controls $G$-fusion of elements of $J$.
\item $J= C_G(J)$.
\item $L$ preserves  a quadratic form $\mathrm q$ of $+$-type on $J$ up to similarity.
\item Set $L_* = \langle H,H_1, r_1,r_2\rangle$. Then $L_*/J \cong \GO_4^+(3)$ and either
\begin{enumerate}
\item if $M= M_0S\langle f\rangle$, then $L=L_*$; or
\item if $M> M_0S\langle f\rangle$, then $L/J\cong \mathrm {CO}_4^+(3)$. (Here   $\mathrm {CO}_4^+(3)$
is the group which preserves  $\mathrm q$ up to similarity.)
\end{enumerate}
\end{enumerate}
\end{lemma}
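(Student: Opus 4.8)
The plan is to bootstrap from the rank-one and rank-two subgroups $H$, $H_1$ and the inverting involutions $r_1,r_2$ already in hand, first pinning down $J$ combinatorially and then identifying $L$ as the full (conformal) orthogonal group.

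\textbf{Step 1: $J=J(S)$ is elementary abelian of order $3^4$.} Recall from Lemma~\ref{Z weak Q}(v) that $A/Z = C_{Q/Z}(S)$, so $A \le Z(J)$ and in particular $A$ is contained in every abelian normal subgroup of $S$ that contains $Z$. Since $J = C_S(A)$ and $A$ is elementary abelian of order $3^3$ (Lemma~\ref{Qaction}(iv)) with $A \not\le Q\cap Q^g$, I would first show $|S:J|=3$ by examining the action of $S/Q$ (order $3$ by Lemma~\ref{structM}(i)) on $Q$: $S$ induces on $Q/Z$ a group whose commutator with $Q/Z$ is $A/Z$, and $C_S(A)$ is the preimage of the kernel of $S/Q$ acting on $A/Z$, which is an index-$3$ subgroup containing $Q\cap Q^g$ but not all of $Q$. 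Using that $Q$ is extraspecial of exponent $3$ and $J\cap Q$ is a maximal abelian (order $3^3$) subgroup of $Q$ on which $S/Q$ acts trivially modulo $Z$, one deduces $J\cap Q$ is abelian; then $J$ is abelian because $[J,J]\le [J\cap Q,J\cap Q] \cdot (\text{stuff in }S/Q)$ all collapses, and $J$ is elementary abelian since $Q$ has exponent $3$ and $S/Q$ has order $3$. The count gives $|J| = 3\cdot 3^3 = 3^4$. For $J=J(S)$: any elementary abelian subgroup of $S$ of order $\ge 3^4$ must (since $|S:Q|=3$) meet $Q$ in an abelian subgroup of order $\ge 3^3$, hence a maximal abelian subgroup of $Q$, and a direct check using the extraspecial structure shows the only such subgroup invariant under the relevant element of $S/Q$ is $J\cap Q$, forcing equality. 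This also yields (iii): $C_G(J) \le C_G(A) \le$ (something with Sylow $3$-subgroup normalizing $J$), and $C_S(J) = J$ because $J$ is self-centralizing in $S$ by the extraspecial structure of $Q$, so $J = C_G(J)$ after checking $O_{3'}(C_G(J))$ is trivial via $J$ acting on it.

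\textbf{Step 2: the quadratic form and $L_*/J$.} Here I would use Lemma~\ref{GO4} as the engine. The elements $r_1, r_2$ act on $J$ (they normalize $S$ and $A$, hence normalize $J = C_S(A)$ after checking $J$ is the unique such subgroup), with $r_1$ inverting $A_1/Z$ and centralizing $A_2/Z$ and symmetrically for $r_2$; lifting this to the full $3^4$, one checks $[J,r_1]$ and $C_J(r_1)$ are each $2$-dimensional and in ``general position'' in the sense of Lemma~\ref{GO4}, and similarly for a suitable pair of $3$-elements coming from $H$ and $H_1$ (the groups $W/(Q\cap Q^g)$ and $W_1/(Q\cap Q^g)$ give natural $\SL_2(3)$-modules by Lemma~\ref{Z weak Q}(iii), so $H$ and $H_1$ each contribute an $\SL_2(3)$ acting on a $2$-dimensional piece of $J$ with a complementary fixed $2$-space). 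Applying Lemma~\ref{GO4} (with $\rho_1$, say, playing the role of the vector $v\notin[V,A]$, which is legitimate by Lemma~\ref{3classes} since $\rho_1$ is not $G$-conjugate into $Z$) produces a non-degenerate quadratic form $\mathrm q$ on $J$, invariant under the subgroup generated by these transvection-like subgroups, with respect to which $\rho_1$ is singular; Lemma~\ref{hyper} and the $+$-type classification then fix the type as $+$. Now $L_* = \langle H, H_1, r_1, r_2\rangle$ acts on $J$ preserving $\mathrm q$, and I would show $L_*/J$ is \emph{all} of $\GO_4^+(3)$: it contains the two commuting $\SL_2(3)\cong$ root $\SL_2$'s from $H, H_1$ together with the reflections $r_1, r_2$, and $\GO_4^+(3) \cong (\SL_2(3)\circ\SL_2(3)).2^2$ is generated by exactly such a configuration — this is a finite check on the structure of $\GO_4^+(3)$, using that $C_{L_*}(J)=J$ from Step 1 to rule out a larger kernel.

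\textbf{Step 3: fusion control and the final dichotomy.} For (ii), that $L$ controls $G$-fusion of elements of $J$: since $J \in$ (contains a) Thompson subgroup of $S\in\syl_3(G)$ and $J$ is weakly closed in $S$ with respect to $G$ (it is characteristic in $S$, being $J(S)$, and $S\in\syl_3(G)$ by Lemma~\ref{basic}), Burnside's fusion argument gives that $N_G(J)=L$ controls $G$-fusion of elements and subsets of $J$. For (iv), $L$ normalizes $J$ and hence permutes the $G$-classes inside $J$; by Lemma~\ref{3classes} the singular $1$-spaces of $\mathrm q$ are precisely the $G$-conjugates of $Z$ in $J$ (seven of them), so $L$ stabilizes the set $\mathcal S(J)$ of singular points, and Lemma~\ref{quadratic form} forces $L$ to preserve $\mathrm q$ up to similarity, i.e. $L/J \le \mathrm{CO}_4^+(3)$. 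Finally the split (a)/(b) is driven by whether the extra involution $t$ of Lemma~\ref{structM}(ii)(b) is present: in case (a), $M = M_0S\langle f\rangle$ and one shows $N_L(S) = M\cap L = N_M(J)$ has image $\GO_4^+(3)\cap(\text{Borel})$, which together with $L_* \le L$ and $L/J \le \mathrm{CO}_4^+(3)$ and a Frattini/order argument (comparing $|N_L(S):J|$ with the Sylow normalizer in $\mathrm{CO}_4^+(3)$ versus $\GO_4^+(3)$) forces $L = L_*$; in case (b) the element $t$ normalizes $J$ and induces a similarity of $\mathrm q$ that is not an isometry (it scales the form, since it swaps $Q_1\leftrightarrow Q_2$ hence $A_1\leftrightarrow A_2$ and conjugates plus-points to minus-points — consistent with the earlier remark that plus and minus cyclic subgroups fuse when $L \approx 3^4{:}\mathrm{CO}_4^+(3)$), so $L/J$ strictly contains $\GO_4^+(3)$ inside $\mathrm{CO}_4^+(3)$, forcing equality $L/J \cong \mathrm{CO}_4^+(3)$.

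\textbf{Main obstacle.} The delicate point is Step 2 — proving $L_*/J$ is the \emph{full} $\GO_4^+(3)$ rather than some proper subgroup containing the obvious root subgroups, and simultaneously verifying that the various transvection-type subgroups really are in the configuration demanded by Lemma~\ref{GO4} (the two $2$-dimensional commutator spaces must be distinct and the test vector genuinely outside their sum). This requires carefully tracking how $H$, $H_1$, $r_1$, $r_2$ act on the $4$-dimensional space $J$ and knowing the subgroup lattice of $\GO_4^+(3)\cong(\mathrm{SL}_2(3)\circ\mathrm{SL}_2(3)){:}2^2$ precisely enough to conclude generation; everything else is bookkeeping with the extraspecial group $Q$ and the fusion machinery.
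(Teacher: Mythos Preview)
Your overall architecture is right, but Step~2 contains genuine errors that break the argument.

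First, $[J,r_1]$ is \emph{not} $2$-dimensional: the $r_i$ turn out to be reflections on $J$, with $[J,r_i]=\langle\rho_i\rangle$ of order $3$ and $|C_J(r_i)|=3^3$ (this is Lemma~\ref{I1} in the paper). So the pair $r_1,r_2$ cannot be fed into Lemma~\ref{GO4}, whose hypothesis demands two commuting elements of order $p=3$ each with $2$-dimensional commutator space on $J$. The paper instead uses $\langle a\rangle=Q_1J/J$ and $\langle b\rangle=Q_2J/J$; here indeed $[J,Q_1]=A_1=C_J(Q_2)$ and $[J,Q_2]=A_2=C_J(Q_1)$, so the GO4 hypothesis is met.

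Second, your choice of test vector $v=\rho_1$ fails the hypothesis $v\notin[V,A]$ of Lemma~\ref{GO4}: with $a,b$ as above one has $[J,\langle a,b\rangle]=A_1A_2=A$, and $\rho_1\in A_1\le A$. Worse, GO4 concludes that $v$ is \emph{singular}, so your sentence ``with respect to which $\rho_1$ is singular'' would force $\rho_1$ into the $G$-class of $Z$, contradicting Lemma~\ref{3classes}. The paper's fix is to first show $\langle Z^{L_*}\rangle=J$ (irreducibility), so there exist $G$-conjugates of $Z$ in $J\setminus A$; any such element is a legitimate $v$, and then the singular set is exactly $\mathcal S(J)=\{j\in J^\#: j\sim_G z\text{ for some }z\in Z\}$.

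Third, in Step~3 you write that the singular $1$-spaces are ``seven of them''. That is $|\mathcal S(A)|$, not $|\mathcal S(J)|$. The paper's determination of the $+$-type comes from a counting argument you have omitted: $|\mathcal S(J)\cap A|=14$, orbits of $N_M(J)$ on $J\setminus A$ have length divisible by $18$, and $|\mathcal S(J)|$ must divide $|\GL_4(3)|$; this forces $|\mathcal S(J)|=32$, matching a $+$-type form. Without this (or an equivalent argument) you have not excluded $-$-type.

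Finally, for $L_*/J\cong\GO_4^+(3)$, your generation sketch is close but the paper's route is cleaner: since $J=J(W)=J(W_1)$ one has $H,H_1\le L$; then $H$ acts as $\SL_2(3)$ on $V$ and $H_1$ as $\SL_2(3)$ on $V_1$, giving $\langle H,H_1\rangle/J\cong\Omega_4^+(3)$, and $r_1$ swaps $H\leftrightarrow H_1$ while $\langle r_1r_2\rangle H_1/W_1\cong\GL_2(3)$, yielding the full $\GO_4^+(3)$. The dichotomy (a)/(b) then follows from the Frattini argument $L=N_M(S)L_*$.
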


\begin{proof}  By construction $A$ is elementary abelian and
so  $A \le C_Q(V)  \le W$ and $A \le C_Q(V_1) \le W_1$. Since $S$ centralizes $A/Z$ and since in $\GL_3(3)$  such
a centralizer has order $18$, we infer that $J=C_S(A)$ has order $3^4$. Since $A$ has index $3$ in $J$,  $J$ is abelian.  Suppose that $B$ is an abelian subgroup of $S$ of order at least $3^4$. We may assume that
$B \ge Z$. Thus by Lemma~\ref{structM}, $B \cap Q$ is an abelian subgroup of $Q$ of order at least $3^3$ and
hence of order exactly $3^3$.  Using that $(B\cap Q)/Z$ is centralized by $QB=S$, Lemma~\ref{Z weak Q} (iii)
yields $B\cap Q= A$. But then $B \le C_S(A)=J$ and we have $B=J$. Hence $J= J(S)$ is the Thompson subgroup of
$S$. Since $J$ centralizes $V$, $J \le S\cap C_G(V)= W$. Thus $J= J(W) $ and similarly $J= J(W_1)$. In
particular, $L \ge \langle H, H_1\rangle N_G(S)$. Since $J$ contains $A$, if $J$ is not elementary abelian, then
$\Phi(J)= Z$. But then $Z$ is normalized by $H$, which is a contradiction as $H$ acts irreducibly on $V$. Thus
$J$ is elementary abelian. This proves (i). Part (ii) follows from \cite[37.6]{AschbacherFG} as $J$ is abelian.

We have that $C_G(J) \le C_G(Z)<M$. Since $J$ acts non-trivially on both $R_1Q/Q$ and $R_2Q/Q$, and $JM_0/M_0$ is
inverted by $t$ when $M> M_0S\langle f\rangle$ (see Lemma~\ref{structM} (ii)), we have $C_M(J)\le S\langle
r_1,r_2\rangle$. Since $r_1Q$ and $r_2Q$ act non-trivially on $A/Z$, we have $C_G(J) \le S$. Hence $J\le C_G(J)=
C_S(J)\le C_S(A)\le J$ and this proves (iii).

Define $$\mathcal S (J) = \{j \in J^\#\mid j^{l}\in Z \text{ for some } l \in L\}.$$

Consider $S/J = Q_1Q_2J/J$.  Then $S /J \in \syl_3(L_*/J) \subseteq \syl_3(L/J)$.  We have $[J,Q_1]= A_1=
C_J(Q_2)$ and $[J,Q_2]= A_2= C_J(Q_1)$. In addition, $[J,S]= [J,Q]= [W,Q]=A$  and $C_J(S)= Z$.

Now $\langle Z^{L_*}\rangle \ge \langle Z^H\rangle \langle Z^{H_1}\rangle = VV_1= A$ and, as $A \not \le Q\cap
Q^g$, $A$ is not normalized by $H$. Hence $\langle Z^{L_*}\rangle=J$  and, in particular, $L_*$ and,
consequently, $L$ acts irreducibly on $J$. Thus there are members of $\mathcal S(J)$ in $J\setminus A$. By
Lemma~\ref{3classes} there are exactly $14$ elements of $\mathcal S(J)$ in $A$ and  in $J\setminus A$ there are a
multiple of 18 such elements. Thence $|\mathcal S(J)| = 14+ n\cdot 18$ for some integer $n\ge 1$. Since $|J|= 3^4$,
using the fact that $|\mathcal S(J)| $ divides $|\GL_4(3)|$ we infer that $|\mathcal S(J)|= 32$.

Using Lemma~\ref{GO4}  with $\langle a \rangle = Q_1J/J$ and $\langle b \rangle = Q_2J/J$, yields that $S$ preserves a
quadratic form with any element of $\mathcal S(J)$ as a singular vector. Since $S/J$ contains $W_1/J$ and $W_2/J$
which both act quadratically on $J$ with $[J,W]= [J,J(Q\cap Q^g)]= [J,(Q\cap Q^g)]=V$ and $[J,W]= [J,W]^{r_1}=
V_1$ we see that for any such  form $V$ and $V_1$ would consist of singular vectors. It follows that $\mathcal S(J)$
is the set of singular vector of a $+$-type quadratic form on $J$. Since this set is by design invariant under
the action of $L$, we have $L/J $ is isomorphic to a subgroup of $\mathrm {CO}_4^+(3)$ by Lemma~\ref{quadratic
form}. Thus (iv) is true. Now $HH_1$ contains $S=WW_1$ which is a Sylow $3$-subgroup of $G$, $H$ acts
irreducibly on $V$ and $H_1$ acts irreducibly on $V_1$, it follows that $HH_1/J \cong \Omega_4^+(3)$. Conjugation
by $r_1$ exchanges $H$ and $H_1$,  $\langle r_1r_2\rangle H_1/W_1 \cong  \GL_2(3)$ and so we infer that $L_*/J
\cong \mathrm {GO}_4^+(3)$ and $L_*$ is normal in $L$. By the Frattini  Argument, $L= N_L(S) L_* = N_M(S)L_*$ and
so (v) holds.
\end{proof}

\begin{lemma}\label{fusion3elts}
We have   $\rho_1$ is $G$-conjugate to $\rho_2$ if and only if $SR\langle f \rangle$ has index $2$ in $M$.
\end{lemma}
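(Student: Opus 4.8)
The plan is to connect the $G$-fusion of $\rho_1$ and $\rho_2$ with the outer automorphism $t$ appearing in Lemma~\ref{structM}(ii)(b), via the fusion control provided by $L = N_G(J)$ (Lemma~\ref{NJ}(ii)). First I would observe that, since $\langle\rho_1\rangle$ is of plus type and $\langle\rho_2\rangle$ is of minus type with respect to the quadratic form $\mathrm q$ on $J$ from Lemma~\ref{NJ}(iv), any $G$-conjugacy between $\rho_1$ and $\rho_2$ must, by Lemma~\ref{NJ}(ii), be realised inside $L$; and an element of $L$ carrying a plus-type one-space to a minus-type one-space exists precisely when $L/J$ does not preserve $\mathrm q$ on the nose but only up to a non-square similarity factor, i.e.\ precisely when $L/J \cong \mathrm{CO}_4^+(3)$ rather than $\mathrm{GO}_4^+(3)$. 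By Lemma~\ref{NJ}(v) these two cases correspond exactly to $M = M_0S\langle f\rangle$ versus $M > M_0S\langle f\rangle$, and the latter is by definition the statement that $SR\langle f\rangle = M_0S\langle f\rangle$ has index $2$ in $M$ (recall $M_0 = RQ$ and $S = WQ$, so $M_0 S = SR Q\cdot\ldots$; one needs the small bookkeeping point that $M_0S\langle f\rangle$ and $SR\langle f\rangle$ name the same subgroup).

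So the forward implication runs: if $\rho_1 \sim_G \rho_2$, then since $Z \not\sim_G \rho_1$ and $Z \not\sim_G \rho_2$ (Lemma~\ref{3classes}) there are at least three $G$-classes represented among the non-trivial cyclic subgroups of $J$ when $\rho_1\not\sim_G\rho_2$ but only two plus the singular class otherwise; more usefully, by Lemma~\ref{NJ}(ii) the conjugating element lies in $L$, and its image in $L/J \le \mathrm{CO}_4^+(3)$ must move a plus-type point to a minus-type point. Since $\mathrm{GO}_4^+(3)$ preserves the form exactly, it preserves the type of every one-space, so no element of $\mathrm{GO}_4^+(3)$ does this; hence $L/J \not\cong \mathrm{GO}_4^+(3)$, so $L/J \cong \mathrm{CO}_4^+(3)$ by Lemma~\ref{NJ}(v), which forces $M > M_0S\langle f\rangle$, i.e.\ $|M : SR\langle f\rangle| = 2$.

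For the reverse implication, suppose $|M : SR\langle f\rangle| = 2$, so by Lemma~\ref{structM}(ii)(b) the element $t \in C_M(Z)\setminus M_0$ exists, interchanging $R_1$ and $R_2$ and normalising $S$, $A$, $J$ and $V$. Since $t$ swaps $R_1$ and $R_2$ it conjugates $Q_1 = [Q,R_1]$ to $Q_2 = [Q,R_2]$, hence $A_1 = A\cap Q_1$ to $A_2 = A\cap Q_2$, and therefore it carries $\langle\rho_1\rangle \le A_1$ to a cyclic subgroup of $A_2$. Now $\langle\rho_2\rangle \le A_2$ is the subgroup inverted by $r_2$, and $A_2/Z$ has order $3$, so $A_2$ contains exactly one subgroup $Q$-conjugate to each of $Z$, $\langle\rho_2\rangle$; by Lemma~\ref{3classes} the non-singular cyclic subgroups of $A_2$ are exactly the three $Q$-conjugates of $\langle\rho_2\rangle$, while $\langle\rho_1\rangle^t$ is non-singular (it is plus-type, being the image under an automorphism of $J$ of the plus-type $\langle\rho_1\rangle$ — here one uses that $t$ normalises $J$ and $\mathrm q$-type is $L$-semi-invariant, and $t\in C_M(Z)$ fixes $Z$ hence permutes singular subgroups among themselves). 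Thus $\langle\rho_1\rangle^t$ is $Q$-conjugate to $\langle\rho_2\rangle$, and composing with that $Q$-conjugation gives an element of $G$ conjugating $\rho_1$ to a generator of $\langle\rho_2\rangle$; adjusting by the inversion $r_2$ if necessary (which lies in $K_2 \le G$) we get $\rho_1 \sim_G \rho_2$.

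\textbf{Main obstacle.} The delicate point is the translation dictionary between the internal description of $M$ in Lemma~\ref{structM} and the external structure of $L/J$ in Lemma~\ref{NJ}(v): one must be careful that ``$\rho_1$ plus-type, $\rho_2$ minus-type'' is exactly the parameterisation coming from $\mathrm q$, and that the similarity factor distinguishing $\mathrm{GO}_4^+(3)$ from $\mathrm{CO}_4^+(3)$ over $\GF(3)$ is a non-square (so that it genuinely interchanges the plus and minus one-space types), which is where the computation $|\GF(3)^\times : (\GF(3)^\times)^2| = 2$ and the explicit relation between the discriminant of $\mathrm q|_{\langle v\rangle}$ and the type of $\langle v\rangle$ enters. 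The rest is bookkeeping with the already-established conjugation action of $t$ on $R_1, R_2, Q_1, Q_2, A_1, A_2$.
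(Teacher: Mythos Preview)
Your proposal is correct and follows the same approach as the paper, which simply cites Lemma~\ref{NJ}(ii) and (v): fusion control by $L$ together with the dichotomy $L/J\cong\GO_4^+(3)$ versus $\mathrm{CO}_4^+(3)$. You have spelled out the details the paper leaves implicit; your explicit use of the element $t$ for the reverse direction is a concrete instantiation of the fact that $\mathrm{CO}_4^+(3)$ fuses plus and minus points, and is fine (though slightly more than needed---one could simply note $\mathrm{CO}_4^+(3)$ has a single orbit on non-singular $1$-spaces).

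One remark on the point you flag as the main obstacle. At this stage of the paper the convention ``$\langle\rho_1\rangle$ plus-type, $\langle\rho_2\rangle$ minus-type'' has not yet been fixed (it appears after Lemma~\ref{I1}), so strictly you need the \emph{fact} that $\langle\rho_1\rangle$ and $\langle\rho_2\rangle$ lie in different $\GO_4^+(3)$-orbits. This is implicit in the proof of Lemma~\ref{NJ}(v): there it is shown that $HH_1J/J\cong\Omega_4^+(3)$ while $r_1r_2\notin HH_1J$ (since $\langle r_1r_2\rangle$ extends $H_1/W_1\cong\SL_2(3)$ to $\GL_2(3)$). If $r_1$ and $r_2$ were reflections in vectors of the same type, their product would lie in $\Omega_4^+(3)$; hence they are of opposite types, and so are $\langle\rho_1\rangle=[J,r_1]$ and $\langle\rho_2\rangle=[J,r_2]$. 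Your parenthetical claim that $\langle\rho_1\rangle^t$ is ``plus-type'' is not quite right (if $t$ is a non-isometric similarity it may swap types), but your argument only needs $\langle\rho_1\rangle^t$ to be non-singular, which follows since $t$ fixes $Z$ and $\langle\rho_1\rangle\ne Z$.
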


\begin{proof}
This  is a consequence of Lemma~\ref{NJ}(ii) and (v).
\end{proof}

Recall the notation introduced in ~\ref{o4} and ~\ref{type}.

\begin{lemma}  The sets $\mathcal P(J)$ and $\mathcal
M(J)$ are fused in $L$  if $L > L_*$  and we have $|\mathcal S(J)|= 16$, $|\mathcal P(J)|=|\mathcal M(J)|= 12$.
\end{lemma}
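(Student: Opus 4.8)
The plan is to count singular and non-singular one-spaces in the $+$-type orthogonal space $J$ over $\GF(3)$ directly, and then use Lemma~\ref{NJ}(v) to decide when the two non-singular classes fuse. First I would recall the standard point counts: a non-degenerate $4$-dimensional orthogonal space of $+$-type over $\GF(q)$ has $(q+1)^2$ singular one-spaces, which for $q=3$ gives $16$. Since $|J\setminus\{0\}| = 3^4-1 = 80$ and each one-space contributes $2$ non-zero vectors, $J$ has $40$ one-spaces in total, of which $16$ are singular; so there are $24$ non-singular one-spaces, split as $|\mathcal P(J)| + |\mathcal M(J)| = 24$. (This is consistent with $|\mathcal S(J)| = 32$ non-zero singular vectors computed in Lemma~\ref{NJ}, since $32 = 16\cdot 2$.)

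Next I would show $|\mathcal P(J)| = |\mathcal M(J)| = 12$. One clean way: $\Omega_4^+(3)$ acts transitively on each of $\mathcal S(J)$, $\mathcal P(J)$, $\mathcal M(J)$ (transitivity of the orthogonal group on singular points, and on non-singular points of each fixed type, is Witt's theorem), so each of $|\mathcal P(J)|$, $|\mathcal M(J)|$ is an orbit length dividing $|\Omega_4^+(3)| = 2^6\cdot 3^2\cdot 5$, and they sum to $24$. To pin down the split I would use that the stabiliser in $\GO_4^+(3)$ of a $+$-type one-space $\langle v\rangle$ induces on $\langle v\rangle^\perp$ (a non-degenerate $3$-space) the full $\GO_3(3) \cong 2\times \Sym(4)$ of order $48$, while $|\GO_4^+(3)| = 2\cdot|\Omega_4^+(3)|\cdot\text{(spinor norm factor)}$; rather than chase these indices, the fastest route is the elementary identity for a non-degenerate $2$-space $\langle v\rangle^\perp$ versus $\langle v\rangle$: for $w$ non-singular, $q(\lambda v + \mu w')$ ranges over values that force the discriminant computation $\mathcal P$ vs $\mathcal M$ to be symmetric because $-1$ is a non-square in $\GF(3)$, so scaling a $+$-type vector by a non-square in a suitable complement swaps the two types. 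Concretely: in the hyperbolic plane $H = \langle e,f\rangle$ with $q(e)=q(f)=0$, $f(e,f)=1$, the vectors $e+\lambda f$ have $q = \lambda$, so among the $4$ anisotropic one-spaces $\langle e+\lambda f\rangle$ ($\lambda\in\GF(3)^\times$ gives $q=1$, $\langle e-f\rangle$...) one checks $2$ are of $+$-type and $2$ of $-$-type; doing this in each of the two hyperbolic planes of an orthogonal decomposition $J = H\perp H$, plus the "mixed" one-spaces, gives $12$ and $12$ after a short bookkeeping. I would present this as the orbit-length argument backed by the $H\perp H$ coordinate check.

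For the fusion statement, I would argue as follows. By Lemma~\ref{NJ}(v), when $L = L_*$ we have $L/J \cong \GO_4^+(3)$, and this group preserves the form $\mathrm q$ exactly, hence preserves the type of every one-space; so $\mathcal P(J)$ and $\mathcal M(J)$ are not fused in $L$ in that case. When $L > L_*$, Lemma~\ref{NJ}(v) gives $L/J \cong \mathrm{CO}_4^+(3)$, the group of similarities of $\mathrm q$. A similarity with multiplier a non-square scalar (and $\GF(3)^\times = \{1,2\}$ has $2$ a non-square) sends a vector of norm in one square-class to a vector of norm in the other square-class, hence maps $+$-type one-spaces to $-$-type one-spaces; such similarities exist in $\mathrm{CO}_4^+(3)\setminus\GO_4^+(3)$ (indeed $\mathrm{CO}_4^+(3)/\GO_4^+(3)$ is generated by the image of the scalar $2\in\GF(3)^\times$). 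Therefore $\mathcal P(J)$ and $\mathcal M(J)$ are fused in $L$, and by Lemma~\ref{fusion3elts} (or directly Lemma~\ref{NJ}(ii),(v)) this is exactly the case $[M : SR\langle f\rangle] = 2$, which is the hypothesis $L > L_*$.

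The only genuinely delicate point is the clean verification that the split is $12/12$ rather than, say, $16/8$ or $6/18$; everything else is either a citation to Witt's theorem / Lemma~\ref{NJ} or a one-line scalar-multiplier observation. I expect to handle the $12/12$ split by the explicit hyperbolic-plane coordinate computation sketched above, since it avoids any reliance on the precise structure of $\mathrm{CO}_4^+(3)$ and is self-contained over $\GF(3)$.
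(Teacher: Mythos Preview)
Your approach is essentially the paper's: the paper's entire proof is the one line ``This follows directly from Lemma~\ref{NJ},'' treating the point counts and the fusion as standard facts about the $+$-type $4$-dimensional orthogonal geometry over $\GF(3)$, and you have correctly supplied those details.

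One slip to correct: your parenthetical claim that $\mathrm{CO}_4^+(3)/\GO_4^+(3)$ is generated by the image of the scalar $2\in\GF(3)^\times$ is wrong, since $q(2v)=4q(v)=q(v)$ in $\GF(3)$, so the scalar $2$ has multiplier $1$ and already lies in $\GO_4^+(3)$. You still need an explicit similarity with non-square multiplier; for the form $q(x_1,x_2,x_3,x_4)=x_1x_2+x_3x_4$ the map $(x_1,x_2,x_3,x_4)\mapsto(2x_1,x_2,2x_3,x_4)$ has multiplier $2$ and does the job. Once you exhibit such an element, note that it already gives a bijection $\mathcal P(J)\to\mathcal M(J)$, so $|\mathcal P(J)|=|\mathcal M(J)|$ follows immediately from $|\mathcal P(J)|+|\mathcal M(J)|=24$ without any hyperbolic-plane bookkeeping; this makes the $12/12$ verification cleaner than the coordinate argument you sketched.
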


\begin{proof} This follows directly from Lemma~\ref{NJ}.
\end{proof}

%

\begin{lemma}\label{I1}  For $i=1,2$, $C_L(r_i) = C_{L_*}(r_i)$, $[J,r_i]= \langle \rho_i\rangle$, $|C_J(r_i)|=3^3$ and $C_L(r_i)/C_J(r_i) \langle r_i\rangle \cong \GO_3(3) \cong 2 \times \Sym(4)$.
\end{lemma}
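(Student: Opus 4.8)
The plan is to work inside $L_* / J \cong \GO_4^+(3)$ and use the orthogonal geometry on $J$ set up in Lemma~\ref{NJ}. First I would establish that $C_L(r_i) = C_{L_*}(r_i)$. When $L = L_*$ this is vacuous, so suppose $L > L_*$, i.e.\ $L/J \cong \mathrm{CO}_4^+(3)$. The quotient $L/L_*$ has order $2$ and is generated by the image of the element $t$ of Lemma~\ref{structM}(ii)(b); recall $t$ exchanges $R_1$ and $R_2$, hence exchanges $Q_1, Q_2$ and so swaps $\langle \rho_1\rangle$ and $\langle \rho_2\rangle$ (up to adjusting $t$ by $r_1 r_2$ as in the proof of Lemma~\ref{structM}), and in particular $t$ does not normalize $\langle r_i \rangle$. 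Since any element of $C_L(r_i)$ does normalize $\langle r_i\rangle$, and since $L = L_* \langle t\rangle$ with $t$ moving $r_i$ to (a conjugate of) $r_{3-i}$, no coset $L_* t$-representative can centralize $r_i$; therefore $C_L(r_i) \le L_*$, giving $C_L(r_i) = C_{L_*}(r_i)$.

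Next I would pin down the action of $r_i$ on $J$. From Notation~\ref{nota}, $\langle \rho_i\rangle \le A_i \le Q_i$ is inverted by $r_i$, while $r_i$ centralizes $Q_{3-i} = C_Q(r_i)$ (Lemma~\ref{Qaction}(ii)) and hence centralizes $A_{3-i}$ and $Z$. Since $J$ is elementary abelian of order $3^4$ (Lemma~\ref{NJ}(i)) with $J = A\langle s\rangle$ for a suitable $S$-element, and $r_i$ acts coprimely on $J$, we get $J = C_J(r_i) \oplus [J, r_i]$. The subspace $[J, r_i]$ is at least $\langle \rho_i\rangle$; it cannot be larger because $r_i$ centralizes $Q_{3-i}$ which already forces $[J, r_i] \le [Q, r_i] \cap \langle Z, \rho_i, \dots\rangle$—more precisely $[J,r_i] \le [J, R_i] \le A_i$ together with $r_i$ centralizing $Z \le A_i$ pins $[J,r_i] = \langle \rho_i\rangle$, a $1$-space, whence $|C_J(r_i)| = 3^3$. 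In orthogonal terms $\langle \rho_i\rangle$ is a nonsingular (plus- or minus-type) $1$-space by the choice of $\rho_i$ as a plus/minus type vector, and $r_i$ acts on $J$ as the reflection-like involution $-1$ on this $1$-space and $+1$ on its perpendicular hyperplane $C_J(r_i) = \langle \rho_i\rangle^\perp$.

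Finally I would compute $C_{L_*}(r_i)/J$ inside $\GO_4^+(3)$. The involution $\bar r_i \in \GO_4^+(3)$ acts as $-1$ on a nonsingular $1$-space and $+1$ on the complementary nonsingular $3$-space $C_J(r_i)$, so its centralizer in $\GO_4^+(3)$ is $\GO_1(3) \times \GO_3(3)$; since $\GO_1(3) \cong 2$ is generated by $\bar r_i$ itself, this gives $C_{L_*/J}(\bar r_i) \cong \langle \bar r_i\rangle \times \GO_3(3)$. One must also check the centralizer does not grow when passing from $\GO_4^+(3) = L_*/J$ back to $L_* = C_L(r_i) J /$ (i.e.\ that $J \le C_{L_*}(r_i) J$ contributes exactly $C_J(r_i)$, which follows from $J$ abelian and $|C_J(r_i)| = 3^3$, and that the extension does not fuse anything extra, which is routine since $r_i$ acts coprimely). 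Therefore $C_L(r_i)/C_J(r_i)\langle r_i\rangle \cong \GO_3(3)$, and the standard isomorphism $\GO_3(3) \cong 2 \times \Sym(4)$ (the central $2$ being $-1$ on the $3$-space, and $\PGO_3(3) \cong \PGL_2(3) \cong \Sym(4)$) completes the claim. The main obstacle I anticipate is the bookkeeping of the last paragraph: carefully separating the contribution of $J$ from that of the complement and confirming that $\langle \rho_i\rangle = [J, r_i]$ really is a nondegenerate $1$-space (so that the geometric centralizer computation applies verbatim), rather than something that could be singular—this is where the choice of $\rho_i$ as a plus/minus type vector in Notation~\ref{nota}(v) and Lemma~\ref{3classes} must be invoked.
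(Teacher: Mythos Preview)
Your overall strategy matches the paper's: identify $r_i$ as a reflection on $J$, then read off the centralizer inside $\GO_4^+(3)$. The argument for $C_L(r_i)=C_{L_*}(r_i)$ via $r_1^t=r_2$ is also what the paper does.

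There is, however, a genuine gap in your computation of $[J,r_i]=\langle\rho_i\rangle$. You write ``$[J,r_i]\le [Q,r_i]$'' and then ``$[J,r_i]\le [J,R_i]\le A_i$''. Neither is justified: $J$ is \emph{not} contained in $Q$ (indeed $J\cap Q=A$ has index $3$ in $J$), so the first inclusion fails; and the full quaternion group $R_i$ does \emph{not} normalize $J$ (only its centre $\langle r_i\rangle$ lies in $L_*$), so ``$[J,R_i]$'' is not a subgroup of $J$ and the second chain is meaningless in the intended sense. In fact $S$ is not normal in $SM_0$ because $[S/Q,R_iQ/Q]\neq 1$ by Lemma~\ref{Z weak Q}(vi), so there is no reason for $R_i$ to normalize $J=J(S)$.

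The paper repairs this by a direct count: since $r_iQ$ is central in $RQ/Q$, it is centralised by $S/Q$, and together with $C_Q(r_i)=Q_{3-i}$ this gives $|C_S(r_i)|=3^4$; on the other hand $r_i$ inverts $Q_iJ/J$, so comparing orders in $S/J=Q_1J/J\times Q_2J/J$ forces $|C_J(r_i)|=3^3$. Equivalently, and closer to your line of thought: from $r_i$ centralising $S/Q$ one gets that $r_i$ centralises $J/A$, hence $[J,r_i]\le A$; then, as $J$ is abelian, $[J,r_i]=[A,r_i]=[A_i,r_i]=\langle\rho_i\rangle$, using only that $r_i$ centralises $A_{3-i}$ and $Z$ and inverts $\rho_i$. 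Either route closes the gap; the rest of your outline then goes through.
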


\begin{proof}  We have that $|C_S(r_i)| = 3^4$ and $r_i$ inverts $Q_iJ/J$. Hence $|C_J(r_i)|= 3^3$.
It follows that both $r_1$ and $r_2$ are reflections on $J$. If $L> L_*$, then $r_1^t= r_2$ and so
$C_L(r_i) = C_{L_*}(r_i)$.  Since $r_1$ and $r_2$ are reflections and since $L_*/J \cong \GO_4^+(3)$ by
Lemma~\ref{NJ}, we  have $C_L(r_i)/C_J(r_i) \langle r_i\rangle \cong \GO_3(3) \cong 2 \times \Sym(4)$.
\end{proof}

From Lemma~\ref{I1} we have  $[J,r_1] = \langle \rho_1\rangle$ and $[J,r_2] =\langle \rho_2\rangle$ are non-singular 1-dimensional spaces in $J$. We fix notation so that $\langle \rho_1\rangle \in \mathcal P(J)$ and
$\langle \rho_2\rangle \in \mathcal M(J)$.

\begin{lemma}\label{type1} The following hold:
\begin{enumerate} \item $V$ and $V_1$ are of Type S;
\item $A_1$ is of Type DP;
\item $A_2$ is of Type DM;
\item  $\langle \rho_1,\rho_2 \rangle$ is of type $N+$;
\item  $|\mathcal S(C_J(r_1))|=4$, $|\mathcal M(C_J(r_1))|=6$ and $|\mathcal P(C_J(r_1))|=3$; and
\item $|\mathcal S(C_J(r_2))|=4$, $|\mathcal M(C_J(r_2))|=3$ and $|\mathcal P(C_J(r_2))|=6$.
\end{enumerate}
\end{lemma}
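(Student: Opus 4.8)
The plan is to read off each assertion from the action of the relevant Sylow $3$-subgroups on $J$ together with the fact (Lemma~\ref{NJ}(iv)) that $\mathcal S(J)$ is the singular set of a $+$-type quadratic form $\mathrm q$ on $J$ whose singular $1$-spaces are precisely the $G$-conjugates of $Z$ lying in $J$. The key structural input is the description, established in Lemmas~\ref{Z weak Q} and \ref{NJ}, that $[J,W]=V$ and $[J,W_1]=V_1$ with $W/J$, $W_1/J$ acting quadratically, that $[J,Q_1]=A_1=C_J(Q_2)$ and $[J,Q_2]=A_2=C_J(Q_1)$, and that $[J,r_i]=\langle\rho_i\rangle$ with $|C_J(r_i)|=3^3$ (Lemma~\ref{I1}). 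For (i), since $V=ZY$ with both $Z$ and $Y=Z^g$ being $G$-conjugates of $Z$, and $V$ has exactly four $1$-subspaces, Lemma~\ref{Z weak Q}(iv) shows every nontrivial cyclic subgroup of $V$ is a conjugate of $Z$ (the two ``diagonal'' ones arise as conjugates of $Z$ under $H$, which acts $2$-transitively on the $1$-spaces of $V$); hence $|\mathcal S(V)|=4$, so $V$ is of Type S by Notation~\ref{type}, and likewise for $V_1=V^{r_1}$.

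For (ii) and (iii): $A_1$ contains $Z$ (a singular point) and the three $Q$-conjugates of $\langle\rho_1\rangle$, which by Lemma~\ref{3classes} are \emph{not} $G$-conjugate to $Z$, hence non-singular; so $|\mathcal S(A_1)|=1$ and the remaining three $1$-spaces are non-singular. Since $\langle\rho_1\rangle\in\mathcal P(J)$ by our fixed choice of notation and $A_1$ is $Q$-invariant while $Q$ fixes $Z$ and permutes the other three $1$-spaces transitively, all three non-singular points of $A_1$ lie in one $L$-class, forcing $|\mathcal P(A_1)|=3$; thus $A_1$ is of Type DP. Symmetrically, $A_2$ contains $Z$ and the three $Q$-conjugates of $\langle\rho_2\rangle\in\mathcal M(J)$, so $A_2$ is of Type DM. For (iv), $\langle\rho_1,\rho_2\rangle$ is a $2$-space containing the plus point $\langle\rho_1\rangle$ and the minus point $\langle\rho_2\rangle$; by Lemma~\ref{types} it is of one of the five types, and only Type N+ (resp.\ possibly N-) admits both a plus and a minus $1$-space, but the bilinear form restricted to $\langle\rho_1\rangle^\perp\cap\langle\rho_2\rangle^\perp$ inside this plane is non-degenerate of determinant the product of the two reflection values, which one computes to be of $+$-type; so $\langle\rho_1,\rho_2\rangle$ is of Type N+. (Concretely: $r_1r_2$ acts on $\langle\rho_1,\rho_2\rangle$ as $-1$, a product of two commuting reflections, which is the signature of a $+$-type plane.)

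For (v) and (vi): by Lemma~\ref{I1}, $C_J(r_i)=\langle\rho_i\rangle^\perp$ is a non-degenerate $3$-space of discriminant opposite to that of $\langle\rho_i\rangle$; so $C_J(r_1)=\langle\rho_1\rangle^\perp$ is the $3$-space obtained by removing a plus point, and $C_J(r_2)=\langle\rho_2\rangle^\perp$ by removing a minus point. A non-degenerate $3$-space over $\GF(3)$ inside a $+$-type $4$-space has, by Lemma~\ref{hyper}, $\mathcal S\neq\emptyset$; counting (the singular set of $\mathrm q|_{C_J(r_i)}$ is that of an orthogonal $3$-space, which has exactly $4$ singular points) gives $|\mathcal S(C_J(r_i))|=4$ in both cases, and the remaining $9$ non-singular points split as $6+3$ or $3+6$ between $\mathcal M$ and $\mathcal P$ according to the discriminant; the orientation is pinned down by noting $\langle\rho_2\rangle\le C_J(r_1)$ contributes to $\mathcal M(C_J(r_1))$ while $\langle\rho_1\rangle\le C_J(r_2)$ contributes to $\mathcal P(C_J(r_2))$, and that $L_*$-conjugacy (via $C_{L_*}(r_i)/C_J(r_i)\langle r_i\rangle\cong\GO_3(3)$, Lemma~\ref{I1}) acts with the orbit sizes $4,6,3$ on the three types, yielding $|\mathcal M(C_J(r_1))|=6$, $|\mathcal P(C_J(r_1))|=3$ and the mirror statement for $r_2$. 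The main obstacle here is purely bookkeeping: keeping the two ``opposite'' orientations straight so that the plus/minus labels in (v) and (vi) come out consistent with the fixed choice $\langle\rho_1\rangle\in\mathcal P(J)$, $\langle\rho_2\rangle\in\mathcal M(J)$; everything else is a direct count in a $4$-dimensional orthogonal space over $\GF(3)$.
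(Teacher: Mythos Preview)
Your approach is essentially the same as the paper's (the paper declares (i)--(iv) ``obvious'' and handles (v)--(vi) in one sentence), but two of your justifications do not close.

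For (iv), the ``Concretely'' remark is wrong as stated: in \emph{both} a plus-type and a minus-type non-degenerate $2$-space over $\GF(3)$, the element $-1$ is a product of two commuting reflections, so this does not distinguish the types. What actually works is what you almost say: since $\rho_2\in Q_2=C_Q(r_1)$, $r_1$ fixes $\rho_2$, hence $\rho_2\in C_J(r_1)=\langle\rho_1\rangle^\perp$, so $\rho_1\perp\rho_2$. Then $\langle\rho_1,\rho_2\rangle=\langle\rho_1\rangle\perp\langle\rho_2\rangle$ has orthogonal basis consisting of one plus and one minus vector; over $\GF(3)$ this gives discriminant a non-square, which is plus type. (Equivalently: the two singular $1$-spaces in $\langle\rho_1,\rho_2\rangle$ are the two ``diagonal'' $1$-spaces of $A$ lying in neither $A_1$ nor $A_2$, which you already know from Lemma~\ref{3classes} are all $G$-conjugates of $Z$.)

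For (v) and (vi), you correctly reduce to deciding whether $\mathcal M(C_J(r_1))$ has size $3$ or $6$, but then simply assert the answer. Noting that $\langle\rho_2\rangle\in\mathcal M(C_J(r_1))$ and that $\GO_3(3)$ has orbits of sizes $4,3,6$ does not by itself force the minus orbit to be the one of size $6$. The paper's device is cleaner and actually finishes the count: since $J$ is of plus type, it admits an orthonormal basis of elements of $\mathcal P(J)$ containing $\rho_1$; hence $C_J(r_1)=\langle\rho_1\rangle^\perp$ inherits an orthonormal basis of three plus vectors, so the restricted form is $x^2+y^2+z^2$ and one reads off $|\mathcal P|=3$, $|\mathcal M|=6$, $|\mathcal S|=4$. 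Dually, $J$ also has an orthonormal basis of minus vectors containing $\rho_2$, giving the mirror count for $C_J(r_2)$. Your ``discriminant'' hint points in this direction, but as written the orientation is not pinned down.
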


\begin{proof} Parts (i)--(iv) are obvious.   By Lemma~\ref{I1} we have that $|C_J(r_i)| = 3^3$ for $i = 1,2$. Since $J$ is a quadratic space of plus type, it follows that
$C_J(r_1)$ has an orthonormal basis consisting of members of $\mathcal P(J)$ and $C_J(r_2)$ has an orthonormal basis
consisting of elements of $\mathcal M(J)$. Thus (v) and (vi) hold.
\end{proof}

\begin{lemma}\label{I2} If $\wt {\rho_{i}} \in C_J(r_i)$ is $L_*$-conjugate to $\rho_i$, then $\langle \rho_i, \wt \rho_i\rangle$
has Type N-. In particular,   $|\mathcal P (\langle \rho_i, \wt \rho_i\rangle)|=|\mathcal M(\langle \rho_i, \wt
\rho_i\rangle)|=2 $.
\end{lemma}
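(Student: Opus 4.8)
The plan is to pin down the type of the $2$-space $E = \langle \rho_i, \wt\rho_i\rangle$ using Notation~\ref{type} together with the fusion information already established, rather than by direct computation with the quadratic form. First I would record that $\rho_i$ and $\wt\rho_i$ are both non-singular: this is immediate for $\rho_i$ from Lemma~\ref{type1}(iv) (it lies in $\mathcal P(J)$ or $\mathcal M(J)$ depending on whether $i=1$ or $i=2$), and $\wt\rho_i$ is non-singular because it is $L_*$-conjugate to $\rho_i$ and $L_*$ preserves the quadratic form $\mathrm q$ up to similarity by Lemma~\ref{NJ}(iv), hence permutes $\mathcal S(J)$, $\mathcal P(J)$, $\mathcal M(J)$ amongst themselves (in fact $\wt\rho_i$ lies in the \emph{same} one of $\mathcal P(J),\mathcal M(J)$ as $\rho_i$, since $\mathcal P$ and $\mathcal M$ are only fused under the full group $L$ when $L>L_*$, not under $L_*$). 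So $E$ contains at least two non-singular $1$-spaces of the same type, which by Notation~\ref{type} immediately rules out Types S, DP, DM and N+ \emph{provided} $E$ also contains no singular $1$-space: of the five types, only N- and N+ have $|\mathcal S(E)|\le 1$, and N+ has $|\mathcal P(E)|=|\mathcal M(E)|=1$, so it cannot contain two $1$-spaces of the same non-singular type. Thus it suffices to show $\mathcal S(E) = \emptyset$, and then $E$ is forced to be of Type N-, whence $|\mathcal P(E)| = |\mathcal M(E)| = 2$ by Notation~\ref{type}.

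To show $\mathcal S(E)=\emptyset$, the key point is that $E \le C_J(r_i)$, and by Lemma~\ref{type1}(v),(vi) the $3$-dimensional space $C_J(r_i)$ has an orthonormal basis consisting of vectors all of the \emph{same} non-singular type — type P if $i=1$, type M if $i=2$. In other words $C_J(r_i)$ is an anisotropic (Witt index $0$) quadratic $3$-space over $\GF(3)$: indeed $|\mathcal S(C_J(r_i))| = 4$ counts \emph{points of the projective plane} $\mathbb P(C_J(r_i))$ lying on the conic, but a closer bookkeeping is cleaner — a non-degenerate quadratic $3$-space over $\GF(3)$ is either $\mathrm O_3(3)$ with $q$ the "sum of three squares" form, which is anisotropic (no non-zero singular vectors), and this is exactly the case that occurs here since $C_J(r_i)$ has an orthonormal basis. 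Hence $C_J(r_i)$ contains \emph{no} singular $1$-spaces at all, so in particular $\mathcal S(E) = \emptyset$. Combined with the previous paragraph this forces $E$ to have Type N-, proving the lemma.

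The only subtlety — and the step I would be most careful about — is the claim that $\wt\rho_i$ has the same type as $\rho_i$ (P for $i=1$, M for $i=2$) rather than the opposite type. This is why the hypothesis says $L_*$-conjugate and not merely $L$-conjugate: Lemma~\ref{NJ}(v) gives $L_*/J \cong \GO_4^+(3)$ and it is $\GO_4^+(3)$, not the similarity group, that fixes $\mathrm q$ on the nose (up to scalars, $\GO_4^+(3)$ preserves each of $\mathcal S(J), \mathcal P(J), \mathcal M(J)$ setwise, because the two classes $\mathcal P(J),\mathcal M(J)$ are swapped only by the similarity that scales $\mathrm q$ by a non-square, which lies in $\mathrm{CO}_4^+(3)\setminus\GO_4^+(3)$). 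So $\rho_i^{\,l} \in \mathcal P(J)$ for all $l\in L_*$ when $i=1$, and $\in \mathcal M(J)$ when $i=2$, which is exactly what the argument needs. Everything else is a direct appeal to Notation~\ref{type}, Lemma~\ref{type1} and Lemma~\ref{NJ}; there is no serious computation.
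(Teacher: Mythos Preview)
Your argument contains a genuine error. You claim that $E = \langle \rho_i, \wt\rho_i\rangle \le C_J(r_i)$, but this is false: by Lemma~\ref{I1} we have $\langle \rho_i\rangle = [J,r_i]$, and Lemma~\ref{eitheror} records explicitly that $r_i$ \emph{inverts} $\rho_i$. Thus $\rho_i \notin C_J(r_i)$ and $E \not\le C_J(r_i)$. Your subsequent claim that $C_J(r_i)$ is anisotropic is also wrong, and in fact you quote the very lemma that contradicts it: Lemma~\ref{type1}(v),(vi) gives $|\mathcal S(C_J(r_i))| = 4$, so $C_J(r_i)$ has four singular $1$-spaces. (Over a finite field every non-degenerate quadratic form in three or more variables is isotropic; the ``sum of three squares'' over $\GF(3)$ vanishes at $(1,1,1)$.) So the second paragraph collapses entirely.

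The fix is short and is exactly what the paper does. The point you are missing is that $\rho_i$ and $\wt\rho_i$ sit on \emph{opposite} sides of the reflection decomposition $J = [J,r_i] \perp C_J(r_i)$: since $\langle\rho_i\rangle = [J,r_i]$ and $\wt\rho_i \in C_J(r_i)$, we get $\rho_i \perp \wt\rho_i$. Now $\rho_i$ and $\wt\rho_i$ are perpendicular non-singular vectors of the same type (your first paragraph correctly argues this using the $L_*$-conjugacy), so the form on $E$ is $\mathrm{diag}(a,a)$ for some nonzero $a\in\GF(3)$; in particular $E$ is non-degenerate, which already rules out DP and DM in addition to S and N+. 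Hence $E$ is of Type N-. Your case analysis in the first paragraph was fine as far as it went --- it just needed the perpendicularity, not the (false) anisotropy, to finish.
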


\begin{proof} Suppose that $\wt {\rho_{i}} \in C_J(r_i)$ is $L_*$-conjugate to $\langle \rho_i\rangle$. Then, as $\langle \rho_i\rangle= [J,r_i]$, $\rho_i$ is perpendicular to $C_J(r_i)$. It follows that $\wt {\rho_{i}} $ is perpendicular to $\rho_i$ and this means that $ \langle \rho_i, \wt \rho_i\rangle$ is of Type N-.
\end{proof}

\section{Two $3$-centralizers}

In this section we determine the structure of $C_G(\rho_1)$ and $C_G(\rho_2)$. We first show that these centralisers do not have non trivial normal $3^\prime$-subgroups. Recall the notation of \ref{nota} and that $f\in M$ is an involution inverting $Z$.

\begin{lemma}\label{JSig} $J$ does not normalize any non-trivial $3'$-subgroups.
\end{lemma}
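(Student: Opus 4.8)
The plan is to suppose for a contradiction that $K = O_{3'}(C_G(J)J)$ — or more precisely some non-trivial $3'$-subgroup normalized by $J$ — is non-trivial, and then to use the fact that $J$ is a $3$-group acting on $K$ together with coprime action to derive a contradiction with the structure of $M$, $H$ and $L$ already established. First I would recall from Lemma~\ref{NJ} that $J = J(S) = C_G(J)$ is elementary abelian of order $3^4$, that $L = N_G(J)$ acts on $J$ preserving a quadratic form of $+$-type, and that $L_* = \langle H, H_1, r_1, r_2\rangle$ has $L_*/J \cong \GO_4^+(3)$. So if $1 \ne K$ is a $3'$-group normalized by $J$, then $K$ is normalized by $N_G(J)$-invariant considerations only after passing to $\langle K^{L}\rangle$; more simply, we may take $K$ to be a minimal $J$-invariant $3'$-subgroup and set $K_0 = \langle K^{?}\rangle$. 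The cleaner route: let $K$ be any non-trivial $3'$-subgroup normalized by $J$ and consider the action of $J$ on $K$.

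The key steps, in order, would be: (1) By coprime action, $K = \langle C_K(B) \mid B \le J, \ J/B \text{ cyclic}\rangle$, equivalently $K = \langle C_K(x) \mid x \in J^\# \rangle$ by Burnside, so some hyperplane $J_0$ of $J$ has $C_K(J_0) \ne 1$. (2) Now translate this into a statement about elements of $J$ of each geometric type (singular, plus, minus). If $x \in J^\#$ and $C_K(x) \ne 1$ then $\langle C_K(x)^{C_G(x)}\rangle$ is a non-trivial $3'$-subgroup normalized by a Sylow $3$-subgroup of $C_G(x)$, hence by $J \cap C_G(x)$ which is large. (3) For a representative $x = \rho_i$ (non-singular), the key leverage is that $\rho_i$ lies in $A_i \le Q$, so $C_G(\rho_i) \ge C_M(\rho_i)$ which by Lemma~\ref{Qaction} and the structure of $M$ in Lemma~\ref{structM} contains $Q$ or at least $Q_{3-i} R_{3-i}$, a $3$-local subgroup with $O_3 = Q$ self-centralizing; such a group has no normal $3'$-subgroup, and more to the point, $C_G(\rho_i)$ cannot normalize a $3'$-group centralized by all of $J$ because $J$ contains elements (like $Z$, or elements inverting $Q/Z$ appropriately) whose centralizers are $3$-constrained with $O_3$ self-centralizing. (4) For singular $x$ — e.g. $x$ generating $Z$ — we have $C_G(x) = C_G(Z) \le M$ and $F^*(M) = Q = O_3(M)$, so $M$ has no non-trivial normal $3'$-subgroup; hence $C_K(Z) = 1$. (5) Similarly for $x \in V$, since $V \le Q \cap Q^g$ and $H = \langle Q, Q^g\rangle$ acts on $V$ with $F^*(\langle Q, Q^g\rangle) $ a $3$-group times $W$... — actually the cleanest observation is that $A = VV_1$ spans $J$, every element of $A$ is conjugate into $Z$ or into $\langle \rho_1\rangle$ or $\langle \rho_2\rangle$, and $J = \langle A^L \rangle$, so controlling $C_K$ on these three classes controls it everywhere.

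The main obstacle I expect is handling the non-singular classes $\langle \rho_1 \rangle$, $\langle \rho_2 \rangle$, because at this stage of the paper the centralizers $C_G(\rho_i)$ have not yet been pinned down (that is the goal of Section~6, which this lemma opens). So I cannot simply quote their structure. Instead I would argue internally: $C_{C_G(\rho_i)}(J) = J$ and $C_J(r_i)$ has order $3^3$ by Lemma~\ref{I1}, with $r_i$ inverting $\rho_i$; the group $\langle C_G(\rho_i) \cap L_*, r_i \rangle$ already forces enough $3$-local structure. Concretely, if $K$ were a non-trivial $3'$-group normalized by $J$, then by step (1) pick $x \in J^\#$ with $C_K(x) \ne 1$; replacing $x$ by a conjugate under $L$ we may take $\langle x \rangle$ to be one of $Z$, $\langle \rho_1\rangle$, $\langle \rho_2\rangle$ (since $\mathcal S(J) \cup \mathcal P(J) \cup \mathcal M(J) = J^\#$ and $L$ acts transitively on each of $\mathcal S(J)$, and on $\mathcal P(J) \cup \mathcal M(J)$), and the $3$-local structure of $M$ (for $Z$), and of $N_G(\langle \rho_i \rangle) \ge Q_{3-i}R_{3-i}$ acting with self-centralizing $O_3 \supseteq Q_{3-i}$ (for $\rho_i$, using that $r_i$ inverts $\rho_i$ and normalizes $Q_{3-i}$, hence $N_G(\langle\rho_i\rangle)$ contains a $3$-constrained subgroup of characteristic $3$ large enough to contain a full $J$-orbit) rules out a non-trivial normal — or even $J$-invariant — $3'$-subgroup of the relevant centralizer. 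Thus $C_K(x) = 1$ for every $x \in J^\#$, forcing $K = 1$, the desired contradiction.

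\begin{proof}
Suppose for a contradiction that $K\ne 1$ is a $3'$-subgroup of $G$ normalized by $J$. By Lemma~\ref{NJ}, $J=J(S)$ is elementary abelian of order $3^4$, $J=C_G(J)$, and $L=N_G(J)$ acts on $J$ preserving a quadratic form $\mathrm q$ of $+$-type, with $L\ge L_*$ and $L_*/J\cong\GO_4^+(3)$. Since $J$ is a $3$-group and $(|J|,|K|)=1$, coprime action (Burnside) gives
\[
K=\langle C_K(x)\mid x\in J^\#\rangle,
\]
so there is $x\in J^\#$ with $C_K(x)\ne 1$. Now $\langle C_K(x)^{C_G(x)}\rangle$ is a non-trivial $3'$-subgroup of $C_G(x)$ normalized by $C_J(x)$; but $J$ normalizes it too since $J$ centralizes $x$ and normalizes $K$, so $\langle C_K(x)^{C_G(x)}\rangle$ is in fact normalized by $C_G(x)\cap N_G(J)$ and, in particular, contains a non-trivial $3'$-subgroup normalized by $J$ which lies in $C_G(x)$.

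By Lemma~\ref{NJ}(ii) together with Lemma~\ref{3classes}, every element of $A=VV_1$ is $G$-conjugate into $Z$, $\langle\rho_1\rangle$ or $\langle\rho_2\rangle$, and since $J=\langle A^{L}\rangle$ and $L$ acts transitively on each of $\mathcal S(J)$ and (using Lemma~\ref{NJ}(v) and the displayed fusion lemma) on $\mathcal P(J)\cup\mathcal M(J)$, every $x\in J^\#$ is $L$-conjugate to an element of $Z^\#\cup\langle\rho_1\rangle^\#\cup\langle\rho_2\rangle^\#$. As $L$ normalizes $J$ and hence permutes the $J$-invariant $3'$-subgroups of $G$, we may therefore assume that $\langle x\rangle$ equals $Z$ or $\langle\rho_i\rangle$ for some $i\in\{1,2\}$.

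If $\langle x\rangle=Z$, then $C_G(x)=C_G(Z)\le M$ and $F^*(M)=Q=O_3(M)$ has no non-trivial normal $3'$-subgroup; since $J\le S\le M$, a $J$-invariant $3'$-subgroup of $M$ lies in $O_{3'}(J M)$, and as $Q\le F^*(M)$ acts without fixed points on any such subgroup while $Q$ centralizes it (being normalized by $Q$ and $3$-constrained), we get a contradiction. More directly, any non-trivial $3'$-subgroup of $M$ normalized by $S$ is centralized by $O_3(M)=Q=F^*(M)$, hence trivial. So $C_K(Z)=1$.

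Now suppose $\langle x\rangle=\langle\rho_i\rangle$ for some $i\in\{1,2\}$; put $j=3-i$. By construction (Notation~\ref{nota}), $\rho_i\in A_i\le Q_i$, and $r_j$ centralizes $Q_i$ while $r_i$ inverts $\langle\rho_i\rangle$ and normalizes $Q_j$. Consider $N=C_G(\rho_i)$. Then $N$ contains $C_{M}(\rho_i)$, which by Lemma~\ref{Qaction}(iii) contains $Q_jR_j$ with $Q=Q_iQ_j$ and $C_G(Q)\le Q$; in particular $N$ contains the $3$-local subgroup $X_0=\langle Q, R_j, C_S(\rho_i)\rangle$ with $O_3(X_0)\supseteq Q$ self-centralizing in $X_0$. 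Moreover $J\cap N=C_J(\rho_i)$ has order $3^3$ by Lemma~\ref{I1} and $C_S(\rho_i)$ has order $3^4$, so $C_J(\rho_i)$ has index $3$ in $C_S(\rho_i)\in\syl_3(N)$. Any $J$-invariant $3'$-subgroup of $N$ is normalized by $C_J(\rho_i)$, and since $\langle C_J(\rho_i)^{N}\rangle$ contains $Q$ (as $C_J(\rho_i)$ contains $A_j=C_J(\rho_i)\cap Q_j$ which generates $Q_j$ under $R_j\le N$, and likewise meets $Q_i$ nontrivially with $R_i\le C_G(r_i)$ acting), the subgroup $O_3(X_0)\supseteq Q=F^*(X_0)$ centralizes it, forcing it to be trivial. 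Hence $C_K(\rho_i)=1$.

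Therefore $C_K(x)=1$ for every $x\in J^\#$, so $K=\langle C_K(x)\mid x\in J^\#\rangle=1$, contrary to assumption. Thus $J$ normalizes no non-trivial $3'$-subgroup of $G$.
\end{proof}
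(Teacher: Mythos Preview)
Your overall strategy---coprime generation plus case analysis on the $L$-class of $\langle x\rangle$---is sound in outline, but the execution has real gaps, and you miss the simplification that makes the paper's proof a few lines long.

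\textbf{The $\rho_i$ case is broken.} You write that $J\cap C_G(\rho_i)=C_J(\rho_i)$ has order $3^3$ ``by Lemma~\ref{I1}''. But $\rho_i\in A_i\le J$ and $J$ is abelian, so $C_J(\rho_i)=J$ has order $3^4$; Lemma~\ref{I1} concerns $C_J(r_i)$, not $C_J(\rho_i)$. You also claim $C_G(\rho_i)$ contains $Q$, which is false: $Q_i$ is extraspecial of order $3^3$ with $\rho_i\in Q_i\setminus Z$, so $C_{Q_i}(\rho_i)=A_i$ has order $9$ and $Q\not\le C_G(\rho_i)$. The subsequent argument about $\langle C_J(\rho_i)^N\rangle$ containing $Q$ therefore does not stand.

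\textbf{The $Z$ case is not quite justified.} You argue that a $3'$-subgroup of $M$ normalized by $S$ is centralized by $Q=F^*(M)$ and hence trivial. That statement is correct, but $C_K(Z)$ is only known to be normalized by $J$, not by $S$. What you actually need is the paper's observation: set $X=C_K(Z)\le M$; then $A=J\cap Q$ normalizes $X$ and $X$ normalizes $Q$, so $[A,X]\le Q\cap X=1$, whence $X\le C_M(A)=J$ and $X=1$.

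\textbf{The key simplification you miss.} The paper avoids the $\rho_i$ cases altogether. Using coprime action in the form $K=\langle C_K(B)\mid B\le J,\ |J:B|=3\rangle$, note that by Lemma~\ref{hyper} every hyperplane $B$ of $J$ contains a singular one-space, i.e.\ an $L$-conjugate of $Z$. Hence if $K\ne 1$ one may assume (after conjugating by an element of $L$, which preserves the class of $J$-invariant $3'$-subgroups) that $C_K(Z)\ne 1$, and the two-line argument above finishes. No analysis of non-singular elements is required.
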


\begin{proof}  Suppose that $Y$ is a non-trivial  $3'$-subgroup normalized by $J$. Then, as every subgroup  of $J$ of order $27$ contains a conjugate
of $Z$ by Lemma~\ref{hyper}, we may assume that $X= C_{Y}(Z)\neq 1$. As $X$ is
normalized by $A = J \cap Q$ and $X$ normalizes $Q$, $[A,X] \le Q \cap X=1$ and hence $X \le
C_M(A) = J $ as $A$ is a maximal abelian subgroup of $Q$. But then $X=1$ which is a contradiction. This proves the lemma.
\end{proof}

\begin{lemma}\label{princeprep} For $i=1,2$,  $C_M(\rho_i) = Q_{3-i}R_{3-i}J\langle fr_i \rangle$
and $C_{C_M(Z)}(\rho_i)/\langle \rho_i \rangle$ is isomorphic to the centralizer of a non-trivial  $3$-central
element in $\PSp_4(3)$ and $Z$ is inverted in $C_M(\rho_i)$.
\end{lemma}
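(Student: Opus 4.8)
The plan is to compute $C_M(\rho_i)$ directly from the structure of $M$ established in Lemma~\ref{structM}, and then recognise the relevant quotient using Lemma~\ref{cen3psp43}. Fix $i \in \{1,2\}$; to be concrete take $i = 1$ (the case $i = 2$ is symmetric, swapping the roles of $R_1$ and $R_2$, $Q_1$ and $Q_2$). First I would locate $\rho_1$ inside $Q$: recall $\langle\rho_1\rangle \le A_1 \le Q_1 = [Q,R_1]$, so $\rho_1 \in Q_1$ and, since $Q_1 \cong 3^{1+2}_+$ with $Z(Q_1) = Z$ and $\rho_1 \notin Z$ (Lemma~\ref{3classes}), $\rho_1$ is a non-central element of $Q_1$. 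Thus $C_Q(\rho_1) = \langle\rho_1\rangle Q_2$ has order $3^4$: indeed $C_{Q_1}(\rho_1) = \langle\rho_1\rangle Z = \langle\rho_1\rangle$ has order $3$, $Q_2$ centralizes $Q_1$ hence $\rho_1$ by Lemma~\ref{Qaction}(iii), and $Q = Q_1 Q_2$.

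Next I would pin down the action of $M/Q$ on $\rho_1 Q/Q$ trivially (it is trivial), so $C_M(\rho_1)/C_Q(\rho_1)$ embeds into the stabiliser in $M/Q$ of the point $\langle\rho_1\rangle Z/Z$ of $Q_1/Z$. By Lemma~\ref{structM} and Notation~\ref{nota}, $M_0 = RQ$ with $R = R_1 R_2$, $R_1, R_2 \cong \Q_8$, and $R_1 Q/Q$ acts on $Q_1/Z \cong 3^2$ as $\Q_8 \le \SL_2(3)$ acting naturally, so it is transitive on the four one-spaces; the stabiliser of $\langle\rho_1\rangle Z/Z$ in $R_1 Q/Q$ is the centre $\langle r_1 Q\rangle$, and $r_1$ indeed inverts $\langle\rho_1\rangle$. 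Meanwhile $R_2$ centralizes $Q_1$ entirely, $S/Q = WQ/Q$ centralizes $A_1/Z \ni \langle\rho_1\rangle Z/Z$ (Lemma~\ref{Z weak Q}(v)), and $J \le S$ centralizes $A \ni \rho_1$. The element $f \in H \cap M$ inverts $V$, hence inverts $\rho_1$ as well (since $\langle\rho_1\rangle \le V$ would be too strong — rather $f$ inverts $Z$ and, by the description of $H$ acting as $\SL_2(3)$ on $V$ and the choice of $\rho_1$, one checks $f$ acts on $A_1/Z$; combining with $r_1$ one gets that $f r_1$ centralizes $\rho_1$). Assembling these: $R_2$, $J$ centralize $\rho_1$, $r_1$ and $f$ invert it while $fr_1$ centralizes it, and $Q_{3-1} = Q_2$ is in the centralizer. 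This gives $C_M(\rho_1) = Q_2 R_2 J \langle f r_1\rangle$, and $Z$ is inverted in it by the element $f r_1$ (which inverts $Z$ since $f$ does and $r_1$ centralizes $Z$), establishing the last clause.

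For the identification of $C_{C_M(Z)}(\rho_1)/\langle\rho_1\rangle$, I would set $C = C_{C_M(Z)}(\rho_1) = Q_2 R_2 J$ (removing the $f r_1$ factor, which lies outside $C_G(Z)$) and examine $\bar C = C/\langle\rho_1\rangle$. Now $Q_2 \cong 3^{1+2}_+$ and $\langle\rho_1\rangle \cap Q_2 = 1$, so $\bar Q_2 \cong 3^{1+2}_+$; the subgroup $J$ contains $A = A_1 A_2$ with $A_1 = \langle\rho_1\rangle\times Z$, so $\bar J$ has order $3^3$ and contains $\bar Z$ and $\bar A_2 = A_2$ as an elementary abelian group — in particular $\bar C$ has a Sylow $3$-subgroup containing an elementary abelian subgroup of order $3^3$ (namely $\bar J$ or $\langle \bar Q_2, \text{a complement}\rangle$; one checks $O^3$ of the $3$-structure gives $\bar A_2 \langle \bar\rho_? \rangle$ elementary abelian of rank $3$). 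The quaternion group $R_2$ acts on $Q_2$ as $\Q_8 \le \SL_2(3)$ on $Q_2/Z$ and faithfully, so $\bar C / \bar Q_2 \cong R_2 \bar J / \bar Q_2$ has shape $\SL_2(3)$ extended by what $\bar J$ contributes modulo $\bar Q_2$; since $J \cap Q = A$ and $A Q_2 = Q_1 Q_2 \cap (\text{stuff})$, one gets $\bar C$ has shape $3^{1+2}_+ . \SL_2(3)$ with $O_2(\bar C) = 1$ (as $O_2(M) = 1$, or directly because $\bar Q_2 = F^*(\bar C)$). Then Lemma~\ref{cen3psp43} applies verbatim and identifies $\bar C$ as the centralizer of a $3$-central element in $\PSp_4(3)$.

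\textbf{Main obstacle.} The delicate point will be the precise bookkeeping of which involutions from the $2$-part of $M$ (namely $R_1$, and the elements $f$, $t$) land in $C_M(\rho_1)$ versus invert $\rho_1$, and in particular verifying that it is exactly the coset $\langle f r_1\rangle$ (and not, say, $R_1$ itself) that survives — this requires carefully tracking the action of $f$ on $Q_1/Z$ using the embedding $H/W \cong \SL_2(3)$ and the relation $A = [Q,W] = VV_1$ from Lemma~\ref{Z weak Q}(v) together with $V_1 = V^{r_1}$. Once the correct centralizer $Q_2 R_2 J\langle f r_1\rangle$ is written down, verifying that $O_2 = 1$ for the quotient and that a Sylow $3$-subgroup contains an elementary abelian $3^3$ (so that Lemma~\ref{cen3psp43} is applicable) is routine from the already-established structure of $Q$, $J$ and the $R_i$-action.
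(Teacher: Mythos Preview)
Your approach is essentially the same as the paper's---establish the containment $C_M(\rho_i)\ge Q_{3-i}R_{3-i}J\langle fr_i\rangle$, then argue equality, then feed the quotient into Lemma~\ref{cen3psp43}. Two points of comparison are worth making.

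First, the step you flag as the ``main obstacle'' is handled more cleanly in the paper. Rather than track the action of $f$ on $Q_1/Z$ via $H$ acting on $V$, the paper simply observes that $f$ inverts \emph{all} of $J$. This follows because $f$ inverts $V$ and inverts $W/(Q\cap Q^g)$ (both are natural $H/W\cong\SL_2(3)$-modules and $f$ maps to the central involution), while $J(Q\cap Q^g)=W$ and $J\cap(Q\cap Q^g)=V$; since $J$ is abelian and $f$ is an involution inverting both $V$ and $J/V$, a one-line computation forces $f$ to invert $J$ itself. Then $f$ inverts $\rho_i\in J$ and $r_i$ inverts $\rho_i$, so $fr_i$ centralises $\rho_i$. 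Your parenthetical attempt to get at this through $A_1/Z$ and $V_1=V^{r_1}$ is on the right track but unnecessarily indirect.

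Second, for the equality $C_M(\rho_i)=Q_{3-i}R_{3-i}J\langle fr_i\rangle$ the paper does not compute the stabiliser in $M/Q$ from above as you propose; instead it counts. The right-hand side has index $24$ or $48$ in $M$ (according to the two cases of Lemma~\ref{structM}), and one checks directly that $\rho_i$ has $24$ conjugates under $R_iQ_i$ in $Q_i$ (namely all of $Q_i\setminus Z$), doubled to $48$ by $t$ in case~(b). This orbit count forces equality and dispenses with any separate analysis of how $t$ acts on $\langle\rho_i\rangle$, which your outline does not address.

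The identification of $C_{C_M(Z)}(\rho_i)/\langle\rho_i\rangle$ via Lemma~\ref{cen3psp43} is exactly as you describe and matches the paper.
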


\begin{proof} Since $\rho_i \in A_i\le J$ and since $[Q_1,Q_2]=1$ and
$[Q_i,R_{3-i}]=1$, we certainly have $C_M(\rho_i) \ge Q_{3-i}R_{3-i}J$. Furthermore, $f$ inverts $J$ and so $f$
inverts $\rho_i$ and as $r_i$ also inverts $\rho_i$, we have $C_M(\rho_i) \ge Q_{3-i}R_{3-i}J\langle fr_i \rangle$
which has index either $24$ or $48$ in $M$ dependent upon whether or not $M = RS\langle f \rangle$
respectively. Since $Q_i$ contains twelve $Q$-conjugates of $\langle \rho_i\rangle$, Lemma~\ref{fusion3elts} implies
$C_M(\rho_i) = Q_{3-i}R_{3-i}J\langle fr_i \rangle$.

Because  $r_if$ inverts $Z$, we have $C_{C_M(Z)}(\rho_i)/\langle \rho_i\rangle= Q_{3-i}R_{3-i}J/\langle
\rho_i\rangle$ with $R_{3-i}$ acting faithfully on $Q_{3-i}$. Thus the final statement also is valid by
Lemma~\ref{cen3psp43}.
\end{proof}

In the next two lemmas we pin down two possible structures of $C_G(\rho_1)$ and $C_G(\rho_2)$. In fact in $\F_4(2)$ we have that both are isomorphic to $3 \times \Sp_6(2)$. That this is the case in our group will be proved later in Lemma~\ref{ItsSp62}.

\begin{lemma}\label{eitheror} For $i=1,2 $ either $C_G(\rho_i) \cong 3 \times \Aut(\SU_4(2))$ or $C_G(\rho_i) \cong 3 \times \Sp_6(2)$. Furthermore, $r_i$ inverts $\rho_i$ and centralizes $C_G(\rho_i)/\langle \rho_i\rangle$.
\end{lemma}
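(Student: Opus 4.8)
\textbf{Proof proposal for Lemma~\ref{eitheror}.}
The plan is to apply Prince's Theorem (Theorem~\ref{PrinceThm}) to the group $C_G(\rho_i)$ after quotienting out $\langle \rho_i\rangle$, using the structural information about $C_M(\rho_i)$ already obtained in Lemma~\ref{princeprep}. First I would set $X = C_G(\rho_i)$ and note that $\langle\rho_i\rangle \unlhd X$, so $\bar X = X/\langle\rho_i\rangle$ is defined; write $\bar d$ for the image of a suitable $3$-element (or rather, I will work with the element $r_i$ acting, or with an element whose centralizer in $\bar X$ matches). Actually the cleaner route: by Lemma~\ref{princeprep}, $C_{C_M(Z)}(\rho_i)/\langle\rho_i\rangle = Q_{3-i}R_{3-i}J/\langle\rho_i\rangle$ is isomorphic to the centralizer of a non-trivial $3$-central element in $\PSp_4(3)$, and $Z$ is inverted in $C_M(\rho_i)$. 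So inside $\bar X$, the image $\bar Z$ of $Z$ has order $3$ and $C_{\bar X}(\bar Z)$ contains a subgroup isomorphic to the $3$-central centralizer $Y$ in $\PSp_4(3)$; I would need to check it equals that, i.e. that $C_{\bar X}(\bar Z) \cong Y$. Here the key point is that $C_G(Z) = M$ (or has index $2$), so $C_G(Z) \cap C_G(\rho_i) = C_M(\rho_i)$, and passing to the quotient by $\langle\rho_i\rangle$ gives exactly $C_{\bar X}(\bar Z) \cong Y$, the centralizer of a $3$-central element of order $3$ in $\PSp_4(3)$; the element $\bar Z$ plays the role of ``$d$''.

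Next I would verify the two hypotheses of Theorem~\ref{PrinceThm}. The subgroup $E$ of $P \in \Syl_3(C_{\bar X}(\bar Z))$ of order $27$: this is the image of $J$ (which has order $3^4$ but contains $\langle\rho_i\rangle$, so $J/\langle\rho_i\rangle$ has order $3^3$ and is elementary abelian by Lemma~\ref{NJ}(i)). I must show $E = \bar J$ does not normalize any non-trivial $3'$-subgroup of $\bar X$. This follows from Lemma~\ref{JSig}: $J$ does not normalize any non-trivial $3'$-subgroup of $G$; a non-trivial $3'$-subgroup of $\bar X$ normalized by $\bar J$ would lift (via coprime action, since $\langle\rho_i\rangle$ is a $3$-group acting on the preimage) to a non-trivial $3'$-subgroup of $X \le G$ normalized by $J$, contradicting Lemma~\ref{JSig}. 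Second, $\bar d = \bar Z$ must be $\bar X$-conjugate to its inverse: this holds because $Z$ is inverted already inside $C_M(\rho_i) \le X$ (Lemma~\ref{princeprep} states $Z$ is inverted in $C_M(\rho_i)$), so $\bar Z$ is inverted in $\bar X$, using an element such as $f r_i$.

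Applying Theorem~\ref{PrinceThm} then yields that $\bar X$ is one of: (a) a group with $|\bar X : C_{\bar X}(\bar Z)| = 2$; (b) $\Aut(\SU_4(2))$; (c) $\Sp_6(2)$. I would eliminate case (a): if $|\bar X : C_{\bar X}(\bar Z)| = 2$, then $\bar X$ has a normal subgroup of index $2$ centralizing $\bar Z$, i.e. $\bar Z$ is almost central, so $C_{\bar X}(\bar Z)$ has index $\le 2$; but then pulling back, $C_X(Z)$ has index $\le 2$ in $X = C_G(\rho_i)$, giving $|C_G(\rho_i) : C_M(\rho_i)| \le 2$, which is far too small — $C_M(\rho_i)$ has order $3 \cdot |Q_{3-i}R_{3-i}J\langle fr_i\rangle|/3^{?}$, a $\{2,3\}$-group, whereas $C_G(\rho_i)$ contains $H/\,\cdot$ or other $2$- and $3$-local structure forcing a larger group; more precisely I would argue via order or via the structure of $K_i = C_G(r_i)$ (not yet determined) — actually the clean elimination is that in case (a) $\bar X$ would be $3$-soluble (a $\{2,3\}$-group even), contradicting Lemma~\ref{JSig} which forces $O_{3'}(\bar X)=1$ combined with the fact that $\bar X$ cannot then be $3$-constrained of the required shape while having $\bar J$ of order $27$ with no normal $3'$-subgroup. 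So $\bar X \cong \Aut(\SU_4(2))$ or $\Sp_6(2)$. Finally, since the Schur multipliers and the fact that $3 \nmid |\mathrm{Mult}|$ appropriately (the extension $X$ of $\langle\rho_i\rangle \cong 3$ by $\bar X$ with $\bar X$ having trivial $3$-part of the Schur multiplier and no outer automorphism of order $3$, together with $\langle\rho_i\rangle$ being central as $X = C_G(\rho_i)$ centralizes $\rho_i$) forces $X = \langle\rho_i\rangle \times X_0$ with $X_0 \cong \bar X$; hence $C_G(\rho_i) \cong 3 \times \Aut(\SU_4(2))$ or $3 \times \Sp_6(2)$. The claims about $r_i$ are immediate: $r_i$ inverts $\rho_i$ by the definition of $\rho_i$ in Notation~\ref{nota}(v), and $r_i \in C_G(\rho_i)/\langle\rho_i\rangle$ corresponds to an involution; since $\bar X$ is simple and $r_i$ normalizes $\langle\rho_i\rangle$ inverting it, $r_i$ together with the direct factor structure forces $r_i$ to centralize the complement $X_0 \cong \bar X$ — here I would pin this down by noting $C_{\bar X}(\bar r_i)$ has the right order (it contains $\overline{C_M(\rho_i) \cap K_i}$ and by the centralizer-of-involution data in Table~\ref{Table1} and Lemma~\ref{sp62facts} this forces $\bar r_i = 1$ in $\bar X$, i.e. $r_i \in \langle\rho_i\rangle X_0$ maps trivially, so $r_i$ centralizes $X_0$).

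\textbf{Main obstacle.} The delicate part is the last step — showing the extension splits as a genuine direct product $3 \times (\cdots)$ and that $r_i$ centralizes the second factor rather than inducing an outer automorphism. Splitting is fine because $\langle\rho_i\rangle$ is central in $C_G(\rho_i)$ by definition and both $\Aut(\SU_4(2))$ and $\Sp_6(2)$ have Schur multiplier with trivial $3$-part, so any central extension by $\mathbb{Z}/3$ splits. The real content is locating $r_i$: one must rule out that $r_i$ acts as the graph-field automorphism of $\SU_4(2)$ (the ``$b$''-type involutions of Table~\ref{Table1}) or as some outer involution, and instead show $r_i$ lies in the direct factor $\langle\rho_i\rangle$ times the identity — equivalently $r_i$ centralizes the component. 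This should follow by comparing $C_G(\rho_i) \cap C_G(r_i) = C_{C_M(Z)}(\rho_i)\langle \text{stuff}\rangle$ computed two ways, but it requires care and is where I expect the bulk of the work to go; it is likely that the authors defer part of this (the identification of which of the two cases occurs, and finer action of $r_i$) to a later lemma, exactly as the excerpt hints with the reference to Lemma~\ref{ItsSp62}.
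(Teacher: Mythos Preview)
Your overall approach---quotient by $\langle\rho_i\rangle$ and apply Prince's Theorem~\ref{PrinceThm} using Lemma~\ref{princeprep} and Lemma~\ref{JSig}---is exactly the paper's. Where you differ is in the execution of the three steps you single out as delicate, and in each case the paper's argument is shorter than you anticipate.

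\textbf{Eliminating Prince's case (i).} Your attempt via ``$3$-soluble, too small'' is not quite an argument. In case (i), $\bar Z$ would be normal in $\bar X$, hence $Z$ normal in $C_G(\rho_i)$. The paper simply observes that $C_L(\rho_i)\le C_G(\rho_i)$ and, by Lemma~\ref{I1}, $C_L(\rho_i)\cong 3\times 3^3{:}(2\times\Sym(4))$ does \emph{not} normalize $Z$. That kills case (i) in one line.

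\textbf{Splitting the central extension.} Your Schur-multiplier argument works, but requires some care for the non-perfect group $\Aut(\SU_4(2))$. The paper instead notes that a Sylow $3$-subgroup of $C_G(\rho_i)$ is $\langle\rho_i\rangle\times Q_{3-i}C_J(r_i)$, so the extension already splits over a Sylow $3$-subgroup; Gasch\"utz's Theorem then gives the global splitting immediately.

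\textbf{Showing $r_i$ centralizes the complement.} This is the point you flag as the main obstacle, but it is in fact the easiest. By Lemma~\ref{Qaction}(ii), $r_i$ centralizes $Q_{3-i}$, and by Lemma~\ref{I1}, $[J,r_i]=\langle\rho_i\rangle$, so $r_i$ centralizes $J/\langle\rho_i\rangle$. Hence $r_i$ centralizes the \emph{entire} Sylow $3$-subgroup $Q_{3-i}J/\langle\rho_i\rangle$ of $\bar X$. No involution in $\Aut(\SU_4(2))$ or in $\Aut(\Sp_6(2))$ centralizes a full Sylow $3$-subgroup, so $r_i$ must act trivially on $\bar X$. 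There is no need to compare centralizer orders or consult Table~\ref{Table1}.
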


\begin{proof}  We consider $C_G(\rho_i)/\langle \rho_i\rangle$.  By Lemma~\ref{princeprep},
$C_{C_M(Z)}(\rho_i)/\langle \rho_i\rangle$ is isomorphic to
 a $3$-centralizer in $\PSp_4(3)$. Since $J/\langle \rho_i\rangle$ normalizes
 no non-trivial $3'$-subgroup of $C_G(\rho_i)$ by Lemma~\ref{JSig} and $Z$ is inverted by $fr_i$, we may apply
Theorem~\ref{PrinceThm} to obtain $C_G(\rho_i)/\langle \rho_i\rangle \cong \Aut(\SU_4(2))$ or  $\Sp_6(2)$ or that
$C_G(\rho_i) = C_M(\rho_i)$. The latter possibility is dismissed as $C_{L}(\rho_i)$ has index $2$ in $\langle
\rho_i\rangle C_{L_*}(r_i) $  and so, by Lemma~\ref{I1},   $$C_{L}(\rho_i)\cong 3 \times 3^3:(2 \times \Sym(4))$$ does not
normalize $Z$.

 The Sylow $3$-subgroup of $C_G(\rho_i) $ is $\langle \rho_i\rangle \times Q_{3-i}C_J(r_i)$
and hence the extension $C_G(\rho_i)/\langle \rho_i\rangle$ splits by Gasch\"utz Theorem. Finally we have that
$r_i$ centralizes  $Q_{3-i}J/\langle \rho_i\rangle$ and, as no automorphism of either $\Aut(\SU_4(2))$ or
$\Sp_6(2)$ of  order $2$ centralizes such a subgroup, we infer that $r_i$ centralizes $C_G(\rho_i)/\langle
\rho_i\rangle$ and of course we also know that $\rho_i$ is inverted by $r_i$.
\end{proof}

\begin{lemma}\label{thesame} We have $C_G(\rho_1) \cong C_G(\rho_2)$.
\end{lemma}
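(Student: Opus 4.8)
The plan is to split into two cases according to whether $\rho_1$ is conjugate to $\rho_2$ in $G$.

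First, by Lemma~\ref{fusion3elts} together with Lemma~\ref{structM}(ii), $\rho_1$ is $G$-conjugate to $\rho_2$ exactly when $M\neq M_0S\langle f\rangle$; in that situation $C_G(\rho_1)$ and $C_G(\rho_2)$ are conjugate subgroups of $G$ and there is nothing to prove. So from now on I would assume $M=M_0S\langle f\rangle$, hence $\rho_1\not\sim_G\rho_2$, and, using Lemma~\ref{eitheror}, write $C_G(\rho_i)=\langle\rho_i\rangle\times X_i$ where $X_i\cong\Aut(\SU_4(2))$ or $X_i\cong\Sp_6(2)$. The task becomes to show $X_1\cong X_2$.

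The key object is $D:=C_G(\langle\rho_1,\rho_2\rangle)$. Since $\rho_1,\rho_2$ both lie in the elementary abelian group $A$ they commute, so $D=C_{C_G(\rho_i)}(\rho_{3-i})$ for $i=1,2$; writing $\overline{\phantom{x}}$ for the image in $X_i=C_G(\rho_i)/\langle\rho_i\rangle$ this gives $D=\langle\rho_i\rangle\times C_{X_i}(\overline{\rho_{3-i}})$, so that $C_{X_1}(\overline{\rho_2})$ and $C_{X_2}(\overline{\rho_1})$ are each isomorphic to $D$ modulo a central subgroup of order $3$ lying in $\langle\rho_1,\rho_2\rangle\le Z(D)$. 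To use this I would identify the $X_i$-class of $\overline{\rho_{3-i}}$. By Lemmas~\ref{princeprep} and \ref{cen3psp43}, $C_{X_i}(\overline Z)\cong 3^{1+2}_+.\SL_2(3)$ has order $648$, so $\overline Z$ is $3$-central in $X_i$; and since $[Q_i,Q_{3-i}]=1$ we have $Q_{3-i}\le C_G(\rho_i)$ with $[Q_{3-i},Z]=1$, so $\overline{Q_{3-i}}\cong 3^{1+2}_+$ is a normal $3$-subgroup of $C_{X_i}(\overline Z)$, whence $\overline{Q_{3-i}}=O_3(C_{X_i}(\overline Z))$, and $\overline{\rho_{3-i}}$ is a non-central element of it because $\rho_{3-i}\in Q_{3-i}\setminus Z=Q_{3-i}\setminus Z(Q_{3-i})$.

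Now I would run the two possibilities for $X_i$ through this. If $X_i\cong\Sp_6(2)$, then $C_{X_i}(\overline Z)\cong\GU_3(2)$ acting naturally on $\GF(4)^3$, the non-central elements of $O_3(\GU_3(2))$ lie in $\SU_3(2)$ and so have eigenvalues $1,\omega,\omega^{2}$; hence $\overline{\rho_{3-i}}$ moves a $4$-dimensional subspace of the natural $\Sp_6(2)$-module and so lies in the class $\tau_2$ of Table~\ref{Table1}, giving $C_{X_i}(\overline{\rho_{3-i}})\cong 3\times\Sym(3)\times\Sym(3)$ and hence $D\cong 3^2\times\Sym(3)^2$; in particular $D$ is then a direct product of $Z(D)\cong 3^2$ by a centreless group, so $C_{X_1}(\overline{\rho_2})\cong C_{X_2}(\overline{\rho_1})$. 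If instead $X_i\cong\Aut(\SU_4(2))=\PSp_4(3).2$, then $\overline{\rho_{3-i}}$ is a non-central unipotent element of the relevant $3^{1+2}_+$, and $C_{X_i}(\overline{\rho_{3-i}})$ is the centralizer of such a unipotent $3$-element, read off from the conjugacy-class data of $\PSp_4(3)\cong\U_4(2)$; one checks that this is never isomorphic to $3\times\Sym(3)^2$ (indeed it is either the order-$648$ transvection centralizer or a group whose order and Sylow $3$-subgroup differ from those of $3\times\Sym(3)^2$). Combining the two cases with the previous paragraph forces $X_1\cong X_2$, and then $C_G(\rho_1)=\langle\rho_1\rangle\times X_1\cong\langle\rho_2\rangle\times X_2=C_G(\rho_2)$.

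The main obstacle is this last step: pinning down exactly the $X_i$-conjugacy class of $\overline{\rho_{3-i}}$ and then verifying that, across the dichotomy $X_i\cong\Sp_6(2)$ versus $X_i\cong\Aut(\SU_4(2))$, the two candidate centralizers are never isomorphic (comparing orders and, where necessary, the structure of $O_3$ and the chief factors). This is a finite computation using Table~\ref{Table1} and Lemma~\ref{sp62facts} for $\Sp_6(2)$, the \ATLAS for $\U_4(2)$, and a little unipotent-class bookkeeping in $\Sp_4(3)$.
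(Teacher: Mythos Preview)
Your overall strategy—computing $D=C_G(\langle\rho_1,\rho_2\rangle)$ from both sides—is natural, but the final ``one checks'' step fails. Concretely: in $X_i\cong\Aut(\SU_4(2))$, the element $\overline{\rho_{3-i}}$ is indeed a non-central element of $O_3(C_{X_i}(\overline Z))\cong 3^{1+2}_+$. Working in the $\SU_4(2)$-model with $\overline Z=\mathrm{diag}(1,\omega,\omega,\omega)$, one finds $O_3(C_{\SU_4(2)}(\overline Z))=O_3(\GU_3(2))=O_3(\SU_3(2))$, and every non-central element of this has eigenvalues $1,\omega,\omega^2$ on the $3$-dimensional space, hence eigenvalues $(1,1,\omega,\omega^2)$ on $\GF(4)^4$. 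This is class $3D$, with $C_{\SU_4(2)}(3D)\cong 3^2\times\Sym(3)$ of order $54$. The outer automorphism fixes class $3D$, so $C_{\Aut(\SU_4(2))}(3D)$ has order $108$, elementary abelian Sylow $3$-subgroup of order $27$, and in fact one computes $C_{\Aut(\SU_4(2))}(3D)\cong 3\times\Sym(3)\times\Sym(3)$—\emph{identical} to $C_{\Sp_6(2)}(\tau_2)\cong\Sp_2(2)\times\GU_2(2)\cong\Sym(3)\times(3\times\Sym(3))$. So $D\cong 3^2\times\Sym(3)^2$ regardless of whether $X_i\cong\Sp_6(2)$ or $X_i\cong\Aut(\SU_4(2))$, and your distinguishing criterion evaporates.

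The paper avoids this trap by a different choice of second element. Instead of pairing $\rho_1$ with $\rho_2$ (giving a Type~N$+$ subspace), it pairs $\rho_1$ with $\wt{\rho_1}\in\mathcal P(I_1)$. Since $|\mathcal P(I_1)|=3$, the element $\wt{\rho_1}$ lies in the $\Sp_6(2)$-class $\tau_1$ (the orbit of size $3$ under $N_{X_1}(I_1)$), whose centralizer $3\times\Sp_4(2)$ is \emph{non-soluble}. The subspace $\langle\rho_1,\wt{\rho_1}\rangle$ has Type~N$-$ (Lemma~\ref{I2}) and so contains a conjugate of $\langle\rho_2\rangle$; hence $\Sp_4(2)'$ appears inside a $3$-element centralizer in $C_G(\rho_2)$. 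Since every $3$-element centralizer in $\Aut(\SU_4(2))$ is soluble (largest order $648$), this forces $X_2\cong\Sp_6(2)$. The point is that the $\tau_1$-centralizer is large enough to carry a non-abelian simple section, while $\tau_2$-centralizers (which your $\rho_2$ hits) are too small to distinguish the two ambient groups.
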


\begin{proof}  By Lemma \ref{eitheror},  $C_G(\rho_1)/\langle \rho_1 \rangle \cong \Sp_6(2)$ or $\Aut(\SU_4(2))$.

Assume that $C_G(\rho_1)/\langle \rho_1 \rangle \cong \Sp_6(2)$. Using Lemma~\ref{type1} (v), we have some $\wt {\rho_1} \in \mathcal P(C_J(\rho_1))$ and as $|\mathcal P(C_J(\rho_1))| = 3$,   $C_{E(C_G(\rho_1))}(\wt {\rho_1})  \cong 3 \times \Sp_4(2)$ from the structure of $\Sp_6(2)$. Therefore $E(C_G(\langle \rho_1, \wt{\rho_1} \rangle)) \cong \Sp_4(2)'$.
Lemma~\ref{I2}, yields that $\Sp_4(2)'$  is involved in the centralizer of a $3$-element in $C_G(\rho_2)$. As there are no such $3$-elements in $\SU_4(2)$ \cite{Atlas},  Lemma \ref{eitheror} implies  $E(C_G(\rho_2))/\langle \rho_2 \rangle \cong \Sp_6(2)$. Hence Lemma~\ref{thesame} holds.
\end{proof}

\section{Building a signalizer in the centralizers of $r_1$ and $r_2$}

In this section we begin the construction $K_i= C_G(r_i)$ for $i=1,2$. We  give a brief overview of our plans for $i = 1$ to guide the reader through the technicalities involved.
Our final aim is to show that $K_1$ is similar to a $2$-centralizer in a group of type  $\F_4(2)$ (see Definition~\ref{F4cent}).
Hence we aim to show that $K_1$ is an
extension of a 2-group by $\Sp_6(2)$. Furthermore this $2$-group is a product of an extraspecial group of
order $2^9$ by an elementary abelian group. Our first aim is to  construct the extraspecial group $\Sigma_1$,
and show that it is normalized by $C_L(r_1)$. Note that $C_J(r_1)\le C_L(r_1)$ and the former group is elementary abelian of  order $3^3$.

We briefly consider the situation in our target group.  In  $\F_4(2)$ there are exactly four maximal subgroups of $C_J(r_1)$ with  centralizers in $\Sigma_1$ which properly contain $\langle r_1\rangle$ and  these maximal subgroups centralize a quaternion group of order eight in $\Sigma_1$.
In our group $G$,  the first problem is to find these quaternion groups. For this we pick a set
of four maximal subgroups of $C_J(r_1)$, which are conjugate to $A_2$. They all contain a conjugate of $\rho_2$. By
Lemma~\ref{eitheror} there are exactly two possibilities for the structure of $C_G(\rho_2)$. Examining these structures shows  $C_{C_G(\rho_2)}(A_2)/\langle \rho_2 \rangle \cong
3^{1+2}_+{:}\SL_2(3)$. Hence $C_{C_G(\rho_2) \cap C_G(r_1)}(A_2)/\langle \rho_2 \rangle \cong \SL_2(3)$. This shows
that $O_2(C_{C_G(\rho_2) \cap C_G(r_1)}(A_2)) \cong \Q_8$, and this is one of the quaternion groups we are looking for. As
$A_2$ has four conjugates under $C_L(r_1)$, we now get a set of four quaternion groups. The problem is  to show
these four quaternion groups generate a 2-group $ \Sigma_1$  which is extraspecial of order $2^9$.
This will be done in Lemma~\ref{sigmai}. Furthermore, the very  construction guarantees us that $C_L(r_1)$ acts on $\Sigma_1$.

We continue to use the notation from \ref{o4}, \ref{type} and \ref{nota}. Additionally we introduce
\begin{notation}\label{notar} For $i=1,2$, $I_i = C_J(r_i)$ and $F_i = C_L(r_i)$.
\end{notation}

Notice that by Lemma~\ref{I1},  $F_i$ acts on $I_i$ and $F_i/I_i \langle
r_i\rangle\cong 2 \times \Sym(4)$. As explained above we intend  to determine a large signalizer for $I_i$, that is  a  $3^\prime$-group which is normalized by $I_i$. We begin with two  easy observations.

\begin{lemma}\label{CSr} For $i=1,2$, $C_{C_M(Z)}(r_i) = Q_{3-i}R_1R_2I_i$ and  $C_S(r_i) = Q_{3-i}I_i\in \Syl_3(C_M(r_i)) \subseteq \Syl_3(K_i)$.
\end{lemma}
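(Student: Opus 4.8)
Here is the plan.

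First I would record the easy containment $Q_{3-i}R_1R_2I_i\le C_{C_M(Z)}(r_i)$: by Lemma~\ref{Qaction}(ii) we have $Q_{3-i}=C_Q(r_i)$ and $Q_{3-i}\trianglelefteq S$; the group $R_1R_2=R$ centralises $r_i$ since $r_i\in Z(R_i)$ and $[R_1,R_2]=1$; and $I_i=C_J(r_i)$ centralises $r_i$ by definition. As $Q_{3-i}\trianglelefteq S$ the set $Q_{3-i}I_i$ is a $3$-subgroup of $S$, and since $R$ is a $2$-group, $Q_{3-i}R_1R_2I_i$ is at worst a well-defined subset of $C_{C_M(Z)}(r_i)$. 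Next I would compute $|C_S(r_i)|$. On the one hand $C_S(r_i)\cap Q=C_Q(r_i)=Q_{3-i}$ has order $3^3$ and $|S/Q|=3$ by Lemma~\ref{structM}(i), so $|C_S(r_i)|\le 3^4$. On the other hand $Q_{3-i}I_i\le C_S(r_i)$; using $J\cap Q=A$ from Lemma~\ref{NJ} together with $Q_{3-i}\cap A=A_{3-i}$ (which holds because $A_i\le Q_i$ and $Q_1\cap Q_2=Z\le A_{3-i}$) one gets $Q_{3-i}\cap I_i=C_{A_{3-i}}(r_i)=A_{3-i}$, so that $|Q_{3-i}I_i|=3^3\cdot 3^3/3^2=3^4$, using $|I_i|=3^3$ from Lemma~\ref{I1}. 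Hence $C_S(r_i)=Q_{3-i}I_i$, which is the second assertion; in particular $I_i\not\le Q$, so $I_iQ/Q=S/Q$.

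The main step is to pin down $|C_{C_M(Z)}(r_i)|$ by passing to $\overline X:=C_M(Z)/Q$. By Lemma~\ref{structM} this group has shape $(\Q_8\times\Q_8).\Sym(3)$ or $(\Q_8\times\Q_8).3$, with $\overline R:=RQ/Q\cong\Q_8\times\Q_8$ normal, quaternion factors $\overline R_j:=R_jQ/Q$, and $\langle\bar r_1,\bar r_2\rangle=Z(\overline R)$. The Sylow $3$-subgroup $S/Q$ normalises each $\overline R_j$ (because $S$ normalises $Q_j=[Q,R_j]$) and acts on it nontrivially by Lemma~\ref{Z weak Q}(vi), but it centralises $Z(\overline R_j)=\langle\bar r_j\rangle$; in the $\Sym(3)$-case the extra involution interchanges $\overline R_1$ with $\overline R_2$ and hence $\bar r_1$ with $\bar r_2$. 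I would conclude that $C_{\overline X}(\bar r_i)$ is exactly the image of $M_0S$ in $\overline X$, of order $2^6\cdot 3$. Since $C_{C_M(Z)}(r_i)$ maps into $C_{\overline X}(\bar r_i)$ and meets $Q$ in the $3$-group $C_Q(r_i)=Q_{3-i}$ of order $3^3$, this gives $|C_{C_M(Z)}(r_i)|_3\le 3^4$ and $|C_{C_M(Z)}(r_i)|_2\le 2^6$; as $|C_M(Z)|$ is a $\{2,3\}$-number (again by Lemma~\ref{structM}) we get $|C_{C_M(Z)}(r_i)|\le 2^6\cdot3^4$. But $Q_{3-i}R_1R_2I_i$ contains the $3$-group $Q_{3-i}I_i=C_S(r_i)$ of order $3^4$ and the $2$-group $R$ of order $2^6$ with $R\cap C_S(r_i)=1$, so it has at least $2^6\cdot3^4$ elements; comparing with the containment above forces $C_{C_M(Z)}(r_i)=Q_{3-i}R_1R_2I_i$ (and retrospectively that this product is a subgroup), which is the first assertion, and also shows $C_S(r_i)\in\syl_3(C_{C_M(Z)}(r_i))$.

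Finally I would deduce the two Sylow statements. Since $C_M(Z)=C_G(Z)$ has index at most $2$ in $M$, $C_{C_M(Z)}(r_i)$ has index at most $2$ in $C_M(r_i)$, so these have the same $3$-part and $C_S(r_i)\in\syl_3(C_M(r_i))$. For the transfer to $K_i$ the key observation is that $Z=Z(C_S(r_i))$: clearly $Z\le Z(S)$ is central in $C_S(r_i)$, and conversely, for $j\in C_S(r_i)\setminus Q$ (which lies in $I_i\setminus Q$) one has $[Q_{3-i},j]\not\le Z$ — indeed $Q$ acts trivially on $Q/Z$, so $[Q/Z,j]=[Q/Z,S]=A/Z$ by Lemma~\ref{Z weak Q}(v), and the $Q_{3-i}/Z$-component of $A/Z$ is $A_{3-i}/Z\ne1$ — so no element of $C_S(r_i)\setminus Q$ centralises $Q_{3-i}$, whence $C_{C_S(r_i)}(Q_{3-i})=C_{Q_{3-i}}(Q_{3-i})=Z$ and so $Z(C_S(r_i))=Z$. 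Thus $Z$ is characteristic in $C_S(r_i)$, so $N_{K_i}(C_S(r_i))\le N_{K_i}(Z)=C_M(r_i)$, which has $C_S(r_i)$ as a Sylow $3$-subgroup; hence if $C_S(r_i)\le P\in\syl_3(K_i)$ then $N_P(C_S(r_i))$ is a $3$-subgroup of $N_{K_i}(C_S(r_i))$ containing $C_S(r_i)$, so equals it, forcing $P=C_S(r_i)$ and $C_S(r_i)\in\syl_3(K_i)$. I expect the hardest part to be the third paragraph — getting the exact order of $C_{C_M(Z)}(r_i)$, which hinges on correctly locating the centraliser of $\bar r_i$ in $C_M(Z)/Q$ in both cases of Lemma~\ref{structM} — with the identification $Z=Z(C_S(r_i))$ needed for the final Sylow transfer being the one genuinely non-mechanical point of the last step.
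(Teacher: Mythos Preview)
Your proof is correct and follows essentially the same route as the paper's. The paper dispatches the first equality in one line by saying the containment is obvious and ``Lemma~\ref{structM}(ii) implies the equality'', then asserts $Z(C_S(r_i))=Z$ without argument; you have simply unpacked both of these steps, computing $|C_{C_M(Z)}(r_i)|$ by passing to $C_M(Z)/Q$ and verifying the center claim via $[Q_{3-i}/Z,j]=A_{3-i}/Z$ for $j\in C_S(r_i)\setminus Q$. The Sylow transfer to $K_i$ is identical in both proofs.
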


\begin{proof}  Obviously $C_{C_M(Z)}(r_i) \ge Q_{3-i}R_1R_2C_J(r_i)$ and then Lemma~\ref{structM} (ii) implies the equality. Therefore, $C_S(r_i) = Q_{3-i}I_i \in \syl_3(C_M(r_i))$ and $Z(C_S(r_i))= Z$. Thus $N_{K_i}(C_S(r_i)) \le N_G(Z)=M$. In particular, $C_S(r_i) \in \syl_3(K_i)$.
\end{proof}

\begin{lemma}\label{fusionr1r2} We have $r_1$ is $G$-conjugate to $r_2$ if and only if $r_1$ is $M$-conjugate to $r_2$.
\end{lemma}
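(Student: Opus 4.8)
The plan is to prove the non-trivial direction, namely that $G$-conjugacy of $r_1$ and $r_2$ already implies $M$-conjugacy; the reverse implication is immediate because $M \le G$. The whole argument is a routine Sylow/normaliser manoeuvre whose only substantive input is Lemma~\ref{CSr}: for $i=1,2$ the group $C_S(r_i) = Q_{3-i}I_i$ is a Sylow $3$-subgroup of $K_i = C_G(r_i)$ and has centre $Z$.

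So suppose $g \in G$ satisfies $r_1^g = r_2$. The first step is to transport a Sylow $3$-subgroup: since conjugation by $g$ is an isomorphism carrying $K_1$ to $K_1^g = C_G(r_2) = K_2$, the group $C_S(r_1)^g$ is a Sylow $3$-subgroup of $K_2$. As $C_S(r_2)$ is also a Sylow $3$-subgroup of $K_2$ by Lemma~\ref{CSr}, Sylow's theorem yields $k \in K_2$ with $C_S(r_1)^{gk} = C_S(r_2)$. Putting $h = gk$, the second step is to read off the action on the common centre: using $Z(C_S(r_i)) = Z$ from Lemma~\ref{CSr} we get
\[
Z^h = Z(C_S(r_1))^h = Z\bigl(C_S(r_1)^h\bigr) = Z(C_S(r_2)) = Z,
\]
so $h \in N_G(Z) = M$. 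Finally, since $k \in K_2 = C_G(r_2)$, we have $r_1^h = (r_1^g)^k = r_2^k = r_2$, and therefore $r_1$ and $r_2$ are conjugate in $M$, as desired.

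There is no genuine obstacle here. The only point that needs a moment's care is that the correcting element $k$ must be chosen inside $C_G(r_2)$ (which it is, being an element of $K_2$): this is exactly what guarantees that moving $Z^g$ back onto $Z$ does not disturb the conjugacy $r_1 \mapsto r_2$, so that the resulting $h$ lies in $M$ and still conjugates $r_1$ to $r_2$.
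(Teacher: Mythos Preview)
Your proof is correct and is essentially identical to the paper's: both use Lemma~\ref{CSr} to conjugate one Sylow $3$-subgroup of a $K_i$ onto the other via an element of that centralizer, then read off that the resulting element normalizes $Z$ and hence lies in $M$. The only cosmetic difference is the direction of the initial conjugation (you take $r_1^g=r_2$, the paper takes $r_1=r_2^g$).
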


\begin{proof}  Obviously if $r_1$ and $r_2$ are conjugate in $M$ then they are conjugate in $G$.  Suppose then that $r_1=r_2^g$  for some $g\in G$. By Lemma~\ref{CSr}, for $i=1,2$,  $C_S(r_i) \in \Syl_3(C_G(r_i))$ and $Z = Z(C_S(r_i))$. Since $r_1= r_2^g$,  $C_S(r_2)^g \in \syl_3(C_G(r_1))$.  Thus there is $h \in C_G(r_1)$ such that $C_S(r_2)^{gh} = C_S(r_1)$. But then $$Z^{gh}= Z(C_S(r_2))^{gh} = Z(C_S(r_1))=Z$$ which means that $gh \in M$.  Hence $r_1$ and $r_2$ are $M$-conjugate.
\end{proof}

Recall, for $i=1,2$,  $$I_i=C_J(r_i)=J \cap E(C_G(\rho_i))$$ as, by Lemma \ref{eitheror}, $ E(C_G(\rho_i)) = C_{C_G(\rho_i)}(r_i)$.

\begin{lemma}\label{crossover} Suppose that  $\wt \rho_1 \in \mathcal P(I_1)$ and $\wt \rho_2 \in \mathcal M(I_2)$. Then, for $i=1,2$,  in $E(C_G(\wt \rho_i))\langle r_i\rangle$, $r_i$ is an involution which has $\Sp_4(2)'$ as a composition factor of its centralizer. Moreover,  $I_i \cap E(C_G(\wt \rho_i))$ is of Type N-.
\end{lemma}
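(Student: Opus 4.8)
Lemma~\ref{crossover} asserts two things about each $\wt\rho_i$: first, that $r_i$ acts on $E(C_G(\wt\rho_i))$ as an involution whose centralizer has $\Sp_4(2)'$ as a composition factor; second, that $I_i\cap E(C_G(\wt\rho_i))$ is a subspace of Type N-. The plan is to work inside $C_G(\rho_i)$ and use the known isomorphism type from Lemma~\ref{thesame}, namely that $C_G(\rho_1)\cong C_G(\rho_2)$ is either $3\times\Aut(\SU_4(2))$ or $3\times\Sp_6(2)$, together with the detailed fusion information about $\langle\rho_i,\wt\rho_i\rangle$ provided by Lemma~\ref{I2}.

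First I would fix $i$ and argue that $\langle\rho_i,\wt\rho_i\rangle$ has Type N- by Lemma~\ref{I2}, since $\wt\rho_i\in C_J(r_i)=I_i$ and $\wt\rho_i$ lies in the same $L_*$-class as $\rho_i$ (for $\mathcal P(I_1)$ this is immediate as $\langle\rho_1\rangle\in\mathcal P(J)$, and similarly on the minus side for $i=2$). Then I would identify $E(C_G(\wt\rho_i))$: applying the description of $C_G(\rho_i)$ as $\langle\rho_i\rangle\times E(C_G(\rho_i))$ with $E(C_G(\rho_i))\cong\SU_4(2)$ or $\Sp_6(2)$, the element $\wt\rho_i$ maps to an element of order $3$ in the simple factor, and I would pin down which class it lies in using the N- information: in $\Sp_6(2)$ the $3$-element whose centralizer modulo its cyclic part is $\Sp_4(2)\times 3$ corresponds to $\tau_1$ in the notation fixed before Lemma~\ref{sp62facts} (i.e. $\dim[V,\tau_1]=2$), so $C_{E(C_G(\rho_i))}(\wt\rho_i)\cong 3\times\Sp_4(2)$ and hence $E(C_G(\langle\rho_i,\wt\rho_i\rangle))\cong\Sp_4(2)'$. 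This is essentially the computation already carried out in the proof of Lemma~\ref{thesame}, so I would cite that reasoning. It also forces $E(C_G(\rho_i))\cong\Sp_6(2)$ as in Lemma~\ref{thesame}, but the statement does not need us to commit to this yet, so I would phrase things so the argument works in either case — in $\SU_4(2)$ the relevant $3$-element also has an $\SL_2(2)\wr$-ish centralizer; actually cleaner is to use that $\langle\rho_i,\wt\rho_i\rangle$ is N- hence $C_G$ of this rank-$2$ group still contains a component, and identify that component as $\Sp_4(2)'$ directly from the subgroup structure recorded in Lemma~\ref{sp62facts} and Table~\ref{Table1}.

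Next I would bring in $r_i$. By Lemma~\ref{eitheror}, $r_i$ inverts $\rho_i$ and centralizes $E(C_G(\rho_i))/\langle\rho_i\rangle$, equivalently (since the extension splits) $r_i$ acts trivially on the component $E(C_G(\rho_i))$ itself; but $r_i$ does \emph{not} lie in $C_G(\wt\rho_i)$ in general — rather $r_i\in C_G(I_i)$ is false too. The right statement is that $r_i$ normalizes $\langle\wt\rho_i\rangle$ (as $\wt\rho_i\in I_i=C_J(r_i)$, so $r_i$ centralizes $\wt\rho_i$). Hence $r_i$ normalizes $C_G(\wt\rho_i)$ and therefore acts on $E(C_G(\wt\rho_i))$. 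Since $r_i$ centralizes $\rho_i$-part and the whole of $C_J(r_i)\supseteq I_i\cap E(C_G(\wt\rho_i))$, the fixed subgroup $C_{E(C_G(\wt\rho_i))}(r_i)$ contains $E(C_G(\langle\rho_i,\wt\rho_i\rangle))\cong\Sp_4(2)'$ up to the cyclic $\langle\rho_i\rangle$; I would check $r_i$ acts non-trivially on $E(C_G(\wt\rho_i))$ (otherwise $r_i$ would centralize too large a $3$-local, contradicting, e.g., $C_M(r_i)$ as computed in Lemma~\ref{CSr} combined with Lemma~\ref{I1}), so $r_i$ is a genuine involution in $\Aut(E(C_G(\wt\rho_i)))$ with $\Sp_4(2)'$ a composition factor of its centralizer. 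Finally, the Type N- assertion for $I_i\cap E(C_G(\wt\rho_i))$: since $I_i$ has order $3^3$ and is a non-degenerate orthogonal $3$-space (Lemma~\ref{type1}), and $E(C_G(\wt\rho_i))$ meets $J$ in the $2$-space $\langle\rho_i,\wt\rho_i\rangle^{\perp}$-ish — more carefully, $I_i\cap E(C_G(\wt\rho_i)) = C_J(r_i)\cap C_G(\wt\rho_i)$ is the perp of $\langle\wt\rho_i\rangle$ inside $I_i$, and since $\langle\wt\rho_i\rangle$ is of $\mathcal P$ or $\mathcal M$ type, its perp in the $3$-dimensional $I_i$ is a $2$-space whose type I read off from Lemma~\ref{type1}(v),(vi); it comes out N-.

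The main obstacle I expect is the bookkeeping of orthogonal types: keeping straight which $1$- and $2$-dimensional subspaces of $J$ are singular, plus, or minus, and correctly computing perps inside the non-degenerate $3$-space $I_i$, all while the case $L>L_*$ fuses $\mathcal P(J)$ with $\mathcal M(J)$ and swaps $r_1\leftrightarrow r_2$. I would handle this by doing the computation once for $i=1$ using Lemma~\ref{type1}(v) (which gives $|\mathcal S(I_1)|=4$, $|\mathcal M(I_1)|=6$, $|\mathcal P(I_1)|=3$), deducing the type of $\langle\wt\rho_1\rangle^\perp\cap I_1$ from a short count, and then invoking the $r_1\leftrightarrow r_2$ symmetry (or Lemma~\ref{type1}(vi) directly) for $i=2$. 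A secondary subtlety is making sure the component of $C_G(\wt\rho_i)$ is genuinely $\Sp_4(2)'$ and not something larger or a near-miss like $\SL_2(2)\times\SL_2(2)$; this is controlled by Lemma~\ref{sp62facts} together with the fact, from Lemma~\ref{I2}, that $\langle\rho_i,\wt\rho_i\rangle$ is of Type N-, so its centralizer's $3$-structure is severely constrained and only the $\Sp_4(2)'$ possibility survives.
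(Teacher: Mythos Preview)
Your overall plan---work inside $C_G(\rho_i)$, identify the class of $\wt\rho_i$ there, then cross over to $C_G(\wt\rho_i)$ and locate $r_i$---is the same as the paper's. But there is a genuine gap and a couple of slips.

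The gap is in the $\SU_4(2)$ case. At this point in the paper only Lemmas~\ref{eitheror} and~\ref{thesame} are available, so $E(C_G(\rho_i))$ may be $\SU_4(2)$ rather than $\Sp_6(2)$. Your ``cleaner'' route is to exhibit $\Sp_4(2)'$ as $E(C_G(\langle\rho_i,\wt\rho_i\rangle))$, but when $E(C_G(\rho_i))\cong\SU_4(2)$ one has $C_{E(C_G(\rho_i))}(\wt\rho_i)\approx 3\times 3^2.\Dih(8)$, which is soluble, so there is no component to point to. The paper does not try to find $\Sp_4(2)'$ inside $C_G(\langle\rho_i,\wt\rho_i\rangle)$; instead it determines the type of $r_i$ as an involution of $E(C_G(\wt\rho_i))$. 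The group $I_i\cap E(C_G(\wt\rho_i))$ centralized by $r_i$ has order $9$ and (once shown to be of Type N-) contains no singular, hence no $3$-central, element of $E(C_G(\wt\rho_i))$. Comparing with Lemma~\ref{sp62facts}(vi) and Lemma~\ref{sp62line}(ii) forces $r_i$ to be of type $b_1$, and then Table~\ref{Table1} gives $C_{E(C_G(\wt\rho_i))}(r_i)\cong\Sp_4(2)$ or $2^5.\Sp_4(2)$ in the two cases uniformly.

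Two smaller points. First, your displayed equation $I_i\cap E(C_G(\wt\rho_i))=C_J(r_i)\cap C_G(\wt\rho_i)$ is false: the right-hand side is $I_i$ itself, since $\wt\rho_i\in I_i$ and $I_i$ is abelian. Second, your assertion that $I_i\cap E(C_G(\wt\rho_i))$ equals $\langle\wt\rho_i\rangle^\perp\cap I_i$ is correct but needs an argument: it comes from conjugating the known identity $J\cap E(C_G(\rho_i))=I_i=\langle\rho_i\rangle^\perp$ (recorded just before Lemma~\ref{crossover}) by an element of $L_*$ carrying $\rho_i$ to $\wt\rho_i$, giving $J\cap E(C_G(\wt\rho_i))=\langle\wt\rho_i\rangle^\perp$ and then intersecting with $I_i$. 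With that in hand your perp computation is a clean alternative to the paper's more group-theoretic description $I_i\cap[I_i,C_{E(C_G(\rho_i))}(\wt\rho_i)]$, and the Type~N- conclusion follows since $\langle\wt\rho_i\rangle^\perp\cap I_i=\langle\rho_i,\wt\rho_i\rangle^\perp$ in $J$, which is the orthogonal complement of a Type~N- plane in an $\OO_4^+(3)$-space and is therefore again of Type~N-.
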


\begin{proof}  For $i=1,2$,
the definition of $I_i$,  yields $r_i \in C_G(\wt \rho_i)$. Now $r_i$  normalizes $E(C_G(\wt \rho_i))$ and centralizes
$I_i \cap E(C_G(\wt \rho_i))$ which has order $9$.

On the other hand, in $C_G(\rho_i)$, as there are only three conjugates of  $\langle \wt{\rho_i}\rangle$  in
$I_i$ by Lemma~\ref{type1}(v) and (vi), we have that $$C_{E(C_G(\rho_i))}(\wt \rho_i)\approx 3 \times 3^2.\Dih(8)$$ if
$E(C_G(\rho_i)) \cong \SU_4(2)$ and  $$C_{E(C_G(\rho_i))}(\wt \rho_i)\approx 3 \times \Sp_4(2) $$ if $E(C_G(\rho_i))
\cong \Sp_6(2)$. As $I_i \le E(C_G(\rho_i))$, it follows that $$I_i \cap [I_i,C_{E(C_G(\rho_i))}(\wt \rho_i)]$$ is of Type N-.
Now deploying
Lemmas~\ref{sp62facts} and \ref{sp62line} (ii), $C_{E(C_G(\wt \rho_i))}(r_i) \cong \Sp_4(2)$ if
$E(C_G(\wt \rho_i)) \cong \SU_4(2)$ and has shape $2^5.\Sp_4(2)$  when $E(C_G(\wt \rho_i))\cong \Sp_6(2)$.  In particular, the main claim in
the lemma is true. We have already observed that $I_i \cap [I_i,C_{E(C_G(\rho_i))}(\wt \rho_i)]$ has Type N-
and as this group is  $I_i \cap E(C_G(\wt \rho_i))$ we have the last part of the lemma.
\end{proof}

We can now locate the four maximal subgroups of $I_i$, whose centralizers  contain the quaternion groups we are looking for.
 Recall that, for $i=1,2$,  $A_{3-i}= A \cap Q_{3-i}$ is a hyperplane of $I_i$ which with respect to the quadratic form on $J$ is a degenerate
$2$-dimensional subspace which contains one conjugate of $Z$ and three conjugates of $\langle \rho_{i}\rangle$. Therefore
$A_{1}$ has Type DP  and has $A_2$ Type DM in the sense of Notation~\ref{type}. Consequently  the set $A_{3-i}^{F_i}$ has order $4$. We let the four
$F_i$-conjugates of $A_{3-i}$  be $I_i^1=A_{3-i}$, $I_i^2$, $I_i^3$ and $I_i^4$. Then, for $1\le j < k \le 4$, we have
$I_i^j \cap I_i^k $ is an $M$-conjugate of $\langle \rho_{3-i}\rangle$. We further select notation so that $$I_i ^1 \cap
I_i^2 = \langle \rho_{3-i}\rangle.$$ The next lemma  follows immediately from the  2-transitive action of $F_i$ on
the set $\{I_i^1,I_i^2,I_i^3,I_i^4\}$.

\begin{lemma}\label{IijIik} For $1 \le l \le 4$ and $1 \le j < k \le 4$ we have \begin{enumerate}
\item $I_1^l$ has Type DM  and  $I_1^j\cap I_1^k\in \mathcal M(I_1)$; and  \item $I_2^l $ has Type DP and $I_2^j\cap I_2^k\in \mathcal
P(I_2)$.\end{enumerate}\qed
\end{lemma}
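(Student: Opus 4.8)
The plan is to derive the lemma formally from two ingredients already available: the $2$-transitivity of $F_i$ on $\{I_i^1,I_i^2,I_i^3,I_i^4\}$ recorded in the paragraph preceding the statement, and the fact that $F_i$ preserves the type (in the sense of Notations~\ref{o4} and \ref{type}) of every subspace of $I_i$. Granting the second point, I would argue as follows. Since $I_i^1 = A_{3-i}$ has Type DM when $i = 1$ (Lemma~\ref{type1}(iii)) and Type DP when $i = 2$ (Lemma~\ref{type1}(ii)), and $F_i$ acts transitively on $\{I_i^1,\dots,I_i^4\}$ preserving types, all four of the $I_i^l$ have the stated type; this gives the first half of each part. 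For the second half, the labelling fixed before the lemma gives $I_i^1 \cap I_i^2 = \langle \rho_{3-i}\rangle$, and by the notation chosen after Lemma~\ref{I1} we have $\langle \rho_2\rangle \in \mathcal M(J)$ and $\langle \rho_1\rangle \in \mathcal P(J)$; since $\rho_{3-i} \in I_i$ this says $I_1^1 \cap I_1^2 \in \mathcal M(I_1)$ and $I_2^1 \cap I_2^2 \in \mathcal P(I_2)$. Because $F_i$ is $2$-transitive on the four $I_i^l$ it is transitive on the $1$-spaces $I_i^j \cap I_i^k$ ($1 \le j < k \le 4$), and type-invariance then forces every one of them to lie in $\mathcal M(I_1)$, respectively $\mathcal P(I_2)$, as required.

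The one point that genuinely needs an argument is the type-invariance of $F_i$ on $I_i$. Here I would use that, by Lemma~\ref{NJ}(iv), $L$ and hence $F_i = C_L(r_i)$ preserves the quadratic form $\mathrm q$ on $J$ up to similarity. A similarity of $J$ with non-square multiplier interchanges $\mathcal P(J)$ and $\mathcal M(J)$, while one with square multiplier fixes $\mathcal S(J)$, $\mathcal P(J)$ and $\mathcal M(J)$ setwise and hence preserves the type of every subspace of $J$. Every element of $F_i$ centralizes $r_i$, so normalizes $[J,r_i] = \langle \rho_i\rangle$, which by Lemma~\ref{I1} is a non-singular $1$-space, lying in $\mathcal P(J)$ when $i=1$ and in $\mathcal M(J)$ when $i=2$; an element with non-square multiplier cannot fix such a $1$-space. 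Hence every element of $F_i$ has square multiplier, so preserves all types on $J$, and restricting to subspaces of $I_i = C_J(r_i)$ gives what is needed. (Alternatively, one can quote Lemma~\ref{I1} directly: $F_i$ acts on $I_i$ as $\GO_3(3)$, which fixes $\mathrm q|_{I_i}$ outright and so certainly preserves types.)

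The main (and only) obstacle is the subtlety just addressed: when $L > L_*$ the group $L$ itself fuses $\mathcal P(J)$ with $\mathcal M(J)$ (Lemma~\ref{NJ}(v)), so one must exploit the extra information that $F_i$ fixes the definite-type $1$-space $\langle \rho_i\rangle$ to see that no such type-reversing element sits inside $F_i$. Once that is in place the rest is purely orbit-theoretic bookkeeping, and I expect no further difficulty.
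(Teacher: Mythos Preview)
Your proposal is correct and follows the same route as the paper, which simply asserts that the lemma ``follows immediately from the $2$-transitive action of $F_i$ on the set $\{I_i^1,I_i^2,I_i^3,I_i^4\}$'' and gives no further argument. You have merely made explicit the one point the paper takes for granted, namely that $F_i$ preserves types; your justification via $F_i = C_{L_*}(r_i)$ with $L_*/J \cong \GO_4^+(3)$ (Lemma~\ref{I1}) is the cleanest way to see this.
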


With these comments we  have the following lemma directly from Lemmas~\ref{eitheror} and \ref{thesame}.

\begin{lemma}\label{Cij} For $i=1,2$ and for $1\le j< k\le 4$, we have $$C_G(I_i^k\cap I_i^j) \cong  3 \times \Aut(\SU_4(2)) \text{  or }  3 \times \Sp_6(2).$$ Furthermore, the isomorphism type  of $C_G(I_i^k\cap I_i^j)$ does not depend on $i$, $j$ or $k$.\end{lemma}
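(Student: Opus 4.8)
The plan is to reduce the statement to Lemmas~\ref{eitheror} and \ref{thesame} by a short conjugacy argument. First I would recall that, by the discussion immediately preceding the lemma, for $1 \le j < k \le 4$ the subgroup $I_i^j \cap I_i^k$ is an $M$-conjugate of $\langle \rho_{3-i}\rangle$; in particular, as $I_i^1,\dots,I_i^4$ are the four $F_i$-conjugates of $A_{3-i}$ on which $F_i$ acts $2$-transitively, all of these pairwise intersections are $M$-conjugate to one another. Hence there is $m \in M$ with $(I_i^j \cap I_i^k)^m = \langle \rho_{3-i}\rangle$, and therefore
$C_G(I_i^j \cap I_i^k)^m = C_G(\langle \rho_{3-i}\rangle) = C_G(\rho_{3-i})$,
the last equality holding because $\langle \rho_{3-i}\rangle$ has prime order, so centralizing the subgroup is the same as centralizing a generator. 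Consequently $C_G(I_i^j \cap I_i^k) \cong C_G(\rho_{3-i})$, and this already shows that the isomorphism type is independent of the choice of $j$ and $k$.

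Next I would apply Lemma~\ref{eitheror} (with $3-i$ in place of $i$) to conclude that $C_G(\rho_{3-i}) \cong 3 \times \Aut(\SU_4(2))$ or $C_G(\rho_{3-i}) \cong 3 \times \Sp_6(2)$, which yields the stated list of possibilities. Finally, to remove the apparent dependence on $i$, I would invoke Lemma~\ref{thesame}, which gives $C_G(\rho_1) \cong C_G(\rho_2)$; since $\{3-1,\,3-2\} = \{2,1\}$, the two values $i=1$ and $i=2$ produce the groups $C_G(\rho_2)$ and $C_G(\rho_1)$ respectively, and these are isomorphic. This completes the argument.

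I do not expect any genuine obstacle here: the lemma is essentially a bookkeeping consequence of results already established, and it is exactly the sort of statement the paper flags as following \emph{directly} from Lemmas~\ref{eitheror} and \ref{thesame}. The only two points deserving a line of care are the identification of $C_G$ of a prime-order subgroup with $C_G$ of a generator, and the observation that $G$-conjugate (indeed $M$-conjugate) subgroups have conjugate, hence isomorphic, centralizers.
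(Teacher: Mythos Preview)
Your proposal is correct and matches the paper's own argument, which simply asserts that the lemma follows directly from Lemmas~\ref{eitheror} and \ref{thesame} together with the conjugacy of the intersections $I_i^j\cap I_i^k$ to $\langle\rho_{3-i}\rangle$ noted just before Lemma~\ref{IijIik}. You have merely made explicit the routine steps (conjugate subgroups have isomorphic centralizers, and centralizing a cyclic group of prime order equals centralizing a generator) that the paper leaves implicit.
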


Recall the Type N+ subgroups of order $9$ are just the non-degenerate subgroups of $J$  of plus type.
\begin{lemma}\label{I1capI2}  $I_1 \cap I_2$  is of Type N+.
\end{lemma}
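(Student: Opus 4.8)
The plan is to locate $I_1 \cap I_2$ as a subspace of $J$ and determine its type using the quadratic form $\mathrm q$. Recall $I_i = C_J(r_i)$ is a $3$-dimensional subspace of the $4$-dimensional plus-type space $J$ (Lemma~\ref{I1}), and $[J,r_i] = \langle \rho_i\rangle$ with $\langle\rho_1\rangle \in \mathcal P(J)$, $\langle\rho_2\rangle \in \mathcal M(J)$ by our fixed choice of notation. Since $r_1$ and $r_2$ are distinct commuting reflections on $J$ (they lie in the abelian $2$-group $R/Q \le M/Q$ acting on $J$, or more directly $[r_1,r_2]=1$ since $r_i \in Z(R_i)$ and $[R_1,R_2]=1$), the reflecting hyperplanes $I_1$ and $I_2$ are distinct, so $I_1 \cap I_2$ is exactly $2$-dimensional. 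Thus I only need to identify which of the five types in Notation~\ref{type} it is.

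The key point is that $I_i = \langle\rho_i\rangle^\perp$ (a reflection's fixed hyperplane is the orthogonal complement of its $-1$-eigenspace). Hence $I_1 \cap I_2 = \langle\rho_1\rangle^\perp \cap \langle\rho_2\rangle^\perp = \langle \rho_1,\rho_2\rangle^\perp$. By Lemma~\ref{type1}(iv), $\langle\rho_1,\rho_2\rangle$ is of Type N+, i.e. a non-degenerate plus-type $2$-space. In a non-degenerate $4$-dimensional orthogonal space, the perpendicular of a non-degenerate $2$-space is again non-degenerate, and for the plus-type form the complement of a plus-type plane is again of plus type (the discriminant/type is multiplicative in an orthogonal decomposition, and $+ \cdot + = +$). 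Therefore $I_1 \cap I_2 = \langle\rho_1,\rho_2\rangle^\perp$ is a non-degenerate $2$-space of plus type, which is precisely Type N+.

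Alternatively, and perhaps more in the spirit of the surrounding combinatorial bookkeeping, one can count: $I_1 \cap I_2$ contains $\langle\rho_1\rangle^\perp \cap \langle\rho_2\rangle^\perp$, and since $\rho_1$ is perpendicular to $I_1$ while $\rho_1 \in I_2$ (as $[J,r_2]=\langle\rho_2\rangle$ is spanned by a single vector perpendicular to $I_2$, and one checks $\rho_1 \not\perp \rho_2$ forces $\rho_1\in I_2$ — indeed $\langle\rho_1,\rho_2\rangle$ being N+ means $\rho_1$ and $\rho_2$ are not perpendicular, so neither lies in the other's perpendicular hyperplane, giving $\rho_1 \in I_2 \setminus I_1$ and $\rho_2 \in I_1 \setminus I_2$). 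Then $I_1 \cap I_2$ is a plane containing neither $\rho_1$ nor $\rho_2$ but perpendicular to both; since $\langle\rho_1\rangle \in \mathcal P(J)$ and $I_1\cap I_2 \perp \rho_1$ with $\rho_1\notin I_1\cap I_2$, the plane $I_1\cap I_2$ sits inside the hyperplane $I_1 = \rho_1^\perp$ as the complement of the minus-type line $\langle\rho_2\rangle$; using Lemma~\ref{type1}(v) (which records the distribution of types inside $I_1$, namely $4$ singular, $6$ minus, $3$ plus one-spaces) together with Witt's theorem one deduces $I_1\cap I_2$ has two singular, one plus and one minus one-space, i.e. Type N+.

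The main obstacle is simply making sure the sign bookkeeping is airtight — i.e. that the orthogonal complement of an N+ plane in a plus-type $4$-space really is again N+ and not N- — but this is exactly the multiplicativity of the Witt/discriminant invariant under perpendicular direct sum (equivalently, $\mathrm O_4^+ = (\mathrm O_2^+ \times \mathrm O_2^+)$ up to the obvious index, whereas N+ $\perp$ N- would reconstruct a minus-type $4$-space), so there is no real difficulty; everything reduces to Lemma~\ref{type1}(iv) and the elementary structure theory of orthogonal spaces over $\GF(3)$ summarised in Lemmas~\ref{hyper} and \ref{types}.
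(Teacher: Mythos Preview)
Your main argument is correct and uses essentially the same first step as the paper: $I_1\cap I_2=C_J(\langle r_1,r_2\rangle)=\langle\rho_1,\rho_2\rangle^\perp$, hence non-degenerate. To decide between Type N+ and N$-$, however, the paper is more direct than your discriminant argument: it simply observes that $Z\le I_1\cap I_2$ (both $r_1,r_2$ lie in $C_G(Z)$, so $Z\le C_J(r_1)\cap C_J(r_2)$), and since $Z$ is singular the plane must be N+ because a Type N$-$ plane contains no singular one-spaces. This avoids any appeal to multiplicativity of the discriminant and uses only facts already on the table.

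Your alternative combinatorial argument contains a slip worth flagging: you assert that $\langle\rho_1,\rho_2\rangle$ being N+ means $\rho_1\not\perp\rho_2$, and that this ``forces $\rho_1\in I_2$''. In fact it is the other way round. Over $\GF(3)$ a plane spanned by a plus vector and a minus vector is of plus type precisely when the two are perpendicular (the Gram matrix $\mathrm{diag}(1,-1)$ has non-square determinant $-1$), and $\rho_1\in I_2=\langle\rho_2\rangle^\perp$ is exactly the statement $\rho_1\perp\rho_2$. Concretely, $\rho_1\in A_1\le Q_1=C_Q(r_2)$, so $r_2$ centralizes $\rho_1$ and $\rho_1\in I_2$ directly. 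This error does not affect your primary argument, but the alternative paragraph as written does not stand.
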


\begin{proof} We know that $I_1 \cap I_2 = C_J(\langle r_1,r_2\rangle)$ and is consequently non-degenerate.
Since $Z \le I_1 \cap I_2$, it has Type N+.
\end{proof}

The next lemma is an adaption of Lemma~\ref{NJ}(ii) to $K_i$.

\begin{lemma}\label{fusion1} $F_i = N_{K_i}(I_i)$ controls $K_i$-fusion of elements in $I_i$.
\end{lemma}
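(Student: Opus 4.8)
The plan is to run, inside $K_i$, the same argument used for Lemma~\ref{NJ}(ii): show that $I_i$ is an abelian subgroup which is weakly closed in a Sylow $3$-subgroup of $K_i$, quote Burnside's fusion argument in the form \cite[37.6]{AschbacherFG} to get control of fusion, and then pin down $N_{K_i}(I_i)$ by an order comparison against $F_i$.

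First I would record that, by Lemma~\ref{CSr}, $P_i:=C_S(r_i)=Q_{3-i}I_i$ is a Sylow $3$-subgroup of $K_i$, of order $3^4$, and that $P_i\le F_i$: indeed $Q_{3-i}$ and $I_i$ both normalise $J=J(S)$ and centralise $r_i$, so $P_i\le N_G(J)\cap C_G(r_i)=C_L(r_i)=F_i$. Thus $P_i$ is a common Sylow $3$-subgroup of $F_i$, $K_i$ and $N_{K_i}(I_i)$. Next, using $I_i\le W$ and $[Q,W]=A$ (Lemma~\ref{Z weak Q}) together with $Q_{3-i}\unlhd S$, one gets $[Q_{3-i},I_i]\le A\cap Q_{3-i}=A_{3-i}\le I_i$, so $I_i\unlhd P_i$ and hence $P_i\le N_{K_i}(I_i)$. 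The key structural claim is that $I_i$ is the \emph{unique} elementary abelian subgroup of $P_i$ of order $3^3$. To see this one checks, using $[Q,S]=A$ (Lemma~\ref{Z weak Q}(v)) and $|S/Q|=3$ (Lemma~\ref{structM}), that $P_i'=A_{3-i}$ has order $9$, that $P_i/P_i'$ is elementary abelian of order $9$, and that $C_{P_i}(A_{3-i})=I_i$. Then if $E\le P_i$ is elementary abelian of order $3^3$ it centralises $E\cap P_i'$, and either $A_{3-i}\le E$, forcing $E\le C_{P_i}(A_{3-i})=I_i$, or $|E\cap A_{3-i}|\le 3$ and $EA_{3-i}=P_i$, whence $A_{3-i}=P_i'=[EA_{3-i},EA_{3-i}]=[E,A_{3-i}]$ has order at most $3$ because $E$ centralises the $1$-space $E\cap A_{3-i}$ in $A_{3-i}$, a contradiction. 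Consequently $I_i$ is weakly closed in $P_i$ with respect to $K_i$, and since $I_i$ is abelian, \cite[37.6]{AschbacherFG} yields that $N_{K_i}(I_i)$ controls $K_i$-fusion of elements of $I_i$.

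It then remains to prove $N_{K_i}(I_i)=F_i$; the inclusion $F_i\le N_{K_i}(I_i)$ is immediate. For the reverse inclusion I would note that $N_{K_i}(I_i)$ permutes the $G$-conjugates of $Z$ contained in $I_i$, which by Lemma~\ref{3classes} and the fact that $L$ controls $G$-fusion in $J$ are precisely the singular points $\mathcal S(I_i)$ of the quadratic form $\mathrm q$ restricted to the non-degenerate $3$-space $I_i$. Since $Z\in\mathcal S(I_i)$, Lemma~\ref{quadratic form} shows that $N_{K_i}(I_i)$ preserves $\mathrm q|_{I_i}$ up to similarity, so $N_{K_i}(I_i)/C_{K_i}(I_i)$ embeds in $\mathrm{CO}_3(3)\cong\GO_3(3)\cong 2\times\Sym(4)$ (here $\mathrm q|_{I_i}$ is not similar to a scalar multiple of itself, so the similarity and isometry groups coincide). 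By Lemma~\ref{I1}, $F_i/I_i\langle r_i\rangle\cong 2\times\Sym(4)$ acts faithfully on $I_i$, so $C_{F_i}(I_i)=I_i\langle r_i\rangle$ and $F_i$ already maps onto all of $\GO_3(3)$; hence $N_{K_i}(I_i)=F_iC_{K_i}(I_i)$. Finally, since $Z\le I_i$ we have $C_{K_i}(I_i)\le C_G(Z)\cap C_G(r_i)\le C_M(r_i)$, and a direct computation inside $C_M(r_i)=Q_{3-i}R_1R_2I_i$ (Lemma~\ref{CSr}, together with the structure of $M$ in Lemma~\ref{structM}) gives $C_{C_M(r_i)}(I_i)=I_i\langle r_i\rangle\le F_i$. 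Therefore $N_{K_i}(I_i)=F_i$ and the lemma follows.

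The step I expect to be the main obstacle is proving that $I_i$ is the unique elementary abelian subgroup of $P_i$ of maximal order (equivalently, that $I_i$ is weakly closed in $P_i$), and, secondarily, the final identification $C_{C_M(r_i)}(I_i)=I_i\langle r_i\rangle$; both are elementary, but they require careful bookkeeping with the precise structure of $P_i=C_S(r_i)$ and of $C_M(r_i)$ developed in Sections~4 and~5.
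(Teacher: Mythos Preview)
Your proof is correct. The first half---showing $I_i=J(C_S(r_i))$ and invoking \cite[37.6]{AschbacherFG}---is exactly the paper's argument, though you supply the verification of the Thompson-subgroup claim that the paper leaves implicit. One small point in your Case~2: the cleanest route to the contradiction is simply that $E$, having index~$3$ in $P_i$, is normal, so $P_i'\le E$ since $E$ is abelian; but $P_i'=A_{3-i}\not\le E$. Your equality $P_i'=[E,A_{3-i}]$ is also valid (both $E$ and $A_{3-i}$ are normal abelian in $P_i$), but the stated reason ``$E$ centralises $E\cap A_{3-i}$'' is beside the point---what you actually need is $[E,A_{3-i}]\le E\cap A_{3-i}$.

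For the identification $N_{K_i}(I_i)=F_i$ you take a genuinely different route from the paper. The paper first computes $C_G(I_i)=J\langle r_i\rangle$ (working inside $M$, since $C_G(I_i)\le C_G(Z)\le M$), and then observes that $N_{K_i}(I_i)$ normalises $C_G(I_i)$ and hence its characteristic subgroup $J=O_3(C_G(I_i))$, giving $N_{K_i}(I_i)\le N_G(J)\cap K_i=L\cap K_i=F_i$ at once. Your argument instead bounds $N_{K_i}(I_i)/C_{K_i}(I_i)$ by the similarity group of $\mathrm q|_{I_i}$ via Lemma~\ref{quadratic form}, checks that this similarity group coincides with $\GO_3(3)$ (using that in odd dimension over $\GF(3)$ the forms $\mathrm q|_{I_i}$ and $-\mathrm q|_{I_i}$ have distinct discriminants), and then matches orders against $F_i$. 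Both approaches work; the paper's is shorter because the single observation $O_3(C_G(I_i))=J$ replaces your form-theoretic bound entirely and makes the separate verification $C_{K_i}(I_i)\le F_i$ unnecessary. (Incidentally, in your final paragraph you want $C_{C_M(Z)}(r_i)$ rather than $C_M(r_i)$, which is what Lemma~\ref{CSr} provides; this is harmless since $C_{K_i}(I_i)\le C_G(Z)$ anyway.)
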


\begin{proof} By Lemma~\ref{CSr},  $C_S(r_i)\in \syl_3(K_i)$ and thus $I_i $ is the Thompson subgroup of
$C_S(r_i)$ and is elementary abelian. It follows from \cite[37.6]{AschbacherFG} that $N_{K_i}(I_i)$ controls fusion in $I_i$. As $C_G(I_i) \le
M$, we calculate that $C_G(I_i)= J\langle r_i\rangle$. Hence $C_{K_i}(I_i)= I_i\langle r_i\rangle$ and
$N_{K_i}(I_i) =  L \cap K_i = F_i$.
\end{proof}

For $i \in \{1,2\}$ and $1\le j < k\le 4$,
$$E_i^{j,k} = E(C_G(I_i^j\cap I_i^k)).$$ So $E_i^{j,k} \cong \SU_4(2)$ or $\Sp_6(2)$ and we note
again that the isomorphism type of this group does not depend on  $i,j$ or $k$.  At least one potential avenue for
confusion is caused by this notation so please note that $E_i^{j,k}$ does not centralize $r_i$. Rather it
centralizes a conjugate of $r_{3-i}$.  Indeed  $E_1^{1,2}= E(C_G(\rho_2))$  centralizes $r_2$ and $E_2^{1,2}= E(C_G(\rho_1))$ centralizes $r_1$ by
Lemma~\ref{eitheror}.

Notice that $I_i$ is centralized by $r_i$ and so $r_i$ is contained in  $C_G(I_i^j\cap I_i^k)$ and it centralizes $I_i \cap E_i^{j,k}$ and this contains $Z$. It follows that $I_i \cap E_i^{j,k}$ is of Type N+ as it must also be non-degenerate. This means that $r_i$ acts as  an involution of  type $a_2$ on  $E_i^{j,k}$ in the sense of Table 1.  Therefore, Lemma~\ref{sp62facts}(ii) gives the following result:
\begin{lemma}\label{cri} We have
\begin{eqnarray*}C_{K_i}(I_{i}^j \cap I_i^k) &=& C_{C_G(I_i^j \cap I_i^k)}(r_i)\\& \approx& \begin{cases}3 \times 2^{1+4}_+.(\Sym(3) \times \Sym(3))&E_i^{j,k} \cong \SU_4(2) \\3\times
2^{1+2+4}.(\Sym(3) \times \Sym(3))&E_i^{j,k} \cong \Sp_6(2)\end{cases}.\end{eqnarray*}\qed
\end{lemma}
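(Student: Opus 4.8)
The plan is to observe first that the displayed equality is immediate: since $K_i=C_G(r_i)$ we have $C_{K_i}(I_i^j\cap I_i^k)=C_G(r_i)\cap C_G(I_i^j\cap I_i^k)=C_{C_G(I_i^j\cap I_i^k)}(r_i)$, so the whole content of the lemma is the computation of $C_{C_G(I_i^j\cap I_i^k)}(r_i)$. As recorded just before Lemma~\ref{IijIik}, the group $I_i^j\cap I_i^k$ is $M$-conjugate to $\langle\rho_{3-i}\rangle$, so Lemmas~\ref{eitheror}, \ref{thesame} and \ref{Cij} give $C_G(I_i^j\cap I_i^k)\cong 3\times\Aut(\SU_4(2))$ or $3\times\Sp_6(2)$, in which the component is $E_i^{j,k}\cong\SU_4(2)$ or $\Sp_6(2)$ respectively and the order-$3$ direct factor is $Z(C_G(I_i^j\cap I_i^k))=I_i^j\cap I_i^k$. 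Write $X$ for the complementary direct factor, so $X\in\{\Aut(\SU_4(2)),\Sp_6(2)\}$, $E_i^{j,k}=E(X)$, and $C_G(I_i^j\cap I_i^k)=(I_i^j\cap I_i^k)\times X$.

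Next, because $I_i^j\cap I_i^k\le I_i=C_J(r_i)$, the involution $r_i$ centralizes $I_i^j\cap I_i^k$, so $r_i\in C_G(I_i^j\cap I_i^k)$; being a $2$-element it lies in $X$, and hence $C_{C_G(I_i^j\cap I_i^k)}(r_i)=(I_i^j\cap I_i^k)\times C_X(r_i)$. Thus it remains to determine $C_X(r_i)$, for which one must fix the conjugacy class of $r_i$ in its action on $E_i^{j,k}$. This is exactly the point settled in the discussion immediately preceding the statement: $r_i$ centralizes $W:=I_i\cap E_i^{j,k}$, which is elementary abelian of order $3^2$ (it is a complement to the order-$3$ direct factor $I_i^j\cap I_i^k$ inside $I_i$), contains the singular line $Z$, and is non-degenerate, hence is of Type N+ in the sense of Notation~\ref{type}; as explained there, the only involutions of $E_i^{j,k}$ centralizing such a subgroup of order $9$ are those of type $a_2$ of Table~\ref{Table1} (the classes $b_3$ and $c_2$ have centralizers with Sylow $3$-subgroup of order $3$, and the class $b_1$ is excluded by the fixed-space data of Table~\ref{Table1} together with Lemma~\ref{sp62facts}(vi)), so $r_i$ is of type $a_2$. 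In particular $r_i$ induces an inner automorphism of $E_i^{j,k}$, so in the $\SU_4(2)$ case $r_i\in E_i^{j,k}$; in either case Lemma~\ref{sp62facts}(ii) reads off $C_X(r_i)$ from the appropriate column of Table~\ref{Table1}, namely $C_X(r_i)\approx 2^{1+4}_+.(\SL_2(2)\times\SL_2(2))$ when $X=\Aut(\SU_4(2))$ and $C_X(r_i)\approx 2^{1+2+4}.(\SL_2(2)\times\SL_2(2))$ when $X=\Sp_6(2)$.

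Finally, since $\SL_2(2)\cong\Sym(3)$ and $|I_i^j\cap I_i^k|=3$, the decomposition $C_{C_G(I_i^j\cap I_i^k)}(r_i)=(I_i^j\cap I_i^k)\times C_X(r_i)$ together with the two values of $C_X(r_i)$ just obtained gives precisely the two shapes asserted. The only step requiring genuine attention is the identification of the $E_i^{j,k}$-class of $r_i$ as $a_2$; granting that — which is the content of the paragraph before the lemma and rests on $W$ having Type N+ — the lemma follows formally from the direct-product decomposition of $C_G(I_i^j\cap I_i^k)$ and Table~\ref{Table1}, with $\SL_2(2)$ rewritten as $\Sym(3)$.
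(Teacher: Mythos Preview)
Your proof is correct and follows essentially the same route as the paper: the paper's argument is just the paragraph immediately preceding the statement (identifying $I_i\cap E_i^{j,k}$ as Type~N+ and hence $r_i$ as an involution of type $a_2$) together with an appeal to Lemma~\ref{sp62facts}(ii) and Table~\ref{Table1}. You supply more detail than the paper does, in particular the explicit exclusion of the $b_1$ class via Lemma~\ref{sp62facts}(vi), whereas the paper simply asserts that Type~N+ forces class $a_2$; but the underlying argument is the same.
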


The next lemma now is the key. It shows that the groups $O_2(C_{K_i}(I_j^{i}))$ are quaternion groups of order eight which pairwise commute and so generate an extraspecial group of order $2^9$.

\begin{lemma}\label{Sigmapieces} Assume that $i=1,2$ and  $1\le j< k\le 4$.
\begin{enumerate}
\item For $m \in \{j,k\}$, $I_i^m \cap E_i^{j,k}$ is a $3$-central element of $G$ and of $E_{i}^{j,k}$;
\item $C_{G}(I_i^k) =  (I_i^k\cap I_i^j) \times C_{E_i^{j,k}}(I_k \cap E_i^{j,k}) \approx 3 \times 3^{1+2}_+.\SL_2(3)$;
\item \begin{enumerate}\item  $O_2(C_{K_i}(I_i^j)) \cong O_2(C_{K_i}(I_i^k))  \cong \Q_8$; \item  $O_2(C_{K_i}(I_i^j))O_2(C_{K_i}(I_i^k)) \le O_2(C_{K_i}(I_i^j\cap I_i^k))$ with equality if $E_i^{j,k} \cong \SU_4(2)$; and \item  $[O_2(C_{K_i}(I_i^j)), O_2(C_{K_i}(I_i^k))]=1$; and\end{enumerate}
\item $C_{I_i}(O_2(C_{K_i}(I_i^j))O_2(C_{K_i}(I_i^k)))= I_i ^j\cap I_i^k$.
\end{enumerate}
\end{lemma}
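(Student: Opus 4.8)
The plan is to work inside the group $C_G(I_i^j\cap I_i^k)$, which by Lemma~\ref{Cij} is $3\times E_i^{j,k}$ with $E_i^{j,k}\cong\SU_4(2)$ or $\Sp_6(2)$, and exploit the fact that $r_i$ acts on $E_i^{j,k}$ as an involution of type $a_2$ (a unitary transvection) with $C_{K_i}(I_i^j\cap I_i^k)$ as described in Lemma~\ref{cri}. First I would prove (i): inside $E_i^{j,k}$ the subgroup $I_i^m\cap E_i^{j,k}$ (for $m\in\{j,k\}$) is one of the three conjugates of $\langle\rho_{3-i}\rangle$ lying in $I_i$, and from the structure of $\Sp_6(2)$ and $\SU_4(2)$ — using Lemma~\ref{sp62facts}(vii) and the description of the $3$-element classes $\tau_1,\tau_2,\tau_3$ — one checks that an element whose centralizer in $E_i^{j,k}$ has shape $3^{1+2}_+{:}\SL_2(3)$ (equivalently, is conjugate to the centralizer of a $3$-central element of $\PSp_4(3)$, cf.\ Lemma~\ref{princeprep} and Lemma~\ref{eitheror}) must be $3$-central both in $E_i^{j,k}$ and, since $E_i^{j,k}$ contains a Sylow $3$-subgroup of $C_G(I_i^j\cap I_i^k)$ which is itself Sylow in $G$ up to the central factor, $3$-central in $G$. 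This immediately gives (ii): $C_G(I_i^k)=C_{C_G(I_i^j\cap I_i^k)}(I_i^k)=(I_i^j\cap I_i^k)\times C_{E_i^{j,k}}(I_i^k\cap E_i^{j,k})$ and the centralizer of a $3$-central element in $\SU_4(2)$ or $\Sp_6(2)$ has shape $3^{1+2}_+{.}\SL_2(3)$ (Lemma~\ref{eitheror} applied in reverse, or directly from the \cite{Atlas}).

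Next, for (iii), I would intersect with $K_i=C_G(r_i)$. Taking the centralizer of $r_i$ in the group from (ii), and noting $r_i$ centralizes $I_i^k\cap I_i^j$ and acts on $C_{E_i^{j,k}}(I_i^k\cap E_i^{j,k})\cong 3^{1+2}_+{.}\SL_2(3)$ as an involution that is an $a_2$-element of $E_i^{j,k}$, one reads off from Table~\ref{Table1} that $C_{C_{E_i^{j,k}}(I_i^k\cap E_i^{j,k})}(r_i)$ has shape $3\times\SL_2(3)$ — more precisely $O^{3}$ of it modulo the central $3^{1+2}_+$ is $\SL_2(3)$, so $O_2$ of $C_{K_i}(I_i^k)$ is $O_2(\SL_2(3))\cong\Q_8$. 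Here I would use the overview paragraph's observation that $C_{C_G(\rho_{3-i})}(A_{i})/\langle\rho_{3-i}\rangle\cong 3^{1+2}_+{:}\SL_2(3)$, so that $C_{C_G(\rho_{3-i})\cap K_i}(A_i)/\langle\rho_{3-i}\rangle\cong\SL_2(3)$ and hence $O_2$ of it is $\Q_8$; this is exactly $O_2(C_{K_i}(I_i^j))$ when $I_i^j=A_i$, and the general case follows by the transitive (indeed $2$-transitive) action of $F_i$ on $\{I_i^1,\dots,I_i^4\}$. For (iii)(b) and (iii)(c): both $\Q_8$'s sit inside $O_2(C_{K_i}(I_i^j\cap I_i^k))$, which by Lemma~\ref{cri} is $2^{1+4}_+$ (when $E_i^{j,k}\cong\SU_4(2)$) or has shape $2^{1+2+4}$ (when $\cong\Sp_6(2)$); in the first case the two distinct $\Q_8$'s must together generate the whole $2^{1+4}_+$, forcing equality and commutation (as $2^{1+4}_+\cong\Q_8\ast\Q_8$). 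The commutation $[O_2(C_{K_i}(I_i^j)),O_2(C_{K_i}(I_i^k))]=1$ in the $\Sp_6(2)$ case I would get from Lemma~\ref{sp62line}(iii), viewing the two $\Q_8$'s as the $C_Q(Z_1),C_Q(Z_2)$ of that lemma inside $O_2(X_{13})$ for the appropriate line stabilizer $X_{13}$ of $E_i^{j,k}\cong\Sp_6(2)$, since $I_i^j\cap E_i^{j,k}$ and $I_i^k\cap E_i^{j,k}$ play the roles of $Z_1,Z_2$ (two $X$-conjugates of $\langle\tau_3\rangle$ fused in $N_P(T)$, matching the two conjugates of $\langle\rho_{3-i}\rangle$ that are swapped inside $F_i$).

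Finally, for (iv), set $D=O_2(C_{K_i}(I_i^j))O_2(C_{K_i}(I_i^k))$, an extraspecial (or at least $2^{1+4}_+$-containing) subgroup on which $I_i$ acts. Certainly $I_i^j\cap I_i^k\le C_{I_i}(D)$ since that subgroup centralizes both $\Q_8$'s by construction. For the reverse, I would argue that $C_{I_i}(O_2(C_{K_i}(I_i^j)))$ is already forced down to $I_i^j$ itself: in $C_{E_i^{j,k}}(I_i^k\cap E_i^{j,k})\cong 3^{1+2}_+{.}\SL_2(3)$ the only elementary abelian $3$-subgroup of $I_i$ centralizing $O_2(\SL_2(3))=\Q_8$ is the centre, which is $I_i^j$; intersecting the analogous statement for $k$ gives $C_{I_i}(D)\le I_i^j\cap I_i^k$. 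Alternatively and more cleanly, $I_i/(I_i^j\cap I_i^k)$ is generated by the images of $I_i^j$ and $I_i^k$, and each of those acts faithfully (fixed-point-freely on the Klein four quotient of) the respective $\Q_8$, so nothing outside $I_i^j\cap I_i^k$ can centralize all of $D$.

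The main obstacle I anticipate is \textbf{part (iii)}, specifically pinning down that the two $\Q_8$ subgroups are distinct and commute in the $\Sp_6(2)$ case: this requires correctly identifying $I_i^j\cap E_i^{j,k}$ and $I_i^k\cap E_i^{j,k}$ as the pair of $\langle\tau_3\rangle$-conjugates inside a line-stabilizer $X_{13}$ of $\Sp_6(2)$ (so that Lemma~\ref{sp62line}(iii) applies), rather than as some other configuration of $3$-elements. One must use that these two subgroups of order $3$ are $F_i$-conjugate (they are $I_i^1\cap I_i^2$-type intersections, all $M$-conjugate to $\langle\rho_{3-i}\rangle$) and that inside $E_i^{j,k}$ they lie in a common Sylow $3$-subgroup, together with the precise shape $2^{1+2+4}$ of $O_2(C_{K_i}(I_i^j\cap I_i^k))$ from Lemma~\ref{cri}, to rule out the alternative that they generate a larger (non-extraspecial) $2$-group. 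Part (i) — establishing $3$-centrality — is the conceptual prerequisite that makes everything else run, but it is essentially a bookkeeping exercise with the $3$-element classes once Lemma~\ref{eitheror} and Lemma~\ref{princeprep} are in hand.
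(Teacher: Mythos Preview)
Your overall strategy for parts (ii), (iii), and (iv) matches the paper's, and in particular your identification of the two $\Q_8$'s with $C_Q(Z_1)$, $C_Q(Z_2)$ from Lemma~\ref{sp62line} inside the line stabilizer of $E_i^{j,k}\cong\Sp_6(2)$ is exactly how the paper handles (iii)(c). The anticipated obstacle you flag is not really one: once you know both $\Q_8$'s are $I_i$-invariant subgroups of $O_2(C_{K_i}(I_i^j\cap I_i^k))$, Lemma~\ref{sp62line}(v) forces them to be $C_Q(Z_1)$ and $C_Q(Z_2)$, and (iii) applies.

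However, your treatment of part (i) has a genuine gap. You assert that $I_i^m\cap E_i^{j,k}$ is ``one of the three conjugates of $\langle\rho_{3-i}\rangle$ lying in $I_i$''; this is wrong. Take $m=j=1$, $k=2$: then $I_i^1=A_{3-i}$ has Type DP or DM, and its unique singular one-space is $Z$. Since the Sylow $3$-subgroup of $E_i^{1,2}=E(C_G(\rho_{3-i}))$ is $Q_iI_{3-i}$ (see the proof of Lemma~\ref{eitheror}), which contains $Z$, one has $Z\le I_i^1\cap E_i^{1,2}$, and as $|I_i^1|=9$ with $I_i^1\cap I_i^2=\langle\rho_{3-i}\rangle$ complementary to $E_i^{1,2}$, this forces $I_i^1\cap E_i^{1,2}=Z$. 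So these intersections are $G$-conjugates of $Z$, not of $\langle\rho_{3-i}\rangle$. This is precisely how the paper proves (i): via Lemma~\ref{princeprep} one has $C_S(\rho_{3-i})=Q_iJ$ with $Z$ as the centre, hence $Z$ is $3$-central in $G$ and in $E_i^{1,2}$; the general case follows by $2$-transitivity of $F_i$ on $\{I_i^1,\dots,I_i^4\}$. Your attempted route---deduce $3$-centrality from the centralizer shape $3^{1+2}_+{.}\SL_2(3)$---is circular (that shape is the content of (ii), which the paper derives \emph{from} (i)), and your claim that $3$-centrality in $E_i^{j,k}$ transfers to $3$-centrality in $G$ is not justified, since a Sylow $3$-subgroup of $C_G(I_i^j\cap I_i^k)$ has order $3^5<3^6=|S|$. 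Once you make the direct identification $I_i^1\cap E_i^{1,2}=Z$, the rest of your plan goes through essentially as written.
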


\begin{proof} It suffices to prove  part (i)  for $I_i^1 $ as then the result will follow by conjugating by $F_i$

So consider $I_i^1\cap I_i^2 = \langle \rho_{3-i}\rangle$. Then, by Lemma~\ref{princeprep},  $C_S(\rho_{3-i})= Q_iJ$ and $C_S(\rho_{3-i})' \cap Z(C_S(\rho_{3-i}))= Z$.
Thus $Z \le I_i^1 \cap E_{i}^{1,j}$ is $3$-central in $G$
and in $E_{i}^{1,j}$. Part (i) follows as $F_i$ acts  2-transitively on $\{I_i^j\mid 1 \le j \le 4\}$.

Part (ii) follows from (i) as the centralizer of a $3$-central element in $\Sp_6(2)$ and $\SU_4(2)$ has shape $3^{1+2}_+.\SL_2(3)$.

To deduce  part (iii),  we first note that  $$O_2(C_{K_i}(I_i^k)) \cong O_2(C_{K_i}(I_i^{j})) \cong  \Q_8$$
follows from (ii) as $r_i$ is an involution in $C_G(I_i^{k})$. We have $l \in \{j,k\}$,
$O_2(C_{K_i}(I_i^l))  \le C_{K_i}(I_{i}^j \cap I_i^k) $ and is normalized by $I_i^jI_i^k= I_i$. Since
\begin{eqnarray*}C_{K_i}(I_{i}^j \cap I_i^k) &=& C_{C_G(I_i^j \cap I_i^k)}(r_i)\\& \approx& \begin{cases}3 \times 2^{1+4}_+.(\Sym(3) \times \Sym(3))&E_i^{j,k} \cong \SU_4(2) \\3\times
2^{1+2+4}.(\Sym(3) \times \Sym(3))&E_i^{j,k} \cong \Sp_6(2)\end{cases}\end{eqnarray*}by Lemma~\ref{cri}, it follows that $O_2(C_{K_i}(I_i^l)) \le
O_2(C_{K_i}(I_{i}^j \cap I_i^k))$. Now we apply Lemma~\ref{sp62line}(iii) to see that
$[O_2(C_{K_i}(I_i^k)),O_2(C_{K_i}(I_i^k))]=1$. (Recall that $O_2(C_{\SU_4(2)}(r_i)) \le
O_2(C_{\Sp_6(2)}(r_i))$.)

 Part (iv) follows as  $I_i \cap E_{i}^{j,k}$ acts faithfully on $O_2(C_{K_i}(I_i^j))O_2(C_{K_i}(I_i^k))$.

\end{proof}
We now introduce some further notation

\begin{notation}\label{sigma}  For $i = 1,2$, $1\le k \le 4$, $$\Sigma_i^k= O_2(C_{K_i}(I_i^k))\cong \Q_8$$ and  $$\Sigma_i= \langle \Sigma_i^k\mid 1\le k \le 4\rangle=\langle O_2(C_{K_i}(I_i^k)) \mid 1 \le k \le 4\rangle.$$
\end{notation}
Note that $\Sigma_1^1= O_2(C_{K_1}(A_2)) =  R_1$ and $\Sigma_2^1= O_2(C_{K_2}(A_1)) = R_2$.

\begin{lemma}\label{sigmai} We have $\Sigma_i$ is extraspecial of order $2^9$ and plus
type, $Z(\Sigma_i)= \langle r_i\rangle$ and $F_i/\langle r_i\rangle$ acts faithfully on $\Sigma_i$.
\end{lemma}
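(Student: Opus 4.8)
The plan is to build $\Sigma_i$ from the four commuting quaternion groups $\Sigma_i^1,\dots,\Sigma_i^4$ constructed in Lemma~\ref{Sigmapieces} and to use the action of $F_i$ on the four-point set $\{I_i^1,\dots,I_i^4\}$ together with the orthogonal geometry on $I_i$ to control how these groups multiply together. First I would record the easy structural facts: each $\Sigma_i^k\cong\Q_8$ has centre $\langle r_i\rangle$ (since $\Sigma_i^k\le C_{K_i}(I_i^k)$ and $r_i$ is the relevant central involution), and by Lemma~\ref{Sigmapieces}(iii)(c) distinct pairs $\Sigma_i^j,\Sigma_i^k$ commute. Hence $\Sigma_i=\langle\Sigma_i^1,\dots,\Sigma_i^4\rangle$ is generated by pairwise-commuting quaternion groups all sharing the central involution $r_i$, so $\Sigma_i'\le\langle r_i\rangle$, $\Sigma_i$ has exponent $4$, and $\Phi(\Sigma_i)=\langle r_i\rangle$; thus $\Sigma_i$ is either elementary abelian times $\langle r_i\rangle$ or extraspecial. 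Since each $\Sigma_i^k$ is non-abelian, $\Sigma_i$ is extraspecial.

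Next I would pin down the order. Working inside $C_{K_i}(I_i^j\cap I_i^k)$, Lemma~\ref{Sigmapieces}(iii)(b) gives $\Sigma_i^j\Sigma_i^k\le O_2(C_{K_i}(I_i^j\cap I_i^k))$, and by Lemma~\ref{cri} the latter has shape $2^{1+4}_+$ or $2^{1+2+4}$, so $\Sigma_i^j\Sigma_i^k$ has order at most $2^5$; combined with $\Sigma_i^j\cap\Sigma_i^k\ge\langle r_i\rangle$ and the commuting we get $|\Sigma_i^j\Sigma_i^k|=2^5$ and $\Sigma_i^j\cap\Sigma_i^k=\langle r_i\rangle$, i.e. $\Sigma_i^j\Sigma_i^k$ is a central product $\Q_8\ast\Q_8\cong 2^{1+4}_+$. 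The key point is then that the four subgroups $\Sigma_i^k$ are ``independent'' modulo $\langle r_i\rangle$: this I would extract from Lemma~\ref{Sigmapieces}(iv), which says $C_{I_i}(\Sigma_i^j\Sigma_i^k)=I_i^j\cap I_i^k$, a one-dimensional subspace. Since $F_i/I_i\langle r_i\rangle\cong 2\times\Sym(4)$ permutes the $I_i^k$ as $\Sym(4)$ on four points and permutes the $\Sigma_i^k$ compatibly, the images $\overline{\Sigma_i^k}$ in $\Sigma_i/\langle r_i\rangle$ are four two-dimensional subspaces permuted transitively; if $\langle\overline{\Sigma_i^1},\dots,\overline{\Sigma_i^4}\rangle$ had dimension $\le 6$ one checks using the symplectic form on $\Sigma_i/\langle r_i\rangle$ induced by commutation and the pairwise-orthogonality coming from $[\Sigma_i^j,\Sigma_i^k]=1$ that some proper sub-sum is $F_i$-invariant, forcing $I_i$ (which is generated by the $I_i^j\cap I_i^k$) to centralize a codimension-one piece, contrary to Lemma~\ref{Sigmapieces}(iv) applied across all pairs. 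The cleanest route is probably to show directly that $\Sigma_i^1\Sigma_i^2$, $\Sigma_i^3$, $\Sigma_i^4$ generate step by step a group of order $2^7$ then $2^9$, using at each stage the commuting relations and the fact that a new $\Q_8$ factor meets the previously built extraspecial group only in $\langle r_i\rangle$ (again by the $C_{I_i}(-)$ computation, since a larger intersection would enlarge the fixed space). This gives $|\Sigma_i|=2^9$; as a central product of four copies of $\Q_8$ it is of $+$-type by the standard formula for the type of a central product, and $Z(\Sigma_i)=\langle r_i\rangle$.

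Finally, $F_i$ normalizes $\Sigma_i$ because it permutes the generating set $\{\Sigma_i^k\}$ (each $\Sigma_i^k=O_2(C_{K_i}(I_i^k))$ and $F_i$ permutes the $I_i^k$), and it centralizes $Z(\Sigma_i)=\langle r_i\rangle$; for faithfulness of the action of $F_i/\langle r_i\rangle$ on $\Sigma_i$ I would note that the kernel $C_{F_i}(\Sigma_i)$ centralizes $I_i$ as well — indeed $I_i=\langle I_i^j\cap I_i^k\mid 1\le j<k\le 4\rangle$ and each $I_i^j\cap I_i^k$ is already seen inside the relevant $C_{K_i}$-computation to be recoverable from $\Sigma_i^j\Sigma_i^k$ via Lemma~\ref{Sigmapieces}(iv), or more directly $C_{F_i}(\Sigma_i)$ centralizes each $C_{I_i}(\Sigma_i^j\Sigma_i^k)=I_i^j\cap I_i^k$ — hence $C_{F_i}(\Sigma_i)\le C_{F_i}(I_i)\cap C_{F_i}(\Sigma_i)=\langle r_i\rangle$ by Lemma~\ref{fusion1}, since $C_{K_i}(I_i)=I_i\langle r_i\rangle$ and $I_i$ clearly does not centralize $\Sigma_i$. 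The main obstacle I anticipate is the order count: ruling out ``collapse'' of $\Sigma_i$ to order $2^7$ or $2^5$, i.e.\ proving the four quaternion factors are as independent as the geometry of $I_i$ suggests. The right tool for this is the transitive $\Sym(4)$-action of $F_i$ on the four factors together with the precise centralizer statement Lemma~\ref{Sigmapieces}(iv), which prevents any $F_i$-invariant proper subproduct from absorbing all of $I_i$; everything else is routine extraspecial-group bookkeeping.
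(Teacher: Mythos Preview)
Your overall strategy matches the paper's: build $\Sigma_i$ as the central product of the four $\Sigma_i^k\cong\Q_8$ sharing the common centre $\langle r_i\rangle$, then check faithfulness of $F_i/\langle r_i\rangle$ via $I_i$. Two remarks, one a simplification and one a genuine gap.

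\textbf{Order count.} Your step-by-step route works, but there is a one-line argument you are circling around without quite landing on. Since $C_{I_i}(\Sigma_i^k)=I_i^k$ (immediate from Lemma~\ref{Sigmapieces}(iv)), the four images $\overline{\Sigma_i^k}$ in $\Sigma_i/\langle r_i\rangle$ are irreducible $\GF(2)I_i$-modules with four \emph{distinct} kernels $I_i^1,\dots,I_i^4$, hence pairwise non-isomorphic, hence their sum is direct. This gives $|\Sigma_i/\langle r_i\rangle|=2^8$ at once. The paper simply cites Lemma~\ref{Sigmapieces}(iii) for the structure and leaves this to the reader; your symplectic-form and $F_i$-invariance detour is unnecessary.

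\textbf{Faithfulness.} Here your argument has a real gap. You claim that $C_{F_i}(\Sigma_i)$ centralizes each $I_i^j\cap I_i^k=C_{I_i}(\Sigma_i^j\Sigma_i^k)$, but centralizing $\Sigma_i^j\Sigma_i^k$ only shows that an element of $C_{F_i}(\Sigma_i)$ \emph{normalizes} $I_i^j\cap I_i^k$; it could still invert it. So the implication $C_{F_i}(\Sigma_i)\le C_{F_i}(I_i)$ is not established, and the rest of your chain collapses. The paper's route is different and cleaner: $C_{F_i}(\Sigma_i)$ is normal in $F_i$, and in $F_i/\langle r_i\rangle\approx 3^3{:}(2\times\Sym(4))$ every non-trivial normal subgroup contains $I_i$ (the action on $I_i$ being irreducible and faithful). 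Hence it suffices to show $I_i$ itself acts faithfully on $\Sigma_i$; and by Lemma~\ref{Sigmapieces}(iv),
\[
C_{I_i}(\Sigma_i)\le C_{I_i}(\Sigma_i^1\Sigma_i^2)\cap C_{I_i}(\Sigma_i^3\Sigma_i^4)=(I_i^1\cap I_i^2)\cap(I_i^3\cap I_i^4)=1,
\]
since the six one-spaces $I_i^j\cap I_i^k$ are pairwise distinct (they exhaust $\mathcal M(I_1)$, respectively $\mathcal P(I_2)$). That normal-subgroup reduction is the missing idea you need.
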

\begin{proof} The structure of $\Sigma_i$ follows  from Lemma~\ref{Sigmapieces} (iii) as the generating subgroups commute pairwise. To see the last part is suffices to show that $I_i$ acts faithfully on $\Sigma_i$ as every normal subgroup of $F_i$ which strictly contains $\langle r_i\rangle$ contains $I_i$.
Using Lemma~\ref{Sigmapieces} (iv) we see that $C_{I_i}(\Sigma_i) = \bigcap_ {j=1}^4I_i^j= 1$.
 \end{proof}

At this stage we have constructed the extraspecial group of order $2^9$ on which $F_i$ acts.

\begin{lemma} \label{celts}  The following hold:
\begin{enumerate}
\item  $C_{\Sigma_1}(Z)=R_1$, $C_{\Sigma_1}(I_1^j \cap I_1^k) =\Sigma_1^j\Sigma_1^k$ and, if $\langle x \rangle \in \mathcal P(I_1)$, then $C_{\Sigma_1}(x) =\langle r_1\rangle$.
\item  $C_{\Sigma_2}(Z)=R_2$, $C_{\Sigma_2}(I_2^j \cap I_2^k) =\Sigma_2^j\Sigma_2^k$ and, if $\langle x \rangle \in \mathcal M(I_2)$, then $C_{\Sigma_2}(x) = \langle r_2\rangle$.
\end{enumerate}
\end{lemma}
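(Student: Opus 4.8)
The plan is to read the lemma off the coprime action of the elementary abelian $3$-group $I_i$ on $\Sigma_i$, once the $\GF(2)I_i$-module structure of $\Sigma_i/\langle r_i\rangle$ has been pinned down. By Lemma~\ref{sigmai}, $\Sigma_i$ is extraspecial of order $2^9$ with $\langle r_i\rangle=Z(\Sigma_i)=\Phi(\Sigma_i)$, and since $I_i$ is a subgroup of $F_i$ of odd order it acts faithfully on $\Sigma_i$, so $C_{I_i}(\Sigma_i)=1$. By Lemma~\ref{Sigmapieces}(iii) the $\Sigma_i^l$ $(1\le l\le 4)$ are quaternion of order $8$, pairwise commute and generate $\Sigma_i$; each $\Sigma_i^l$ is an $F_i$-conjugate of $R_i=\Sigma_i^1$ and $F_i\le C_G(r_i)$, so each has centre $\langle r_i\rangle$. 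Hence, writing $\overline X=X\langle r_i\rangle/\langle r_i\rangle$, every $\overline{\Sigma_i^l}$ is $2$-dimensional over $\GF(2)$ and, by an order count, $\overline{\Sigma_i}=\overline{\Sigma_i^1}\oplus\cdots\oplus\overline{\Sigma_i^4}$; since $I_i\le C_J(r_i)\le K_i$ is abelian it normalises each $I_i^l$, hence $C_{K_i}(I_i^l)$ and $\Sigma_i^l$, so this is a decomposition of $\GF(2)I_i$-modules. Finally $C_{I_i}(\Sigma_i^l)=I_i^l$ for each $l$: the inclusion $\supseteq$ holds because $\Sigma_i^l=O_2(C_{K_i}(I_i^l))$ centralises $I_i^l$, and if $C_{I_i}(\Sigma_i^l)$ were all of $I_i$ then intersecting with $C_{I_i}(\Sigma_i^k)\supseteq I_i^k$ $(k\ne l)$ would contradict $C_{I_i}(\Sigma_i^l\Sigma_i^k)=I_i^l\cap I_i^k$ from Lemma~\ref{Sigmapieces}(iv); consequently $\bigcap_{l=1}^4 I_i^l=\bigcap_l C_{I_i}(\Sigma_i^l)=C_{I_i}(\Sigma_i)=1$.

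For $x\in I_i$ of order $3$ and an index $l$: if $\langle x\rangle\le I_i^l$ then $x$ centralises $\Sigma_i^l$; otherwise $x$ does not centralise $\Sigma_i^l$, and as $\GL_2(2)$ has no order-$3$ element fixing a non-zero vector (the intermediate case being ruled out by $[\overline{\Sigma_i^l},x]=[\overline{\Sigma_i^l},x,x]$ under coprime action) it follows that $x$ is fixed-point-free on $\overline{\Sigma_i^l}$. So $C_{\overline{\Sigma_i}}(x)=\bigoplus_{l:\langle x\rangle\le I_i^l}\overline{\Sigma_i^l}$, and pulling back through $\langle r_i\rangle\le C_{\Sigma_i}(x)$ we get $|C_{\Sigma_i}(x)|=2\cdot 4^{m}$, where $m$ is the number of $l$ with $\langle x\rangle\le I_i^l$. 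Since the central product $\prod_{l:\langle x\rangle\le I_i^l}\Sigma_i^l$ lies in $C_{\Sigma_i}(x)$ and has precisely that order, $C_{\Sigma_i}(x)=\prod_{l:\langle x\rangle\le I_i^l}\Sigma_i^l$. Thus the lemma reduces to recording which of the $I_i^l$ contain $Z$, a plus-type line, a minus-type line, and an intersection $I_i^j\cap I_i^k$.

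I would carry this out for $i=1$; the case $i=2$ is the same after interchanging $\mathcal P$ and $\mathcal M$, and Type DM and Type DP, using $\Sigma_2^1=R_2$. Since $Z\le I_1^1=A_2$ is singular while by Lemma~\ref{IijIik}(i) each $I_1^1\cap I_1^l$ $(l\ge 2)$ belongs to $\mathcal M(I_1)$, we get $Z\le I_1^l$ only for $l=1$, so $C_{\Sigma_1}(Z)=\Sigma_1^1=R_1$. As each $I_1^l$ has Type DM it contains no plus-type line, so for $\langle x\rangle\in\mathcal P(I_1)$ we have $\langle x\rangle\le I_1^l$ for no $l$, whence $C_{\Sigma_1}(x)=\langle r_1\rangle$. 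The remaining --- and, I expect, hardest --- point is that $I_1^j\cap I_1^k\not\le I_1^l$ for $l\notin\{j,k\}$, equivalently that no three of the $I_1^l$ meet in a common line. Using the $2$-transitivity of $F_1$ on $\{I_1^1,\dots,I_1^4\}$ (noted before Lemma~\ref{IijIik}) it is enough to exclude $I_1^1\cap I_1^2\le I_1^3$: pick $\sigma\in F_1$ fixing $I_1^1$ and cyclically permuting $I_1^2,I_1^3,I_1^4$, and set $\langle y\rangle=I_1^1\cap I_1^2$; the assumption gives $\langle y\rangle\le I_1^1\cap I_1^3$, hence $\langle y\rangle=I_1^1\cap I_1^3$ (both being $1$-dimensional), so $\sigma(\langle y\rangle)=\sigma(I_1^1\cap I_1^2)=I_1^1\cap I_1^3=\langle y\rangle$, and then $I_1^1\cap I_1^4=\sigma(I_1^1\cap I_1^3)=\langle y\rangle$; therefore $\langle y\rangle\le\bigcap_{l=1}^4 I_1^l=1$, a contradiction. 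Thus $m=2$ for $\langle y\rangle=I_1^j\cap I_1^k$ and $C_{\Sigma_1}(I_1^j\cap I_1^k)=\Sigma_1^j\Sigma_1^k$, which finishes (i); part (ii) is obtained verbatim with $i=2$.
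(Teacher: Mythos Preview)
Your argument is correct and follows the same skeleton as the paper's: decompose $\Sigma_i$ as the central product $\Sigma_i^1\cdots\Sigma_i^4$, establish $C_{I_i}(\Sigma_i^l)=I_i^l$, and then read off $C_{\Sigma_i}(x)$ from which $I_i^l$ contain $\langle x\rangle$. The paper's proof is considerably terser: it derives $C_{I_1}(\Sigma_1^j)=I_1^j$ directly from the faithfulness of $I_1$ on $\Sigma_1$ (implicitly using that the $\Sigma_1^j$ are $F_1$-conjugate, so if one $C_{I_1}(\Sigma_1^j)$ equals $I_1$ they all do), handles the $\mathcal P(I_1)$ and $Z$ cases, and then simply asserts that (i) holds.

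Your treatment is actually more complete on one point the paper leaves implicit: the claim $C_{\Sigma_1}(I_1^j\cap I_1^k)=\Sigma_1^j\Sigma_1^k$ requires knowing that $I_1^j\cap I_1^k\not\le I_1^l$ for $l\notin\{j,k\}$, and you supply an explicit argument for this via the $3$-cycle in the stabiliser of $I_1^1$ and $\bigcap_l I_1^l=1$. (An alternative bookkeeping argument: each $I_1^l$ has Type DM, so contains exactly three minus-type lines; once the six intersections $I_1^j\cap I_1^k$ are known to be distinct they exhaust $\mathcal M(I_1)$, forcing the three minus lines in $I_1^l$ to be precisely the $I_1^l\cap I_1^m$, $m\ne l$.) Either way, your proof and the paper's are the same in spirit; yours just makes more of the steps visible.
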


\begin{proof} We prove (i) the proof of (ii) being the same.  Let $1 \le j \le 4$.
We know that $\Sigma_1= \Sigma_1^1\Sigma_1^2\Sigma_1^3\Sigma_1^4$. Since $I_1$ acts faithfully on $\Sigma_1$, we have that $C_{I_1}(\Sigma_1^j) = I_1^j$. Thus the elements of $\mathcal P(I_1)$ act non-trivially on each $\Sigma_1^j$ and so $C_{\Sigma_1}(x) =\langle r_1\rangle$ for $\langle x \rangle \in \mathcal P(I_1)$.  Since we know that $Z$ centralizes exactly $R_1= \Sigma_1^1$ on $\Sigma_1$ we now have that (i) holds.
\end{proof}

\section{The structure of $C_G(\rho_1)$}

We continue to use our standard notation.  In this section  we are going to show that $C_G(\rho_1)$  is isomorphic to the corresponding centralizer in $\F_4(2)$. So our aim is to show that  $C_{G}(\rho_1) \cong 3 \times \Sp_6(2)$.  By   Lemma~\ref{eitheror}
we have that $C_G(\rho_1)$ either is as in $\F_4(2)$ or is isomorphic to $3 \times \Aut(\SU_4(2))$. We will show  the latter case yields  a contradiction.

\begin{lemma}\label{uniqueAisig1}
Suppose that $C_G(\rho_i) \cong 3 \times \Aut(\SU_4(2))$. Then $\Sigma_i$ is the unique maximal signalizer for
$I_i^1$ in $K_i$.
\end{lemma}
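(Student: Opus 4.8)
The plan is to work inside $K_i = C_G(r_i)$ and to show that any $3'$-subgroup normalized by $I_i^1$ is forced into $\Sigma_i$. First I would set $C = C_{K_i}(I_i^1)$; from Lemma~\ref{cri} and the hypothesis $E_i^{j,k}\cong\SU_4(2)$ we know $C \approx 3\times 2^{1+4}_+.(\Sym(3)\times\Sym(3))$, so $O_2(C) = \Sigma_i^1 \cong \Q_8$ and $C/\langle\rho_{3-i}\rangle O_2(C)$ is $\Sym(3)\times\Sym(3)$. In particular $O_{2'}(C) = \langle\rho_{3-i}\rangle$ has order $3$. The key preliminary observation is that $\Sigma_i$ itself is a signalizer for $I_i^1$: it is a $2$-group (Lemma~\ref{sigmai}) and $I_i^1 \le I_i$ normalizes each $\Sigma_i^k$ up to permuting them, hence normalizes $\Sigma_i$; actually $I_i^1$ centralizes $\Sigma_i^1 = O_2(C_{K_i}(I_i^1))$ since $I_i^1$ centralizes $I_i^1$, but the relevant point for the maximality claim is just normalization. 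So $\Sigma_i$ is a genuine $3'$-subgroup normalized by $I_i^1$, and the content of the lemma is that it is the \emph{largest} one and that any other is contained in it.

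Next I would take an arbitrary signalizer $Y$ for $I_i^1$ in $K_i$, i.e. a $3'$-subgroup with $I_i^1 \le N_{K_i}(Y)$, and argue that $Y \le \Sigma_i$. The standard device is to use the full elementary abelian $3$-group $I_i^1$ of order $9$: by a coprime action / Bender-type argument, $Y = \langle C_Y(x) \mid 1 \ne x \in I_i^1\rangle$, or more precisely $Y$ is generated by the $C_Y(x)$ as $x$ ranges over a generating set of hyperplanes, so it suffices to control $C_Y(x)$ for each $\langle x\rangle$ of order $3$ in $I_i^1$. For each such $x$, $C_Y(x)$ is a $3'$-subgroup of $C_{K_i}(x)$ normalized by $I_i^1$; the subgroups $\langle x\rangle$ of $I_i^1 = A_2$ are all $G$-conjugate to $Z$ or to $\langle\rho_{3-i}\rangle$ (Lemma~\ref{IijIik}, Lemma~\ref{3classes}), so $C_{K_i}(x)$ is $C_{K_i}(Z) = C_{C_M(Z)}(r_i) = Q_{3-i}R_1R_2I_i$ (Lemma~\ref{CSr}) in the first case, and $C_{K_i}(\rho_{3-i}) = C_{C_G(\rho_{3-i})}(r_i) = E_i^{1,2}\langle r_i\rangle$-related group (Lemma~\ref{eitheror}, Lemma~\ref{cri}) in the second. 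In each of these explicitly known groups the maximal normal (or $I_i^1$-invariant) $3'$-subgroup is visible: inside $Q_{3-i}R_1R_2I_i$ the $3'$-part is $R_1R_2$, and $C_{R_1R_2}(x)$ for $x$ of order $3$ generating a conjugate of $Z$ is contained in $\Sigma_i$ by Lemma~\ref{celts}(i) (namely $C_{\Sigma_i}(Z) = R_i$ and the relevant quaternion factors); inside $C_{K_i}(\rho_{3-i})$, which has shape $3\times 2^{1+4}_+.(\Sym(3)\times\Sym(3))$, the largest $I_i^1$-invariant $3'$-subgroup is $O_2$ of that centralizer, which by Lemma~\ref{Sigmapieces}(iii) equals $\Sigma_i^1\Sigma_i^2 \le \Sigma_i$ (here the $\SU_4(2)$-hypothesis is essential: it gives equality $O_2(C_{K_i}(I_i^j))O_2(C_{K_i}(I_i^k)) = O_2(C_{K_i}(I_i^j\cap I_i^k))$, so there is no room for a bigger $2$-group). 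Assembling, $C_Y(x) \le \Sigma_i$ for every $\langle x\rangle$, hence $Y = \langle C_Y(x)\rangle \le \Sigma_i$.

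Finally I would note that $\Sigma_i$ is indeed a signalizer for $I_i^1$ (shown above), so it is the unique maximal one. The main obstacle I anticipate is the generation step $Y = \langle C_Y(x) : \langle x\rangle \le I_i^1\rangle$: this needs $I_i^1 \cong 3^2$ to act coprimely on $Y$, which is automatic, but one must be slightly careful because $I_i^1$ normalizes $Y$ rather than acting on a direct factor, and because the conclusion $C_Y(x) \le \Sigma_i$ uses that $\Sigma_i$ is itself $I_i^1$-invariant so that $C_Y(x)$ and $\Sigma_i$ live in the same ambient centralizer $C_{K_i}(x)$ where the structure is pinned down. The second delicate point is making sure the $\SU_4(2)$ hypothesis is used precisely where needed — namely to rule out that $C_{K_i}(I_i^j\cap I_i^k)$ contains a $2$-group properly larger than $\Sigma_i^j\Sigma_i^k$; in the $\Sp_6(2)$ case the extra $2^{1+2}$ would break the argument, which is exactly why that case survives and is treated separately later. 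Everything else is a matter of reading off the maximal $3'$-subgroups of the two explicitly determined centralizers and invoking Lemma~\ref{celts} and Lemma~\ref{Sigmapieces}.
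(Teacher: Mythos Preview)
Your overall strategy is exactly the paper's: generate an arbitrary $I_i^1$-signalizer $Y$ by the coprime-action formula $Y=\langle C_Y(x)\mid x\in (I_i^1)^\#\rangle$, then bound each $C_Y(x)$ inside $\Sigma_i$ by splitting into the case $\langle x\rangle=Z$ and the case $\langle x\rangle\in\mathcal M(I_i^1)$ (resp.\ $\mathcal P(I_i^1)$), using the $\SU_4(2)$ hypothesis via Lemma~\ref{Sigmapieces}(iii)(b) for the latter. That is precisely the paper's proof, so the plan is sound.

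There is, however, a genuine gap in your handling of the $Z$-case. You write that ``the $3'$-part is $R_1R_2$'' and then invoke Lemma~\ref{celts}(i). But knowing that a Hall $3'$-subgroup of $C_{K_i}(Z)$ is $R_1R_2$ only gives $C_Y(Z)$ conjugate into $R_1R_2$, not contained in it; and in any case $R_{3-i}\not\le\Sigma_i$, so landing in $R_1R_2$ is not enough. Lemma~\ref{celts}(i) says $C_{\Sigma_i}(Z)=R_i$, which goes the wrong direction for what you need. The correct argument (and the paper's) is: $C_Y(Z)\le M$, so $C_Y(Z)$ normalizes $Q$; since $I_i^1=A_{3-i}\le Q$ normalizes $C_Y(Z)$, we get $[I_i^1,C_Y(Z)]\le Q\cap C_Y(Z)=1$, hence $C_Y(Z)\le C_G(I_i^1)$. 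By Lemma~\ref{Sigmapieces}(ii) this centralizer has shape $3\times 3^{1+2}_+.\SL_2(3)$ with unique (normal) Sylow $2$-subgroup $\Sigma_i^1=R_i$, so $C_Y(Z)\le R_i\le\Sigma_i$. Equivalently, the paper phrases this as $C_Y(Z)\le O_{3'}(M\cap K_i)=R_i$.

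A minor slip: in your opening paragraph you set $C=C_{K_i}(I_i^1)$ and quote Lemma~\ref{cri} for its shape, but Lemma~\ref{cri} describes $C_{K_i}(I_i^j\cap I_i^k)$, not $C_{K_i}(I_i^1)$; the latter is the group of shape $3\times 3^{1+2}_+.\SL_2(3)$ from Lemma~\ref{Sigmapieces}(ii). This does not affect the main argument, but it is worth correcting.
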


\begin{proof} We simplify our notation by assuming that $i=1$. The argument for $i=2$ is the same.
Notice that  $$\{I_1^1 \cap I_1^j\mid 2\le j\le 4\}= \mathcal M(I_i^1) .$$
The only other proper subgroup of $I_1^1$ is $Z$ by Lemma~\ref{IijIik}.  Hence, as $E_1^{1,j} \cong \SU_4(2)$ by assumption,  Lemma~\ref{Sigmapieces} (iii)(b) implies that $$\Sigma_1 \ge  O_2(C_{K_1}(I_1^k\cap I_1^j))= O_{3'}(C_{K_1}(I_1^k\cap I_1^j)).$$

 Suppose that $\Theta$ is a signalizer for $I_1^1$.
 Then $$\Theta= \langle C_\Theta(a)\mid a\in
I_1^{1\#}\rangle.$$
However,  $$C_\Theta (Z)\le O_{3'}(M\cap K_1)= R_1\le \Sigma_1$$
and, for $1<j \le 4$, by Lemma~\ref{cri} and Lemma~\ref{celts}
$$C_{\Theta}(I_1^1\cap I_1^j) \le O_{3'}(C_{K_i}(I_1^1\cap I_1^j))= \Sigma_1^1\Sigma_1^j\le \Sigma_1.$$ Hence $\Theta\le \Sigma_1$.
\end{proof}

The next lemma puts us firmly on the track of $\F_4(2)$ and $\Aut(\F_4(2))$.

\begin{lemma}\label{ItsSp62} We have $C_G(\rho_1) \cong C_G(\rho_2) \cong 3 \times \Sp_6(2)$.
\end{lemma}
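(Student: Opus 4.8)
We have from Lemmas~\ref{eitheror} and \ref{thesame} that $C_G(\rho_1)\cong C_G(\rho_2)$ and that each is isomorphic to $3\times\Aut(\SU_4(2))$ or to $3\times\Sp_6(2)$, so it suffices to eliminate the first alternative. The plan is therefore to assume, for a contradiction, that $C_G(\rho_i)\cong 3\times\Aut(\SU_4(2))$ for $i=1,2$. Under this assumption Lemma~\ref{uniqueAisig1} applies, so $\Sigma_i$ is the unique maximal signalizer for $I_i^1$ in $K_i$; the strategy is first to use this to pin down $K_i$ exactly and then to derive a contradiction.

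The first step is to show that $K_i=N_G(\Sigma_i)$, indeed that $\Sigma_i=O_2(K_i)$. Since $\langle r_i\rangle=Z(\Sigma_i)$ by Lemma~\ref{sigmai}, we have $N_G(\Sigma_i)\le C_G(r_i)=K_i$, so only the reverse inclusion is in question. By the uniqueness of the maximal signalizer, $N_{K_i}(I_i^1)$ normalizes $\Sigma_i$; in particular, as $I_i^1=A_{3-i}$ is normal in $Q_{3-i}$, the group $Q_{3-i}$ normalizes $\Sigma_i$, and since $I_i\le F_i\le N_G(\Sigma_i)$ by the very construction of $\Sigma_i$, the Sylow $3$-subgroup $C_S(r_i)=Q_{3-i}I_i$ of $K_i$ (Lemma~\ref{CSr}) together with $F_i$ lies in $N_G(\Sigma_i)$. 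On the other hand $O_2(K_i)$ is an $I_i^1$-signalizer and so $O_2(K_i)\le\Sigma_i$ by maximality. One now combines these facts --- the presence in $N_G(\Sigma_i)$ of a full Sylow $3$-subgroup of $K_i$ and of the $2$-local $F_i$, the nonexistence of a normal $3'$-subgroup (following the pattern of Lemma~\ref{JSig} inside $K_i$), and the structure of the centralisers $C_{K_i}(I_i^j\cap I_i^k)$ recorded in Lemma~\ref{cri} --- to force $\Sigma_i\unlhd K_i$, whence $\Sigma_i=O_2(K_i)$ and $K_i=N_G(\Sigma_i)$. This is the step I expect to be the main obstacle: a priori $\Sigma_i$ is only known to be the unique maximal $I_i^1$-signalizer, and promoting this to normality in the whole of $K_i$ requires delicately interweaving the signalizer uniqueness with the $2$- and $3$-local information already assembled.

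Next I would identify $K_i/\Sigma_i$. As $\Sigma_i\cong 2^{1+8}_+$ we have $C_{K_i}(\Sigma_i)=Z(\Sigma_i)=\langle r_i\rangle$, so $K_i/\Sigma_i$ acts faithfully on $\Sigma_i/\langle r_i\rangle\cong 2^8$; moreover the image of $C_S(r_i)$ contains the elementary abelian group $I_i\cong 3^3$, which by Lemmas~\ref{Sigmapieces} and \ref{celts} centralises only $\langle r_i\rangle$ in $\Sigma_i$, and by Lemmas~\ref{fusion1} and \ref{I1} has self-centralising image with normaliser inducing $2\times\Sym(4)$. Using that $I_i$ normalises no nontrivial $3'$-subgroup of $K_i$, that the centraliser in $K_i$ of a conjugate of $\langle\rho_{3-i}\rangle$ has the shape given in Lemma~\ref{cri}, and that $\langle\rho_{3-i}\rangle$ is real in $G$, an application of Theorem~\ref{PrinceThm} (with Lemma~\ref{cen3psp43} supplying its hypothesis) to a suitable element of order $3$ of $K_i/\Sigma_i$ yields $K_i/\Sigma_i\cong\Aut(\SU_4(2))$ or $\Sp_6(2)$, with $\Sigma_i/\langle r_i\rangle$ the natural, respectively spin, module by Lemma~\ref{modfacts}.

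Finally, I would exploit the embedding of $E(C_G(\rho_i))\cong\SU_4(2)$ in $K_i$: since $E(C_G(\rho_i))\cap\Sigma_i=O_2(\SU_4(2))=1$, it embeds in $K_i/\Sigma_i$, and restricting the $8$-dimensional module $\Sigma_i/\langle r_i\rangle$ to this $\SU_4(2)$ shows $C_{\Sigma_i}(E(C_G(\rho_i)))=\langle r_i\rangle$, hence $C_{K_i}(E(C_G(\rho_i)))=\langle r_i\rangle$. Because $\rho_i$ also centralises $E(C_G(\rho_i))$ but $\langle\rho_i\rangle\not\le K_i$, the involution $r_i$ then has centraliser of order $2$ in $C_G(E(C_G(\rho_i)))$; a Burnside normal-$p$-complement argument forces $C_G(E(C_G(\rho_i)))$ to be generalised dihedral over an abelian group of odd order, and since that odd part lies in $C_G(\rho_i)=\langle\rho_i\rangle\times\Aut(\SU_4(2))$ and centralises $\SU_4(2)$, it equals $\langle\rho_i\rangle$; thus $C_G(E(C_G(\rho_i)))\cong\Sym(3)$ and $N_G(E(C_G(\rho_i)))$ is completely determined. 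Tracking the resulting $2$-local structure --- in particular producing an elementary abelian subgroup $\bar Q$ of order $2^8$ whose normaliser in $G$ has the shape required by Theorem~\ref{closed} --- then yields $G=N_G(\bar Q)O_{2'}(G)$, which is incompatible with $F^*(C_G(Z))$ being extraspecial of order $3^5$ and so contradicts the hypothesis of Theorem~\ref{MT}. This contradiction completes the proof.
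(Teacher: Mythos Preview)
Your overall strategy---assume $C_G(\rho_i)\cong 3\times\Aut(\SU_4(2))$, determine $K_i$, derive a contradiction---matches the paper's, but two steps have genuine gaps.

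\textbf{The step $K_i=N_G(\Sigma_i)$.} You correctly flag this as the main obstacle, but your argument is only a gesture (``one now combines these facts\ldots to force $\Sigma_i\unlhd K_i$''). The paper proceeds in the \emph{opposite order}: it first works inside $N_G(\Sigma_i)$, observing that $\Sigma_i$ is self-centralising (since $C_G(\Sigma_i)$ is an $I_i^1$-signalizer) and that $\langle F_i,C_M(r_i)\rangle\le N_G(\Sigma_i)$ (since $\Sigma_i$ is the unique maximal signalizer for both $I_i$ and $Q_{3-i}$), and applies Prince's Theorem~\ref{PrinceThm} to obtain $N_G(\Sigma_i)/\Sigma_i\cong\Aut(\SU_4(2))$ or $\Sp_6(2)$. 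Only \emph{then} is Theorem~\ref{closed} invoked, and it is applied to $K_i/\langle r_i\rangle$ (not to $G$) with $H=N_G(\Sigma_i)/\langle r_i\rangle$ and $Q=\Sigma_i/\langle r_i\rangle$. Hypotheses (i)--(iii) are immediate; for (iv) one checks via Lemma~\ref{celts} that any $3$-element $d$ with $C_{\Sigma_i}(d)>\langle r_i\rangle$ is conjugate to an element of $Z$ or of $\langle\rho_{3-i}\rangle$, and in either case $C_{K_i}(d)\le N_G(\Sigma_i)$, so the unique-signalizer property pins $\Sigma_i^h=\Sigma_i$. You mention Theorem~\ref{closed} only at the very end, applied to $G$ with an unspecified $\bar Q$; that is not where it belongs.

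\textbf{The final contradiction.} Your proposed endgame---analysing $C_G(E(C_G(\rho_i)))$, deducing it is $\Sym(3)$, and then somehow producing a $\bar Q$ of order $2^8$ to which Theorem~\ref{closed} applies---is speculative and does not lead anywhere concrete. The paper's contradiction is entirely different and exploits the \emph{two} involutions $r_1,r_2$ simultaneously. First, Lemma~\ref{crossover} rules out $K_1/\Sigma_1\cong\Aut(\SU_4(2))$: for $\tilde\rho_1\in\mathcal P(I_1)$ the centralizer $C_{K_1}(\tilde\rho_1)$ involves $\Sp_4(2)'$, but in $\Aut(\SU_4(2))$ the relevant $3$-centralizer has shape $3^3.\Dih(8)$ and is soluble. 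Hence $K_i/\Sigma_i\cong\Sp_6(2)$ with $\Sigma_i/\langle r_i\rangle$ the spin module for both $i$. Now $r_2$ acts on $\Sigma_1$ as an involution of type $a_2$ (it centralises $I_1\cap I_2$ of Type~N+), and since the $\mathcal P$-elements of $I_1\cap I_2$ act fixed-point-freely on $\Sigma_1/\langle r_1\rangle$ one gets $|C_{\Sigma_1}(r_2)|=2^7$. As $C_{\Sigma_1}(r_2)$ and $C_{\Sigma_2}(r_1)$ normalise each other, $[C_{\Sigma_1}(r_2),C_{\Sigma_2}(r_1)]\le\Sigma_1\cap\Sigma_2$; an element of $\mathcal M(I_1\cap I_2)$ centralising $\Sigma_1\cap\Sigma_2$ then forces $\Sigma_1\cap\Sigma_2=1$ by Lemma~\ref{celts}. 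Thus $C_{\Sigma_1}(r_2)$, containing an extraspecial group of order $2^7$, injects into $K_2/\Sigma_2\cong\Sp_6(2)$, contradicting Lemma~\ref{sp62facts}.
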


\begin{proof} Suppose that the lemma is false. Then by Lemmas~\ref{eitheror} and \ref{thesame}
$$C_G(\rho_1) \cong C_G(\rho_2) \cong 3 \times \Aut(\SU_4(2)).$$
We claim that, for $i=1,2$, $\Sigma_i$ is
self-centralizing in $K_i$. Let $W_i= C_G(\Sigma_i)$. Then $W_i \le K_i$ and, as $C_S(r_i) \in \Syl_3(K_i)$ by Lemma~\ref{CSr} and since this group acts faithfully on $\Sigma_i$ by Lemma~\ref{sigmai}, we have that $W_i$ is a $3'$-group which is normalized by $I_i^1$.  By Lemma~\ref{uniqueAisig1}, $\Sigma_i$ is the
unique  maximal signalizer for $I_i^1$ and hence $\Sigma_i \ge W_i$.

Since $\Sigma_i$ is the unique maximal
signalizer for $I_i^1$ in $K_i$ it is also the unique maximal signalizer of $Q_{3-i}\ge I_i^1$ and $I_i\ge I_i^1$ in $K_i$. It
follows that $N_G(\Sigma_i) \ge \langle F_i, C_M(r_i)\rangle $ as $Q_{3-i} $ is a normal subgroup of $C_M(r_i)$.
Now $$C_M(r_i)\Sigma_i/\Sigma_i = I_iQ_{3-i}R_{3-i}\langle f \rangle \Sigma_i/\Sigma_i$$  as $R_i \le \Sigma_i$. We now deduce  $C_{C_M(Z)}(r_i)\Sigma_i/\Sigma_i $ is isomorphic to a $3$-centralizer in $\PSp_4(3)$. Furthermore, as
$\Sigma_i$ is the unique maximal signalizer for $I_i$ in $K_i$, we have that $I_i$ does not normalize any
non-trivial $3'$-subgroup of $N_G(\Sigma_i)/\Sigma_i$ and $f$ inverts $Z$. Therefore, since $F_i \le N_G(\Sigma_i)$,  Prince's
Theorem~\ref{PrinceThm} yields  $$N_G(\Sigma_i)/\Sigma_i \cong \Aut(\SU_4(2))\text{ or } \Sp_6(2).$$
Observe that $N_G(\Sigma_i) \ge \langle F_i,C_M(r_i)\rangle \ge E(C_G(\rho_{i}))$.

We claim  $N_G(\Sigma_i) = K_i$.  To prove this we intend to apply Theorem~\ref{closed} to $K_i/\langle r_i\rangle$.  We have already verified
hypotheses (i) and (ii) of that theorem.

As $N_G(\Sigma_i)/\Sigma_i \cong \Aut(\SU_4(2))\text{ or } \Sp_6(2)$, every element of $C_S(r_i)\Sigma_i/\Sigma_i$ is $N_G(\Sigma_i)/\Sigma_i$-conjugate to
an element of $I_i \Sigma_i/\Sigma_i= J(C_S(r_i))\Sigma_i/\Sigma_i$ the Thompson subgroup of $C_S(r_i)\Sigma_i/\Sigma_i$. Since $F_i$ controls fusion in $I_i$ by Lemma~\ref{fusion1}, we also have
hypothesis (iii) of Theorem~\ref{closed}.

Again to simplify notation, assume that $i=1$.
Suppose that $d $ is an element of order $3$ with $ d\in N_G(\Sigma_1)  \cap N_G(\Sigma_1)^h $ for some $h \in K_1$ is such that
$C_{\Sigma_1}(d) \neq \langle r_1\rangle$.  Then, by Lemma~\ref{celts} (i), we may suppose that $\langle d \rangle = Z$ or $\langle d \rangle = I_1^1\cap I_1^2= \langle \rho_2\rangle$. Then, as $N_{K_1}(Z)=C_{M}(r_1) \le N_G(\Sigma_1)$ and $C_{K_1}(\rho_2)=C_{C_G(\rho_{2})}(r_1)
\le N_G(\Sigma_1)$, we deduce   $$C_{K_1}(d) \le N_G(\Sigma_1).$$ On the other hand,
$C_{{N_G(\Sigma_1)}^h}(d)$ contains a $K_1$-conjugate $X$ of $I_1$. Since $X \le C_{K_1}(d) \le N_G(\Sigma_1)$, we may suppose that ${N_G(\Sigma_1)}
\cap {N_G(\Sigma_1)}^h \ge I_1$.
But then $\Sigma_1 = \Sigma_1^h$ and $N_G(\Sigma_1)=
{N_G(\Sigma_1)}^h$ as $\Sigma_1$ is the unique maximal signalizer for $I_1$ in $K_1$ by Lemma~\ref{uniqueAisig1}.    Thus the hypothesis of Theorem~\ref{closed} fulfilled and therefore
$K_1=N_G(\Sigma_1)$.

Suppose that $N_G(\Sigma_1)/\Sigma_1\cong \Aut(\SU_4(2))$.  Let $\wt \rho_1 \in \mathcal P(I_1)$. Then, as $|\mathcal P(I_1)|=3$,
$$C_{N_G(\Sigma_1)/\Sigma_1} (\wt {\rho_1}\Sigma_1) \cong 3^3.\Dih(8) $$ by Lemma~\ref{type1} (v). On the other hand, by Lemma~\ref{crossover} this group is non-soluble which is a contradiction. We
conclude that $N_G(\Sigma_1)/\Sigma_1 \cong \Sp_6(2)$. Repeating the arguments for $N_G(\Sigma_2)$ yields
$N_G(\Sigma_2)/\Sigma_2 \cong \Sp_6(2)$. Furthermore, the elements from $\mathcal P(I_1)$ act fixed point freely on $\Sigma_1/\langle r_1\rangle$ and the elements of $\mathcal M(I_2)$ act fixed point freely on $\Sigma_2/\langle r_2\rangle$,  in both cases, $i=1,2$, $\Sigma_i/\langle r_i\rangle$ is the spin module for
$N_G(\Sigma_i)/\Sigma_i$.

Since $r_2$ commutes with $I_1\cap I_2 \le N_G(\Sigma_1)$ which has Type N+ by Lemma~\ref{I1capI2},
Table~\ref{Table1} indicates that $r_2$ acts as a unitary transvection on $\Sigma_1/\langle r_1\rangle$.
Therefore $|C_{\Sigma_1/\langle r_1\rangle }(r_2)|= 2^6$ and
$$2^6 \le |C_{\Sigma_1}(r_2)|\le 2^7.$$ Since $\langle r_1, r_2 \rangle$ is centralized by $I_1\cap I_2$,
$C_{\Sigma_1}(r_2)$ is $(I_1\cap I_2)$-invariant. Because  the elements of $\mathcal P(I_1\cap I_2)$ act fixed
point freely on $\Sigma_1/\langle r_1\rangle$ (see Lemma~\ref{sp62spin}) we infer that
$|C_{\Sigma_1}(r_2)|=2^7$. Now, as $K_i=N_G(\Sigma_i)$ for $i=1,2$,  $C_{\Sigma_1}(r_2)$ normalizes
$C_{\Sigma_2}(r_1)$ and vice versa, and so
$$[C_{\Sigma_1}(r_2), C_{\Sigma_2}(r_1)] \le \Sigma_1 \cap \Sigma_2.$$
Since $r_1 \not \in \Sigma_2$ and $r_2 \not \in \Sigma_1$, $\Sigma_1 \cap \Sigma_2$ is abelian and is
centralized by $C_{\Sigma_1}(r_2)C_{\Sigma_2}(r_1)$. In particular, $\Sigma_1 \cap \Sigma_2 \le Z(C_{\Sigma_1}(r_2))$. Thus, as
 $|C_{\Sigma_1}(r_2)|= 2^7$ and $\Sigma_1$ is
extraspecial it follows that  $\Sigma_1 \cap \Sigma_2$ has order at most $2^2$ as $r_1 \not \in \Sigma_2$. We have that $I_1 \cap I_2$ acts
on $\Sigma_1 \cap \Sigma_2$. Since $|I_1 \cap I_2|= 3^2$, there is $w \in C_{I_1\cap I_2}(\Sigma_1\cap
\Sigma_2)^\#$. Now $(\Sigma_1 \cap \Sigma_2)\langle r_1 \rangle$ is elementary abelian. Since  for $a \in \mathcal
S(I_1\cap I_2)$ have $C_{\Sigma_1}(a) \cong \Q_8$ and $a \in \mathcal P(I_1 \cap I_2)$ have $C_{\Sigma_1}(a)=
\langle r_1 \rangle$, we must have $\langle w \rangle \in \mathcal M(I_1\cap I_2)$. But then $\Sigma_1\cap \Sigma_2
\le C_{\Sigma_2}(w)=1$ by Lemma~\ref{celts}. This means that $\Sigma_1 \cap \Sigma_2=1$ which then forces  $[C_{\Sigma_1}(r_2),
C_{\Sigma_2}(r_1)]=1 $ and Lemma~\ref{sp62facts} (iv) provides a contradiction.\end{proof}

\section{Some subgroups in  the centralizer of the involutions $r_1$ and $r_2$}

In this section, we finally  construct $O_2(K_i)$ where $K_i= C_G(r_i)$. Recall from Definition~\ref{F4cent},  we expect $O_2(K_i) $ to be a product of an elementary abelian group of order $2^7$ by an extraspecial group of order $2^9$. We have already located the extraspecial group $\Sigma_i$. In this section we uncover the elementary abelian group. We consider the situation for $K_1$. In the previous section we proved that $C_G(\rho_2) \cong  3 \times \Sp_6(2)$. With this additional information  we  study $C_{K_1}(\rho_2)$. This group has shape $3 \times 2^{1+2+4}.(\Sym(3) \times \Sym(3))$. For us it is important that $Z(O_2(C_{K_1}(\rho_2) ))$ is elementary abelian of order 8. Furthermore $I_1=C_J(r_1)$ normalizes this group. This time there are six conjugates of this group under the action $C_L(r_1)$ and we define a group $\Upsilon_1$ generated by these six conjugates. We  show that $\Upsilon_1$ is elementary abelian of order $2^7$ and centralizes $\Sigma_1$, the extraspecial group found earlier. Hence the product of both gives a $2$-group $\Gamma_1$ of order $2^{15}$, which is in fact isomorphic to the corresponding group in $\F_4(2)$.  Furthermore we show that $N_G(\Gamma_1)/\Gamma_1 \cong \Sp_6(2)$ and so $N_G(\Gamma_1)$  is similar to a $2$-centralizer in $\F_4(2)$. In the next section we show $K_1 = N_G(\Gamma_1)$.

We use our, by now, standard  notation.  In particular recall the definition of $\Sigma_i$ from
\ref{sigma} and $I_i^j$ the conjugates of $A_{3-i}$ under $F_i=C_L(r_i)$. Our first goal is to construct a signalizer
for $I_i^1$, $i=1,2$, which contains $\Sigma_i$ properly.  So, for $1 \le j<k\le 4$, we define
$$\Theta_i^{j,k}= Z(O_2(C_{K_i}(I_i^j \cap I_i^k)))$$ and put $$\Upsilon_i = \langle \Theta_{i}^{j,k}\mid  1 \le
j<k\le 4\rangle.$$ We will shortly show that $\Upsilon_i$ is elementary abelian group of order $2^7$.

As $C_G(I_i^j\cap I_i^k) \cong 3 \times \Sp_6(2)$, Lemma~\ref{cri} yields
$$C_{K_i}(I_i^j\cap I_i^k)\approx 2^{1+2+4}.(\Sym(3) \times \Sym(3)).$$
Hence, by Lemmas~\ref{sp62line} (iii) and (iv) and \ref{Sigmapieces}(iii), $\Theta_{i}^{j,k} $ is elementary abelian of order $2^3$ and $$O_2(C_{K_i}(I_i^j\cap I_i^k))= \Sigma_i^{j}\Sigma_i^k\Theta_{i}^{j,k}.$$ We record this latter equality.

\begin{lemma}\label{O2} For $i= 1, 2$ and $1 \le j < k \le 4$, $O_2(C_{K_i}(I_i^j\cap I_i^k))= \Sigma_i^{j}\Sigma_i^k\Theta_{i}^{j,k}.$ 
\end{lemma}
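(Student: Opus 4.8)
The plan is to read off the structure of $C_{K_i}(I_i^j\cap I_i^k)$ directly from Lemma~\ref{cri} together with the known structure of the centralizer of a $3$-central element in $\Sp_6(2)$, and then identify the three standard normal subgroups $\Sigma_i^j$, $\Sigma_i^k$, $\Theta_i^{j,k}$ inside $O_2(C_{K_i}(I_i^j\cap I_i^k))$. Since we now know (Lemma~\ref{ItsSp62}) that $C_G(I_i^j\cap I_i^k)\cong 3\times \Sp_6(2)$, Lemma~\ref{cri} gives $C_{K_i}(I_i^j\cap I_i^k)\approx 3\times 2^{1+2+4}.(\Sym(3)\times\Sym(3))$, so $N:=O_2(C_{K_i}(I_i^j\cap I_i^k))$ has order $2^{7}$ with $N'=\langle r_i\rangle$ of order $2$ (it equals the $O_2$ of the centralizer of a transvection in $\Sp_6(2)$, which is the group called $Q$ in Lemma~\ref{sp62line}, of order $2^7$). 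The element $I_i^j\cap I_i^k$ plays the role of $\tau_3$ acting on the natural module in Lemma~\ref{sp62line}: indeed by Lemma~\ref{Sigmapieces}(i) it is $3$-central in $E_i^{j,k}$, and inside $N$ the two conjugates $I_i^j$, $I_i^k$ of $A_{3-i}$ that meet in $I_i^j\cap I_i^k$ give two subgroups of order $3$ playing the role of $Z_1$, $Z_2$ in Lemma~\ref{sp62line}(ii).

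First I would apply Lemma~\ref{sp62line}(iii) inside $E_i^{j,k}\cong\Sp_6(2)$ (with $P=X_{13}$, $Q=N$): the two conjugates of $\langle\tau_3\rangle$ have quaternion centralizers $\Q_8$ in $N$ which commute and generate an extraspecial subgroup of order $2^5$, and these centralizers are precisely $\Sigma_i^j=O_2(C_{K_i}(I_i^j))$ and $\Sigma_i^k=O_2(C_{K_i}(I_i^k))$ by definition and by Lemma~\ref{Sigmapieces}(iii)(b)–(iv). Next, by Lemma~\ref{sp62line}(iv), $C_N(I_i^j\cap I_i^k)$ (this is $C_Q(\tau_1)$ in the notation there, since $\tau_1=\tau_1\tau_2^*$ in the relevant labelling) equals $Z(N)$, which has order $2^3$ and is elementary abelian by Lemma~\ref{sp62line}(iii)/(iv); but $Z(N)=Z(O_2(C_{K_i}(I_i^j\cap I_i^k)))=\Theta_i^{j,k}$ by definition. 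Counting orders, $|\Sigma_i^j\Sigma_i^k|=2^5$ (extraspecial, intersecting $\Theta_i^{j,k}$ in $\langle r_i\rangle$), and $|\Theta_i^{j,k}|=2^3$, so $|\Sigma_i^j\Sigma_i^k\Theta_i^{j,k}|=2^5\cdot 2^3/2=2^7=|N|$, forcing equality $N=\Sigma_i^j\Sigma_i^k\Theta_i^{j,k}$, which is the claim.

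The only mildly delicate point is making sure the abstract labels ``$\tau_1$, $\tau_3$'' and ``$Z_1$, $Z_2$'' in Lemma~\ref{sp62line} are correctly matched to our concrete subgroups $I_i^j\cap I_i^k$, $I_i^j$, $I_i^k$; this is settled by the dimension bookkeeping already available — $I_i^j\cap I_i^k$ is $3$-central (Lemma~\ref{Sigmapieces}(i)), hence corresponds to the ``$\tau_3$''-class whose centralizer $N$ in $X_{13}$ is the $2^7$ under consideration, and the two $\Q_8$'s produced by Lemma~\ref{sp62line}(iii) are then exactly the $\Sigma_i^\bullet$ by the uniqueness statement Lemma~\ref{sp62line}(v). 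I do not expect a serious obstacle here: once the dictionary with Lemma~\ref{sp62line} is in place, Lemma~\ref{cri} and an order count finish it, and the proof in the paper is correspondingly short, just invoking Lemmas~\ref{sp62line}(iii),(iv) and \ref{Sigmapieces}(iii).
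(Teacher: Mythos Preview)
Your overall plan and the final order count are exactly what the paper does: use Lemma~\ref{cri} (in the $\Sp_6(2)$ case furnished by Lemma~\ref{ItsSp62}) to see $|N|=2^7$ with $N'=\langle r_i\rangle$, use Lemma~\ref{Sigmapieces}(iii)(b) to place the extraspecial $\Sigma_i^j\Sigma_i^k$ of order $2^5$ inside $N$, observe $\Theta_i^{j,k}=Z(N)$ has order $2^3$ and meets $\Sigma_i^j\Sigma_i^k$ only in $\langle r_i\rangle$, and conclude by comparing orders. That part is fine and matches the paper's one-line justification via Lemmas~\ref{sp62line}(iii),(iv) and \ref{Sigmapieces}(iii).

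However, your ``dictionary'' with Lemma~\ref{sp62line} is scrambled, and one of the intermediate assertions is actually false. The subgroup $I_i^j\cap I_i^k$ is the direct factor of order $3$ in $C_G(I_i^j\cap I_i^k)=(I_i^j\cap I_i^k)\times E_i^{j,k}$; it centralizes $E_i^{j,k}$ and in particular centralizes all of $N$. So $C_N(I_i^j\cap I_i^k)=N$, not $Z(N)$, and $I_i^j\cap I_i^k$ does not play the role of any $\tau_m$ inside $E_i^{j,k}$. The elements of order $3$ that do act on $N$ form $T=I_i\cap E_i^{j,k}$ (of order $9$), and the $\tau_3$-conjugates $Z_1,Z_2$ of Lemma~\ref{sp62line} are $I_i^j\cap E_i^{j,k}$ and $I_i^k\cap E_i^{j,k}$ (these are the $3$-central subgroups named in Lemma~\ref{Sigmapieces}(i)), not $I_i^j,I_i^k$ themselves (which have order $9$). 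With this corrected identification, Lemma~\ref{sp62line}(iii) gives $C_N(Z_1)=\Sigma_i^j$, $C_N(Z_2)=\Sigma_i^k$ (each $\cong\Q_8$, commuting), and Lemma~\ref{sp62line}(iv) gives $Z(N)=C_N(\tau_1)$ of order $2^3$ for the $\tau_1$-conjugate in $T$; your order count then goes through unchanged. The parenthetical ``$\tau_1=\tau_1\tau_2^*$'' is a slip and should be dropped.
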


\begin{lemma} \label{UPSstruct}Suppose that $i=1,2$ and $  \{j, k,l,m\} =\{1,2,3,4\}$. Then \begin{enumerate}
\item $\Theta_i^{j,k}$ is elementary abelian of order $2^3$, contains $r_i$ and a $G$-conjugate $s_{3-i}$ of
$r_{3-i}$ with $s_{3-i}\neq r_i$.
\item $\Theta_i^{j,k}= \Theta_i^{l,m}$.
\item $\Upsilon_i$ centralizes $\Sigma_i$.
\item $\Theta_i^{j,k} \Theta_i^{k,l}  $ is elementary abelian of order $2^5$.
\item $\Upsilon_i$ is elementary abelian of order $2^7$ and is normalized by $I_i$.\end{enumerate}
\end{lemma}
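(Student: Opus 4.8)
The plan is to carry out all the analysis inside the groups $C_G(I_i^j\cap I_i^k)\cong 3\times\Sp_6(2)$ provided by Lemma~\ref{ItsSp62}. Because $r_i$ acts on $E_i^{j,k}$ as an involution of type $a_2$ (as noted before Lemma~\ref{cri}), we have $C_{K_i}(I_i^j\cap I_i^k)=\langle I_i^j\cap I_i^k\rangle\times C_{E_i^{j,k}}(r_i)$ with $C_{E_i^{j,k}}(r_i)\cong C_{\Sp_6(2)}(a_2)$, whose $O_2$ is the group called $Q$ in Lemma~\ref{sp62line}. Thus $\Theta_i^{j,k}=Z(Q)$ (elementary abelian of order $2^3$), $\langle r_i\rangle=Q'=O_2(C_{K_i}(I_i^j\cap I_i^k))'$, and, combining Lemma~\ref{O2} with Lemma~\ref{sp62line}(iii), $O_2(C_{K_i}(I_i^j\cap I_i^k))=\Sigma_i^j\Sigma_i^k\Theta_i^{j,k}$ where $\Sigma_i^j\Sigma_i^k$ is the central product of the two commuting quaternion subgroups of $Q$ appearing in Lemma~\ref{sp62line}(iii), so $\Sigma_i^j\Sigma_i^k\cong 2^{1+4}_+$ and $\Sigma_i^j\Sigma_i^k\cap\Theta_i^{j,k}=\langle r_i\rangle$. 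This already gives the first assertion of (i) except for $s_{3-i}$.

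For the rest of (i): $r_i\in Z(K_i)$ centralizes $O_2(C_{K_i}(I_i^j\cap I_i^k))$ and $r_i=Q'$ lies in it, so $r_i\in Z(O_2)=\Theta_i^{j,k}$. For $s_{3-i}$ I would invoke Lemma~\ref{sp62line}(vi) (in fact its proof), which exhibits inside $Z(Q)$ a transvection of $E_i^{j,k}$, i.e. an involution $s\neq r_i$ acting on $E_i^{j,k}$ with type $b_1$; put $s_{3-i}:=s$. Since $I_i^j\cap I_i^k$ is $G$-conjugate to $\langle\rho_{3-i}\rangle$, $s_{3-i}$ acts as a $b_1$-involution on $E(C_G(d))$ for some $G$-conjugate $d$ of $\rho_{3-i}$; by Lemma~\ref{crossover} this matches how $r_{3-i}$ acts on $E(C_G(\widetilde\rho_{3-i}))$ for a suitable conjugate $\widetilde\rho_{3-i}$ of $\rho_{3-i}$, and I would close the identification $s_{3-i}\sim_G r_{3-i}$ using that $L$ controls $G$-fusion of order-$3$ elements of $J$ (Lemma~\ref{NJ}(ii)) together with the $3$-local data of Lemma~\ref{sp62facts}(vi) and Lemma~\ref{fusion1}. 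I expect this fusion step to be the main obstacle of the whole lemma.

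For (ii) I would first show that, for $\{j,k,l,m\}=\{1,2,3,4\}$, the group $\Theta_i^{j,k}$ centralizes $I_i^l\cap I_i^m$: the subgroup $I_i^j\cap I_i^k$ centralizes $\Theta_i^{j,k}$, and since the image of $I_i/(I_i^j\cap I_i^k)\cong 3^2$ in $\GL_2(2)\cong\Sym(3)$ acting on $\Theta_i^{j,k}/\langle r_i\rangle$ is a $3$-group, $C_{I_i}(\Theta_i^{j,k})$ is a hyperplane of $I_i$, which by the symmetry of the four planes $I_i^a$ must contain $I_i^l\cap I_i^m$. Then, working inside $C_{K_i}(\langle I_i^j\cap I_i^k,\,I_i^l\cap I_i^m\rangle)$ and comparing $O_2(C_{K_i}(I_i^j\cap I_i^k))$ with $O_2(C_{K_i}(I_i^l\cap I_i^m))=\Sigma_i^l\Sigma_i^m\Theta_i^{l,m}$, one sees $\Theta_i^{j,k}\le O_2(C_{K_i}(I_i^l\cap I_i^m))$; as $\Theta_i^{j,k}$ and $\Theta_i^{l,m}=Z(O_2(C_{K_i}(I_i^l\cap I_i^m)))$ are both elementary abelian of order $2^3$ and the latter is the centre of that $O_2$, the containment forces $\Theta_i^{j,k}=\Theta_i^{l,m}$. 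Now (iii) is immediate: $\Theta_i^{j,k}=\Theta_i^{l,m}$ centralizes $\Sigma_i^j\Sigma_i^k$ and $\Sigma_i^l\Sigma_i^m$, hence all of $\Sigma_i$, so $\Upsilon_i$ centralizes $\Sigma_i$ and $\Upsilon_i\cap\Sigma_i=\langle r_i\rangle$.

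Finally (iv) and (v). By (ii) the six $\Theta_i^{j,k}$ reduce to $\Theta_i^{1,2}=\Theta_i^{3,4}$, $\Theta_i^{1,3}=\Theta_i^{2,4}$, $\Theta_i^{1,4}=\Theta_i^{2,3}$; these pairwise intersect in $\langle r_i\rangle$ (each is centralized by a different hyperplane of $I_i$, while a larger common part would be centralized by all of $I_i$, contrary to $C_{\Theta_i^{j,k}}(I_i)=\langle r_i\rangle$) and pairwise commute (by the argument of (ii), $\Theta_i^{1,2}\le C_{K_i}(I_i^1\cap I_i^3)$, so $\Theta_i^{1,2}\Theta_i^{1,3}\le O_2(C_{K_i}(I_i^1\cap I_i^3))$ and $[\Theta_i^{1,2},\Theta_i^{1,3}]\le\Theta_i^{1,2}\cap\Theta_i^{1,3}=\langle r_i\rangle$, which an elementary abelian group forces to be trivial). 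Hence $\Theta_i^{j,k}\Theta_i^{k,l}$ is elementary abelian of order $2^5$, which is (iv), and $\Upsilon_i=\Theta_i^{1,2}\Theta_i^{1,3}\Theta_i^{1,4}$ is elementary abelian; a dimension count — the three factors being independent modulo $\langle r_i\rangle$, equivalently $\Upsilon_i=C_{K_i}(\Sigma_i)$, using that a Sylow $3$-subgroup of $K_i$ acts faithfully on $\Sigma_i$ by Lemma~\ref{sigmai} — yields $|\Upsilon_i|=2^7$. That $\Upsilon_i$ is normalized by $I_i$ is clear, since $I_i$ centralizes each $I_i^j\cap I_i^k\le I_i$ and hence normalizes each $C_{K_i}(I_i^j\cap I_i^k)$ and so each $\Theta_i^{j,k}$.
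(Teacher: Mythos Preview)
Your argument for (ii) has a gap, and your argument for (iv) contains a genuine error.

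For (ii), you correctly observe that $C_{I_i}(\Theta_i^{j,k})$ has index at most $3$ in $I_i$, and in fact it is a hyperplane of $I_i$ containing $I_i^j\cap I_i^k$. But there are four such hyperplanes, and the assertion that ``by the symmetry of the four planes $I_i^a$'' this hyperplane must contain $I_i^l\cap I_i^m$ is not a proof. The paper pins this down concretely: writing $X_i^{j,k}=C_{I_i\cap E_i^{j,k}}(\Theta_i^{j,k})$, one computes $C_{\Sigma_i}(X_i^{j,k})=\Sigma_i^l\Sigma_i^m$ via Lemma~\ref{celts} (using $|C_{\Sigma_i}(X_i^{j,k})|=2^5$ and $[\Sigma_i^j\Sigma_i^k,X_i^{j,k}]=\Sigma_i^j\Sigma_i^k$), and then Lemma~\ref{celts} again forces $X_i^{j,k}=I_i^l\cap I_i^m$. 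Once this identification is made, $\Theta_i^{j,k}$ is $I_i$-invariant, lies in $O_2(C_{K_i}(I_i^l\cap I_i^m))$, and Lemma~\ref{sp62line}(v) then forces $\Theta_i^{j,k}=\Theta_i^{l,m}$.

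For (iv), your claim ``$\Theta_i^{1,2}\le C_{K_i}(I_i^1\cap I_i^3)$'' is false. By what you have just established in (ii), $C_{I_i}(\Theta_i^{1,2})=(I_i^1\cap I_i^2)(I_i^3\cap I_i^4)$ is a Type N$-$ subspace whose two $\mathcal M$-points are $I_i^1\cap I_i^2$ and $I_i^3\cap I_i^4$; it does \emph{not} contain $I_i^1\cap I_i^3$. So you cannot place $\Theta_i^{1,2}$ inside $O_2(C_{K_i}(I_i^1\cap I_i^3))$, and even if you could, having $[\Theta_i^{1,2},\Theta_i^{1,3}]\le\langle r_i\rangle$ would not by itself force the product to be abelian. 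The paper's route is different: the two Type N$-$ centralizing hyperplanes $C_{I_i}(\Theta_i^{j,k})$ and $C_{I_i}(\Theta_i^{k,l})$ meet in a $\mathcal P$-point $Y$ of $I_i$, and one works inside $C_G(Y)\cong 3\times\Sp_6(2)$. There, by Lemma~\ref{crossover}, $r_i$ is of type $b_1$, so $O_2(C_{E(C_G(Y))}(r_i))$ is elementary abelian of order $2^5$, and $\Theta_i^{j,k}\Theta_i^{k,l}$ is contained in it. This is the missing idea.
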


\begin{proof}  To reduce the notational complexity of our argument we present the proof for $i=1$ the proof when $i=2$ is the same but we have to be careful when following the members of $\mathcal M(J)$ and $\mathcal P(J)$ in the arguments.

 By definition
$$\Theta_1^{j,k}=Z(O_2(C_{K_1}(I_1^j \cap I_1^k))).$$ We know  $I_1^j\cap I_1^k \in \mathcal M(J)$
from Lemma~\ref{IijIik} and we know  $C_{K_1}(I_1^j\cap I_1^k)\cap E_1^{j,k}$ is a line stabiliser in the
natural symplectic representation of $E_1^{j,k} \cong \Sp_6(2)$. Thus $\Theta_1^{j,k}$ is elementary abelian of
order $2^3$ by Lemma~\ref{sp62line} and of course $\Theta_1^{j,k}$ contains $r_1$ and, by Lemma~\ref{crossover}, $r_2$ is a 2-central involution in $E_1^{jk}$ and so $\Theta_1^{j,k}$
also contains a conjugate of $r_2$. This proves (i).

Now $J \cap E_1^{j,k}$ centralizes a conjugate of $r_{2}$ and is thus
$G$-conjugate to $I_2$. It follows from Lemma~\ref{type1} that $|\mathcal S(J \cap E_1^{j,k})|=4$, $|\mathcal P(J \cap
E_1^{j,k})|= 6$ and $|\mathcal M(J \cap E_1^{j,k})|=3$. Now using Lemma~\ref{sp62line} (iv), we have
$$X_1^{j,k}=C_{I_1 \cap E_1^{j,k}}(\Theta_1^{j,k}) \in \mathcal M(I_1 \cap E_1^{j,k}).$$
Observe  $X_1^{j,k} \le I_1$ and so $X_1^{j,k}$ normalizes $\Sigma_1$.

Since $X_1^{j,k} \in \mathcal
M(I_1)$,  $C_{\Sigma_1} (X_1^{j,k})$ has order $2^5$ by Lemma~\ref{celts}. As
$[\Sigma_1^j\Sigma_1^k,X_1^{j,k} ]= \Sigma_1^j\Sigma_1^k$  and $\Sigma_1$ is extraspecial, we deduce  $$C_{\Sigma_1}(X_1^{j,k}) =
\Sigma_1^l\Sigma_1^m = C_{\Sigma_1}(\Sigma_1^j\Sigma_1^k).$$  In particular, we now have  $X_1^{j,k} = I_1^l
\cap I_1^m$ by Lemma~\ref{celts}. This implies  $\Theta_1^{j,k} \le C_G( I_1^l \cap I_1^m)$ and $\Theta_1^{j,k}$ is normalized by $I_1$;
therefore
$$\langle\Theta_1^{j,k},\Sigma_1^l\Sigma_1^m\rangle = O_2(C_{K_1}(I_1^l\cap I_1^m)).$$ Since $\Theta_1^{j,k}$
is $I_1$-invariant and elementary abelian, we infer $\Theta_1^{j,k}= \Theta_1^{l,m}$ and that
$\Theta_1^{j,k}$ commutes with $\Sigma_1^j\Sigma_1^k$ as well as with $\Sigma_1^l\Sigma_1^m$. Since
$\Sigma_1=\Sigma_1^j\Sigma_1^k\Sigma_1^l\Sigma_1^m$, we  have now proved claims (ii) and (iii).

Because  $\Theta_1^{j,k} = \Theta_1^{l,m}$ we have that $\Theta_1^{j,k}$ is centralized by $\langle X_1^{j,k}, X_1^{l,m}\rangle = \langle I_1^i\cap I_1^j,I_1^l\cap I_1^m\rangle$ which has Type N- as $\Theta_1^{j,k}$ does not commute with a conjugate of $Z$. Hence $\langle \Theta_1^{j,k} , \Theta_1^{k,l}\rangle$ is centralized by
$$Y=\langle X_1^{j,k}, X_1^{l,m}\rangle \cap \langle X_1^{k,l}, X_1^{j,m}\rangle \in \mathcal P(J).$$

Now $C_G(Y) \cong 3 \times \Sp_6(2)$ and  $I_1 \cap E(C_G(Y))$ is of Type N- by Lemma~\ref{crossover}. Since
$\langle \Theta_1^{j,k} , \Theta_1^{k,l}\rangle$ centralizes $r_1$ and   is normalized by $I_1$ we infer that $r_1$
is an involution of $E(C_G(Y))$ with centralizer of shape $2^{5}.\Sp_4(2)$ and that $\langle \Theta_1^{j,k} ,
\Theta_1^{k,l}\rangle \le O_2(C_{E(C_G(Y))}(r_1))$ which is elementary abelian. But then $$\langle \Theta_1^{j,k} , \Theta_1^{k,l}\rangle =
\Theta_1^{j,k}  \Theta_1^{k,l}$$ is elementary abelian of order at most $2^5$. It now follows that
$\Upsilon_1= \Theta_1^{1,2}\Theta_1^{2,3}\Theta_1^{2,4}$ has order at most $2^7$ and is $I_1$-invariant. We have seen that $C_{I_1}(\Theta_1^{j,k}
\Theta_1^{k,l})$ is $I_1^j\cap I_1^k$. Thus $C_{I_1}(\Upsilon_i) \le I_1^1\cap I_1^2\cap I_1^3\cap I_1^4=1$.
Hence $I_1$ acts faithfully on $\Upsilon_1$ and so $|\Upsilon_1|=2^7$. This completes the proof of (iv) and  (v) and the
verification of the  statements in the lemma.
\end{proof}

For $i=1,2$, we now set  $$\Gamma_i = \Sigma_i\Upsilon_i.$$

\begin{lemma}\label{Gammabasic} For $i=1,2$, we have that $\Gamma_i$ has order $2^{15}$ and is normalized by $F_i$. Furthermore the following hold.
\begin{enumerate}
\item $Z(\Gamma_i) = \Upsilon_i$; and
\item $[\Gamma_i,\Gamma_i]= \langle r_i\rangle$.
\end{enumerate}
\end{lemma}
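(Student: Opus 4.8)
The plan is to read off everything from the fact that $\Gamma_i$ is a central product of $\Sigma_i$ and $\Upsilon_i$. First I would collect the ingredients already established: by Lemma~\ref{sigmai}, $\Sigma_i$ is extraspecial of order $2^9$ with $Z(\Sigma_i)=\langle r_i\rangle$, and $F_i$ normalizes $\Sigma_i$ (indeed $F_i=C_L(r_i)$ permutes the subgroups $I_i^1,\dots,I_i^4$ by the very definition of the $I_i^k$, hence, conjugating inside $K_i$ since $F_i$ centralizes $r_i$, it permutes the generators $\Sigma_i^k=O_2(C_{K_i}(I_i^k))$ of $\Sigma_i$); by Lemma~\ref{UPSstruct}(iii),(v), $\Upsilon_i$ is elementary abelian of order $2^7$ and centralizes $\Sigma_i$; and $r_i\in\Theta_i^{j,k}\le\Upsilon_i$ by Lemma~\ref{UPSstruct}(i), while $r_i\in Z(\Sigma_i)\le\Sigma_i$. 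The same permutation-action argument shows $F_i$ permutes the subgroups $I_i^j\cap I_i^k$, hence permutes the groups $\Theta_i^{j,k}=Z(O_2(C_{K_i}(I_i^j\cap I_i^k)))$, so $F_i$ normalizes $\Upsilon_i$; combining, $F_i$ normalizes $\Gamma_i=\Sigma_i\Upsilon_i$.

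Next I would pin down $\Sigma_i\cap\Upsilon_i$. Since $\Upsilon_i$ centralizes $\Sigma_i$, every element of $\Sigma_i\cap\Upsilon_i$ centralizes $\Sigma_i$, hence lies in $Z(\Sigma_i)=\langle r_i\rangle$; as $r_i$ lies in both groups, $\Sigma_i\cap\Upsilon_i=\langle r_i\rangle$ has order $2$. Therefore $|\Gamma_i|=|\Sigma_i|\,|\Upsilon_i|/|\Sigma_i\cap\Upsilon_i|=2^9\cdot2^7/2=2^{15}$, and $\Gamma_i$ is the central product of $\Sigma_i$ and $\Upsilon_i$ amalgamating $\langle r_i\rangle$.

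For the two displayed statements: since $\Upsilon_i$ is abelian and centralizes $\Sigma_i$, it centralizes $\Gamma_i=\Sigma_i\Upsilon_i$, so $\Upsilon_i\le Z(\Gamma_i)$; conversely, writing $z\in Z(\Gamma_i)$ as $z=su$ with $s\in\Sigma_i$, $u\in\Upsilon_i$, both $z$ and $u$ centralize $\Sigma_i$, forcing $s\in Z(\Sigma_i)=\langle r_i\rangle\le\Upsilon_i$ and hence $z\in\Upsilon_i$; thus $Z(\Gamma_i)=\Upsilon_i$. Finally, because $\Sigma_i$ and $\Upsilon_i$ normalize one another, $[\Gamma_i,\Gamma_i]=[\Sigma_i,\Sigma_i]\,[\Sigma_i,\Upsilon_i]\,[\Upsilon_i,\Upsilon_i]=\langle r_i\rangle$, the first factor being $\Sigma_i'=\langle r_i\rangle$ and the remaining two trivial. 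There is no substantive obstacle here; the content is entirely carried by Lemmas~\ref{sigmai} and \ref{UPSstruct}, and the only points needing care are the identification $\Sigma_i\cap\Upsilon_i=\langle r_i\rangle$ and the (mild) extra argument that $F_i$, not merely $I_i$, normalizes $\Upsilon_i$.
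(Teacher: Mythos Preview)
Your proposal is correct and follows essentially the same approach as the paper: both proofs read off the order, the centre, and the commutator subgroup of $\Gamma_i$ directly from the central-product decomposition $\Gamma_i=\Sigma_i\Upsilon_i$ established in Lemmas~\ref{sigmai} and \ref{UPSstruct}, and both observe that $F_i$ normalizes $\Gamma_i$ because it permutes the generating pieces by construction. You have simply spelled out in more detail points the paper leaves implicit, such as the explicit identification $\Sigma_i\cap\Upsilon_i=\langle r_i\rangle$ and the permutation argument for $F_i$ normalizing $\Upsilon_i$.
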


\begin{proof}  By Lemmas~\ref{sigmai} and \ref{UPSstruct},   $\Sigma_i$ has order $2^9$ and is extraspecial and $|\Upsilon_i|= 2^7$ and centralizes $\Sigma_i$. This yields   $\Upsilon_i \cap \Sigma_i = \langle r_i\rangle$ and  $\Gamma_i$ has order $2^{15}$. Furthermore,  as $\Sigma_i$ is extraspecial, $\Upsilon_i$ is elementary abelian and $\Upsilon_i$ commutes with $\Sigma_i$ we have that $\Upsilon_i= Z(\Gamma_i)$ and
$[\Gamma_i,\Gamma_i]= \langle r_i\rangle$. Hence (i) holds.

By the construction of $\Sigma_i$ and $\Upsilon_i$, $F_i$ normalizes both groups and consequently also normalizes their product. This is (ii).
\end{proof}

\begin{lemma}\label{MaxSig} For $i=1,2$, $\Gamma_i$  is the unique maximal signalizer for $I_i^1$ in $K_i$.
\end{lemma}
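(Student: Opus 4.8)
The plan is to show that $\Gamma_i$ absorbs every $3'$-subgroup normalized by $I_i^1$ by the standard signalizer argument: a signalizer $\Theta$ for $I_i^1$ is generated by the centralizers $C_\Theta(a)$ as $a$ runs over the nontrivial elements of $I_i^1$, so it suffices to show each such $C_\Theta(a)$ lies in $\Gamma_i$. By Lemma~\ref{IijIik}, the proper nontrivial subgroups of $I_i^1$ are $Z$ together with the three subgroups $I_i^1\cap I_i^j$ ($j\ne 1$), which exhaust $\mathcal M(I_i^1)$ when $i=1$ and $\mathcal P(I_i^1)$ when $i=2$. So the two cases to handle are $\langle a\rangle = Z$ and $\langle a\rangle = I_i^1\cap I_i^j$.

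For the first case, $C_\Theta(Z)\le O_{3'}(C_G(Z)\cap K_i) = O_{3'}(C_M(r_i))$, and by Lemma~\ref{CSr} this is $R_1R_2$ (inside $C_{C_M(Z)}(r_i) = Q_{3-i}R_1R_2I_i$); since $R_i = \Sigma_i^1\le\Sigma_i$ and $R_{3-i}$ — wait, here I must be careful: $R_{3-i}$ is \emph{not} obviously inside $\Gamma_i$. The correct bound is that $C_\Theta(Z)$ is a $3'$-subgroup of $C_M(r_i)$ normalized by $I_i^1$, and one checks from the structure of $C_{C_M(Z)}(r_i)$ together with Lemma~\ref{celts} that $O_{3'}$ of the relevant subgroup normalized by $I_i^1$ lands in $\Sigma_i^1 = R_i \le \Gamma_i$. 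For the second case, $C_\Theta(I_i^1\cap I_i^j)\le O_{3'}(C_{K_i}(I_i^1\cap I_i^j))$, which by Lemma~\ref{O2} equals $O_2(C_{K_i}(I_i^1\cap I_i^j)) = \Sigma_i^1\Sigma_i^j\Theta_i^{1,j}\le \Sigma_i\Upsilon_i = \Gamma_i$. Combining, every $C_\Theta(a)\le\Gamma_i$, hence $\Theta\le\Gamma_i$.

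It remains to see that $\Gamma_i$ itself is a signalizer for $I_i^1$, i.e. a $3'$-group normalized by $I_i^1$: it is a $2$-group of order $2^{15}$ by Lemma~\ref{Gammabasic}, and it is normalized by $F_i\supseteq I_i\supseteq I_i^1$, again by Lemma~\ref{Gammabasic}. Maximality and uniqueness then follow at once, since any signalizer is contained in $\Gamma_i$ and $\Gamma_i$ is one. I would phrase the proof by first invoking Lemma~\ref{Gammabasic} to record that $\Gamma_i$ is a $3'$-group normalized by $I_i^1$, then run the generation argument $\Theta = \langle C_\Theta(a)\mid a\in I_i^{1\#}\rangle$ and dispatch the two types of $a$ using Lemmas~\ref{CSr}, \ref{celts} and \ref{O2}.

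The main obstacle I anticipate is pinning down $C_\Theta(Z)$ precisely: one must verify that the only $3'$-part of $C_M(r_i)$ that is normalized by $I_i^1$ (equivalently, acted on without fixed points issues) is contained in $\Sigma_i$, rather than picking up the second quaternion factor $R_{3-i}$; this requires noting that $I_i^1 = A_{3-i}$ acts nontrivially on $R_{3-i}/\langle r_{3-i}\rangle$-type quotients (indeed $R_{3-i} = \Sigma_{3-i}^1$, not a subgroup of $K_i$'s relevant signalizer here) and using Lemma~\ref{celts}(i)--(ii) to see $C_{\Sigma_i}$ of the singular and plus/minus one-spaces of $I_i^1$ behave as claimed. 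Once that bookkeeping is in place the rest is immediate from the cited lemmas.
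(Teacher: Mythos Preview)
Your overall strategy matches the paper's: generate the signalizer $\Theta$ from the centralizers $C_\Theta(a)$ for $a\in I_i^{1\#}$ and show each lies in $\Gamma_i$. However, there is a genuine gap in your treatment of the case $\langle a\rangle = I_i^1\cap I_i^j$.

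You assert $C_\Theta(I_i^1\cap I_i^j)\le O_{3'}(C_{K_i}(I_i^1\cap I_i^j))$ without justification. This is not automatic: $C_\Theta(I_i^1\cap I_i^j)$ is an $I_i^1$-invariant $3'$-subgroup of $D=C_{K_i}(I_i^1\cap I_i^j)$, but $I_i^1$ is \emph{not} a Sylow $3$-subgroup of $D$ (the $3$-part of $|D|$ is $3^3$ while $|I_i^1|=3^2$), and there is no general principle forcing such a subgroup into $O_{3'}(D)$. The paper closes this gap by observing, via Lemma~\ref{sp62line}(ii), that $Z\le I_i^1$ maps to a \emph{diagonal} element of order $3$ in $D/O_2(D)$ (inside the $\Sym(3)\times\Sym(3)$ factor). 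A diagonal $3$-element normalizes no nontrivial $2$-subgroup of $\Sym(3)\times\Sym(3)$, so the $Z$-invariant $3'$-group $C_\Theta(\rho_{3-i})$ is forced into $O_2(D)\le\Gamma_i$.

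Ironically, you flag the $Z$ case as the main obstacle, but that one is actually the easier of the two: since $I_i^1=A_{3-i}\le Q_{3-i}=O_3(C_{K_i}(Z))$, any $I_i^1$-invariant $3'$-subgroup $X\le C_{K_i}(Z)$ satisfies $[X,I_i^1]\le X\cap O_3(C_{K_i}(Z))=1$, so $X$ centralizes $A_{3-i}$. As $A_{3-i}$ is maximal abelian in $Q_{3-i}$ and $R_{3-i}$ acts irreducibly on $Q_{3-i}/Z$, the $2$-part of $C_{C_{K_i}(Z)}(A_{3-i})$ is exactly $R_i$, giving $X\le R_i$ directly. The real subtlety lies in the $\rho_{3-i}$ case, where $O_3(D)=\langle\rho_{3-i}\rangle$ is too small for this argument and the diagonal observation is genuinely needed.
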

\begin{proof} Assume that $W$ is an $I_i^1$ signalizer in $K_i$. Then $$W= \langle C_W(x) \mid x \in (I_i^1)^\#\rangle.$$
If $\langle x \rangle =Z\in \mathcal S (I_i^1)$, then $$O_{3'}(C_{K_i}(Z))=R_i =\Sigma_i^1 \le \Sigma_i\le
\Gamma_i$$ is the unique maximal $I_i^1$ signalizer in $C_{K_i}(Z)$. All the other subgroups of order $3$ in
$I_i^1$ are conjugate to $\langle \rho_{3-i}\rangle$ by an element of $Q_{3-i} \le F_i$. Hence we only need to
consider $I_i^1$ signalizers in $C_{K_i}(\rho_{3-i})$.
By Lemma \ref{ItsSp62}, $C_{G}(\rho_{3-i}) = C_G(I_i^1\cap I_i^2) \cong 3 \times \Sp_6(2)$ and we know from Lemma~\ref{cri} that $$C_{K_i}(\rho_{3-i}) \approx 3 \times 2^{1+2+4}.(\Sym(3) \times \Sym(3)).$$ Set $D= C_{K_i}(\rho_{3-i})$. Then $$O_2(D)=  \Sigma_i^1\Sigma_i^2\Theta_i^{1,2} \le \Gamma_i$$ and, Lemma~\ref{sp62line}(ii), implies   $ZO_2(D)/O_2(D)$ is diagonal in $D/O_2(D)$. Since
$C_W(\rho_{3-i})$ is normalized by $Z$ we infer that $C_W(\rho_{3-i}) \le \Gamma_i$ as claimed.
\end{proof}

\begin{lemma}\label{fusionr_i} For $i=1,2$, there is a $G$-conjugate of $r_i$ in $\Gamma_i\setminus \Upsilon_i$.
\end{lemma}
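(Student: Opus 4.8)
\textbf{Proof plan for Lemma~\ref{fusionr_i}.}
The statement is that for $i=1,2$ the extraspecial–by–elementary-abelian group $\Gamma_i$ contains a $G$-conjugate of $r_i$ which lies outside the centre $\Upsilon_i = Z(\Gamma_i)$. The natural strategy is to exhibit such a conjugate explicitly inside one of the constituent subgroups $\Sigma_i^j$ or, more usefully, inside one of the subgroups $\Theta_i^{j,k}$, and then check it is not in $\Upsilon_i$. By Lemma~\ref{UPSstruct}(i) each $\Theta_i^{j,k}$ is elementary abelian of order $2^3$ and contains $r_i$ together with a $G$-conjugate $s_{3-i}$ of $r_{3-i}$ with $s_{3-i}\neq r_i$; since $r_{3-i}$ is itself $G$-conjugate to $r_i$ in the case $L>L_*$, this already produces a conjugate of $r_i$ inside $\Gamma_i$ in that case. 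To make the argument uniform (and to cover the case $M = M_0S\langle f\rangle$ where $r_1$ and $r_2$ need not be $G$-fused) I would instead work inside the $\Sp_6(2)$-subgroup $E_i^{j,k} = E(C_G(I_i^j\cap I_i^k))$ directly: we have shown $r_i$ acts on $E_i^{j,k}$ as an involution of type $a_2$ (a unitary transvection, by the discussion before Lemma~\ref{cri}), and $O_2(C_{K_i}(I_i^j\cap I_i^k)) = \Sigma_i^j\Sigma_i^k\Theta_i^{j,k} \le \Gamma_i$ by Lemma~\ref{O2}.

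First I would show that $\Gamma_i$ contains an involution from $r_i^G$ lying in $\Sigma_i \setminus \langle r_i\rangle$. Since $\Sigma_i$ is extraspecial of plus type and order $2^9$ (Lemma~\ref{sigmai}) and $r_i = Z(\Sigma_i)$ is a transvection on $\Sigma_i/\langle r_i\rangle$ when we view things appropriately, the key input is that in the $\Sp_6(2)$-module structure on $\Sigma_i/\langle r_i\rangle$ (it is the spin module, as was noted in the proof of Lemma~\ref{ItsSp62}), the action of $C_J(r_i)$-conjugates gives many transvection-type involutions. Concretely, the reflection $r_{3-i}$ (or a $G$-conjugate $s_{3-i}$) normalizes one of the $I_i^j$ and hence the $\Q_8$-subgroup $\Sigma_i^j = O_2(C_{K_i}(I_i^j))$; it acts on $\Sigma_i^j$ and the product $r_i s_{3-i}$ or a suitable coset representative lands in $\Sigma_i$. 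I would then verify, using Lemma~\ref{celts} and the fact that the $\mathcal P(I_i)$- (resp.\ $\mathcal M(I_2)$-) elements act fixed-point-freely on $\Sigma_i/\langle r_i\rangle$, that the resulting involution cannot be central in $\Gamma_i$: an element of $\Upsilon_i = Z(\Gamma_i)$ commutes with all of $\Sigma_i$, whereas an element of $\Sigma_i\setminus\langle r_i\rangle$ does not (since $\Sigma_i$ is extraspecial, its centre is exactly $\langle r_i\rangle$). This pins down that the conjugate we found is in $\Gamma_i\setminus\Upsilon_i$.

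More directly, here is the cleanest route. Fix $j<k$ and consider $x = s_{3-i} \in \Theta_i^{j,k}\setminus\langle r_i\rangle$ furnished by Lemma~\ref{UPSstruct}(i); it is a $G$-conjugate of $r_{3-i}$. If $r_i$ and $r_{3-i}$ are $G$-conjugate (the case $L>L_*$, equivalently $M>M_0S\langle f\rangle$ by Lemmas~\ref{fusionr1r2} and \ref{fusion3elts}), we are already done once we check $x\notin\Upsilon_i$: but $x\in\Theta_i^{j,k}\le Z(O_2(C_{K_i}(I_i^j\cap I_i^k)))$ need not be central in $\Gamma_i$ because $\Upsilon_i$ is only a proper part of a larger abelian group and $\Theta_i^{j,k}$ meets $\Upsilon_i$ in a $2$-dimensional subspace (from $\Upsilon_i = \Theta_i^{1,2}\Theta_i^{2,3}\Theta_i^{2,4}$ of order $2^7$ with the $\Theta$'s of order $2^3$, pairwise products of order $2^5$), so there is an element of $\Theta_i^{j,k}$ outside $\Upsilon_i$; I would argue that the $I_i$-action, which is faithful on $\Upsilon_i$ and on $\Sigma_i$, forces the conjugate $x$ (or $xr_i$, or $x\sigma$ for a suitable $\sigma\in\Sigma_i^j\Sigma_i^k$) to lie in $\Gamma_i\setminus\Upsilon_i$. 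In the remaining case, where $r_1$ and $r_2$ are not $G$-conjugate, I would instead use that $r_i$ is $2$-central in $E_i^{j,k}\cong\Sp_6(2)$ and invoke Lemma~\ref{sp62line}(vi): there, $Q' = \langle t\rangle$ satisfies $t^X\cap Q\not\subseteq Z(Q)$, i.e.\ the $2$-central class of $\Sp_6(2)$ meets $O_2(C_{K_i}(I_i^j\cap I_i^k))$ outside its centre. Translating back, $r_i^{E_i^{j,k}}$ meets $\Sigma_i^j\Sigma_i^k\Theta_i^{j,k} = O_2(C_{K_i}(I_i^j\cap I_i^k)) \le \Gamma_i$ outside $\langle r_i\rangle$, and a short computation with Lemma~\ref{celts} shows such an element is not in $\Upsilon_i = Z(\Gamma_i)$. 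The main obstacle is the bookkeeping needed to guarantee the conjugate produced avoids the centre $\Upsilon_i$ rather than merely avoiding $\langle r_i\rangle$; this is where Lemma~\ref{sp62line}(vi) (applied inside $E_i^{j,k}$) does the decisive work, since it places the conjugate inside the extraspecial part $\Sigma_i^j\Sigma_i^k$ rather than the elementary abelian direct factor.
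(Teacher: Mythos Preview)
Your final paragraph lands on exactly the argument the paper uses: apply Lemma~\ref{sp62line}(vi) inside $E_i^{j,k}\cong\Sp_6(2)$ to see that $r_i$ (which generates $Q'$ for $Q=O_2(C_{K_i}(I_i^j\cap I_i^k))$) has an $E_i^{j,k}$-conjugate in $Q\setminus Z(Q)$. Since $Q=\Sigma_i^j\Sigma_i^k\Theta_i^{j,k}\le\Gamma_i$ by Lemma~\ref{O2} and $Q\cap\Upsilon_i=\Theta_i^{j,k}=Z(Q)$ (because $\Sigma_i\cap\Upsilon_i=\langle r_i\rangle$), ``outside $Z(Q)$'' is already ``outside $\Upsilon_i$'', and you are done. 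The paper's proof is precisely this one-liner, without any case division.

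However, your ``cleanest route'' in the first case contains a real error. You claim that $\Theta_i^{j,k}$ meets $\Upsilon_i$ in a $2$-dimensional subspace, so that some element of $\Theta_i^{j,k}$ lies outside $\Upsilon_i$. This is false: by definition $\Upsilon_i=\langle\Theta_i^{j,k}\mid 1\le j<k\le 4\rangle$, so every $\Theta_i^{j,k}$ is entirely contained in $\Upsilon_i$. In particular the conjugate $s_{3-i}\in\Theta_i^{j,k}$ furnished by Lemma~\ref{UPSstruct}(i) always lies in $\Upsilon_i$ and can never witness the lemma. So the case split on whether $r_1\sim_G r_2$ is both unnecessary and, in the branch where you try to use $s_{3-i}$, broken. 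Drop that branch entirely and run the sp62line(vi) argument uniformly; also note that (vi) gives a conjugate outside $Z(Q)=\Theta_i^{j,k}$, not merely outside $\langle r_i\rangle$ as you wrote---this is exactly what you need.
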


\begin{proof} This fusion can already be seen in $$C_{K_i}(\rho_{3-i})\approx 3 \times 2^{1+2+4}.(\Sym(3)\times\Sym(3))$$
as $r_i$ is not
weakly closed in $O_2(C_{K_i}(\rho_{3-i}))$ with respect to $C_G(\rho_{3-i})$  by Lemma~\ref{sp62line} (vi).
\end{proof}

We are now able to determine the structure of $N_G(\Gamma_i)$.

\begin{lemma}\label{itssp} For $i=1,2$, the following hold.
\begin{enumerate}
\item $N_G(\Gamma_i)/\Gamma_i \cong \Sp_6(2)$;
\item as $N_G(\Gamma_i)/\Gamma_i$-modules,  $\Gamma_i/\Upsilon_i$ is a spin module and $\Upsilon_i/\langle r_i \rangle$ is a natural module;
\item $\syl_2(N_G(\Gamma_i)) \subseteq \syl_2(K_i)$; and
\item if $T \in \syl_2(N_G(\Gamma_i))$, then  $\Gamma_i/\langle r_i \rangle = J(T/\langle r_i \rangle)$, $Z(T) \le \Upsilon_i$ and $Z(T)$ has order $4$.
\end{enumerate}
In particular, $N_G(\Gamma_i)$ is similar to a $2$-centralizer in $\F_4(2)$.
\end{lemma}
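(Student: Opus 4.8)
The plan is to put $N = N_G(\Gamma_i)$ and establish the four assertions in order, the final statement then being immediate from (i), (ii) and Lemma~\ref{Gammabasic}. Since $\langle r_i\rangle = [\Gamma_i,\Gamma_i]$ is characteristic in $\Gamma_i$, we have $N \le K_i$. As $\Gamma_i$ is a normal $2$-subgroup of $N$, $I_i^1 \le F_i \le N$, and $\Gamma_i$ is the unique maximal signalizer for $I_i^1$ in $K_i$ (Lemma~\ref{MaxSig}), every normal $2$-subgroup of $N$ lies in $\Gamma_i$; hence $\Gamma_i = O_2(N)$. Moreover, exactly as in the proof of Lemma~\ref{ItsSp62}, because $I_i^1 \le Q_{3-i} = C_Q(r_i)$ and $I_i^1 \le I_i$, the group $\Gamma_i$ is also the unique maximal signalizer in $K_i$ for each of $Q_{3-i}$ and $I_i$; since $Q_{3-i}$ is normal in $C_M(r_i)$, this forces $N \ge \langle F_i, C_M(r_i)\rangle$.

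For (i) I would apply Prince's Theorem~\ref{PrinceThm} to $\overline N := N/\Gamma_i$ with $d$ the image $\overline Z$ of $Z$. Since $\Gamma_i$ is a normal $2$-subgroup and $Z$ has order $3$, a coprime-action argument gives $C_{\overline N}(\overline Z) = C_N(Z)\Gamma_i/\Gamma_i$, and by Lemma~\ref{CSr} together with $C_M(r_i) \le N$ we get $C_N(Z) = C_{C_M(Z)}(r_i) = Q_{3-i}R_1R_2I_i$. Now $R_i \le \Sigma_i \le \Gamma_i$, and $r_{3-i} \notin \Gamma_i$: indeed $r_{3-i} \in F_i$ normalizes $I_i^1 = A_{3-i} = Z\langle\rho_{3-i}\rangle$ (it fixes $Z$ and inverts $\rho_{3-i}$), so its image in $F_i/I_i\langle r_i\rangle \cong 2\times\Sym(4)$ acts on $\{I_i^1,\dots,I_i^4\}$ as a transposition (it does not invert all of $I_i$, as $[I_i,r_{3-i}]\le\langle\rho_{3-i}\rangle$), hence $r_{3-i}$ permutes the direct factors $\Sigma_i^j$ of $\Sigma_i$ non-trivially and so does not centralize $\Sigma_i/\langle r_i\rangle$; were $r_{3-i}\in\Gamma_i$ it would act on $\Sigma_i$ as an inner automorphism, which it does not. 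Consequently $R_{3-i}$ maps isomorphically onto a quaternion subgroup of $\overline N$, $Q_{3-i}I_i$ maps onto a group of order $3^4 = |\Sp_6(2)|_3$ with normal subgroup $Q_{3-i}\Gamma_i/\Gamma_i \cong 3^{1+2}_+$, and one reads off that $C_{\overline N}(\overline Z)$ has shape $3^{1+2}_+.\SL_2(3)$, i.e. is isomorphic to the centralizer of a $3$-central element in $\PSp_4(3)$. Since $\Gamma_i$ is the unique maximal $I_i^1$-signalizer in $K_i$, the group $I_i$ normalizes no non-trivial $3'$-subgroup of $\overline N$, and $\overline Z$ is inverted in $\overline N$ by the image of an involution of $N$ inverting $Z$ (available from the structure of $M$ given in Lemma~\ref{structM}). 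Prince's theorem then yields $\overline N \cong \Aut(\SU_4(2))$ or $\Sp_6(2)$, the alternative $|\overline N : C_{\overline N}(\overline Z)| = 2$ being absurd since $F_i\Gamma_i/\Gamma_i$ is far larger. Finally $\overline N \cong \Aut(\SU_4(2))$ is excluded exactly as in Lemma~\ref{ItsSp62}: choosing $\widetilde{\rho_i}$ with $\langle\widetilde{\rho_i}\rangle \in \mathcal P(I_1)$ when $i=1$ (respectively $\langle\widetilde{\rho_i}\rangle\in\mathcal M(I_2)$ when $i=2$), the equality $|\mathcal P(I_1)| = 3$ would force $C_{\overline N}(\widetilde{\rho_i}\Gamma_i/\Gamma_i)$ to be soluble, contradicting Lemma~\ref{crossover}. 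Hence $\overline N \cong \Sp_6(2)$, which is (i).

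For (ii), note $\Gamma_i/\Upsilon_i \cong \Sigma_i/\langle r_i\rangle$ has dimension $8$. By Lemma~\ref{celts} the element $\widetilde{\rho_i}$ above satisfies $C_{\Sigma_i}(\widetilde{\rho_i}) = \langle r_i\rangle$, so by coprimality it has no trivial constituent on $\Sigma_i/\langle r_i\rangle$; as $8<9$, Lemma~\ref{modfacts} forces $\Gamma_i/\Upsilon_i$ to be the irreducible spin module. Dually, $I_i$ acts faithfully on $\Upsilon_i$ (Lemma~\ref{UPSstruct}) and hence on the $6$-dimensional $\Upsilon_i/\langle r_i\rangle$, so $\overline N$ is non-trivial on it and Lemma~\ref{modfacts} gives that $\Upsilon_i/\langle r_i\rangle$ is the natural module. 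In particular $\overline N$ acts faithfully on $\Gamma_i/\Upsilon_i$, whence $C_N(\Gamma_i) = Z(\Gamma_i) = \Upsilon_i$.

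For (iii) and (iv), let $T \in \syl_2(N)$. Then $\Gamma_i/\langle r_i\rangle$ is elementary abelian of order $2^{14}$ and $T/\Gamma_i$ is a Sylow $2$-subgroup of $\overline N \cong \Sp_6(2)$ acting faithfully on the $14$-dimensional module $\Gamma_i/\langle r_i\rangle$, which by (ii) and Lemma~\ref{NotF} is not an $F$-module; a standard offender argument then shows that $\Gamma_i/\langle r_i\rangle$ is the unique elementary abelian subgroup of $T/\langle r_i\rangle$ of maximal order, so $J(T/\langle r_i\rangle) = \Gamma_i/\langle r_i\rangle$. Hence $Z(T) \le C_T(\Gamma_i) = \Upsilon_i$, and since $\overline N$ acts on $\Upsilon_i/\langle r_i\rangle$ as $\Sp_6(2)$ on its uniserial natural module (Lemma~\ref{sp62natural}), whose Sylow $2$-subgroups have a $1$-dimensional fixed space, $Z(T) = C_{\Upsilon_i}(T)$ has order $4$; this is (iv). For (iii): if $T < T_1 \in \syl_2(K_i)$, pick $x \in N_{T_1}(T)\setminus T$; since $r_i \in Z(K_i)$, $\Gamma_i^x$ is a normal subgroup of $T$ with $\Gamma_i^x/\langle r_i\rangle$ elementary abelian of maximal order, so $\Gamma_i^x/\langle r_i\rangle = J(T/\langle r_i\rangle) = \Gamma_i/\langle r_i\rangle$, whence $x$ normalizes $\Gamma_i$ and $x \in N \cap T_1 = T$, a contradiction; thus $\syl_2(N) \subseteq \syl_2(K_i)$. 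The step I expect to be the main obstacle is the precise identification in (i) of $C_{\overline N}(\overline Z)$ as the centralizer of a $3$-central element of $\PSp_4(3)$ rather than merely a proper section of it: this rests on $r_{3-i} \notin \Gamma_i$ together with the faithful action of the quotient $C_3 = I_i/A_{3-i}$ on $Q_{3-i}R_{3-i}$, and one must be careful about how $Q_{3-i}$, $R_{3-i}$ and $I_i$ intersect $\Gamma_i$.
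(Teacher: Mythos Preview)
Your setup through Prince's theorem, the module identifications in (ii), and the Thompson subgroup arguments for (iii) and (iv) are all correct and essentially the same as the paper's.  The genuine problem is your elimination of $\Aut(\SU_4(2))$.

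You write that this case is ``excluded exactly as in Lemma~\ref{ItsSp62}'', but the crossover argument in that lemma is applied only \emph{after} it has been shown that $K_1 = N_G(\Sigma_1)$ (via Theorem~\ref{closed}).  That equality is what lets one compare $C_{K_1}(\widetilde{\rho_1})$---about which Lemma~\ref{crossover} speaks---with $C_{N_G(\Sigma_1)/\Sigma_1}(\widetilde{\rho_1}\Sigma_1)$.  In the present situation you do \emph{not} know $K_i = N_G(\Gamma_i)$; indeed, that is precisely the content of Lemma~\ref{H=K}, proved only in the next section.  Lemma~\ref{crossover} tells you that $C_{K_i}(\widetilde{\rho_i})$ has $\Sp_4(2)'$ as a composition factor, but you have no argument that $C_{K_i}(\widetilde{\rho_i}) \le N_G(\Gamma_i)$: the unique maximal signalizer property for $I_i^1$ (or $I_i$, or $Q_{3-i}$) gives you control only over subgroups that normalize one of these, and $C_{K_i}(\widetilde{\rho_i})$ does not.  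So the solubility of $C_{\overline N}(\widetilde{\rho_i}\Gamma_i)$ is not in contradiction with anything you know.

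The paper's route around this is quite different.  It first establishes (iii) and (iv) for both candidate quotients $\Aut(\SU_4(2))$ and $\Sp_6(2)$, then invokes Lemma~\ref{fusionr_i} to obtain a $G$-conjugate $s = r_i^g$ lying in $\Gamma_i \setminus \Upsilon_i$.  After adjusting $g$ so that $C_{\Gamma_i^g}(r_i) \le T$ with $T \in \syl_2(N_G(\Gamma_i))$, one compares $\Gamma_i$ with $\Gamma_i^g$: since $\Phi(\Gamma_i \cap \Gamma_i^g) \le \langle r_i\rangle \cap \langle s\rangle = 1$, the intersection is elementary abelian, and the presence of an extraspecial $2^7$-subgroup in $C_{\Gamma_i}(s)$ together with Lemma~\ref{sp62facts}(v) forces $r_i \in \Gamma_i^g$.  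Bounding $|\Gamma_i \cap \Gamma_i^g|$ and $|\Upsilon_i \cap \Gamma_i^g|$ then yields $|(\Gamma_i^g \cap N_G(\Gamma_i))\Gamma_i/\Gamma_i| \ge 2^5$, which is impossible in $\Aut(\SU_4(2))$ since its $2$-rank is $4$.  This argument nowhere assumes $K_i = N_G(\Gamma_i)$.
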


\begin{proof}
We already know that $\Gamma_i$ is normalized by $F_i$ and we have that $\Gamma_i$ is the unique maximal
$I_i^1$-signalizer in $K_i$ by Lemma~\ref{MaxSig}. It follows that $\Gamma_i$ is also the unique maximal signalizer for $Q_{3-i} \ge
I_i^1$ in $K_i$. Therefore $N_{E (C_G(\rho_i))}(Q_{3-i})$ also normalizes $\Gamma_i$. It follows from
\cite[page 46]{Atlas} that  $$X=\langle F_i, N_{E (C_G(\rho_i))}(Q_{3-i})\rangle\cong \Aut(\SU_4(2))$$  and
$X$ normalizes $\Gamma_i$.

 Since $C_{K_i}(Z)\Gamma_i/\Gamma_i$ is a $3$-centralizer of type $\PSp_4(3)$, $\Gamma_i$ is a maximal signalizer for $I_i^1$ and $Z$ is inverted in $N_G(\Gamma_i)/\Gamma_i$, we deduce
 $N_G(\Gamma_i)/\Gamma_i \cong \Sp_6(2)$ or $\Aut(\SU_4(2))$  by using Theorem~\ref{PrinceThm}.

We know that $I_i$ acts faithfully on both $\Gamma_i/\Upsilon_i$  and $\Upsilon_i/\langle r_i \rangle$.
In particular, as  $|\Upsilon_i/\langle r_i\rangle|= 2^6$,  if $N_G(\Gamma_i)/\Gamma_i\cong \Aut(\SU_4(2))$  then $\Upsilon_i/\langle r_i\rangle$ is an orthogonal module and if $N_G(\Gamma_i)/\Gamma_i\cong \Sp_6(2)$  then $\Upsilon_i/\langle r_i\rangle$ is a natural module. Similarly since $C_{\Sigma_i}(Z)= \Sigma_i^1$ and since this  subgroup is not normalized by $F_i$ and $|\Gamma_i/\Upsilon_i|=2^8$, if  $N_G(\Gamma_i)/\Gamma_i\cong \Aut(\SU_4(2))$,  then $\Gamma_i/\Upsilon_i$ is an natural module and,  if $N_G(\Gamma_i)/\Gamma_i\cong \Sp_6(2)$,  then $\Gamma_i/\Upsilon_i$ is a spin module (see Lemma~\ref{modfacts}). So once we have proved part (i), part (ii) will also be proved.

Next we prove (iii) and the first part of (iv). Let $T \in \syl_2(N_G(\Gamma_i))$. Since, by Lemma~\ref{NotF},  $\Gamma_i/\langle r_i\rangle $ is not an $F$-module for $N_G(\Gamma_i)/\Gamma_i$, \cite[Lemma~26.15]{GLS2} implies that $ \Gamma_i/\langle r_i\rangle$ is the Thompson subgroup of $T/\langle r_i\rangle$. It follows that $N_{K_i}(T) \le N_G(\Gamma_i)$ and,
in particular,  $T \in \Syl_2(K_i)$ and $N_{K_i}(T)=T$. Notice furthermore that $N_G(\Gamma_i)/\langle r_i\rangle$ controls $K_i/\langle r_i\rangle$-fusion  in $\Gamma_i /\langle r_i\rangle$.  The last two parts of (iv) follow from the fact that $\Sigma_i$ is extraspecial and Lemma~\ref{nonsplitmods}.

It remains to prove (i). Assume that $N_G(\Gamma_i)/\Gamma_i \cong\Aut(\SU_4(2))$.
Using Lemma~\ref{fusionr_i}, there exists $g\in G$ and $s \in \Gamma_i\setminus \Upsilon_i$ such that $s=r_i^g$.   Since $N_G(\Gamma_i^g)$ contains a Sylow $2$-subgroup of $C_G(s)$,  there is a $h \in C_G(s)$ such that $C_{\Gamma_1}(s)^h \le N_G(\Gamma_i^g)$ and we have $s= r_i^{gh}$ so we may suppose  $g$ was chosen so  $C_{\Gamma_1}(s)\le N_G(\Gamma_i^g)$. Note that, as $s \in \Gamma_i\setminus \Upsilon_i$,  $s$ is conjugate in $\Gamma_i$ to $sr_i$ and, as $N_G(\Gamma_i)/\langle r_i\rangle$ controls $K_i/\langle r_i\rangle$-fusion  in $\Gamma_i /\langle r_i\rangle$, $s$ is not $K_i$-conjugate to an element of $\Upsilon_i$.

Since $C_{\Gamma_1}(s)$ contains an extraspecial group of order $2^7$ with derived group $\langle r_i\rangle$, and  $\Aut(\SU_4(2))$  does not (by Lemma~\ref{sp62facts}),  we have  $r_i \in \Gamma_i^g$. It follows that $C_{\Gamma_i^g}(r_i)$, which has index at most $2$ in $\Gamma_i$, also contains an extraspecial group of order $2^7$. As $T \in \Syl_2(K_i)$,  there is $f \in K_i$ such that $C_{\Gamma_i^g}(r_i)^f=C_{\Gamma_i^{gf}}(r_i)\le T$.  It follows that $s^f \in \Gamma_i \setminus \Upsilon_i$
and we may as well suppose that $s=s^f$ (though we may no longer have $C_{\Gamma_1}(s)\le N_G(\Gamma_i^g)$).
With this choice of $s$,    $|\Gamma_i^g : \Gamma_i^g\cap N_G(\Gamma_i)| \le 2$.
Now $$\Phi(\Gamma_i^g \cap \Gamma_i) \le \Phi(\Gamma_i^g) \cap \Phi(\Gamma_i)= \langle s\rangle \cap \langle r_i\rangle=1$$ which means  $\Gamma_i^g \cap \Gamma_i$  is elementary abelian. As $\Gamma_i$ contains $\Sigma_i$ which is extraspecial of order $2^9$, this  yields   $|\Gamma_i^g\cap \Gamma_i| \le 2^{11}$ and so $$|({\Gamma_i^g}\cap N_G(\Gamma_i))\Gamma_i/\Gamma_i| \ge 2^3.$$
Further
 $$[\Upsilon_i \cap \Gamma_i^g, N_G(\Gamma_i) \cap  \Gamma_i^g] \leq [\Gamma_i^g, \Gamma_i^g]\cap \Upsilon_i =\langle s \rangle \cap \Upsilon_i = 1.$$
Hence, as  $|({\Gamma_i^g}\cap N_G(\Gamma_i))\Gamma_i/\Gamma_i| \ge 2^3$, Lemma~\ref{sp62facts}(iii)  (which says that $\Aut(\SU_4(2))$ contains no fours group of unitary transvections) implies  $|\Upsilon_1 \cap \Gamma_i^g| \leq 2^5$. Therefore  $|\Gamma_i \cap \Gamma_i^g| \leq 2^9$. We have now shown  $|(\Gamma_i^g \cap N_G(\Gamma_i))\Gamma_i/\Gamma_i| \geq 2^5$ which, as this group is elementary abelian and the $2$-rank of $\Aut(\SU_4(2))$ is $4$,  is contradiction. Therefore $N_G(\Gamma_i)/\Gamma_i \cong \Sp_6(2)$ and this completes the proof of part (i) and thereby also (ii).

\end{proof}

\section{The centralisers of $r_1$ and $r_2$}

In this section   we finally determine the structure of $K_i=C_G(r_i)$. We will prove  $K_i = N_G(\Gamma_i)$ and hence conclude that $K_i$ is similar to a $2$-centralizer in $\F_4(2)$. The plan is to show $\Upsilon_i$ is strongly closed in a Sylow $2$-subgroup of  $K_i$ with respect to $K_i$ and then to quote Goldschmidt's Theorem in the form of Lemma~\ref{Gold} to show that $K_i=N_G(\Gamma_i)$. To achieve  this we  study $K_i$-fusion of involutions. As most of the centralizers of involutions in $N_G(\Gamma_i)$ have order divisible by three, this will be reduced to fusion of 3-elements. Hence the first lemma we prove in this section  will be that $N_G(\Gamma_i)$ is strongly $3$-embedded in $K_i$, which means that we have control of fusion of elements of order $3$ in $K_i$.

We use all our previous  notation and furthermore for this section we set $H_i = N_G(\Gamma_i)$.

\begin{lemma}\label{Strong3} For $i=1,2$,
 $H_i$ is strongly $3$-embedded in $K_i$. In particular, $H_i$ controls fusion of elements of
order $3$ in $H_i$.
\end{lemma}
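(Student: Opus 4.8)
The plan is to establish strong $3$-embedding of $H_i$ in $K_i$ by verifying that for every $g\in K_i\setminus H_i$, the intersection $H_i\cap H_i^g$ has order prime to $3$, and then to derive fusion control as an immediate consequence. First I would fix a Sylow $3$-subgroup of $K_i$, which by Lemma~\ref{CSr} is $C_S(r_i)=Q_{3-i}I_i$; recall $I_i=C_J(r_i)$ is its Thompson subgroup, elementary abelian of order $3^3$, and that $I_i^1=A_{3-i}$ together with its $F_i$-conjugates $I_i^2,I_i^3,I_i^4$ exhaust the Type DP (resp.\ DM) hyperplanes, while $Z$ and the conjugates of $\langle\rho_{3-i}\rangle$ are the cyclic subgroups appearing in these hyperplanes. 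The key structural input is that $\Gamma_i$ is the \emph{unique} maximal signalizer for $I_i^1$ in $K_i$ (Lemma~\ref{MaxSig}), together with the already-known local structure: $N_{K_i}(Z)=C_M(r_i)\le H_i$ by Lemma~\ref{CSr} and Lemma~\ref{structM}, and $C_{K_i}(\rho_{3-i})=C_{C_G(\rho_{3-i})}(r_i)\le H_i$ because $E(C_G(\rho_{3-i}))$ normalizes $\Gamma_i$ (this is exactly the content used in the proofs of Lemmas~\ref{MaxSig} and \ref{itssp}). Thus $H_i$ contains the normalizer of each nontrivial subgroup of $I_i^1$ in $K_i$.

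The main step is then: suppose $d\in K_i$ has order $3$ with $d\in H_i\cap H_i^g$ for some $g\in K_i$; I claim $H_i^g=H_i$. Since $H_i/\Gamma_i\cong\Sp_6(2)$ with $\Gamma_i/\langle r_i\rangle$ not an $F$-module, $\Gamma_i/\langle r_i\rangle$ is the Thompson subgroup of a Sylow $2$-subgroup (Lemma~\ref{itssp}(iv), via Lemma~\ref{NotF} and \cite[Lemma~26.15]{GLS2}); moreover $H_i$ controls $K_i$-fusion of elements of order $3$ inside $H_i$ once we know the strong embedding, but at this stage I would instead argue directly as in Lemma~\ref{ItsSp62}: conjugating $d$ within $H_i$ (which acts on $\Gamma_i$ with $\Sp_6(2)$ on top, and in $\Sp_6(2)$ every $3$-element is conjugate into the Thompson subgroup $I_i$ of the Sylow $3$-group by Lemma~\ref{sp62facts}(vii)) we may assume $d\in I_i$, indeed after a further $F_i$-conjugation that $\langle d\rangle$ is one of $Z$ or $\langle\rho_{3-i}\rangle$, equivalently $\langle d\rangle\le I_i^1$. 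Then $C_{K_i}(d)\le H_i$ by the remarks above. On the other side, $C_{H_i^g}(d)$ contains a Sylow $3$-subgroup of $C_{K_i}(d)$, hence (replacing $g$ by $gx$ for suitable $x\in C_{K_i}(d)\le H_i$) a $K_i$-conjugate of $I_i$, and that conjugate lies in $C_{K_i}(d)\le H_i$; so $H_i\cap H_i^g$ contains a conjugate of $I_i$. Since $\Gamma_i$ is the unique maximal signalizer for (a conjugate of) $I_i^1$ in $K_i$, and $\Gamma_i^g$ is the analogous unique maximal signalizer inside $H_i^g$, the common subgroup forces $\Gamma_i^g=\Gamma_i$, whence $H_i^g=N_G(\Gamma_i)=H_i$. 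Therefore $H_i\cap H_i^g$ is a $3'$-group for every $g\in K_i\setminus H_i$, which is precisely strong $3$-embedding.

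The fusion-control statement follows formally: if $x,y$ are elements of order $3$ in $H_i$ which are conjugate in $K_i$, say $y=x^g$, then $\langle x\rangle$ and $\langle y\rangle^{g^{-1}}$ are $3$-subgroups of $H_i$ fused in $K_i$; applying the strong $3$-embedding together with a Sylow argument in $H_i$ (a standard consequence, e.g.\ \cite[Proposition~17.11]{GLS2}) gives that the fusion is already realized in $H_i$, so $x\sim_{H_i}y$. The step I expect to be the main obstacle is the reduction $\langle d\rangle\le I_i^1$: one must be sure that the $H_i$-conjugacy and then $F_i$-conjugacy genuinely bring an arbitrary $3$-element of $H_i$ into a cyclic subgroup whose $K_i$-normalizer is visibly inside $H_i$, and this relies on correctly matching the $\Sp_6(2)$-class of $d$ (via $\dim[V,\tau_j]$) with the orthogonal type of the corresponding line in $J$ using Lemmas~\ref{sp62facts}(vii), \ref{type1} and \ref{IijIik}; the other cases (e.g.\ a $3$-element whose image generates a subgroup of $I_i$ not conjugate into $I_i^1$) have to be excluded by observing that $I_i$ itself lies in $H_i\cap H_i^g$ once $C_{K_i}(d)\le H_i$ is known for at least one generator, so in fact it suffices to handle the two classes $Z$ and $\langle\rho_{3-i}\rangle$ and invoke $C_{K_i}(I_i)=I_i\langle r_i\rangle\le H_i$.
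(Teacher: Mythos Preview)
Your proposal contains a genuine gap. The claimed reduction ``after a further $F_i$-conjugation \dots\ $\langle d\rangle\le I_i^1$'' fails: for $i=1$ the hyperplane $I_1^1=A_2$ has Type~DM, so it contains only elements of $\mathcal S(I_1)$ and $\mathcal M(I_1)$, while $F_1\le L_*$ preserves the partition $\mathcal S\cup\mathcal M\cup\mathcal P$. Hence no element with $\langle d\rangle\in\mathcal P(I_1)$ (that is, $d$ an $L_*$-conjugate of $\rho_1$) can be $F_1$-conjugated into $I_1^1$, and the analogous obstruction occurs for $i=2$ with $\mathcal M(I_2)$. Your proposed workaround in the final sentence does not repair this: knowing $C_{K_i}(d)\le H_i$ for the two handled classes gives no information about $H_i\cap H_i^g$ when the only $3$-elements present there are of the third type, and the observation $C_{K_i}(I_i)=I_i\langle r_i\rangle\le H_i$ does not place $I_i$ inside $H_i^g$.

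The paper's proof is both simpler and complete. Rather than arguing via signalizers, it shows directly that $C_{K_i}(x)\le H_i$ for \emph{every} $x$ of order $3$ in $I_i$, treating the three orbit types separately; together with $N_{K_i}(C_S(r_i))\le N_{K_i}(Z)\le H_i$ this yields strong $3$-embedding by \cite[Lemma~17.11]{GLS2}. The missing third case is dispatched as follows: for $\langle x\rangle=\langle\tilde\rho_1\rangle\in\mathcal P(I_1)$, Lemma~\ref{crossover} gives $C_{K_1}(\tilde\rho_1)\approx 3\times 2^5{:}\Sp_4(2)$, and since $C_{H_1}(\tilde\rho_1)$ already has this order one gets $C_{K_1}(\tilde\rho_1)=C_{H_1}(\tilde\rho_1)\le H_1$. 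With this in hand your more elaborate signalizer argument becomes unnecessary, since $C_{K_i}(d)\le H_i$ for all three types already forces $H_i\cap H_i^g$ to be a $3'$-group whenever $g\notin H_i$.
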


\begin{proof}

Suppose that $x \in H_i$ has order $3$. We will show  $C_{K_i}(x)$ normalizes $\Gamma_i$.  Recall $C_S(r_i) \in \Syl_3(K_i)$ and $C_S(r_i) \le F_i \le H_i$ so   $C_S(r_i)$ normalizes $\Gamma_i$. Since every element of order $3$ in $C_S(r_i)$ is $H_i$-conjugate into $I_i$, we may suppose  $x \in I_i$.

Again to simplify our notation slightly we consider the case when $i=1$. Thus $|\mathcal S(I_1)|= 4$, $|\mathcal M(I_1)|= 6$ and $|\mathcal P(I_1)|=3$ by Lemma~\ref{I1}.  If $\langle x \rangle \in \mathcal S(I_1)$, then  we may suppose that $\langle x \rangle = Z$. In this case, by Lemma~\ref{CSr}   $$C_{K_1}(Z) = Q_2R_1R_2I_1 \le H_1.$$
So suppose that $\langle x\rangle = \langle \rho_2\rangle \in \mathcal M(I_1)$. Then, by Lemma~\ref{O2},  $$C_{K_1}(\rho_2) =  \Sigma_1^1\Sigma_1^2\Theta_1^{1,2}N_{F_1}(I_1 \cap E_1^{12}) \le \Gamma_1 F_1 \le H_1.$$
Suppose $\langle x \rangle = \wt {\rho_1} \in\mathcal P(I_1)$. Then, by Lemma~\ref{crossover},  $C_{K_1}(\wt {\rho_1}) \approx 3 \times 2^5{:}\Sp_4(2)$ and this has the same order as  $C_{H_1}(\wt {\rho_1})$.  Thus $C_{K_1}(\wt {\rho_1}) \le H_1$.  Finally, $N_{K_1}(C_S(r_1)) \le N_{K_1}(Z)$ and so $H_1$ is strongly $3$-embedded in $K_1$ by \cite[Lemma 17.11]{GLS2}.
 \end{proof}

 We next show $H_i = K_i$ for  $i = 1,2$ the proof is accomplished through a series of lemmas. It suffices to prove this with $i=1$ as the proof for $i=2$ is the same. By Lemma~\ref{itssp} (ii), $Z(H_1) = \langle r_1\rangle$, $\Upsilon_1/Z(H_1)$ is the natural $\Sp_6(2)$-module and
$\Gamma_1/\Upsilon_1$ is the spin module for $\Sp_6(2)$. Let $T$ be a Sylow 2-subgroup of $H_1$. From
Lemma~\ref{itssp} (iv) we have $T \in \syl_2(K_1)$.

\begin{lemma}\label{sc} \begin{enumerate}\item  If  $x \in \Upsilon _1^\#$ and $s\in x^{K_1}$, then $s$ and $sr_1$ are not $K_1$-conjugate.\item $\Upsilon_1$ is strongly closed in $\Gamma_1$ with
respect to $K_1$. \end{enumerate}
\end{lemma}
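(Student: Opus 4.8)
The two parts are intertwined: (ii) follows almost immediately from (i), so the real work is in (i), namely showing that for $x \in \Upsilon_1^\#$ no $K_1$-conjugate $s$ of $x$ is $K_1$-conjugate to $sr_1$. I would first note that since $\Upsilon_1/\langle r_1\rangle$ is the natural $\Sp_6(2)$-module for $H_1/\Gamma_1$ (Lemma~\ref{itssp}(ii)) and $H_1/\langle r_1\rangle$ controls $K_1/\langle r_1\rangle$-fusion in $\Gamma_1/\langle r_1\rangle$ (established in the proof of Lemma~\ref{itssp}), it suffices to understand, for each $H_1$-orbit on $\Upsilon_1/\langle r_1\rangle$, which preimages in $\Upsilon_1$ are genuinely $K_1$-conjugate. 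The subgroup $\Upsilon_1$ has order $2^7$ with $\Upsilon_1/\langle r_1\rangle$ of order $2^6$ supporting the $\Sp_6(2)$ natural orthogonal form; the key structural input is that, because $\Upsilon_1/\langle r_1\rangle$ is the natural module and $\langle r_1\rangle$ is the (nonsplit, by Lemma~\ref{nonsplitmods}) trivial submodule, the squaring map $\Upsilon_1 \to \langle r_1\rangle$ together with the $\Sp_6(2)$-action pins down $\Upsilon_1$ as a module. I would identify the three $H_1$-orbits on $(\Upsilon_1/\langle r_1\rangle)^\#$ (singular vectors, and the two classes of non-singular vectors under the relevant orthogonal geometry, or more simply the orbits of $\Sp_6(2)$ on its natural module which has $63$ nonzero vectors in a single orbit — so in fact $\Upsilon_1/\langle r_1\rangle$ has a \emph{single} nonzero orbit of length $63$). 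Thus there are at most two $K_1$-classes of involutions meeting $\Upsilon_1 \setminus \langle r_1\rangle$, distinguished by whether $x$ or $xr_1$ has the smaller centralizer, i.e.\ whether a given preimage of a nonzero vector squares to $1$ or to $r_1$.

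\textbf{Key steps for (i).} First, by Lemma~\ref{nonsplitmods} applied with $W = \Upsilon_1$, $C_{\Upsilon_1}(T) > \langle r_1\rangle$, so $Z(T) \le \Upsilon_1$ has order $4$ (Lemma~\ref{itssp}(iv)); write $Z(T) = \langle r_1, z\rangle$. Since $H_1$ is transitive on $(\Upsilon_1/\langle r_1\rangle)^\#$, every $x \in \Upsilon_1\setminus\langle r_1\rangle$ is $H_1$-conjugate (hence $K_1$-conjugate) into $Z(T)$, so I only need to analyse $z$ and $zr_1$. Now I compute $C_G(z)$: because $z \in Z(T) \le \Upsilon_1$ projects to a nonzero (hence, by transitivity, singular-type) vector of the natural module, $C_{H_1}(z)$ is a maximal parabolic of $\Sp_6(2)$ lifted, of shape roughly $2^{1+6+8}.(2^{\text{stuff}}.\SL_3(2))$ or similar — but more to the point, I would instead recognize $z$ as a specific involution of $G$ by using already-proven centralizer structure. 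Concretely: $r_1$ and $r_2$ were chosen (Notation~\ref{nota}, Lemma~\ref{itssp}) so that $K_i = C_G(r_i)$ has the shape of a $2$-centralizer of type $\F_4(2)$; $Z(T)$ contains a conjugate of $r_2$ by the $\F_4$-set-up symmetry (indeed $Z(T) = \langle r_1, r_2^{\text{something}}\rangle$), and the point of Lemma~\ref{fusionr1r2} is that $r_1 \sim_G r_2$ iff $r_1 \sim_M r_2$. The crucial asymmetry is: $r_1$ squares-class is distinguished inside $\Upsilon_1$ because $r_1 = [\Gamma_1,\Gamma_1] = \Phi(\Sigma_1)$, whereas a non-central involution $z \in \Upsilon_1$ does \emph{not} lie in $\Sigma_1'$; equivalently $C_{\Gamma_1}(r_1) = \Gamma_1$ has order $2^{15}$ while $C_{\Gamma_1}(z)$ is strictly smaller. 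I would show that $r_1$ is not $K_1$-conjugate to any element of $\Upsilon_1\setminus\langle r_1\rangle$ by this centralizer-order / $\Gamma_1$-structure argument (this is essentially the parenthetical remark in the proof of Lemma~\ref{itssp} that $s \in \Gamma_i\setminus\Upsilon_i$ is not $K_i$-conjugate into $\Upsilon_i$, run in the reverse direction), and then that $z$ and $zr_1$ are not $K_1$-conjugate by comparing $|C_G(z)|$ with $|C_G(zr_1)|$ using the module structure of $\Upsilon_1$ (one of $z$, $zr_1$ lies in more conjugates of $\Sigma_1$, or has larger centralizer in $\Gamma_1$).

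\textbf{Deducing (ii).} Given (i), suppose $x \in \Upsilon_1^\#$ and $x^g \in \Gamma_1$ for some $g \in K_1$; I must show $x^g \in \Upsilon_1$. If $x = r_1$ this is clear since $r_1 = Z(\Gamma_1)'$... more carefully: $x^g$ is an involution in $\Gamma_1$, and the involutions of $\Gamma_1$ lying outside $\Upsilon_1$ are exactly those in $\Gamma_1\setminus\Upsilon_1$ that square to $1$ (the coset structure $\Gamma_1/\Upsilon_1$ being the spin module, with squaring $\Gamma_1 \to \langle r_1\rangle$). Each such $s \in \Gamma_1\setminus\Upsilon_1$ satisfies $s \sim_{\Gamma_1} sr_1$ (conjugation inside the extraspecial part $\Sigma_1$), so if $x^g = s$ then $x \sim_{K_1} s \sim_{K_1} sr_1$, contradicting (i) with the roles reshuffled — here one needs that $x \in \Upsilon_1$ forces $x$ and $xr_1$ not $K_1$-conjugate, which is exactly (i). Hence $x^g \in \Upsilon_1$, proving strong closure.

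\textbf{Main obstacle.} The delicate point is step (i): ruling out that a \emph{non-central} involution of $\Upsilon_1$ is $K_1$-conjugate to $r_1$, and ruling out the $z \leftrightarrow zr_1$ fusion. For the first, the cleanest route is the extraspecial-subgroup invariant: $r_1 \in \Phi(\Sigma_1)$ with $\Sigma_1 \le \Gamma_1$ extraspecial of order $2^9$, and one shows via Lemma~\ref{sp62facts}(iv)(v) (no $2^4$ with all involutions conjugate, no extraspecial $2^7$ inside $\Sp_6(2)$) that a non-central $z$ cannot play this role. For the second, I expect to need the precise action of $\Sp_6(2)$ on the $7$-dimensional nonsplit module $\Upsilon_1$ (Lemma~\ref{nonsplitmods}) to see that the $126$ vectors above the $63$ nonzero module-vectors split into the $63$ squaring to $1$ and the $63$ squaring to $r_1$, with these two sets \emph{not} fused in $K_1$ — and here one invokes that $H_1$ controls $K_1$-fusion in $\Gamma_1/\langle r_1\rangle$ but the $\langle r_1\rangle$-ambiguity must be resolved by a centralizer computation in $G$, comparing $C_G(z)$ (of $\F_4(2)$-type, one of the classes in Lemma~\ref{F42Classes}) against $C_G(zr_1)$. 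This centralizer bookkeeping is the heart of the matter.
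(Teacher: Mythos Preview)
Your deduction of (ii) from (i) is correct and is exactly what the paper does: any $y\in\Gamma_1\setminus\Upsilon_1$ satisfies $[y,\Gamma_1]=\langle r_1\rangle$, so $y$ is $\Gamma_1$-conjugate to $yr_1$, and (i) then forbids $y$ from being a $K_1$-conjugate of an element of $\Upsilon_1$.

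For (i), however, your proposed argument has two concrete problems, and you are missing a much shorter route. First, the ``squaring'' dichotomy you suggest---splitting the $126$ preimages of nonzero vectors into those squaring to $1$ and those squaring to $r_1$---does not exist: $\Upsilon_1$ is elementary abelian of order $2^7$ (Lemma~\ref{UPSstruct}(v)), so every element squares to $1$. There is no quadratic-form invariant on $\Upsilon_1$ itself to separate $z$ from $zr_1$. Second, your plan to resolve the $\langle r_1\rangle$-ambiguity by ``comparing $C_G(z)$ against $C_G(zr_1)$'' and invoking Lemma~\ref{F42Classes} is circular: at this point in the argument we do not know the structure of $C_G(z)$ for $z\in\Upsilon_1\setminus\langle r_1\rangle$ (establishing $K_i=N_G(\Gamma_i)$ is precisely the goal of this section), and Lemma~\ref{F42Classes} is a statement about $\F_4(2)$, not about $G$. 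Note also that $C_{H_1}(z)=C_{H_1}(zr_1)$ for any $z\in\Upsilon_1$, since $r_1$ is central in $H_1$, so centralizers inside $H_1$ cannot separate them either.

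The paper's argument is a one-line Burnside-type observation which you set up but then abandon. You correctly reduce (via transitivity of $H_1$ on $(\Upsilon_1/\langle r_1\rangle)^\#$ and Lemma~\ref{nonsplitmods}) to the case $x\in Z(T)\setminus\langle r_1\rangle$. Then \emph{both} $x$ and $xr_1$ lie in $Z(T)$. If they were $K_1$-conjugate, then since $T\in\Syl_2(K_1)$ they would be conjugate in $N_{K_1}(T)$; but $N_{K_1}(T)=T$ (this was established in the proof of Lemma~\ref{itssp}), and $T$ centralizes $Z(T)$, a contradiction. That is the entire proof of (i).
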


\begin{proof}  (i) Obviously, if $x= r_1$, the result is true. So we may suppose that $x \in \Upsilon_1 \setminus \langle r_1\rangle$. Since $H_1$ acts transitively on $(\Upsilon_1/\langle r_1\rangle)^\#$, we may additionally assume
$ x\langle r_1\rangle \in C_{{\Upsilon_1/\langle r_1\rangle}}( T)$ which has order $2$ by
Lemma~\ref{sp62natural}. As by Lemma~\ref{nonsplitmods} the preimage of $ C_{{\Upsilon_1/\langle r_1\rangle}}( T)$ is centralized by $T$ we
have $x \in Z(T)$.

Suppose that $x$ is $K_1$-conjugate to $xr_1$. Then as $x$ and $xr_1 \in Z(T)$, this conjugation must happen in
$N_{K_1}(T)$. Since $T \in \syl_2(K_1)$, this is impossible and it follows that $x$ is not $K_1$-conjugate to
$xr_1$. This proves (i)

Now consider  $y \in \Gamma_1\setminus \Upsilon_1$.  Then $[y,\Gamma_1] =\langle r_1\rangle$  and so $y$ is
conjugate to $r_1y$ in $\Gamma_1$. Therefore (i) implies (ii).
\end{proof}

\begin{lemma}\label{snormsigma} Let $x \in \Upsilon_1$, $g \in K_1$ and assume that $s=x^g$ with $s \in T \setminus \Gamma_1$. Then $s$ normalizes a $H_1$-conjugate of $I_1\Gamma_1$ and $\Sigma_1$.
\end{lemma}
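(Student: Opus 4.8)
The plan is to argue that $s$, being $K_1$-conjugate to an element $x\in\Upsilon_1$, acts on $\Gamma_1$ in the same way a suitable element of $\Upsilon_1$ acts on $\Gamma_1^g$, and then to exploit the signalizer-uniqueness already established. First I would invoke Lemma~\ref{itssp}(iii),(iv): a Sylow $2$-subgroup $T$ of $H_1=N_G(\Gamma_1)$ is also Sylow in $K_1$, and $\Gamma_1/\langle r_1\rangle = J(T/\langle r_1\rangle)$. Since $s=x^g\in T$, the group $\Gamma_1^{g^{-1}}$ is another conjugate of $\Gamma_1$, and I would like to replace $g$ by $gh$ with $h\in C_{K_1}(x)$ so that $\Gamma_1^{g^{-1}}$ (equivalently, the relevant centralizer $C_{\Gamma_1}(s)$) lies inside $T$; this is possible because $N_G(\Gamma_1^{g^{-1}})$ contains a Sylow $2$-subgroup of $C_{K_1}(s)$ and $T\in\syl_2(K_1)$, exactly as in the conjugacy-adjustment arguments used in Lemma~\ref{itssp}. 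After this adjustment $s$ normalizes $\Gamma_1^{g^{-1}}$, and by Lemma~\ref{itssp} applied to that conjugate, $s$ then normalizes $I_1^{g^{-1}}\Gamma_1^{g^{-1}}$ and $\Sigma_1^{g^{-1}}$.

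The key point to check is that $\Gamma_1^{g^{-1}}$ is itself $H_1$-conjugate to $\Gamma_1$ in the required sense, so that ``$s$ normalizes a $H_1$-conjugate of $I_1\Gamma_1$ and of $\Sigma_1$'' is literally the statement obtained. Here I would use that $x\in\Upsilon_1 = Z(\Gamma_1)$, so $\Gamma_1 \le C_{K_1}(x)$, and hence both $\Gamma_1$ and $\Gamma_1^{g}$ (wait: we want $\Gamma_1^{g^{-1}}$, same thing up to relabeling) are maximal signalizers sitting inside $C_{K_1}(s)$ after the adjustment. More precisely, since $s\in T\setminus\Gamma_1$ and $\Gamma_1/\langle r_1\rangle$ is the Thompson subgroup of $T/\langle r_1\rangle$, conjugating $\Gamma_1$ by an element of $K_1$ taking $x$ to $s$ produces a subgroup whose image mod $\langle s\rangle$ is the Thompson subgroup of the corresponding Sylow image; I would then use that any two such conjugates of $\Gamma_1$ containing $s$ in their centre (or more carefully: normalized by $s$) are $C_{K_1}(s)$-conjugate, reducing to the signalizer-uniqueness Lemma~\ref{MaxSig}: $\Gamma_1$ is the unique maximal signalizer for $I_1^1$ in $K_1$, and likewise each conjugate is the unique maximal signalizer for the corresponding conjugate of $I_1^1$.

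The main obstacle will be bookkeeping the conjugation carefully enough to land in $H_1$-conjugacy rather than merely $K_1$-conjugacy, and to ensure that after replacing $g$ we have not only $C_{\Gamma_1}(s)\le T$ but genuinely $s\in N_G(\Gamma_1^{g^{-1}})$. I expect this to follow by the same device as in the proof of Lemma~\ref{itssp}(i): since $N_G(\Gamma_1^{g^{-1}})$ contains a full Sylow $2$-subgroup of $C_G(s)\ge C_{K_1}(s)$, and $s\in T$, there is $f\in C_{K_1}(s)$ with $T^{f}\cap$ (that Sylow $2$-subgroup) large enough to force $\Gamma_1^{g^{-1}f}\le T$ and hence, by the Thompson-subgroup characterization, $\Gamma_1^{g^{-1}f} = \Gamma_1$ or at worst a $T$-conjugate of it; tracking this through shows $s$ normalizes $\Gamma_1^{g^{-1}f}$, an $H_1$-conjugate of $\Gamma_1$, and therefore normalizes the corresponding conjugates of $I_1\Gamma_1$ and $\Sigma_1$, which is exactly the assertion. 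Finally, Lemma~\ref{sc} guarantees $s$ is not $K_1$-conjugate into $\Upsilon_1$ in a way that would collapse the argument, so $s\notin\Gamma_1$ is consistent throughout.
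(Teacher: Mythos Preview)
Your proposal misses the key simplification and, as written, does not establish the claim. The essential observation is that $s\in T\le H_1=N_G(\Gamma_1)$: there is nothing to import from outside $H_1$. Since $s\notin\Gamma_1$, the coset $s\Gamma_1$ is an involution in $H_1/\Gamma_1\cong\Sp_6(2)$, and in $\Sp_6(2)$ every involution is conjugate into the normalizer of the elementary abelian subgroup $I_1\Gamma_1/\Gamma_1$ of order $3^3$ (for instance because every involution centralizes some $3$-element, and all $3$-elements are conjugate into $I_1\Gamma_1/\Gamma_1$ by Lemma~\ref{sp62facts}(vii)). Hence there is $h\in H_1$ with $s\in N_{H_1}((I_1\Gamma_1)^h)$. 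But $N_{H_1}(I_1\Gamma_1)=F_1\Gamma_1$ and both $F_1$ and $\Gamma_1$ normalize $\Sigma_1$, so $s$ normalizes $\Sigma_1^h$ as well. That is the whole proof; the hypothesis $s=x^g$ with $x\in\Upsilon_1$ is not even used here.

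Your route instead tries to transport $I_1\Gamma_1$ and $\Sigma_1$ through the conjugation $g\in K_1$ and then argue the result is an $H_1$-conjugate. Apart from the repeated confusion between $g$ and $g^{-1}$, the crucial step you need is an $f\in C_{K_1}(s)$ with $\Gamma_1^{gf}\le T$, so that the Thompson-subgroup characterization $\Gamma_1/\langle r_1\rangle=J(T/\langle r_1\rangle)$ forces $\Gamma_1^{gf}=\Gamma_1$ and hence $gf\in H_1$. But you have not shown that a Sylow $2$-subgroup of $C_{K_1}(s)$ lies inside $T$: all you know is $s\in T$, not $s\in Z(T)$, so $C_T(s)$ need not be Sylow in $C_{K_1}(s)$. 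Your appeals to Lemma~\ref{itssp} and to signalizer uniqueness do not supply this, and the invocation of Lemma~\ref{sc} at the end is irrelevant to the normalization claim. The argument can perhaps be repaired with extra Sylow bookkeeping, but it is entirely unnecessary once you notice that $s$ already lies in $H_1$ and the question is purely one of conjugacy in $\Sp_6(2)$.
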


\begin{proof}  Since in $H_1/\Gamma_1\cong \Sp_6(2)$ every
involution is conjugate in to $N_{H_1/\Gamma_1}(I_1 \Gamma_1/\Gamma_1)$, we may as well suppose that $s  $
normalizes $I_1 \Gamma_1$. In particular by Lemma~\ref{sigmai}  we may additionally  assume $\Sigma_1^s =
\Sigma_1$.
\end{proof}

\begin{lemma}\label{3prime}  Let $x \in \Upsilon_1$, $g \in K_1$ and assume that $s=x^g$ with $s \in T \setminus \Gamma_1$.
 Then the following hold:
\begin{enumerate}
\item $C_{\Gamma_1/\Upsilon_1}(s) = C_{\Gamma_1}(s)\Upsilon_1/\Upsilon_1$; and
\item$C_{H_1}(s)$ is a $3^\prime$-group.
\end{enumerate}
\end{lemma}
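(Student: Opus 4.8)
The plan is to exploit the fact that $s$ is an involution in $T\setminus\Gamma_1$ lying in the $H_1$-conjugacy class of some $x\in\Upsilon_1$, and to read off its action on $\Gamma_1$ from the known module structure supplied by Lemma~\ref{itssp}. By Lemma~\ref{snormsigma} we may assume $s$ normalizes $I_1\Gamma_1$ and $\Sigma_1$; in particular $s$ acts on $\Gamma_1/\Upsilon_1$ (the spin module for $H_1/\Gamma_1\cong\Sp_6(2)$) and on $\Upsilon_1/\langle r_1\rangle$ (the natural module). Since $s\in\Upsilon_1^{K_1}$ and $\Upsilon_1$ is strongly closed in $\Gamma_1$ with respect to $K_1$ (Lemma~\ref{sc}), $s$ is \emph{not} $K_1$-conjugate (hence not $H_1$-conjugate) to an element of $\Gamma_1$, and by Lemma~\ref{sc}(i) $s$ is not $K_1$-conjugate to $sr_1$. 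Combined with the constraint that $s$ is an involution normalizing $\Gamma_1$ with $s^2=1$, this pins down $\langle s,\Gamma_1\rangle/\Gamma_1$ as a specific involution class in $\Sp_6(2)$; referring to Table~\ref{Table1} the image $\bar s$ must be of type $a_2$ (a transvection on the natural module): an $a_2$-element has $C_{\Upsilon_1/\langle r_1\rangle}(\bar s)$ of codimension $1$, and the other involution types would force $s$ either to centralise too little or to be conjugate to $sr_1$.

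**Proving (i).** For (i) I would argue that $s$ cannot square to a non-trivial element of $\Gamma_1$ (it is an involution), so $\langle s\rangle\Upsilon_1/\Upsilon_1$ and $\langle s\rangle\Gamma_1/\Gamma_1$ are of order $2$. The claim $C_{\Gamma_1/\Upsilon_1}(s)=C_{\Gamma_1}(s)\Upsilon_1/\Upsilon_1$ is exactly the statement that no coset of $\Upsilon_1$ in $\Gamma_1$ is ``centralised modulo $\Upsilon_1$ but not centralised'': i.e.\ there is no $\gamma\in\Gamma_1$ with $[\gamma,s]\in\Upsilon_1\setminus\langle r_1\rangle$. Applying Lemma~\ref{involutionsonexspec} to the extraspecial group $\Sigma_1$ (normalized by $s$ by Lemma~\ref{snormsigma}), and using Lemma~\ref{sc}(i) (which says $s$ is not $\Sigma_1$-conjugate to $se$ for $e\in Z(\Sigma_1)^\#=\langle r_1\rangle^\#$, since $s\not\sim_{K_1}sr_1$), we get $[\Sigma_1,s]\le C_{\Sigma_1}(s)$ with $[\Sigma_1,s]$ elementary abelian. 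Since $\Gamma_1=\Sigma_1\Upsilon_1$ with $\Upsilon_1$ central in $\Gamma_1$, we have $[\Gamma_1,s]=[\Sigma_1,s]$, so $[\Gamma_1,s,s]\le[\Sigma_1,s,s]=1$; thus $[\gamma,s,s]=1$ for all $\gamma\in\Gamma_1$, which forces any $\gamma$ with $[\gamma,s]\in\Upsilon_1$ to satisfy $[\gamma,s]\in C_{\Upsilon_1}(s)$ — but that is not yet the claim. To get (i) precisely, I would note that the map $\gamma\Upsilon_1\mapsto[\gamma,s]$ is a homomorphism from $C_{\Gamma_1/\Upsilon_1}(s)$ into $\Upsilon_1$ with image $[\Gamma_1,s]\cap\Upsilon_1$; but $[\Gamma_1,s]=[\Sigma_1,s]$ has trivial intersection with $Z(\Gamma_1)=\Upsilon_1$ except in $\langle r_1\rangle$ — indeed $[\Sigma_1,s]\cap\Upsilon_1\le[\Sigma_1,s]\cap Z(\Sigma_1)=\langle r_1\rangle$, because $[\Sigma_1,s]$ is an elementary abelian subgroup of the extraspecial group $\Sigma_1$ and its intersection with $\Upsilon_1$ lies in $C_{\Sigma_1}(\Sigma_1)$... more carefully, $[\Sigma_1,s]\cap\Upsilon_1\le\Sigma_1\cap\Upsilon_1=\langle r_1\rangle$. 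Hence the homomorphism has image inside $\langle r_1\rangle$, and since $s$ and $sr_1$ are not $K_1$-conjugate there is in fact no $\gamma$ with $[\gamma,s]=r_1$ realised by conjugation within $\langle\Gamma_1,s\rangle$... here I would instead simply observe $[\Gamma_1,s]\cap\Upsilon_1=\langle r_1\rangle$ or $1$, and that $r_1\in[\Gamma_1,s]$ would make $s$ conjugate to $sr_1$ in $\Gamma_1$, contradicting Lemma~\ref{sc}(i); therefore $[\Gamma_1,s]\cap\Upsilon_1=1$, giving (i).

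**Proving (ii) and the main obstacle.** For (ii) I would use (i) together with the module structure. From (i), $|C_{\Gamma_1}(s)|=|\Upsilon_1\cap C_{\Gamma_1}(s)|\cdot|C_{\Gamma_1/\Upsilon_1}(s)|=|C_{\Upsilon_1}(s)|\cdot|C_{\Gamma_1}(s)\Upsilon_1/\Upsilon_1|$, and since $\bar s$ is an $a_2$-transvection on $\Upsilon_1/\langle r_1\rangle$ we get $|C_{\Upsilon_1}(s)|=2^6$ (using Lemma~\ref{nonsplitmods} to see $r_1$ is centralised and the natural module part gives codimension $1$), and $|C_{\Gamma_1/\Upsilon_1}(\bar s)|=2^6$ by the action of an $a_2$-element on the spin module (Table~\ref{Table1}, $\dim C_U(u_1)=6$). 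So $|C_{\Gamma_1}(s)|=2^{12}$, whence $|C_{\Gamma_1/\langle r_1\rangle}(s)|=2^{11}$, i.e.\ $C_{\Gamma_1}(s)$ has index $2^3$ in $\Gamma_1$. Now $C_{H_1}(s)\cap\Gamma_1=C_{\Gamma_1}(s)$ and $C_{H_1}(s)\Gamma_1/\Gamma_1\le C_{H_1/\Gamma_1}(\bar s)=C_{\Sp_6(2)}(a_2)\approx 2^{1+2+4}.(\SL_2(2)\times\SL_2(2))$ by Table~\ref{Table1}. The point is that the $3$-part of this centraliser acts on $W:=\Gamma_1/\langle r_1\rangle$, which as a $\GF(2)(C_{H_1}(s)\Gamma_1/\Gamma_1)$-module has composition factors the restrictions of a $6$-dimensional and an $8$-dimensional module; but $C_{\Gamma_1}(s)/\langle r_1\rangle$ has index $2^3$ in $W$, so $C_{\Gamma_1}(s)/\langle r_1\rangle=C_W(s)$ is an index-$8$ subspace invariant under $C_{H_1}(s)$. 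If some element $d$ of order $3$ lay in $C_{H_1}(s)$, then $d$ would centralise $s$ and act on $W$ fixing $C_W(s)$; I would derive a contradiction by showing $d$ would then be an offender-like element forcing $|W:C_W(d)|\le 8$ while $\langle s,d\rangle\cong 3$ or $6$ acts, contradicting Lemma~\ref{NotF} (which says $W$ is not an $F$-module): more precisely, since $[W,d]\le[W,\langle d\rangle]$ and $C_W(\langle s\rangle)$ is $d$-invariant of index $8=2^3<|\langle d\rangle|$ only if... — the clean route is that the subgroup $\langle d\rangle$ together with the $2$-group $C_T(s)$-part generates a group whose action on $W$ violates Lemma~\ref{NotF} or Lemma~\ref{Noover}. \textbf{The main obstacle} is precisely this last step: ruling out order-$3$ elements in $C_{H_1}(s)$ by a module-theoretic offender argument. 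I expect to finish it by combining the fixed-point data above with Lemma~\ref{NotF}: if $1\ne\langle d\rangle$ has order $3$ in $C_{H_1}(s)$, it acts on $W$ and $C_W(d)\supseteq$ something forced large by coprime action on $C_W(s)$ and on $W/C_W(s)$, while $\langle s\rangle$ provides an offender of order $2$ on $W$ of drop exactly $2^3$, which is impossible in a non-$F$-module whose relevant sections are the $6$- and $8$-dimensional $\Sp_6(2)$-modules. Hence $C_{H_1}(s)$ is a $3'$-group, proving (ii).
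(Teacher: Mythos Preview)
Your argument for (i) is essentially the paper's, but you mis-state the key facts in your final paragraph. The claim $[\Gamma_1,s]=[\Sigma_1,s]$ is false: since $s\notin\Gamma_1=C_{H_1}(\Upsilon_1)$ we have $[\Upsilon_1,s]\neq 1$, so $[\Gamma_1,s]=[\Sigma_1,s][\Upsilon_1,s]\neq[\Sigma_1,s]$; likewise $[\Gamma_1,s]\cap\Upsilon_1\ge[\Upsilon_1,s]>1$, contradicting your final assertion. The correct argument (which you nearly write earlier) is exactly the paper's: for $w\Upsilon_1\in C_{\Gamma_1/\Upsilon_1}(s)$ write $w=w_*u$ with $w_*\in\Sigma_1$, $u\in\Upsilon_1$; then $[w_*,s]\in\Sigma_1\cap\Upsilon_1=\langle r_1\rangle$, and $[w_*,s]=r_1$ would conjugate $s$ to $sr_1$, contrary to Lemma~\ref{sc}(i). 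So $w_*\in C_{\Gamma_1}(s)$ and (i) follows.

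Your approach to (ii) has a genuine gap and is in fact circular. You assert that $\bar s=s\Gamma_1$ is of type $a_2$, but from Table~\ref{Table1} an $a_2$-involution has $\dim C_V(\bar s)=4$, not codimension $1$; more importantly, the paper's Lemma~\ref{c2} shows $\bar s$ is actually of type $c_2$, and the proof of Lemma~\ref{c2} \emph{uses} part (ii) of the present lemma to eliminate types $a_2$, $b_1$, $b_3$. So you cannot determine the type first. Your proposed $F$-module argument via Lemma~\ref{NotF} does not apply either: that lemma concerns elementary abelian $2$-offenders, not the action of a $3$-element inside the centraliser of an involution, and you never produce such an offender.

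The paper's proof of (ii) is short and uses a completely different idea that you do not mention: the strong $3$-embedding of $H_1$ in $K_1$ (Lemma~\ref{Strong3}). If $W\in\Syl_3(C_{H_1}(s))$ were non-trivial, take $U\in\Syl_3(C_{H_1}(x))$ (of order $3^2$ since $\Upsilon_1/\langle r_1\rangle$ is the natural module); strong $3$-embedding gives $U\in\Syl_3(C_{K_1}(x))$, so $U^g\in\Syl_3(C_{K_1}(s))$ and $W\le U^{gh}$ for some $h\in C_{K_1}(s)$. Then $1\ne W\le H_1\cap H_1^{gh}$ forces $gh\in H_1$, making $s=x^{gh}$ an $H_1$-conjugate of $x\in\Gamma_1$, which is impossible as $\Gamma_1\trianglelefteq H_1$ and $s\notin\Gamma_1$.
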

\begin{proof} By Lemma~\ref{snormsigma} we may assume that $s$ normalizes both $I_1\Gamma_1$ and $\Sigma_1$.  Let $w \Upsilon_1 \in C_{\Gamma_1/\Upsilon_1}(s)$ and write $w= w_*u$ where $w_* \in \Sigma_1$
  and $u \in \Upsilon_1$. Then $$[w,s]= [w_*u,s] = [w_*,s][u,s] \in \Upsilon_1.$$
  As $s$ normalizes $\Sigma_1$, this
means that $[w_*,s]\in \Sigma_1\cap \Upsilon_1 = \langle r_1\rangle$. Since $x$ is not $K_1$-conjugate to $sr_1$,
we deduce that $w_*$ is centralized by $s$ and this proves (i).

Suppose that $W \in \Syl_3(C_{H_1}(s))$ and let $U \in \Syl_3(C_{H_1}(x))$. Then, as $ \Upsilon_1/\langle
r_1\rangle$ is the natural $\Sp_6(2)$-module, $U$ has order $3^2$ by Lemma~\ref{sp62natural}. Since
by Lemma~\ref{Strong3} $H_1$ is strongly $3$-embedded in $K_1$ we know that $U\in  \syl_3(C_{K_1}(x))$ and so
$U^g \in \Syl_3(C_{K_1}(s))$. Thus there exists $h \in C_{K_1}(s)$ so that $U^{gh} \ge W$. Consequently  $W \le
H_1 \cap H_1^{gh}$ and Lemma~\ref{Strong3}  yields $gh \in H_1$ which contradicts the fact that $s = x^{gh}$, $s
\in T \setminus \Sigma_1\Upsilon_1$ and $x \in \Upsilon_1$.
\end{proof}

Suppose that $s^* \in s\Gamma_1$ is an involution  which is conjugate to $s$ in $K_1$.

Then $ws= s^*$ with $w \in \Gamma_1$. We claim that $w \in C_{\Gamma_1}(s)$.  To see this we note that the other possibility is that $w^{s}= w^{-1}= wr_1$ and then  we  calculate
$${s^*}^{s}= (ws)^{s}= w^{s}s= w^{-1}s=wr_1s= s^*r_1$$ which contradicts Lemma~\ref{sc}(i).

Let
$q \in C_{\Gamma_1}(s)$ and assume that $[w,q]\neq 1$. Then, by Lemma~\ref{Gammabasic}, $[w,q]=r_1$ and
$$s^*{^q}=(ws)^q= w^qs= w[w,q]s= wsr_1=s^*r_1$$, which is also impossible. Therefore $w \in Z(C_{\Gamma_1}(s))$. Since $s$ normalizes $\Sigma_1$ and $\Sigma_1$ is extraspecial, the Three Subgroup Lemma implies $Z(C_{\Sigma_1}(s))= [\Sigma_1,s]$.  Thus
 Lemma~ \ref{sc}(i) implies that

\begin{lemma}\label{newclaim}   Let $x \in \Upsilon_1$, $g \in K_1$ and assume that $s=x^g$ with $s \in T \setminus \Gamma_1$. If $s$ is $H_1$-conjugate to $s^*=w s$ where $w \in \Gamma_1$, then
$w \in Z(C_{\Gamma_1}(s))\le  [\Gamma_1,s]\Upsilon_1$. In particular, $s\Upsilon_1$ is $\Gamma_1/\Upsilon_1$-conjugate to
$s^*\Upsilon_1$ and $C_{H_1/\Gamma_1}(s\Upsilon_1)= C_{H_1/\Upsilon_1}(s)\Gamma_1/\Gamma_1$.
\end{lemma}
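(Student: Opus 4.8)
Looking at Lemma~\ref{newclaim}, the statement collects several conclusions that should follow from the preceding discussion. Let me think about how to organize the proof.

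\bigskip

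The plan is to follow the thread already laid out in the paragraph immediately preceding the lemma statement, but to present it cleanly. First I would invoke Lemma~\ref{snormsigma} to assume that $s$ normalizes both $I_1\Gamma_1$ and $\Sigma_1$; this is harmless since we are proving a conjugation-invariant statement about $s$. The first claim is that if $s$ is $H_1$-conjugate to $s^* = ws$ with $w \in \Gamma_1$, then $w \in Z(C_{\Gamma_1}(s))$. This is exactly the computation sketched above: writing $s^* = ws$, since $s^* \in s\Gamma_1$ is an involution we have $(ws)^2 = 1$, i.e. $w \cdot w^s = 1$, so either $w \in C_{\Gamma_1}(s)$ or $w^s = w^{-1} = wr_1$ (using $[\Gamma_1,\Gamma_1] = \langle r_1\rangle$ from Lemma~\ref{Gammabasic}); the latter gives ${s^*}^s = w^s s = wr_1 s = s^* r_1$, contradicting Lemma~\ref{sc}(i) since $s^*$ is $K_1$-conjugate to $x \in \Upsilon_1$. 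Hence $w \in C_{\Gamma_1}(s)$. Next, for $q \in C_{\Gamma_1}(s)$ with $[w,q] \ne 1$ we get $[w,q] = r_1$ and $(s^*)^q = w^q s = w[w,q]s = wsr_1 = s^*r_1$, again contradicting Lemma~\ref{sc}(i). Therefore $w$ centralizes $C_{\Gamma_1}(s)$, i.e. $w \in Z(C_{\Gamma_1}(s))$.

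\bigskip

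The second part identifies $Z(C_{\Gamma_1}(s))$. Since $s$ normalizes the extraspecial group $\Sigma_1$ and centralizes $r_1 = Z(\Sigma_1)$, the Three Subgroup Lemma gives $[C_{\Sigma_1}(s),[\Sigma_1,s]] \le [[\Sigma_1,s,s],C_{\Sigma_1}(s)] \cdots$ — more precisely, from $[\Sigma_1/\langle r_1\rangle,s]$ being perpendicular to $C_{\Sigma_1/\langle r_1\rangle}(s)$ under the symplectic form one gets $Z(C_{\Sigma_1}(s)) = [\Sigma_1,s]$ (this is the standard fact about an involution acting on an extraspecial group; note by Lemma~\ref{sc}(i) $s$ is not $\Sigma_1$-conjugate to $sr_1$, so Lemma~\ref{involutionsonexspec} applies and $[\Sigma_1,s]$ is elementary abelian contained in $C_{\Sigma_1}(s)$). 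Combining with $C_{\Gamma_1}(s) = C_{\Sigma_1}(s)\Upsilon_1$ modulo the fact that $\Upsilon_1$ is central in $\Gamma_1$, we obtain $Z(C_{\Gamma_1}(s)) = Z(C_{\Sigma_1}(s))\Upsilon_1 = [\Sigma_1,s]\Upsilon_1 \le [\Gamma_1,s]\Upsilon_1$. Thus $w \in [\Gamma_1,s]\Upsilon_1$, giving the displayed inclusion $w \in Z(C_{\Gamma_1}(s)) \le [\Gamma_1,s]\Upsilon_1$.

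\bigskip

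For the final two assertions: since $w \in [\Gamma_1,s]\Upsilon_1$, the coset $s^*\Upsilon_1 = ws\Upsilon_1$ with $w\Upsilon_1 \in [\Gamma_1,s]\Upsilon_1/\Upsilon_1 = [\Gamma_1/\Upsilon_1, s\Upsilon_1]$; and since $\Gamma_1/\Upsilon_1$ acts on the set of involutions in the coset $s\Upsilon_1/\Upsilon_1 \cdot (\Gamma_1/\Upsilon_1)$ with $[\Gamma_1/\Upsilon_1,s\Upsilon_1]$ as the orbit of $s\Upsilon_1$ under conjugation, we conclude $s\Upsilon_1$ is $\Gamma_1/\Upsilon_1$-conjugate to $s^*\Upsilon_1$. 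Finally, to see $C_{H_1/\Gamma_1}(s\Upsilon_1) = C_{H_1/\Upsilon_1}(s)\Gamma_1/\Gamma_1$: one inclusion is trivial; for the other, if $g\Gamma_1$ centralizes $s\Upsilon_1$ then $s^g \in s\Upsilon_1 \cap T$ wait — actually $s^g\Gamma_1 = s\Gamma_1$ only says $s^g \in s\Gamma_1$; intersecting with the coset $s\Upsilon_1$ needs the just-proved fact that within $s\Gamma_1$, every $K_1$-conjugate $s^*$ of $s$ lying in $s\Gamma_1$ has $s^*\Upsilon_1$ already $\Gamma_1$-conjugate (indeed $\Gamma_1/\Upsilon_1$-conjugate) to $s\Upsilon_1$, so modifying $g$ by an element of $\Gamma_1$ we may assume $s^g \in s\Upsilon_1$, giving $g\Gamma_1 \in C_{H_1/\Upsilon_1}(s)\Gamma_1/\Gamma_1$. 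The main obstacle I anticipate is getting the fourth assertion's bookkeeping exactly right — making sure that ``$C_{H_1/\Gamma_1}(s\Upsilon_1)$'' is interpreted correctly and that the reduction using $\Gamma_1/\Upsilon_1$-conjugacy genuinely lands the representative in $s\Upsilon_1$ rather than merely $s\Gamma_1$; everything else is the short commutator computations already outlined before the statement.
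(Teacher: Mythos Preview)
Your proposal is correct and follows essentially the same approach as the paper; indeed the paper's ``proof'' is precisely the paragraph of computations preceding the lemma statement, which you have reproduced and then expanded to cover the ``In particular'' clauses that the paper leaves implicit.

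One small correction: your intermediate claim $C_{\Gamma_1}(s) = C_{\Sigma_1}(s)\Upsilon_1$ is not literally true, since $s \notin \Gamma_1$ need not centralize $\Upsilon_1$. What you actually need (and what your argument really uses) is the weaker containment
\[
Z(C_{\Gamma_1}(s)) \le C_{\Gamma_1}(C_{\Sigma_1}(s)) = Z(C_{\Sigma_1}(s))\Upsilon_1,
\]
the last equality holding because $\Upsilon_1 = Z(\Gamma_1)$ and, in the extraspecial group $\Sigma_1$, the subgroup $C_{\Sigma_1}(s) \ge [\Sigma_1,s]$ is self-perpendicular in the sense that $C_{\Sigma_1}(C_{\Sigma_1}(s)) = Z(C_{\Sigma_1}(s))$. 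With this adjustment your chain $Z(C_{\Gamma_1}(s)) \le [\Sigma_1,s]\Upsilon_1 \le [\Gamma_1,s]\Upsilon_1$ goes through exactly as written, and the rest of your argument for the two final assertions is correct.
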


Now we are going to identify the involution $s\Gamma_1$ in $H_1/\Gamma_1 \cong \Sp_6(2)$.

\begin{lemma}\label{c2}  Let $x \in \Upsilon_1$, $g \in K_1$ and assume that $s=x^g$ with $s \in T \setminus \Gamma_1$.  Then $s\Gamma_1$ is an
involution of type $c_2$ and all $K_1$-conjugates of $x $ in $H_1\setminus \Gamma_1$ project to elements of this type.
\end{lemma}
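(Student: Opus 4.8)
The plan is to pin down the involution class of $s\Gamma_1$ in $H_1/\Gamma_1\cong\Sp_6(2)$ by computing the fixed spaces of $s$ on the two modules $\Gamma_1/\Upsilon_1$ (the spin module) and $\Upsilon_1/\langle r_1\rangle$ (the natural module), and comparing with the data in Table~\ref{Table1} together with Lemmas~\ref{sp62natural} and \ref{sp62spin}. First I would invoke Lemma~\ref{snormsigma} to arrange that $s$ normalizes $I_1\Gamma_1$ and $\Sigma_1$; then $s$ acts on the extraspecial group $\Sigma_1$ centralizing its centre $\langle r_1\rangle$, and on $\Upsilon_1$. By Lemma~\ref{3prime}(i), $C_{\Gamma_1/\Upsilon_1}(s)=C_{\Gamma_1}(s)\Upsilon_1/\Upsilon_1$, and by Lemma~\ref{newclaim} this fixed space has the same dimension as $C_{H_1/\Upsilon_1}(s)\Gamma_1/\Gamma_1$ controls; the point is that $[\Sigma_1,s]\le C_{\Sigma_1}(s)$ with $[\Sigma_1,s]$ elementary abelian (this is exactly Lemma~\ref{involutionsonexspec}, whose hypothesis is Lemma~\ref{sc}(i): $s$ is not $K_1$-conjugate to $sr_1$). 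Since $\Sigma_1$ is extraspecial of order $2^9$ and plus type, the possible dimensions of $C_{\Sigma_1/\langle r_1\rangle}(s)$ are $8,6,4$ and correspondingly $|[\Sigma_1,s]|$ is $1,2^2,2^4$.

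Next I would rule out the classes $a_2$, $b_1$, $b_3$ (here $b_3$ is automatically excluded since $s\in H_1$, not $\Aut$-type — more precisely $s\Gamma_1\in\Sp_6(2)$, so only $a_2$, $b_1$, $c_2$ remain in $\Sp_6(2)$; in Table~\ref{Table1} these are $u_1$, $u_3$, $u_4$). For a $c_2$-involution $u_4$, Table~\ref{Table1} gives $\dim C_V(u_4)=4$ and $\dim C_U(u_4)=4$, i.e. $[U,u_4]$ has dimension $4$ on the spin module; this matches the case $|[\Sigma_1,s]|=2^4$, i.e. $\dim C_{\Sigma_1/\langle r_1\rangle}(s)=4$. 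The crux is to exclude the other two possibilities. If $s\Gamma_1$ were of type $a_2$ ($u_1$), then on the natural module $\Upsilon_1/\langle r_1\rangle$ we would have $\dim C=4$ and on the spin module $\dim C=6$, so $[\Sigma_1,s]\le\langle r_1\rangle$ forces $s$ to centralize $\Sigma_1$ modulo its centre, hence (Burnside) $s$ centralizes $\Sigma_1$ up to the centre and then either centralizes $\Sigma_1$ or inverts it; but $s\notin\Gamma_1=\Sigma_1\Upsilon_1$ and $C_G(\Sigma_1)\le\Gamma_1$ by the self-centralizing properties established for $\Gamma_1$ (Lemma~\ref{MaxSig} and its consequences), so $s$ cannot centralize $\Sigma_1$; and $s$ cannot invert the extraspecial $\Sigma_1$ since that would force $s$ to be $\Sigma_1$-conjugate to $sr_1$, contradicting Lemma~\ref{sc}(i). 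This disposes of $a_2$. For $b_1$ ($u_3$): Lemma~\ref{sp62facts}(vi) together with Lemma~\ref{sp62line}(ii) gives that a Sylow $3$-subgroup of $C_{H_1/\Gamma_1}(s\Gamma_1)$ contains conjugates of $\langle\tau_1\rangle$ and $\langle\tau_2\rangle$; lifting this through the strongly $3$-embedded subgroup $H_1\le K_1$ (Lemma~\ref{Strong3}) would force $C_{H_1}(s)$ to have order divisible by $3$, contradicting Lemma~\ref{3prime}(ii). Hence $s\Gamma_1$ is of type $c_2$.

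Finally, since every $K_1$-conjugate of $x$ lying in $H_1\setminus\Gamma_1$ is an involution of the shape just analysed (the argument above used only that $s=x^g\in T\setminus\Gamma_1$), the same conclusion applies to all of them, which gives the ``in particular'' clause. I expect the main obstacle to be the clean exclusion of the $a_2$ case: one has to argue carefully that $s$ genuinely acts nontrivially on $\Sigma_1/\langle r_1\rangle$, which requires combining the self-centralizing/maximal-signalizer properties of $\Gamma_1$ with the non-fusion statement Lemma~\ref{sc}(i) and the structure of involutions acting on extraspecial groups (Lemma~\ref{involutionsonexspec}); the $b_1$ exclusion is comparatively soft given Lemma~\ref{3prime}(ii) and strong $3$-embedding.
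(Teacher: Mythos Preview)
Your proposal has two genuine gaps.

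First, your dismissal of the $b_3$ class is wrong. All four involution classes $a_2$, $b_1$, $b_3$, $c_2$ in Table~\ref{Table1} are classes in $\Sp_6(2)$; the ``$b$'' label refers only to the fact that in the embedding $\SU_4(2)\le\Aut(\SU_4(2))$ those involutions lie outside the derived group. Since $H_1/\Gamma_1\cong\Sp_6(2)$, the possibility $s\Gamma_1\in b_3$ is live and must be excluded. (Your own retention of $b_1$ while discarding $b_3$ on the same grounds is already inconsistent.) In the paper this case is eliminated by observing from Table~\ref{Table1} that for $b_3$ one has $\dim C_U=4=\dim[U,\cdot]$ and $\dim C_V=3=\dim[V,\cdot]$, so $[\Gamma_1/\langle r_1\rangle,s]=C_{\Gamma_1/\langle r_1\rangle}(s)$; the $3$-part of $C_{H_1/\Gamma_1}(s\Gamma_1)$ then lifts to $C_{H_1}(s)$, contradicting Lemma~\ref{3prime}(ii).

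Second, your $a_2$ argument is internally inconsistent and does not work. You correctly record that $\dim C_U(a_2)=6$, which gives $\dim[\Sigma_1/\langle r_1\rangle,s]=2$; but then you assert ``$[\Sigma_1,s]\le\langle r_1\rangle$'', which would mean this dimension is $0$. In the $a_2$ case $s$ does act nontrivially on $\Sigma_1/\langle r_1\rangle$, so your Burnside/centralize-or-invert dichotomy never gets off the ground. The paper instead handles $a_2$ and $b_1$ uniformly via Lemma~\ref{newclaim}: one has $|C_{H_1/\Upsilon_1}(s\Upsilon_1)|$ divisible by $3^2$, and then an analysis of the action of a $3^2$-subgroup on $C_{\Upsilon_1/\langle r_1\rangle}(s)/[\Upsilon_1/\langle r_1\rangle,s]$ (of order $4$ for $a_2$, and a natural $\Sp_4(2)$-module for $b_1$) forces a $3$-element to centralize $s$ itself, again contradicting Lemma~\ref{3prime}(ii). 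Your sketch for $b_1$ gestures at this but omits the crucial passage from centralizing $s\Gamma_1$ to centralizing $s$, which is exactly the content of Lemma~\ref{newclaim} together with the module calculation; strong $3$-embedding alone does not supply it.
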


\begin{proof}

By  Lemma~\ref{sp62facts} (i),  $s \Gamma_1$ is an involution  of type $a_2$, $b_1$, $b_3$ or $c_2$  in
$H_1/\Gamma_1\cong\Sp_6(2)$.  If $s\Gamma_1$ is of type $b_3$, then Lemma~\ref{sp62facts} implies that $[\Gamma_1/\langle r_1\rangle,s] = C_{\Gamma_1/\langle r_1\rangle}(s)$ and consequently $3$ divides $|C_{H_1}(s)|$. Hence  $s\Gamma_1$ is not of type $b_3$ by Lemma~\ref{3prime} (ii).

%

If $s\Gamma_1$ is of type $b_1$ or $a_2$, then, by Lemma~\ref{newclaim},  $|C_{H_1/\Upsilon_1}(s)|$ is divisible by $3^2$. If $s\Gamma_1$ is of type $a_2$, then  Lemma~\ref{sp62facts} implies $$|C_{\Upsilon/\langle r_1 \rangle}(s)/[\Upsilon/\langle r_1 \rangle,s]| = 4$$ and so $s$ is centralized by an element of order 3 contrary to Lemma~\ref{3prime} (ii). Thus $s\Gamma_1$ is not of type $a_2$. If $s\Gamma_1$ is of type $b_1$, then  Lemma~\ref{sp62facts} yields  $C_{\Upsilon/\langle r_1 \rangle}(s)/[\Upsilon/\langle r_1 \rangle,s]$ is the natural $\Sp_4(2)$-module and, as $\Sp_4(2)$ acts transitively on the non-trivial elements of this module, we again see $s$ is centralized by a $3$-element, a contradiction. Thus $s\Gamma_1$ must be of type $c_2$.
\end{proof}

\begin{lemma}\label{strongc}   $\Upsilon_1$
is strongly $2$-closed in $T$ with respect to $K_1$.
\end{lemma}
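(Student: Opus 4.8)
The plan is to argue by contradiction. Suppose $\Upsilon_1$ is not strongly $2$-closed in $T$ with respect to $K_1$. Since $\Upsilon_1$ is strongly closed in $\Gamma_1$ by Lemma~\ref{sc}(ii), there is an involution $s \in T \setminus \Gamma_1$ with $s \in x^{K_1}$ for some $x \in \Upsilon_1^\#$; write $s = x^g$, $g \in K_1$. As $g$ centralizes $r_1$ and $\Gamma_1 \le C_G(x)$ (because $x \in \Upsilon_1 = Z(\Gamma_1)$), we get $\Gamma_1^g \le C_{K_1}(s)$ and $[\Gamma_1^g,\Gamma_1^g] = \langle r_1\rangle = [\Gamma_1,\Gamma_1]$. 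By Lemma~\ref{snormsigma} I may assume $s$ normalizes $I_1\Gamma_1$ and $\Sigma_1$, and Lemma~\ref{c2} then tells us that $s\Gamma_1$ is a $c_2$-involution of $H_1/\Gamma_1 \cong \Sp_6(2)$; since the centralizer of a $c_2$-involution in $\Sp_6(2)$ has order $2^9\cdot 3$ by Table~\ref{Table1}, $c_2$ is $2$-central, so I may further arrange that $s\Gamma_1 \in Z(T/\Gamma_1)$ and hence $[T,s] \le \Gamma_1$.

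Next I would extract the precise local data forced by $s\Gamma_1$ being of type $c_2$. From the $\dim C_U$ and $\dim C_V$ columns of Table~\ref{Table1} together with Lemma~\ref{3prime}(i) one gets $|C_{\Upsilon_1}(s)| = 2^5$ and $|C_{\Gamma_1}(s)| = 2^9$, and also $|C_{\Sigma_1}(s)| = 2^5$; since $s$ is not $\Sigma_1$-conjugate to $sr_1$ by Lemma~\ref{sc}(i), Lemma~\ref{involutionsonexspec} gives that $[\Sigma_1,s]$ is elementary abelian and lies in $C_{\Sigma_1}(s)$, and comparing orders (a group cannot have centre of index $2$) forces $[\Sigma_1,s] = C_{\Sigma_1}(s)$, a maximal elementary abelian subgroup of $\Sigma_1$. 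Hence $C_{\Gamma_1}(s) = C_{\Sigma_1}(s)C_{\Upsilon_1}(s)$ is elementary abelian of order $2^9$ and $\langle s\rangle C_{\Gamma_1}(s)$ is elementary abelian of order $2^{10}$.

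Then I would locate $\Gamma_1^g$ inside $K_1$. Now $\Gamma_1/\langle r_1\rangle$ is elementary abelian of order $2^{14}$ and equals $J(T/\langle r_1\rangle)$ by Lemma~\ref{itssp}(iv), while a Sylow $2$-subgroup of $K_1$ has order $|T|=2^{24}$ by Lemma~\ref{itssp}(iii); choosing a Sylow $2$-subgroup of $K_1$ containing the $2$-group $\Gamma_1^g$ and applying the $K_1$-conjugate of Lemma~\ref{itssp}(iv) shows $\Gamma_1^g/\langle r_1\rangle$ is a maximal abelian subgroup of that Sylow modulo $\langle r_1\rangle$, hence $\Gamma_1^g = \Gamma_1^c$ for some $c \in K_1$. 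Thus $gc^{-1} \in H_1$ and $s \in \Upsilon_1^g = \Upsilon_1^c$; as $s \notin \Gamma_1$ we have $\Gamma_1^c \ne \Gamma_1$, i.e. $c \notin H_1$. In particular $s \in Z(\Gamma_1^c)$, so $s$ centralizes $\Gamma_1 \cap \Gamma_1^c$ and therefore $\Gamma_1 \cap \Gamma_1^c \le C_{\Gamma_1}(s)$ has order at most $2^9$, giving $|\Gamma_1\Gamma_1^c| \ge 2^{21}$.

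Finally I would derive the contradiction by playing $\Gamma_1$ off against $\Gamma_1^c$ inside $K_1$: both are normal in their respective Sylow $2$-subgroups, have derived group $\langle r_1\rangle$, contain extraspecial subgroups $\Sigma_1$, $\Sigma_1^c$ of order $2^9$ with $\Upsilon_1 = Z(\Gamma_1)$, $\Upsilon_1^c = Z(\Gamma_1^c)$ elementary abelian of order $2^7$, each strongly closed in the corresponding $\Gamma$ by Lemma~\ref{sc}(ii). Combining this with the fixed-point data of the second paragraph and the image of $\Gamma_1^c \cap H_1$ in the $c_2$-centralizer $2^{8}.\SL_2(2)$ of $H_1/\Gamma_1 \cong \Sp_6(2)$, one is forced into a configuration excluded by Lemma~\ref{sp62facts} (no elementary abelian subgroup of order $2^4$ with all involutions conjugate, no extraspecial subgroup of order $2^7$) or one produces a $3$-element centralizing $s$, contrary to Lemma~\ref{3prime}(ii). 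This rules out $s$ and proves the lemma; the argument for $\Upsilon_2$ in $K_2$ is identical. The main obstacle is precisely this last step: the book-keeping needed to see how the extraspecial part $\Sigma_1^c$ and the natural-module part $\Upsilon_1^c$ of $\Gamma_1^c$ meet the filtration $\langle r_1\rangle < \Upsilon_1 < \Gamma_1$, and converting the resulting numerology into a genuine contradiction using the tight subgroup constraints for $\Sp_6(2)$.
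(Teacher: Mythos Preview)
Your setup through the third paragraph is correct and matches the paper's approach: you correctly obtain $s\Gamma_1$ of type $c_2$, $F=C_{\Sigma_1}(s)=[\Sigma_1,s]$ of order $2^5$, and a $K_1$-conjugate $\Gamma_1^c$ of $\Gamma_1$ with $s\in\Upsilon_1^c$. (In fact your $\Gamma_1^c$ plays exactly the role of the paper's $\Gamma_1^*$, obtained as the preimage of the Thompson subgroup in a Sylow $2$-subgroup $T_0$ of $K_1$ with $s\in Z(T_0)$.) However, your final paragraph is not a proof: you yourself flag that the ``book-keeping'' to reach a contradiction is missing, and the two possible endgames you list are too vague to be verified.

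The decisive step you are missing is short and concrete. Work not with all of $\Gamma_1^c$ but with the image of $F$ in $H^c/\Gamma_1^c\cong\Sp_6(2)$. First, $F\cap\Gamma_1^c=\langle r_1\rangle$: if $y\in F\setminus\langle r_1\rangle$ lay in $\Gamma_1^c$, then since $y\in\Gamma_1\setminus\Upsilon_1$ (because $F\cap\Upsilon_1=\langle r_1\rangle$), Lemma~\ref{sc}(ii) forces $y\notin\Upsilon_1^c$, so $ys\in\Gamma_1^c\setminus\Upsilon_1^c$; but $ys\in Fs=s^{\Sigma_1}\cup(sr_1)^{\Sigma_1}$ is $K_1$-conjugate to $s$ or $sr_1$, both in $\Upsilon_1^c$, contradicting strong closure of $\Upsilon_1^c$ in $\Gamma_1^c$. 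Hence $F\Gamma_1^c/\Gamma_1^c$ has order $2^4$. Second, for each $y\in F\setminus\langle r_1\rangle$ the coset $y\langle r_1\rangle$ contains an element with $ys$ $K_1$-conjugate to $s$; since $s\in\Gamma_1^c$ and $r_1\in\Gamma_1^c$, the coset $y\Gamma_1^c=ys\Gamma_1^c$ therefore contains a $K_1$-conjugate of $x\in\Upsilon_1$ lying in $T_0\setminus\Gamma_1^c$, and Lemma~\ref{c2} (applied to $\Gamma_1^c$) says this coset has type $c_2$. Thus $F\Gamma_1^c/\Gamma_1^c$ is elementary abelian of order $2^4$ with all nontrivial elements of type $c_2$, which Lemma~\ref{sp62facts}(iv) forbids. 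That is the contradiction; your attempted size estimates on $\Gamma_1\cap\Gamma_1^c$ are not needed.
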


\begin{proof} Let $x \in \Upsilon_1$, $g \in K_1$ and assume that $s=x^g$ with $s \in T \setminus \Gamma_1$. By Lemma~\ref{c2}, $s$ acts as an element of type $c_2$ on the natural $\Sp_6(2)$-module.

Let $F=C_{\Sigma_1}(s)=[\Sigma_1,s]$. Then $F$ has order $2^5$ by Lemma~\ref{sp62facts}. Thus the coset $Fs$ consists solely of conjugates
of $s$ and of $sr_1$  and $F \cap \Upsilon_1 = \langle r_1\rangle$.

 Recall that we may suppose that $x \in Z(T)$. So  $s$ is a $2$-central element of $K_1$. 
Hence, as $F$ is a $2$-group which centralizes $s$, $F$ is contained in a Sylow $2$-subgroup $T_0$ of $K_1$ which
centralizes $s$. Let $\Gamma_1^*$ be the preimage of $J(T_0/\langle r_1\rangle)$, $\Upsilon_1^*=Z(\Gamma_1^*)$
and $H^*= N_G(\Gamma_1^*)$. By Lemma~\ref{itssp} we have that $\Gamma_1^*$ is conjugate to $\Gamma_1$ in $K_1$.  Then also $H^*$ is $K_1$-conjugate to $H_1$ and $H^*/\Gamma_1^*\cong \Sp_6(2)$.

Assume that $y \in F\setminus \langle r_1 \rangle$. Then $ys$ is conjugate to either $s$ or $sr_1$. In particular any coset of $\langle r_1 \rangle$ in $F$ contains some $y$ such that $ys$ is conjugate to $s$ in $K_1$. If $y \in
\Gamma_1^*$, then, as $y \in \Gamma_1\setminus \Upsilon_1$,  Lemma~ \ref{sc} (ii) yields $y \not \in \Upsilon_1^*$ and consequently we also have $ys \in \Gamma_1^*\setminus
\Upsilon_1^*$  which contradicts Lemma~\ref{sc}. Thus $y \not \in \Gamma_1^*$ and the coset $y\Gamma_1^*$ contains
$ys$. We deduce with Lemma~\ref{c2} that $y\Gamma_1^*$ is of type $c_2$ in $N_{K_1}(\Gamma_1^*)/\Gamma_1^*$ and
$F\Gamma_1^*/\Gamma_1^*$ is a  subgroup of order $2^4$ in which all the non-trivial elements are in class $c_2$.
Since $\Sp_6(2)$ has no such subgroups by Lemma~\ref{sp62facts}, we have a contradiction. Therefore  $\Upsilon_1$
is strongly 2-closed in $T$ with respect to $K_1$.
\end{proof}

Next we can prove the main result of this section:

\begin{lemma}\label{H=K} For $i=1,2$, we have $H_i= K_i$. In particular, $K_1$ and $K_2$ are similar to $2$-centralizers in $\F_4(2)$,
\end{lemma}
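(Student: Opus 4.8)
The plan is to apply Goldschmidt's Strongly Closed Abelian $2$-subgroup Theorem in the form of Lemma~\ref{Gold} to the group $K_i$ with respect to the abelian subgroup $\Upsilon_i$. Everything is in place for $i=1$ (and the case $i=2$ is identical), so I will work with $K_1$, $H_1=N_G(\Gamma_1)$, $T\in\syl_2(H_1)$, noting $T\in\syl_2(K_1)$ by Lemma~\ref{itssp}(iii).

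First I would verify the hypotheses of Lemma~\ref{Gold}. The group $\Upsilon_1$ is elementary abelian by Lemma~\ref{UPSstruct}(v). We have $N_{K_1}(\Upsilon_1)=N_G(\Gamma_1)=H_1$: indeed $\Gamma_1=\Sigma_1\Upsilon_1$ is recovered from $\Upsilon_1$ as $\Sigma_1$ centralizes $\Upsilon_1$, or more directly $\Upsilon_1/\langle r_1\rangle$ spans a subgroup whose normalizer lies in $H_1$; in any case $N_{K_1}(\Upsilon_1)$ normalizes $\Gamma_1$ since $\Gamma_1/\langle r_1\rangle=J(T/\langle r_1\rangle)$ is canonically attached (Lemma~\ref{itssp}(iv)), so $N_{K_1}(\Upsilon_1)\le H_1$, and conversely $H_1$ normalizes $\Upsilon_1=Z(\Gamma_1)$ (Lemma~\ref{Gammabasic}(i)). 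Then $H_1/C_{K_1}(\Upsilon_1)$: since $C_{K_1}(\Upsilon_1)$ is a normal subgroup of $H_1$ containing $\Gamma_1$ with $C_{H_1/\Gamma_1}(\Upsilon_1/\langle r_1\rangle)=1$ because $\Upsilon_1/\langle r_1\rangle$ is the faithful natural $\Sp_6(2)$-module by Lemma~\ref{itssp}(ii), we get $C_{K_1}(\Upsilon_1)=\Gamma_1$, hence $F^*(N_{K_1}(\Upsilon_1)/C_{K_1}(\Upsilon_1))=H_1/\Gamma_1\cong\Sp_6(2)$ is a non-abelian simple group. Finally $O_{2'}(K_1)=1$: this holds because $K_1=C_G(r_1)$ and $O_{2'}(C_G(r_1))$ is normalized by $I_1$ (which lies in $F_1\le K_1$), hence by Lemma~\ref{JSig} applied after noting it would be a $3'$-subgroup normalized by the elementary abelian $I_1$ — more carefully, any such subgroup would meet the signalizer theory already developed and be contained in $\Gamma_1$, forcing it to be trivial.

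The crucial remaining hypothesis of Lemma~\ref{Gold} is that $\Upsilon_1$ is strongly closed in $N_{K_1}(\Upsilon_1)=H_1$ with respect to $K_1$; that is, $\Upsilon_1^g\cap H_1\le\Upsilon_1$ for all $g\in K_1$. Since $T\in\syl_2(K_1)$ and every element of $\Upsilon_1^g\cap H_1$ is conjugate in $H_1$ into $T$, it suffices to show $\Upsilon_1^g\cap T\le\Upsilon_1$ for all $g\in K_1$, i.e. that $\Upsilon_1$ is strongly closed in $T$ with respect to $K_1$. But this is precisely the content of Lemma~\ref{strongc}, which was established through the chain Lemma~\ref{sc}, Lemma~\ref{snormsigma}, Lemma~\ref{3prime}, Lemma~\ref{newclaim}, Lemma~\ref{c2} culminating in Lemma~\ref{strongc}. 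Thus all hypotheses of Lemma~\ref{Gold} are met.

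Applying Lemma~\ref{Gold} gives $K_1=N_{K_1}(\Upsilon_1)=H_1$, and symmetrically $K_2=H_2=N_G(\Gamma_2)$. Finally, by Lemma~\ref{itssp} the group $N_G(\Gamma_i)$ is similar to a $2$-centralizer in $\F_4(2)$ in the sense of Definition~\ref{F4cent}: $N_G(\Gamma_i)/O_2(N_G(\Gamma_i))=H_i/\Gamma_i\cong\Sp_6(2)$, $O_2(N_G(\Gamma_i))=\Gamma_i=\Sigma_i\Upsilon_i$ is a product of $Z(\Gamma_i)=\Upsilon_i$ (elementary abelian of order $2^7$) with the extraspecial group $\Sigma_i$ of order $2^9$, and the module structure on $\Upsilon_i/\langle r_i\rangle$ (natural) and $\Gamma_i/\Upsilon_i=\Gamma_i/Z(\Gamma_i)$ (spin) is exactly as required, using $\Gamma_i'=\langle r_i\rangle=O_2(N_G(\Gamma_i))'$ from Lemma~\ref{Gammabasic}(ii). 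Hence $K_1$ and $K_2$ are similar to $2$-centralizers in a group of type $\F_4(2)$. The main obstacle in this final step is purely the strong-closure statement Lemma~\ref{strongc}, which has already been dispatched; what remains here is the verification of the Goldschmidt hypotheses, in particular the identification $C_{K_1}(\Upsilon_1)=\Gamma_1$ and $O_{2'}(K_1)=1$.
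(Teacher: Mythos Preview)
Your overall approach---strong closure of $\Upsilon_1$ via Lemma~\ref{strongc}, then Lemma~\ref{Gold}---is exactly the paper's. However, your verification of the hypotheses of Lemma~\ref{Gold} contains a genuine gap at the point where you claim $C_{K_1}(\Upsilon_1)=\Gamma_1$. Your argument uses only that $\Upsilon_1/\langle r_1\rangle$ is a faithful $H_1/\Gamma_1$-module; this establishes $C_{H_1}(\Upsilon_1)=\Gamma_1$, i.e.\ $C_{K_1}(\Upsilon_1)\cap H_1=\Gamma_1$, but not that $C_{K_1}(\Upsilon_1)\le H_1$. Similarly, your Thompson-subgroup remark does not yield $N_{K_1}(\Upsilon_1)\le H_1$, since $\Gamma_1/\langle r_1\rangle=J(T/\langle r_1\rangle)$ is determined by $T$, not by $\Upsilon_1$ alone, and there is no a~priori reason for $N_{K_1}(\Upsilon_1)$ to normalize $T$.

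The paper closes this gap with the signalizer machinery. One first observes that $C_{K_1}(\Upsilon_1)\cap C_S(r_1)=1$ (the image of $C_S(r_1)$ in $H_1/\Gamma_1\cong\Sp_6(2)$ acts faithfully on the natural module $\Upsilon_1/\langle r_1\rangle$), so $C_{K_1}(\Upsilon_1)$ is a $3'$-subgroup of $K_1$ normalized by $I_1^1$. Lemma~\ref{MaxSig} then forces $C_{K_1}(\Upsilon_1)\le\Gamma_1$, hence $C_{K_1}(\Upsilon_1)=\Gamma_1$. Now $N_{K_1}(\Upsilon_1)\le N_{K_1}(C_{K_1}(\Upsilon_1))=N_{K_1}(\Gamma_1)=H_1$ follows for free, giving $N_{K_1}(\Upsilon_1)=H_1$ and $N_{K_1}(\Upsilon_1)/C_{K_1}(\Upsilon_1)\cong\Sp_6(2)$, so the $F^*$ hypothesis of Lemma~\ref{Gold} holds. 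You do gesture at the signalizer theory when discussing $O_{2'}(K_1)=1$, but you need to invoke Lemma~\ref{MaxSig} precisely here, for $C_{K_1}(\Upsilon_1)$, where your module argument falls short.
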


\begin{proof} Again it is enough to prove the lemma for $i = 1$. By Lemma~\ref{strongc} we have that  $\Upsilon_1$
is strongly 2-closed in $T$ with respect to $K_1$. Therefore Lemma~\ref{Gold} yields $K_1\le
N_{G}(\Upsilon_1)$. Now $C_{K_1}(\Upsilon_1) \cap C_S(r_1)=1$  and so $C_{K_1}(\Upsilon_1)$ is a $3'$-group.
Since, by Lemma~\ref{MaxSig}, $\Gamma_1$ is the unique maximal $I_1^1$-signalizer in $K_1$, we conclude  $\Gamma_1 \ge
C_{K_1}(\Upsilon_1)$ and thus $\Gamma_1 =C_{K_1}(\Upsilon_1)$.  It follows that $K_1=N_{K_1}(\Upsilon_1) =
N_{K_1}(\Gamma_1)$ as claimed.
\end{proof}

\section{Proof of Theorem~\ref{MT}}

Having determined the shapes of the centralizers of the involutions $r_1$ and $r_2$ in this section  we accomplish the final identification of $G$.

Let $T \in \Syl_2(K_1)$, where $K_1 = C_G(r_1)$, and recall that $\Gamma_1= \Sigma_1\Upsilon_1= O_2(K_1)$. The  conclusion of the work of the previous sections is that $K_1$ is similar to a $2$-centralizer in $\F_4(2)$.

By Lemma~\ref{UPSstruct}, $\Upsilon_1$ contains a $G$-conjugate $s_2$ of $r_2$ with $s_2\neq r_1$. As $K_1$
acts transitively on the non-trivial elements of $\Upsilon_1/\langle r_1\rangle$,   Lemma~\ref{nonsplitmods} shows that we may further suppose  that $s_2 \in
Z(T)$ and $Z(T)=\langle r_1,s_2\rangle$. Define $U_2 = C_G(s_2)$. We have $U_2$ is $G$-conjugate to $K_2= C_G(r_2)$ and
thus, as $|K_1|=|K_2|$, we have  $T \in \syl_2(U_2)$.

We will use the two groups to construct a subgroup $P = \langle K_1,U_2 \rangle \cong F_4(2)$ using Theorem~\ref{P=F4}. Recall Definition~\ref{F4setup},  and note that $K_1$, $U_2$, $T$ is an $\F_4$ set-up.

\begin{lemma}\label{P=F4new}  $P = \langle K_1, U_2\rangle \cong \F_4(2)$.
\end{lemma}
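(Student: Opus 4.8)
The plan is to verify that $K_1$, $U_2$, $T$ is an $\F_4$ set-up in the sense of Definition~\ref{F4setup} and then to invoke Theorem~\ref{P=F4} directly; since all the substantive work lies in the earlier sections, this final step is essentially a bookkeeping exercise.

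First I would check condition (i) of Definition~\ref{F4setup}. We take $x_1 = r_1$ with $U_1 = K_1 = C_G(r_1)$; by Lemma~\ref{H=K} the group $K_1$ is similar to a $2$-centralizer in a group of type $\F_4(2)$. For $x_2$ we take the involution $s_2 \in \Upsilon_1$ isolated just before the lemma, which by Lemma~\ref{UPSstruct} is a $G$-conjugate of $r_2$ distinct from $r_1$; hence $U_2 = C_G(s_2)$ is isomorphic to $K_2 = C_G(r_2)$, and $K_2$ is similar to a $2$-centralizer in a group of type $\F_4(2)$, again by Lemma~\ref{H=K}. Since the property of being similar to a $2$-centralizer in a group of type $\F_4(2)$ (Definition~\ref{F4cent}) is plainly an isomorphism invariant, $U_2$ inherits it.

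Next I would check condition (ii). We have chosen $T \in \syl_2(K_1)$ and, using the transitivity of $K_1$ on the non-trivial vectors of $\Upsilon_1/\langle r_1\rangle$ together with Lemma~\ref{nonsplitmods}, arranged that $s_2 \in Z(T)$. Since $r_1 \in Z(K_1) \le Z(T)$ and, by Lemmas~\ref{itssp}(iv) and~\ref{H=K}, $|Z(T)| = 4$, whereas $\langle r_1, s_2\rangle$ already has order $4$ (because $s_2$ is an involution not contained in $\langle r_1\rangle$), we conclude $Z(T) = \langle r_1, s_2\rangle$. Thus $K_1$, $U_2$, $T$ is an $\F_4$ set-up in $G$.

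Finally, Theorem~\ref{P=F4} applies verbatim to this $\F_4$ set-up and yields $P = \langle K_1, U_2\rangle = \langle U_1, U_2\rangle \cong \F_4(2)$. I do not anticipate any obstacle at this stage: the genuine difficulties — establishing that $K_1$ and $K_2$ are similar to $2$-centralizers in a group of type $\F_4(2)$ (Lemma~\ref{H=K}) and the building-theoretic identification underlying Theorem~\ref{P=F4} — have already been overcome in the preceding sections.
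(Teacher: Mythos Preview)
Your proposal is correct and follows exactly the paper's approach: the paper establishes in the paragraph preceding the lemma that $K_1$, $U_2$, $T$ is an $\F_4$ set-up (with $Z(T)=\langle r_1,s_2\rangle$ arranged via Lemma~\ref{nonsplitmods}) and then the proof is the single line ``This follows directly from Theorem~\ref{P=F4}.'' You have simply spelled out the verification of Definition~\ref{F4setup} in slightly more detail than the paper does, but the content is identical.
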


\begin{proof} This follows directly from Theorem~\ref{P=F4}.
\end{proof}

In fact we have the following corollary:

\begin{corollary}\label{F42sub} If $X$ is any group which satisfies the assumptions of Theorem~{\rm \ref{MT}}, then $X$ contains a subgroup isomorphic to $\F_4(2)$.
\end{corollary}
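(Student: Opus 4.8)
The plan is to observe that Corollary~\ref{F42sub} is an immediate consequence of the work carried out in the proof of Theorem~\ref{MT} up to and including Lemma~\ref{P=F4new}, together with the hypothesis of Theorem~\ref{MT} itself. Indeed, suppose that $X$ is a group satisfying the assumptions of Theorem~\ref{MT}: so there is an element $Z \le X$ of order $3$ with $C_X(Z)$ similar to a $3$-centralizer in a group of type $\PSU_6(2)$ or $\F_4(2)$, and $Z$ is not weakly closed in $F^*(C_X(Z))$. These are precisely the hypotheses imposed on $G$ at the start of Section~4, and so every lemma proved in Sections~4 through~11 applies verbatim with $X$ in place of $G$.

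First I would run through the construction of the relevant subgroups inside $X$: from Notation~\ref{nota} we obtain the involutions $r_1, r_2$ and the $3$-elements $\rho_1,\rho_2$, and Lemmas~\ref{structM}, \ref{NJ}, \ref{eitheror}, \ref{thesame} give the structure of $M$, $L$ and the $3$-centralizers. Lemma~\ref{ItsSp62} upgrades these to $C_X(\rho_i) \cong 3 \times \Sp_6(2)$, and then Lemmas~\ref{sigmai}, \ref{Gammabasic}, \ref{itssp} and \ref{H=K} establish that $K_i = C_X(r_i) = N_X(\Gamma_i)$ is similar to a $2$-centralizer in a group of type $\F_4(2)$ in the sense of Definition~\ref{F4cent}. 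Next, following the paragraph preceding Lemma~\ref{P=F4new}, I would pick $T \in \Syl_2(K_1)$ and locate inside $\Upsilon_1$ a conjugate $s_2$ of $r_2$ with $s_2 \in Z(T)$ and $Z(T) = \langle r_1, s_2 \rangle$, using Lemma~\ref{UPSstruct} and Lemma~\ref{nonsplitmods}; setting $U_2 = C_X(s_2)$, the triple $K_1, U_2, T$ is an $\F_4$ set-up in $X$ in the sense of Definition~\ref{F4setup}.

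Finally, Theorem~\ref{P=F4} (equivalently Lemma~\ref{P=F4new}) yields $\langle K_1, U_2 \rangle \cong \F_4(2)$, which is the required subgroup of $X$. There is no real obstacle here: the entire content of Corollary~\ref{F42sub} is simply the remark that the construction of the $\F_4(2)$-subgroup carried out in the proof of Theorem~\ref{MT} only uses the standing hypotheses of that theorem and not any conclusion about $G$ being equal to this subgroup. The one point worth stating explicitly is that all of Sections~4--11 are developed purely under the hypothesis of Theorem~\ref{MT}, so that nothing from Section~12 beyond Lemma~\ref{P=F4new} is needed; in particular, we do not invoke Holt's Theorem or the strong $3$-embedding argument that completes the identification $G \cong \F_4(2)$ or $\Aut(\F_4(2))$. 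Thus the proof is a one-line appeal to Lemma~\ref{P=F4new} applied to $X$.
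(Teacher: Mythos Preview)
Your proposal is correct and takes essentially the same approach as the paper: both argue that the construction culminating in Lemma~\ref{P=F4new} uses only the standing hypotheses of Theorem~\ref{MT}, so it applies verbatim to any such $X$. The paper's proof is simply the one-line statement ``This follows immediately from Lemma~\ref{P=F4new}'', and your version is a more detailed unpacking of exactly that observation.
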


\begin{proof} This follows immediately from Lemma~\ref{P=F4new}. \end{proof}

Our aim is to show that $G$ is isomorphic to either $\F_4(2)$ or $\Aut(\F_4(2))$. For this we will show that $P$ is normal in $G$. As a first step we show that $P$ is normalized by $M$ and that $P_0=PM$ is either $\F_4(2)$ or $\Aut(\F_4(2))$. We then produce a normal subgroup $G_*$ of $G$ of index at most two such that $P_0 \cap G_* = P$. Our objective is then to show  $G_* = P$. This will be done using Holt's Theorem (Lemma~\ref{Holt}). Hence  we  have to gain control of $G_*$-fusion of involutions in $P$. For this we show that $P_0$ is strongly $3$-embedded in $G_*$, which will imply that $P$ controls $G_*$-fusion in $P$. We start with the proof that $M$ normalizes $P$.

We have $C_P(\rho_1) \cong C_P(\rho_2) \cong 3 \times \Sp_6(2)$ and so, by Lemma~\ref{ItsSp62},  $C_G(\rho_i) = C_P(\rho_i)$, $i = 1,2$.  As $\langle C_M(\rho_1), C_M(\rho_2)\rangle = M \cap P$, we see   $ \langle C_G(\rho_1)
,C_G(\rho_2)\rangle$ satisfies the assumptions of Theorem~\ref{MT}. By Corollary~\ref{F42sub} we get that $
\langle C_G(\rho_1) ,C_G(\rho_2)\rangle$ contains a subgroup isomorphic to $\F_4(2)$. As $P \cong \F_4(2)$, we obtain

\begin{lemma}\label{subF42} $ \langle C_G(\rho_1) ,C_G(\rho_2)\rangle = P$.
\end{lemma}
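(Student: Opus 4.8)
The plan is to realise $D := \langle C_G(\rho_1), C_G(\rho_2)\rangle$ as a subgroup of $P$ which is then forced to be all of $P$ by an order count. Since $C_G(\rho_i) = C_P(\rho_i) \le P$ for $i = 1,2$ by Lemma~\ref{ItsSp62} and the identification made just above, we have $D \le P \cong \F_4(2)$ (Lemma~\ref{P=F4new}). It therefore suffices to exhibit inside $D$ a subgroup isomorphic to $\F_4(2)$: being of the same finite order as $P$, such a subgroup must coincide with both $D$ and $P$.

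To produce such a subgroup I would apply Corollary~\ref{F42sub} to $D$, which requires only that $D$ itself satisfy the hypotheses of Theorem~\ref{MT} with respect to $Z$, as was noted in the discussion preceding the lemma. The substantive points to be checked are these. First, $Z \le C_P(\rho_i) = C_G(\rho_i) \le D$ has order $3$. Secondly, the subgroups $Q_i$, $R_i$ and $J$ all centralise $\rho_{3-i}$ and so lie in $C_G(\rho_{3-i}) \le D$; hence $Q = Q_1 Q_2$, $M_0 = Q R_1 R_2$ and $S = QJ$ lie in $D$, while $C_M(\rho_i) \le C_G(\rho_i) \le D$ together with the identity $\langle C_M(\rho_1), C_M(\rho_2)\rangle = M \cap P$ gives $M \cap P \le D$. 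Since $D \le P$ this forces $D \cap M = M \cap P$, and since $C_G(Z) \le M$ we get $C_D(Z) = C_P(Z)$. As $P \cong \F_4(2)$, this group $C_P(Z)$ is similar to a $3$-centraliser in a group of type $\F_4(2)$ and $F^*(C_D(Z)) = O_3(C_P(Z)) = Q$; alternatively, for the two possibilities $M_0 S$ and $M_0 S\langle t\rangle$ for $C_D(Z)$ coming from Lemma~\ref{structM} one verifies directly that $F^* = Q$ is extraspecial of order $3^5$ with centre $Z$ and that $M_0/Q \cong \Q_8 \times \Q_8$ is normal in $C_D(Z)/Q$. Thirdly, $Z$ is not weakly closed in $Q$ with respect to $D$: for this the plan is to force $L_*$ (equivalently $H$) inside $D$, using that $r_1, r_2, S \le D$ and $C_L(\rho_i) \le C_G(\rho_i) \le D$; once $L_* \le D$, transitivity of $L_*/J \cong \GO_4^+(3)$ on the singular $1$-subspaces of $J$ conjugates $Z$ to the conjugate $Y = Z^g \le Q$ with $Y \ne Z$ introduced in Section~4.

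Granting these, Corollary~\ref{F42sub} yields a subgroup $E \le D$ with $E \cong \F_4(2)$, and $E \le D \le P$ with $|E| = |\F_4(2)| = |P|$ gives $E = D = P$, proving the lemma. The one step that is not bookkeeping is the third point above — that the failure of weak closure of $Z$ in $Q$ persists on passing from $G$ to the \emph{a priori} proper subgroup $D$. This is exactly what the two input identities are for: $C_G(\rho_i) = C_P(\rho_i)$ forces $D \le P$, and $\langle C_M(\rho_1), C_M(\rho_2)\rangle = M \cap P$, together with $C_L(\rho_1), C_L(\rho_2) \le D$, pins down $D \cap M = M \cap P$ and drives the Thompson subgroup normaliser $L_*$ into $D$.
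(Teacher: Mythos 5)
Your argument runs along exactly the same lines as the paper's own (rather terse) proof: $D := \langle C_G(\rho_1),C_G(\rho_2)\rangle \le P$ via Lemma~\ref{ItsSp62}, then $D$ satisfies the hypotheses of Theorem~\ref{MT} so Corollary~\ref{F42sub} produces an $\F_4(2)$ inside $D$, and the order count forces $D=P$; the paper simply asserts the hypothesis-check "as $\langle C_M(\rho_1),C_M(\rho_2)\rangle = M\cap P$, we see\dots" whereas you spell it out. Your verification is correct, and the one step you flag as substantive — driving $L_*$ into $D$ so that the $L_*$-conjugate $Y$ of $Z$ inside $Q$ witnesses the failure of weak closure in $D$ — does go through, but deserves the extra sentence you omit: from $C_L(\rho_1),C_L(\rho_2)\le D$ one has $J\le D$ and two images $C_L(\rho_i)J/J\cong \GO_3(3)$ in $L_*/J\cong\GO_4^+(3)$ acting irreducibly on $\rho_i^\perp=I_i$; their intersection stabilises $\langle\rho_1\rangle$, $\langle\rho_2\rangle$ and $\langle\rho_1,\rho_2\rangle^\perp$ so has order at most $4$, whence $\langle \GO_3(3)_1,\GO_3(3)_2\rangle$ has index at most $2$, and since each $\GO_3(3)_i$ contains reflections of both spinor norms (along vectors of $\mathcal P(I_i)$ and $\mathcal M(I_i)$) while $\Omega_4^+(3)$ has no subgroup of index $2$, the two copies generate all of $\GO_4^+(3)$, i.e.\ $L_*\le D$. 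With that supplied, your proof is complete and matches the paper's.
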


\begin{lemma}\label{PM} $M$ normalizes $P$.

\end{lemma}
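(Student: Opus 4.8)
The plan is to show that $M$ normalizes $P$ by exhibiting enough $3$-local subgroups of $G$ that lie inside $P$, are permuted by $M$, and generate $P$. The key point is that $P = \langle C_G(\rho_1), C_G(\rho_2)\rangle$ by Lemma~\ref{subF42}, and both $\langle \rho_1\rangle$ and $\langle \rho_2\rangle$ lie in $J \le M$. Since $M$ normalizes $J$ and $L = N_G(J)$ controls $G$-fusion of elements of $J$ (Lemma~\ref{NJ}(ii)), the $M$-conjugates of $\langle\rho_1\rangle$ and $\langle\rho_2\rangle$ all lie in $J$ and are among the non-singular one-spaces of $J$; in fact $\langle\rho_1\rangle^M \cup \langle\rho_2\rangle^M$ is exactly the set $\mathcal P(J)\cup \mathcal M(J)$ of all non-singular one-spaces (using Lemma~\ref{fusion3elts} and Lemma~\ref{I1}, the two classes being fused precisely when $L > L_*$).

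First I would record that $P = \langle C_G(x) \mid \langle x\rangle \in \mathcal P(J)\cup\mathcal M(J)\rangle$. Indeed this group is contained in $\langle C_G(\rho_1),C_G(\rho_2)\rangle = P$ since each such $C_G(x)$ is $G$-conjugate to $C_G(\rho_1)$ or $C_G(\rho_2)$, and by the fusion control of $L$ we may conjugate inside $N_G(J)\le N_G(P')$... more carefully: each $x$ with $\langle x\rangle\in\mathcal P(J)$ is $L_*$-conjugate to $\rho_1$, and since $L_* \le \langle H,H_1,r_1,r_2\rangle$ and $H = \langle Q,Q^g\rangle \le \langle C_G(\rho_1),C_G(\rho_2)\rangle$ (as $Q = C_G(\rho_1)\cap\cdots$, or more simply $Q_1,Q_2 \le C_G(\rho_2),C_G(\rho_1)$ respectively so $Q = Q_1Q_2 \le P$ and likewise $r_1,r_2 \in P$), we get $L_* \le P$, hence $C_G(x) = C_G(\rho_1)^{\ell} \le P$ for the relevant $\ell \in L_*$. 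The same argument handles $\mathcal M(J)$. Thus $P = \langle C_G(x)\mid \langle x\rangle \in \mathcal P(J)\cup\mathcal M(J)\rangle$, and now the set $\mathcal P(J)\cup\mathcal M(J)$ is visibly invariant under $M$ (since $M$ normalizes $J$ and preserves the quadratic form $\mathrm q$ up to similarity by Lemma~\ref{NJ}(iv), $M$ being contained in $L$ on $J$... in fact $M\cap L$ acts, but we need all of $M$).

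The main obstacle is precisely that last parenthetical: to conclude $M$ permutes $\{C_G(x)\}$ we need $M$ to normalize $J$, which is true since $J = J(S)$ is the Thompson subgroup of $S = O_3(M)\cdot(S/Q)$ and $S \unlhd M$... but wait, $S$ is only Sylow in $M$, not normal. So instead I would argue: for $m \in M$, $J^m$ is again the Thompson subgroup of $S^m$, another Sylow $3$-subgroup of $G$ contained in... no. The correct route: $M$ normalizes $Q$, and $J \cap Q = A$, but $A$ is not $M$-invariant. The resolution is that $M$ does normalize $Z$ and we should instead conjugate: for $m\in M$, $J^m \le S^m$ with $S^m \in \Syl_3(M)$, so $J^m$ and $J$ are $M$-conjugate Thompson subgroups, hence $J^m = J^{m'}$ for some $m' \in N_M(S)$; but $N_M(S)$ normalizes $S$ and hence $J(S) = J$, so $J^m = J$. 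Therefore $M$ does normalize $J$, and then $M$ normalizes $\langle C_G(x) \mid \langle x\rangle \in \mathcal P(J)\cup\mathcal M(J)\rangle = P$, completing the proof. I would write this up as: fix $m\in M$; since $S,S^m\in\Syl_3(M)$ there is $n\in M$ with $S^{mn}=S$, and as $N_M(S)\le N_G(S)\le M$ normalizes $J=J(S)$ we get $J^{mn}=J$; then $P^{mn}=P$ since $P$ is generated by the $C_G(x)$, $\langle x\rangle\in\mathcal P(J)\cup\mathcal M(J)$, and this set is permuted by $N_M(J)\supseteq\langle mn\rangle$; finally $n\in M$ normalizes $P$ by Lemma~\ref{subF42} applied with $n$ in place of... no — we need $n$ to normalize $P$ separately. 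Better: prove $N_M(J) \le N_G(P)$ directly as above, and since $M = \langle$ things in $N_M(J)\rangle$? That fails too. Cleanest: $P\cap M \ge L_*\cap M$ contains $S$ and $N_M(S)\le M$, and the Frattini-type argument $M = C_G(Z)\langle f\rangle$... I will instead simply show every element of $M$ normalizes $J$ by the Sylow argument above (this needs no extra input: $S \in \Syl_3(M)$, Sylow's theorem gives the conjugating element inside $M$, and $N_G(S) \le M$ normalizes $J(S)$ by Lemma~\ref{basic}), whence $M \le N_G(J)$, and then $M$ permutes $\mathcal P(J)\cup\mathcal M(J)$ and hence normalizes $P$.
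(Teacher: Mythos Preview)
Your final move — the claim that the Sylow argument shows $M \le N_G(J)$ — is wrong, and this is a genuine gap. The argument you sketch shows only that for $m\in M$ there is $n\in M$ with $S^{mn}=S$, hence $J^{mn}=J$; it does \emph{not} give $J^m=J$. In fact $M$ does not normalize $J$: for instance $R_1\le M$ acts irreducibly on $Q_1/Z$ and therefore does not stabilise the one-space $A_1/Z$, so $R_1$ fails to normalize $A=J\cap Q$ and hence fails to normalize $J$. Consequently the set $\mathcal P(J)\cup\mathcal M(J)$ is not $M$-invariant, and your generation description of $P$, while correct, cannot be used in the way you propose.

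The paper's route is much shorter and avoids $J$ entirely. One first observes that $M_0S\langle f\rangle\le P$ (indeed $R_1$ centralizes $Q_2\ni\rho_2$, $R_2$ centralizes $Q_1\ni\rho_1$, $J$ centralizes both $\rho_i$, and $fr_1\in C_M(\rho_1)$ by Lemma~\ref{princeprep}), so by Lemma~\ref{structM} either $M\le P$ or $|M:M\cap P|=2$. In the latter case the extra element $t$ swaps $\rho_1$ and $\rho_2$, and since $P=\langle C_G(\rho_1),C_G(\rho_2)\rangle$ by Lemma~\ref{subF42}, conjugation by $t$ swaps the two generators and so normalizes $P$. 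This is all that is needed.
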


\begin{proof} Since $P \cong \F_4(2)$ and $\rho_1$ and $\rho_2$ are not conjugate in $P$, we have that $M \cap P = RS\langle f \rangle$. If $M \le P$, we have nothing to do. If $M > M \cap P = RS\langle f \rangle$, then, by Lemma~\ref{structM}, there is an element $t$ of $M\setminus M \cap P$ such that $\rho_1^t = \rho_2$. This element normalizes $P$ by Lemma~\ref{subF42}. Thus $M$ normalizes $P$.
\end{proof}

Define $P_0= PM$.

\begin{lemma} \label{strong3a}  $P_0$ is strongly $3$-embedded in $G$.
\end{lemma}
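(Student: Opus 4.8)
The plan is to show that $P_0=PM$ is strongly $3$-embedded in $G$ by verifying the standard criterion: $3$ divides $|P_0|$ and for every $g\in G\setminus P_0$ the intersection $P_0\cap P_0^g$ has order prime to $3$. By \cite[Lemma 17.11]{GLS2} (already invoked in Lemma~\ref{Strong3}) it suffices to show that $C_G(x)\le P_0$ for every element $x$ of order $3$ in $P_0$, together with $N_G(S)\le P_0$ for $S\in\syl_3(P_0)$.

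First I would fix $S\in\syl_3(M)\subseteq\syl_3(G)$ (Lemma~\ref{basic}); since $M\le P_0$ and $S\in\syl_3(P_0)$ as well, $N_G(S)\le N_G(Z)=M\le P_0$ by Lemma~\ref{basic}. Next, every element of order $3$ in $S$ is $M$-conjugate into $A$ (Lemma~\ref{ActionQ} handles those in $Q$, and one checks the same for the remaining classes using the structure of $M$ from Lemma~\ref{structM}); moreover every element of order $3$ in $G$ is $G$-conjugate into $S$, hence into $A$, so it is enough to control $C_G(x)$ for $x$ running over representatives of the $G$-classes of order-$3$ elements meeting $A$. By Lemma~\ref{3classes} the elements of $A$ fall into three $G$-classes: those conjugate to generators of $Z$, those conjugate to $\rho_1$, and those conjugate to $\rho_2$. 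For $x$ generating $Z$ we have $C_G(x)\le N_G(Z)=M\le P_0$. For $x=\rho_i$ we have just recorded, before the statement, that $C_G(\rho_i)=C_P(\rho_i)\le P\le P_0$ (this uses Lemma~\ref{ItsSp62} together with Lemma~\ref{subF42}). Finally one must deal with $x$ generating a $\mathcal P(J)$- or $\mathcal M(J)$-type subgroup that is \emph{not} $G$-conjugate to $\rho_1$ or $\rho_2$: but Lemma~\ref{NJ}(ii) together with Lemma~\ref{type1} shows every nonsingular cyclic subgroup of $J$ is $L$-conjugate to $\langle\rho_1\rangle$ or $\langle\rho_2\rangle$, so this case does not in fact occur. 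Thus $C_G(x)\le P_0$ for all order-$3$ elements $x\in P_0$.

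Putting these together, $P_0$ contains a full Sylow $3$-subgroup $S$ of $G$ with $N_G(S)\le P_0$, and $C_G(x)\le P_0$ for every $x\in P_0$ of order $3$; by \cite[Lemma 17.11]{GLS2} this is exactly the statement that $P_0$ is strongly $3$-embedded in $G$.

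The main obstacle I anticipate is the bookkeeping of $G$-fusion of order-$3$ elements inside $A$ and $J$: one has to be certain that the only $G$-classes represented are those of $Z$, $\rho_1$, $\rho_2$, and that $C_G$ of a $\mathcal P(J)$- or $\mathcal M(J)$-generator is genuinely recognised inside $P$ rather than being larger in $G$. This is where Lemmas~\ref{NJ}, \ref{type1}, \ref{crossover} and \ref{ItsSp62} must be assembled carefully; the rest is the routine verification of the strong-embedding criterion.
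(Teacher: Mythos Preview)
Your proposal is correct and follows the same strategy as the paper: verify the criterion of \cite[Lemma 17.11]{GLS2} by showing $N_G(S)\le P_0$ and $C_G(x)\le P_0$ for representatives $x$ of the order-$3$ classes, namely the generators of $Z$, $\langle\rho_1\rangle$, and $\langle\rho_2\rangle$.

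The one place where the paper is cleaner is the reduction step. You assert that every element of order $3$ in $S$ is $M$-conjugate into $A$, handling $S\setminus Q$ by an appeal to Lemma~\ref{structM} that you do not actually carry out; this is exactly the bookkeeping you flagged as an obstacle. The paper sidesteps it entirely: since $P\cong\F_4(2)$ is already known, $P$ has precisely three conjugacy classes of elements of order $3$ and all are represented in $J$. Hence every order-$3$ element of $P_0=PM$ is $P_0$-conjugate into $J$, and the three cases $\langle x\rangle\in\mathcal S(J)$, $\mathcal P(J)$, $\mathcal M(J)$ are then dispatched exactly as you do (using $M\le P_0$ for the singular case and Lemma~\ref{ItsSp62} for the nonsingular cases). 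So your route via $A$ and $M$-conjugacy works in principle but requires an extra verification that the paper avoids by invoking the class structure of $\F_4(2)$.
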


\begin{proof}
Since $P \cong \F_4(2)$, there are three conjugacy classes of elements of order $3$ in $P$ and they are all witnessed
in $J$.   For $\langle x\rangle \in \mathcal S( J)$, we have $N_G(\langle x \rangle) = M \le P_0$ and for $\langle x \rangle \in \mathcal M(J) \cup \mathcal P(J)$ we have $C_G(x) = C_P(x)$ by Lemma~\ref{ItsSp62}.
 Since also $N_G(S) \le M \le P_0$ we have $P_0$ is strongly
$3$-embedded in $G$ by \cite[Lemma 17.11]{GLS2}.
\end{proof}

We can now determine the structure of $P_0$.

\begin{lemma}\label{P0syl}  We have $P_0$ contains a Sylow $2$-subgroup of $G$ and either $P_0=P$ or $P_0 \cong \Aut(\F_4(2))$.
\end{lemma}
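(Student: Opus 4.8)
The plan is to pin down $P_0$ as an abstract group and then to locate a Sylow $2$-subgroup. By Lemma~\ref{PM} we have $P\unlhd P_0=PM$, so $P_0/P\cong M/(M\cap P)$; and from the proof of Lemma~\ref{PM}, $M\cap P=RS\langle f\rangle$, which by Lemma~\ref{structM} has index $1$ or $2$ in $M$. If the index is $1$ then $M\le P$ and $P_0=P$. If it is $2$ then $|P_0/P|=2$; since $P\cong\F_4(2)$ is simple with $|\Out(\F_4(2))|=2$, either $C_{P_0}(P)=1$, whence $P_0$ embeds in $\Aut(P)$ and, comparing orders, $P_0\cong\Aut(\F_4(2))$, or $C_{P_0}(P)=\langle z\rangle$ for an involution $z$ centralising $P$. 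The last case cannot occur: $z$ would centralise $Q\le S\le M\cap P\le P$, so $z\in C_G(Q)$; but $C_G(Q)\le C_G(Z)\le M$ and $C_M(Q)\le Q$ because $F^*(M)=Q$, forcing $z\in Q$, which is absurd as $z$ is an involution and $Q$ is a $3$-group. Hence $P_0=P$ or $P_0\cong\Aut(\F_4(2))$.

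The heart of the Sylow statement is the claim $K_1=C_G(r_1)\le P$. First $r_1\in P$: we have $C_G(\rho_1)=C_P(\rho_1)\le P$, and $r_1$ inverts $\rho_1$ by Lemma~\ref{eitheror}, so $r_1\in N_G(\langle\rho_1\rangle)\setminus C_G(\rho_1)$; since $N_P(\langle\rho_1\rangle)$ likewise contains an element inverting $\rho_1$ and $C_G(\rho_1)=C_P(\rho_1)$, we get $N_G(\langle\rho_1\rangle)=N_P(\langle\rho_1\rangle)\le P$. Next $\Gamma_1\le P$: by Lemma~\ref{strong3a} strong $3$-embedding forces $N_G(X)\le P_0$ for every non-trivial $3$-subgroup $X$ of $P_0$, so $L=N_G(J)\le P_0$; the orthogonal action of $L$ on $J$ is transitive on the minus-type points (it contains $\mathrm{GO}_4^+(3)$ by Lemma~\ref{NJ}(v)), so each of the subspaces $I_1^k$ (of type DM) and each $I_1^j\cap I_1^k\in\mathcal M(J)$ contains a point $\langle y\rangle=\langle\rho_2\rangle^{\,l}$ with $l\in L\le P_0$; since $C_G(\rho_2)=C_P(\rho_2)\le P$ and $P\unlhd P_0$, we obtain $C_{K_1}(\langle y\rangle)=K_1\cap C_P(\rho_2)^{\,l}\le P$, so every generator $\Sigma_1^k,\Theta_1^{j,k}$ of $\Gamma_1$ lies in $P$. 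Now $r_1\in Z(\Gamma_1)$ and $\Gamma_1\unlhd K_1\ge C_P(r_1)$ give $\Gamma_1\le O_2(C_P(r_1))$, while $E(C_G(\rho_1))\cong\Sp_6(2)$ centralises $r_1$ (Lemma~\ref{eitheror}) and lies in $C_P(\rho_1)\le P$, so $\Sp_6(2)\le C_P(r_1)$ and in particular $3^4\mid|C_P(r_1)|$. Inspecting the four involution centralisers of $\F_4(2)$ in Lemma~\ref{F42Classes}, only those of $x_1$ and $x_2$ have order divisible by $3^4$; hence $r_1$ is $P$-conjugate to $x_1$ or $x_2$ and $|C_P(r_1)|=2^{15}|\Sp_6(2)|=|K_1|$, so $K_1=C_P(r_1)\le P$.

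Finally let $T\in\Syl_2(K_1)$; then $|T|=|P|_2$, so $T\in\Syl_2(P)$, and it remains to show that a Sylow $2$-subgroup $T_0$ of $P_0$ (with $T_0\cap P\in\Syl_2(P)$) is Sylow in $G$. If not, then $N_U(T_0)>T_0$ for $T_0<U\in\Syl_2(G)$, and $N_U(T_0)$ permutes non-trivially the involutions of $Z(T_0)$; one shows that all of these have $G$-centraliser inside $P_0$, so that $N_U(T_0)\le P_0$, a contradiction. The involutions occurring are $G$-conjugates of $r_1$ or $r_2$ (together with $r_1r_2$ in the split case); since the graph automorphism supplied by $M\setminus(M\cap P)$ interchanges the $P$-classes of $r_1$ and $r_2$, these are precisely the two root-involution classes of $P$, whose members all have $G$-centraliser inside $P$ by conjugating $K_1$ or $K_2$, while $r_1r_2$ is handled through the natural $\Sp_6(2)$-module $\Upsilon_1$. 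Thus $T_0\in\Syl_2(G)$ and $P_0$ contains a Sylow $2$-subgroup of $G$. The main obstacle is exactly this last step: once $K_1=C_G(r_1)$ has been forced inside $P$ everything is concrete, but in the case $P_0\cong\Aut(\F_4(2))$ the well-controlled involution $r_1$ is no longer $2$-central in $P_0$, so the Sylow count has to be secured through the $G$-fusion of the remaining central involutions of a Sylow $2$-subgroup rather than through $r_1$ directly, and tracking which $\F_4(2)$-classes those involutions fall into is the delicate point.
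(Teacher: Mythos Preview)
Your first paragraph handling the dichotomy $P_0=P$ versus $P_0\cong\Aut(\F_4(2))$ is correct and in fact tidier than the paper's treatment. Your second paragraph, however, is entirely superfluous: $K_1\le P$ is immediate from the very definition $P=\langle K_1,U_2\rangle$, so the elaborate reconstruction of $\Gamma_1$ inside $P$ and the order comparison serve no purpose.

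The genuine gap is the one you yourself flag at the end. When $P_0\cong\Aut(\F_4(2))$, a graph-type element of $T_0\setminus T$ swaps $r_1$ and $s_2$, so $Z(T_0)=\langle r_1s_2\rangle$ has order $2$; your route through the involutions of $Z(T_0)$ therefore needs $C_G(r_1s_2)\le P_0$, and nothing established so far gives this---the remark about ``handling $r_1s_2$ through $\Upsilon_1$'' does not close the argument. The paper avoids this obstacle by following $r_1$ rather than $Z(T_0)$. Since $\langle r_1\rangle=\Gamma_1'\le T'$ and $|T_0:T|\le 2$ forces $T_0'\le T$, every $w\in N_G(T_0)$ sends $r_1$ into $T\le P$. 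Then $r_1^w$ is an involution of $P$ with $|C_G(r_1^w)|_3=3^4$; strong $3$-embedding (Lemma~\ref{strong3a}) places a Sylow $3$-subgroup of $C_G(r_1^w)$ inside $P_0$, so $|C_P(r_1^w)|_3=3^4$, and Lemma~\ref{F42Classes} then forces $r_1^w\in r_1^P\cup s_2^P$. If $P_0>P$ these two $P$-classes fuse into a single $P_0$-class, while if $P_0=P$ then $r_1\not\sim_G s_2$ by Lemma~\ref{fusionr1r2}; in either case $r_1^w\in r_1^{P_0}$, whence $w\in K_1\cdot P_0=P_0$ and $T_0\in\Syl_2(G)$.
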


\begin{proof}  Assume that $T \not \in \syl_2(G)$ and let $T_1 >T$ normalize $T$. Then $T_1$ normalizes  $Z(T) = \langle r_1, s_2\rangle$. Since $K_1 \le P$ and $U_2\le P$, there exists $x \in T_1$ such that $r_1^x \neq r_1$ and $s_2^x \neq s_2$. Since $Z(T)$ has order $4$, we deduce that  $r_1^x= s_2$ and thus that $K_1^x=U_2$.
Hence $x$ normalizes $P = \langle K_1, U_2 \rangle$ and $P_0=P\langle x \rangle \cong \Aut(\F_4(2))$.

Now let $T_0 \in \syl_2(P_0)$  ($P_0 = P$ or $P_0 = \Aut(P)$) and assume that $w \in N_G(T_0)$. As $r_1 \in T^\prime \leq T_0^\prime \leq T$, we have $r_1^w \in T \leq P$. Employing Lemma~\ref{F42Classes} we see that  all involutions of $P$ commute with elements of order $3$. By Lemma~\ref{strong3a}  $C_{P_0}(r_1^w)$  contains a Sylow $3$-subgroup of $C_G(r_1^w)$. Hence it follows  that $r_1^w \in r_1^{P_0}\cup s_2^{P_0}$. Then there is $x \in P_0$ such that $r_1=r_1^{wx}$ or $s_2= r_1^{wx}$. Since $\langle K_1, U_2 \rangle = P$, we have $wx \in P$.  However this means  $w \in P_0$ and we infer   $T_0\in \Syl_2(G)$.
\end{proof}

Now we produce the normal subgroup $G_*$ with $G_* \cap P_0 = P$.

\begin{lemma}\label{index2} If $P_0 > P$, then $G$ has a subgroup $G_*$ of index $2$  with  $P = P_0 \cap  G_*$.  Furthermore $G_*$ satisfies the hypothesis of Theorem~{\rm \ref{MT}}.
\end{lemma}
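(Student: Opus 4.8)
\textbf{Proof proposal for Lemma~\ref{index2}.}

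The plan is to produce $G_*$ as the kernel of a homomorphism from $G$ onto a group of order $2$, arising from a transfer argument, and then to verify that $G_*$ inherits the hypothesis of Theorem~\ref{MT}. First I would observe that, by Lemma~\ref{P0syl}, $P_0$ contains a Sylow $2$-subgroup $T_0$ of $G$, and since $P_0 > P$ we have $P_0 \cong \Aut(\F_4(2))$, so $P$ has index $2$ in $P_0$ and $T_0 \cap P$ is a Sylow $2$-subgroup of $P \cong \F_4(2)$. The idea is that the outer automorphism responsible for $P_0/P$ should be detected by transfer in $G$. Concretely, I would look at the focal subgroup or, more directly, invoke the fact that $r_1$ and $s_2$ are $G$-conjugate (they are conjugate in $P_0$, indeed swapped by the element $x$ of Lemma~\ref{P0syl}) while, inside $P$, the involution $r_1$ is \emph{not} fused to $s_2$ — because in $\F_4(2)$ the two classes of $2$-central involutions $x_1, x_2$ of Lemma~\ref{F42Classes}(i) are not fused (they become fused only in $\Aut(\F_4(2))$). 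Thus the $G$-class of $r_1$ meets $T_0 \cap P$ in a strictly larger set than the $P$-class does, and this discrepancy is exactly what one feeds into a transfer / Thompson-subgroup-of-fusion argument.

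The cleanest route is via strong $3$-embedding. By Lemma~\ref{strong3a}, $P_0$ is strongly $3$-embedded in $G$; in particular $P_0$ contains the normalizer of every nontrivial $3$-subgroup of $G$ that it meets, and $P_0$ controls $G$-fusion of its elements of order $3$ (and hence, since every involution of $P_0$ centralizes an element of order $3$ by Lemma~\ref{F42Classes}, a good deal of $2$-fusion as well). The transfer homomorphism $G \to T_0^{\mathrm{ab}}$, or more efficiently the ``permutation character'' argument on the coset space $G/P_0$, will then have image of order $2$: the hypothesis $P_0 > P$ guarantees that $P_0$, and hence the fixed point structure, is not self-normalizing-trivial in the required way, so the transfer is not onto $\{1\}$. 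Let $G_*$ be the kernel. Then $|G:G_*| = 2$ and $P_0 \not\le G_*$ (since the transfer is nontrivial on $P_0$, detecting the graph automorphism), whence $P_0 \cap G_*$ is a subgroup of index $2$ in $P_0$ containing $P$; as $P$ itself has index $2$ in $P_0$ we conclude $P_0 \cap G_* = P$. One should double-check that the element $x$ inducing the outer automorphism is genuinely outside $G_*$; this is where the precise fusion statement that $r_1 \not\sim_P s_2$ but $r_1 \sim_G s_2$ is used — it forces the ``swap'' to be recorded by the transfer.

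For the final sentence, I would verify that $G_*$ satisfies the hypothesis of Theorem~\ref{MT}. We have $Z \le G_*$ of order $3$ (as $3$ is odd, $Z \le O^{2}(G) \le G_*$). Since $|G:G_*| = 2$, the centralizer $C_{G_*}(Z)$ has index $1$ or $2$ in $C_G(Z)$; because $C_G(Z) = C_{C_G(Z)}(Z)$ has index at most $2$ in $M = N_G(Z)$ and $F^*(C_G(Z)) = Q$ already lies in $G_*$, the group $C_{G_*}(Z)$ still contains $Q = F^*(C_G(Z))$ and still contains the normal subgroup $R^*/Q \cong \Q_8 \times \Q_8$ (a $2$-group modulo $Q$, hence inside $O^{2}$-relevant part — in any case $R^* \le P \le G_*$). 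Thus $C_{G_*}(Z)$ is again similar to a $3$-centralizer in a group of type $\PSU_6(2)$ or $\F_4(2)$ in the sense of Definition~\ref{F4cent}'s predecessor. Finally $Z$ is not weakly closed in $F^*(C_{G_*}(Z)) = Q$: the conjugating element $g$ with $Z^g = Y \le Q$, $Y \ne Z$, satisfies $g \in \langle Q, Q^g\rangle = H \le G_*$ since $H$ is generated by $3$-groups. Hence all hypotheses of Theorem~\ref{MT} hold for $G_*$, completing the proof.

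\textbf{Main obstacle.} I expect the principal difficulty to be making the transfer argument airtight: one must be sure that the nontrivial part of $G$-fusion that $P_0$ controls (via strong $3$-embedding plus the involution-centralizer structure of $\F_4(2)$) is precisely enough to force the transfer map onto $C_2$ and no more, and that the graph automorphism is the thing being detected rather than being absorbed. Equivalently, one must rule out that $P_0 \le G_*$, i.e. that the whole of $\Aut(\F_4(2))$ transfers trivially — this is where Lemma~\ref{strong3a}, Lemma~\ref{F42Classes}, and the non-fusion of the two $2$-central classes in $\F_4(2)$ (versus their fusion in $G$) must be combined carefully.
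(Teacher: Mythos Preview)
Your transfer argument has the logic backwards. You propose to feed into Thompson transfer the fact that $r_1$ and $s_2$ are $G$-conjugate (via $P_0$) but not $P$-conjugate; but both $r_1$ and $s_2$ lie in $T = T_0 \cap P$, so this observation says that $G$-fusion inside $T$ is \emph{strictly richer} than $P$-fusion. Extra fusion enlarges the focal subgroup and makes the transfer map \emph{smaller}, not larger. Thompson's Transfer Lemma requires the opposite: an involution $x \in T_0 \setminus T$ that is \emph{not} $G$-conjugate to any element of $T$. Nothing in your outline produces such an $x$, and the ``permutation character'' paragraph is too vague to fill the gap.

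The paper's proof supplies exactly the missing involution. One takes $x \in T_0 \setminus T$ with $C_{P_0}(x) \cong 2 \times {}^2\F_4(2)$ (the graph-automorphism involution of $\Aut(\F_4(2))$). Strong $3$-embedding of $P_0$ in $G$ (Lemma~\ref{strong3a}) forces a Sylow $3$-subgroup of $C_{P_0}(x)$ to be Sylow in $C_G(x)$; this Sylow $3$-subgroup is extraspecial of order $3^3$. By Lemma~\ref{F42Classes} the four $P$-classes of involutions have centralizers with $3$-parts $3^4,3^4,3^2,3^2$, none equal to $3^3$, so $x$ cannot be $G$-conjugate to any involution of $T$. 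Thompson transfer then yields $G_*$ of index $2$ with $x \notin G_*$, hence $P_0 \cap G_* = P$. Your verification that $G_*$ inherits the hypothesis of Theorem~\ref{MT} is fine; it is the production of $G_*$ that needs to be redone along these lines.
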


\begin{proof}  We let $T_0 \in \syl_2(P_0)$ and $T \in \syl_2(P)$ with $T_0>T$.   Suppose that $t \in T_0$ is an involution and $C_{P_0}(t)$ has a non-trivial Sylow $3$-subgroup $D$. Then as $P_0$ is strongly $3$-embedded by Lemma~\ref{strong3a} we have that $D \in \Syl_3(C_G(t))$.
Now by Lemma~\ref{F42Classes}   $P$ has four conjugacy classes of involutions and their centralizers have $3$-parts of their orders $3^4$, $3^4$, $3^2$ and $3^2$. On the other hand, if we let $x \in T_0\setminus T$ with  $C_{P_0}(x) \cong 2 \times {}^2\F_4(2)$, then $C_P(x)$ has Sylow $3$-subgroups which are extraspecial of order $3^3$. It follows that $x$ is not conjugate to any element in $T$ and consequently $G$ has a subgroup $G_*$ of index $2$ by Thompson's Transfer Lemma \cite[Lemma 15.16]{GLS2}. Obviously then $P_0 \cap G_* = P$ and $G_*$ satisfies the hypothesis of Theorem~\ref{MT}.
\end{proof}

We finally prove that $G \cong \F_4(2)$ or $\Aut(\F_4(2))$.

\begin{proof}[Proof of Theorem~{\rm \ref{MT}}]  By Lemma~\ref{index2}, we may suppose that $P=P_0.$ Using  Lemma~\ref{F42Classes}, $P$ has
exactly four conjugacy classes of involutions and each such  involution $t$ has $|C_P(t)|_3 \neq 1$. Since $P$ is
strongly $3$-embedded in $G$, $C_P(t)$ contains a Sylow $3$-subgroup of $C_G(t)$. Thus, as $|C_P(r_1)|_3= 3^4$,
we have $r_1^G\cap P \subseteq r_1^P \cup r_2^P$. Since $r_1$ and $r_2$ are not $G$-conjugate by
Lemma~\ref{fusionr1r2} and \ref{index2}, we get that $r_1^G \cap P= r_1^P$.  We note that if $N$ is a non-trivial normal subgroup of $G$, then, as $C_G(r_1) \le P$ and $r_1 \not \in Z(P)$, $1 \neq C_N(r_1) \le N \cap P$ which means that $P \le N$. Because  $N_G(S) \le P$, the Frattini Argument implies $G = N_G(S)N \leq PN =N$.  Hence $G$ is a simple group. Now an application of Lemma~\ref{Holt} and the observation that $P$ is neither soluble nor an alternating group yields  $G=P$ and the proof is complete.
\end{proof}

\end{document}